\newtheorem{hypothesis}{Hypothesis}
\numberwithin{equation}{section}
\newtheorem{theorem}{Theorem}[section]
\newtheorem{lemma}[theorem]{Lemma}
\newtheorem{proposition}[theorem]{Proposition}
\newtheorem{corollary}[theorem]{Corollary}
\newtheorem{conjecture}[theorem]{Conjecture}
\newtheorem*{theorem*}{Theorem}
\theoremstyle{remark}
\newtheorem{remark}[theorem]{Remark}
\numberwithin{equation}{section}
\newcommand{\lif}[1]{\widetilde{#1}}  
\newcommand{\com}[2][\theta]{#2^{#1}}
\newcommand{\End}[2]{\mathcal{E}_{#1}(#2)}
\newcommand{\tEnd}[3][\omega]{\mathcal{E}_{#2}(#3,#1)}
\renewcommand{\P}[1]{\Phi(#1)}
\newcommand{\Pbd}[1]{\Phi_{bdd}(#1)}
\newcommand{\cP}[1]{\bar{\Phi}(#1)}
\newcommand{\cPbd}[1]{\bar{\Phi}_{bdd}(#1)}
\newcommand{\p}{\phi}
\newcommand{\q}{\psi}
\newcommand{\lp}{\tilde{\phi}}
\newcommand{\Pkt}[1]{\Pi_{#1}}
\newcommand{\lPkt}[1]{\tilde{\Pi}_{#1}}
\newcommand{\cPkt}[1]{\bar{\Pi}_{#1}}
\newcommand{\clPkt}[1]{\tilde{\bar{\Pi}}_{#1}}
\renewcommand{\r}{\pi}
\newcommand{\lr}{\tilde{\pi}}
\newcommand{\sH}{\bar{\mathcal{H}}}
\newcommand{\cS}[1]{\bar{S}_{#1}}
\renewcommand{\S}[1]{\mathcal{S}_{#1}}
\renewcommand{\c}{\lambda}
\newcommand{\lG}{\widetilde{G}}
\newcommand{\x}{\omega}
\renewcommand{\L}[1]{{}^L#1}
\newcommand{\D}[1]{\widehat{#1}}
\newcommand{\Irr}[1]{\text{Irr}(#1)}
\newcommand{\Gal}[1]{\Gamma_{#1}}
\renewcommand{\a}{\alpha}
\renewcommand{\Im}{\text{Im}\,}
\newcommand{\lZ}{Z_{\widetilde{G}}}
\newcommand{\Z}{Z_{G}}
\newcommand{\iG}[1]{{G^{1}_{#1}}}
\newcommand{\ir}{\pi^{1}}
\newcommand{\Res}{\text{Res}}
\newcommand{\Ind}{\text{Ind}}
\newcommand{\Cent}{\text{Cent}}
\newcommand{\Two}{\mathbb{Z}_{2}}
\newcommand{\C}{\mathbb{C}}
\newcommand{\lf}{\tilde{f}}
\newcommand{\Int}{\text{Int}}
\newcommand{\Hom}{\text{Hom}}
\newcommand{\Ker}{\text{Ker}}
\newcommand{\e}{\varepsilon}
\begin{document}
\title{On a lifting problem of L-packets}

\author{Bin Xu}

\address{Department of Mathematics and Statistics \\  University of Calgary, Canada }
\email{bin.xu2@ucalgary.ca}

\subjclass[2010]{22E50 (primary); 11F70 (secondary)}
\keywords{quasisplit reductive groups, endoscopic transfer, L-packets}


\maketitle

\begin{abstract}

Let $G \subseteq \lG$ be two quasisplit connected reductive groups over a local field of characteristic zero and they have the same derived group. Although the existence of L-packets is still conjectural in general, it is believed that the L-packets of $G$ should be the restriction of that of $\lG$. Motivated by this, we hope to construct the L-packets of $\lG$ from that of $G$. The primary example in our mind is when $G = Sp(2n)$, whose L-packets have been determined by Arthur (2013), and $\lG = GSp(2n)$. As a first step, we need to consider some well-known conjectural properties of L-packets. In this paper, we show how they can be deduced from the conjectural endoscopy theory. As an application, we obtain some structural information about L-packets of $\lG$ from that of $G$.  

\end{abstract}

\maketitle


\section{Some standard notations}
\label{sec: notation}

Suppose $F$ is a field, we denote its algebraic closure by $\bar{F}$. Let $G$ be a reductive algebraic group over $F$ and $\theta$ be an $F$-automorphism of $G$. We denote the identity component of $G$ by $G^{0}$. If $G$ is connected, we denote the derived group of $G$ by $G_{der}$, and the adjoint group of $G$ by $G_{ad}$. Let $G_{sc}$ be the simply connected cover of $G_{der}$. If $\D{G}$ is the complex reductive group dual to $G$, we write $\D{G}_{der}$, $\D{G}_{ad}$ for the derived group and adjoint group of $\D{G}$ respectively, and $\D{G}_{sc}$ is the simply connected cover of $\D{G}_{der}$. We denote the centre of $G$ by $Z_{G}$ or $Z(G)$. If $G$ is abelian, let $G^{\theta}$ be the $\theta$-invariant subgroup of $G$, and $G_{\theta}$ be the $\theta$-coinvariant group of $G$, i.e., $G_{\theta} = G / (\theta - 1)G$. For a finite group $S$, we denote its set of linear characters by $S^{*}$.


\section{Introduction}
\label{sec: introduction}

Let $F$ be a local field of characteristic zero and $G$ be a quasisplit connected reductive group over $F$. The local Langlands conjecture asserts the set $\Pkt{}(G(F))$ of isomorphism classes of irreducible smooth representations of $G(F)$ can be parametrized by the set $\P{G}$ of local Langlands parameters. This parametrization is usually not a bijection. In fact it is conjectured that each parameter $\p \in \P{G}$ is associated with a finite set $\Pkt{\p}$ of isomorphism classes of irreducible smooth representations of $G(F)$, and they give a disjoint decomposition of
\begin{align}
\label{eq: disjoint decomposition}
\Pkt{}(G(F)) = \bigsqcup_{\p \in \P{G}} \Pkt{\p}.
\end{align}
Such finite sets are called L-packets. This parametrization is based on the belief that there should be certain arithmetic invariants (e.g., L-factor) defined on both the representation side and the parameter side so that one could match them. From this point of view, one can think the L-packet $\Pkt{\p}$ attached to some $\p \in \P{G}$ consists of all irreducible smooth representations of $G(F)$ whose arithmetic invariants match that of $\p$. However, it can be very difficult to define these arithmetic invariants on the representation side in general. On the other hand, there are some elementary properties that one would require this parametrization always satisfy. These properties are usually given under the name ``Desiderata" (see \cite{Borel:1979}, \cite{GGP:2012}). In this paper, we will mainly concern the following three desiderata.

\begin{itemize}

\item {\bf Desideratum 1: Central character}

The first desideratum asserts all irreducible smooth representations in $\Pkt{\p}$ have the same central character and it can be constructed from $\p$. To see this construction, we need to give the definition of local Langlands parameters. Let $\Gal{} = \text{Gal}(\bar{F}/F)$ be the absolute Galois group, $W_{F}$ be the Weil group and $\D{G}$ be the complex reductive group dual to $G$. The Langlands dual group is $\L{G} = \D{G} \rtimes W_{F}$, where the action of $W_{F}$ factors through $\Gal{}$. We define the local Langlands group to be 
\[
L_{F} := \begin{cases}
                      W_{F},   & F \text{ is archimedean},  \\
                      W_{F} \times SL(2, \C), & F \text{ is nonarchimedean}.
                      \end{cases}                   
\]
Then a Langlands parameter $\p$ is a $\D{G}$-conjugacy class of admissible homomorphisms from $L_{F}$ to $\L{G}$ (see \cite{Borel:1979}). In particular, it respects the projections on $W_{F}$ from both $L_{F}$ and $\L{G}$. We take a torus $Z$ defined over $F$, containing the centre $Z_{G}$ of $G$. For example, $Z$ can be a maximal torus of $G$. Let $\lG = (G \times Z) / Z_{G}$, where $Z_{G}$ is included diagonally, and let $D = Z / Z_{G}$. Then we have an exact sequence 
\begin{align}
\label{eq: central character}
\xymatrix{1 \ar[r] & G \ar[r] & \lG \ar[r]  & D \ar[r] & 1.}
\end{align}
On the dual side, we have 
\[
\xymatrix{1 \ar[r] & \D{D} \ar[r] & \D{\lG} \ar[r]  & \D{G} \ar[r] & 1.}
\]
This induces a map from $\P{\lG}$ to $\P{G}$. It follows from a result of Labesse (\cite{Labesse:1985}, Theorem 8.1) that this map is surjective. Therefore we can lift any $\p \in \P{G}$ to some $\lp \in \P{\lG}$. Note $Z_{\lG} = Z$ is a torus, so dual to 
\[
\xymatrix{1 \ar[r] & Z_{\lG} \ar[r] & \lG \ar[r]  & G_{ad} \ar[r] & 1}
\]
we have
\[
\xymatrix{1 \ar[r] &  \D{G}_{sc} \ar[r] & \D{\lG} \ar[r]  & \D{Z}_{\lG} \ar[r] & 1.}
\]
So by composing with $\D{\lG} \rightarrow \D{Z}_{\lG}$, $\lp$ gives rise to an element ${\bold a}_{\lp} \in H^{1}(W_{F}, \D{Z}_{\lG})$. Then by the local Langlands correspondence for tori, ${\bold a}_{\lp}$ corresponds to a quasicharacter $\chi_{\lp}$ of $Z_{\lG}(F)$. After we take restriction to $Z_{G}(F)$, we get a quasicharacter $\chi_{\p}$ of $Z_{G}(F)$. To see this construction is independent of the torus $Z$, we need to know two things. First, if there is another torus $Z_{1} \supseteq Z$, let $\lG_{1} = (G \times Z_{1}) / Z_{G}$ and $\lp_{1} \in \P{\lG_{1}}$ be a lift of $\lp$, then $\chi_{\lp_{1}}|_{Z_{\lG}} = \chi_{\lp}$. Secondly, if there are two torus $Z_{1}$ and $Z_{2}$ both containing $Z_{G}$, then there exists a third torus $Z_{3}$ containing both $Z_{1}$ and $Z_{2}$. The first thing follows easily from some commutative diagrams. For the second one, we can simply take $Z_{3} = (Z_{1} \times Z_{2}) / Z_{G}$.

\item {\bf Desideratum 2: $G_{ad}(F)$-conjugate action}

The second desideratum is more involved, and in particular it requires a different point of view towards L-packet. Roughly speaking, there are two steps in constructing the L-packets. First one constructs the L-packets for the set  $\Pkt{temp}(G(F))$ of isomorphism classes of irreducible tempered representations, and then the other L-packets (nontempered) can be constructed from the tempered ones by using the theory of Langlands' quotient. Therefore it suffices to know the tempered L-packets. The same is also true for the Langlands parametrization \eqref{eq: disjoint decomposition}. That is to say it is enough to know the parametrization of the tempered L-packets, which should correspond to the ``bounded parameters", namely the images of the Weil group part have compact closure. 
From the point of view of harmonic analysis, irreducible smooth representations are characterized by their ``characters", which are $G(F)$-conjugate invariant locally integrable functions over $G(F)$ and smooth over the open dense subset of strongly regular semisimple elements $G_{reg}(F)$. A virtual character $\Theta$ (i.e., finite linear combination of characters) is called {\bf stable} if it is $G(\bar{F})$-conjugate invariant over $G_{reg}(F)$, namely $\Theta(\gamma) = \Theta(\gamma')$ for any $\gamma, \gamma' \in G_{reg}(F)$ such that $\gamma = g^{-1} \gamma' g$ for some $g \in G(\bar{F})$. It is conjectured that the tempered L-packets are the minimal subsets of irreducible tempered representations, within which some linear combination of the characters is stable (cf. Conjecture 9.2, \cite{Shahidi:1990}). Therefore the conjugate action by $G_{ad}(F)$ on $\Pkt{}(G(F))$ permutes the elements in each tempered L-packet. Moreover, there is an explicit conjectural formula for describing this action, which will be the second desideratum. To state the formula, we need to introduce a parametrization for elements inside tempered L-packets, which will be called endoscopic parametrization.

Let us denote the set of bounded Langlands parameters by $\Pbd{G}$. For $\p \in \Pbd{G}$, we choose a representative $\underline{\p}: L_{F} \rightarrow \L{G}$ and define 
\[
S_{\underline{\p}} = \Cent(\Im \underline{\p}, \D{G}),
\] 
i.e., the centralizer of the image of $\underline{\p}$ in $\D{G}$. Let $S^{0}_{\underline{\p}}$ be the identity component of $S_{\underline{\p}}$ and $Z(\D{G})^{\Gamma}$ be the $\Gal{}$-invariant elements in the centre $Z(\D{G})$ of $\D{G}$. Then we also define $A_{\underline{\p}} = S_{\underline{\p}} / S^{0}_{\underline{\p}}$ and $\S{\underline{\p}} = S_{\underline{\p}}/S^{0}_{\underline{\p}}Z(\D{G})^{\Gamma}$. There is an exact sequence
\[
\xymatrix{1 \ar[r] & Z_{\underline{\p}} \ar[r] & A_{\underline{\p}} \ar[r] & \S{\underline{\p}} \ar[r] & 1,}
\]
where $Z_{\underline{\p}} = Z(\D{G})^{\Gamma} /  Z(\D{G})^{\Gamma} \cap S^{0}_{\underline{\p}}$. If $\underline{\p}^{g} = \Int (g) \circ \underline{\p} $ for $g \in \D{G}$, there is an isomorphism $S_{\underline{\p}} \rightarrow S_{\underline{\p}^{g}}$ unique up to $S_{\underline{\p}}$-conjugation. This means one can not define a group ``$S_{\p}$" independent of the choice of representatives $\underline{\p}$, but rather one can define the conjugacy classes in ``$S_{\p}$". 

We define a Whittaker datum to be a pair $(B, \Lambda)$, where $B$ is a Borel subgroup of $G$ and $\Lambda$ is a nondegenerate character on the unipotent radical $N(F)$ of $B(F)$. All Whittaker data can be constructed as follows. We fix an $F$-splitting $(B, T, \{X_{\alpha}\})$ of $G$ and a nontrivial additive character $\q_{F}: F \rightarrow \C^{\times}$, then we define 
\[
\Lambda(exp(\sum_{\alpha} n_{\alpha} X_{\alpha})) = \q_{F}(\sum_{\alpha} n_{\alpha}),
\]
which extends uniquely to a character of $N(F)$.

\begin{conjecture}
\label{conj: endoscopic parametrization}
We fix a Whittaker datum $(B, \Lambda)$ for $G$, and suppose $\p \in \Pbd{G}$.
\begin{enumerate}
\item
There is a unique $(B, \Lambda)$-generic representation in $\Pkt{\p}$.
\item
There is a canonical pairing between $\Pkt{\p}$ and $\S{\underline{\p}}$, which induces an inclusion from $\Pkt{\p}$ to the set $\D{\S{\underline{\p}}}$ of characters  of irreducible representations of $\S{\underline{\p}}$
\[
\xymatrix{\Pkt{\p} \ar[r] & \D{\S{\underline{\p}}} \\
                 \r \ar@{{|}->}[r]  & <\cdot, \r>_{\underline{\p}},}
\]
such that it sends the $(B, \Lambda)$-generic representation to the trivial character. This becomes a bijection when $F$ is nonarchimedean. Moreover, if $\underline{\p}^{g} = \Int (g )\circ \underline{\p}$ for $g \in \D{G}$, then
\[
< g x g^{-1}, \r>_{\underline{\p}^{g}} = <x, \r>_{\underline{\p}},
\]
for $\r \in \Pkt{\p}$ and $x \in \S{\underline{\p}}$.
\end{enumerate}
\end{conjecture}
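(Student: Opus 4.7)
The plan is to establish both parts simultaneously via endoscopic character identities, following the template that has been successful for classical groups. The framework is an induction on $\dim G$: one assumes analogous packets $\Pkt{\p_{H}}$ and stable tempered characters $S^{H}_{\p_{H}}$ have already been constructed for all proper endoscopic groups $H$ of $G$, together with a Whittaker-compatible normalization carried along inductively.

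First I would construct candidate packets. Each semisimple $s \in \S{\underline{\p}}$, together with $\underline{\p}$, determines an endoscopic datum whose dual parameter $\p_{H}$ lies in $\Pbd{H}$. Pairing $S^{H}_{\p_{H}}$ with the Whittaker-normalized Langlands--Shelstad--Kottwitz transfer factor yields an invariant distribution $\Theta^{G}_{\underline{\p},s}$ on $G(F)$. I would aim to establish the identity
\[
\Theta^{G}_{\underline{\p},s} \;=\; \sum_{\r} \langle s, \r \rangle_{\underline{\p}}\, \Theta_{\r},
\]
in which the sum ranges over a finite set of irreducible tempered representations, and the coefficients depend on $s$ only through its image in $\S{\underline{\p}}$. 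One would then \emph{define} $\Pkt{\p}$ as the set of $\r$ occurring with some $\langle s, \r \rangle_{\underline{\p}} \neq 0$, with the pairing read off from these coefficients; Fourier inversion on $\S{\underline{\p}}$ then converts the family $\{\Theta^{G}_{\underline{\p},s}\}_{s}$ into a system of irreducible characters indexed by $\D{\S{\underline{\p}}}$.

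For part (1), the unique $(B,\Lambda)$-generic member should emerge from the case $s = 1$: the stable character $\Theta^{G}_{\underline{\p},1}$ admits a nonzero Whittaker functional with respect to $(B,\Lambda)$, and the Kottwitz--Shelstad normalization of transfer factors is designed precisely so that the unique $\r$ with $\langle 1, \r\rangle_{\underline{\p}} = 1$ is the generic one. The essential tool is Rodier's germ expansion at the regular nilpotent orbit, which transfers compatibly under Langlands--Shelstad. The equivariance claim in part (2) is then formal: conjugation by $g \in \D{G}$ supplies an isomorphism of endoscopic data sending $(\underline{\p}, s)$ to $(\underline{\p}^{g}, g s g^{-1})$, the transfer factors intertwine, and the coefficients transform as asserted.

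The principal obstacle --- the reason this remains conjectural in general --- is realizing the abstract distribution $\Theta^{G}_{\underline{\p},s}$ concretely as a finite integer combination of irreducible characters. Outside the classical-groups setting where Arthur's stable twisted trace formula delivers this, the requisite global trace formula comparisons and the full endoscopic fundamental lemma in this generality remain incomplete. A secondary subtlety is independence from the choice of representative $\underline{\p}$: this reduces to the functoriality of transfer factors under $\D{G}$-conjugation, which is exactly the content of the final equivariance identity and so dovetails with the rest of the argument.
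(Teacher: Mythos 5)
This statement is labeled \emph{Conjecture}~\ref{conj: endoscopic parametrization} in the paper and is never proved there. It is one of the three assumptions the author bundles into the ``Endoscopic Hypothesis'' and carries as a working assumption throughout Sections~\ref{sec: endoscopy}--\ref{sec: twist}; the only place it is asserted unconditionally is Section~\ref{subsec: classical group}, where it is restated (with the $\Sigma_0$-orbit modification) as Theorems~\ref{thm: LLC} and~\ref{thm: endoscopic parametrization} and attributed to Arthur's book. So there is no ``paper's own proof'' to compare against; the paper deliberately treats this as input.

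Your sketch is an honest description of the expected Arthur-style strategy --- build candidate stable distributions from endoscopic transfer of stable characters on proper endoscopic groups, invert over $\S{\underline{\p}}$, and fix the Whittaker normalization via Rodier/Shahidi germ expansions --- and you correctly identify the genuine bottleneck: showing that the transferred distribution $\Theta^{G}_{\underline{\p},s}$ is a finite integral combination of irreducible tempered characters, which in the known cases is supplied by the stable (twisted) trace formula. Since you explicitly flag this as the missing ingredient, you are not claiming to have a proof; you are reproducing the conjectural framework. That is consistent with what the paper does, but do be careful not to present this as more than a heuristic: the equivariance identity in part~(ii) is also not ``formal'' once you try to make the transfer-factor normalization and the choice of base point precise, and the injectivity into $\D{\S{\underline{\p}}}$ (as opposed to $\D{A_{\underline{\p}}}$, i.e.\ triviality on $Z(\D{G})^\Gamma$) requires a separate argument, essentially the central-character compatibility treated later in Section~\ref{sec: central character}. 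In short, you have correctly reconstructed the conjectural status and the proof template rather than a proof, which matches the paper's stance.
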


Since $\D{\S{\underline{\p}}}$ are functions on conjugacy classes of $\S{\underline{\p}}$, the parametrization of elements inside $\Pkt{\p}$ can be actually stated independent of the choice of representative $\underline{\p}$ in the conjecture. Nevertheless, we would like to work with the group $\S{\underline{\p}}$ rather than its conjugacy classes, so throughout this paper we will always fix a representative $\underline{\p}$. Let $\Irr{\S{\underline{\p}}}$ be the set of isomorphism classes of irreducible representations of $\S{\underline{\p}}$. If $\rho \in \Irr{\S{\underline{\p}}}$, we will denote the corresponding representation in $\Pkt{\p}$ by $\r(\rho)$. We call a parameter $\p \in \Pbd{G}$ {\bf simple} if $\S{\p} = 1$. For simple parameters, it follows from this conjecture that their corresponding packets are singletons. Finally, we want to point out part (i) of the conjecture is often referred to as the {\bf generic packet conjecture}, and the pairing in part (ii) comes from the conjectural endoscopic character identity (see Conjecture~\ref{conj: twisted character identity}), while its ``canonicity" depends on the choice of Whittaker datum.

Let $\S{\underline{\p}}^{*}$ be the group of linear characters of $\S{\underline{\p}}$. Then the explicit formula for describing the action of $G_{ad}(F)$ on $\Pkt{\p}$ can be stated in the following conjecture.

\begin{conjecture}
\label{conj: twisting character 1}
There exists a homomorphism
\[
\xymatrix{G_{ad}(F) \ar[r] & \S{\underline{\p}}^{*} \\
                 g \ar[r]  & \eta_{g}}
\]  
such that
\[
<\cdot, \r^{g}>_{\underline{\p}} = \eta_{g}<\cdot, \r>_{\underline{\p}}.
\]
\end{conjecture}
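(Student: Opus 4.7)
The plan is to derive the formula from the endoscopic character identity (Conjecture~\ref{conj: twisted character identity}), by tracking how the Whittaker-normalized pairing transforms under the two parallel actions of $G_{ad}(F)$: on irreducible representations by conjugation, and on Whittaker data by the same. First I make the Whittaker dependence of the pairing explicit, writing $\langle \cdot, \r \rangle^{(B,\Lambda)}_{\underline{\p}}$. For $g \in G_{ad}(F)$ choose a lift $\tilde{g} \in G(\bar{F})$; since $G \to G_{ad}$ has central kernel, the automorphism $\Int(\tilde{g})$ of $G$ is defined over $F$, and it carries $(B,\Lambda)$ to a new Whittaker datum $(B^g, \Lambda^g)$ and $\r$ to $\r^g$.

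The argument then has two steps. The first is an equivariance: transporting the endoscopic character identity by $\Int(\tilde{g})$ gives
\[
\langle s, \r^g \rangle^{(B^g, \Lambda^g)}_{\underline{\p}} = \langle s, \r \rangle^{(B, \Lambda)}_{\underline{\p}}, \qquad s \in \S{\underline{\p}}.
\]
Here (a) the parameter $\underline{\p}$ is intrinsically unaffected, being defined only up to $\D{G}$-conjugacy; (b) $\Int(\tilde{g})$ preserves stable conjugacy classes, so stable orbital integrals and hence endoscopic transfers are invariant; and (c) the Kottwitz--Shelstad transfer factor is functorial in the Whittaker datum, which accounts for the passage $(B,\Lambda) \to (B^g, \Lambda^g)$. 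The second step invokes the comparison of Whittaker normalizations: for any two Whittaker data $(B,\Lambda)$ and $(B',\Lambda')$, the ratio of the corresponding pairings is a linear character of $\S{\underline{\p}}$ depending only on the pair of data and not on $\r$. Setting $\eta_g$ equal to this ratio character for the pair $((B^g,\Lambda^g), (B,\Lambda))$ and combining the two steps yields $\langle s, \r^g \rangle_{\underline{\p}} = \eta_g(s)\, \langle s, \r \rangle_{\underline{\p}}$. Multiplicativity $\eta_{gh} = \eta_g \eta_h$ follows from the chain rule for such ratios along $(B,\Lambda) \to (B^g,\Lambda^g) \to (B^{gh},\Lambda^{gh})$, together with $G_{ad}(F)$-equivariance of the ratio character. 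Triviality of $\eta_g$ for $g$ in the image of $G(F) \to G_{ad}(F)$ is immediate since $G(F)$-conjugation fixes the Whittaker datum up to $G(F)$-conjugacy.

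I expect the main technical obstacle to lie in the equivariance step. The subtlety is that $\tilde{g}$ a priori lives only in $G(\bar{F})$, so one must verify that the Kottwitz--Shelstad transfer factor, constructed from an $F$-splitting embedded in the Whittaker datum, transforms under $\Int(\tilde{g})$ exactly by substitution of the transported splitting, with no residual Galois cocycle distorting the endoscopic identity. Concretely, for $\gamma \in G_{reg}(F)$ the class $\text{inv}(\gamma, \tilde{g}\gamma\tilde{g}^{-1}) \in H^1(F, G_{sc})$ must pair against $s$ via the Kottwitz pairing to produce a character of $\S{\underline{\p}}$ depending only on $g$, and this character must agree with the change-of-Whittaker-datum character appearing in the second step. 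Once this cohomological identification is established, the identity and its homomorphism property follow formally from the endoscopic character identity and the Kottwitz--Shelstad comparison.
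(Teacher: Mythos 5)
Your proposal is logically coherent and follows what is essentially the GGP/Kottwitz--Shelstad route: vary the Whittaker datum along the $G_{ad}(F)$-action and appeal to the known fact that two Whittaker normalizations of the pairing differ by a linear character of $\S{\underline{\p}}$. The paper, however, takes a genuinely different path. Rather than change the Whittaker datum at all, it keeps $(B,\Lambda)$ fixed and works with an auxiliary enlargement $G \subseteq \lG'$ with $\lG'_{der} = G'_{der}$ (coming from a $z$-extension of $G_{ad}$ so that $H^1(F, Z_{\lG'}) = 1$, hence $G_{ad}(F) = \lG'(F)/Z_{\lG'}(F)$). The key technical input is Lemma~\ref{lemma: twisted transfer factor}, which shows $\Delta_{\lG', \lif{H}_1}(\gamma_1, \delta) = \Delta_{G, H_1}(\gamma_1, \delta)$ on $G(F)$, combined with the standard transfer-factor transformation $\Delta_{\lG', \lif{H}_1}(\gamma_1, g^{-1}\delta g) = \x_{\lG'}(g)\Delta_{\lG', \lif{H}_1}(\gamma_1, \delta)$ for $g \in \lG'(F)$. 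Feeding this into the endoscopic character identity (Lemma~\ref{lemma: twisting character}, then Theorem~\ref{thm: twisting character}) directly produces the twisting character $\e_g = \a(\cdot)(\lif{g})$; Section~\ref{subsec: proof of conjecture} then verifies that the Tate-duality construction of $\eta_g$ from GGP matches this $\e_g$. So the paper trades your change-of-Whittaker-datum lemma for the transfer-factor comparison between $G$ and $\lG'$, a choice motivated by its broader goal of lifting L-packets along $G \subseteq \lG$.

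Two cautions about your version. First, the comparison-of-Whittaker-normalizations step you invoke is a property of the Kottwitz--Shelstad transfer factor that lies outside the paper's stated Endoscopic Hypothesis (which assumes the character identity only for the single fixed datum); to make the argument self-contained here you would need to argue, as the paper does, that the relevant cohomological twist is already encoded in how the transfer factor transforms under conjugation by $\lG'(F)$, or else import the Whittaker-ratio formula as an additional input and cite it. Second, your last paragraph puts the obstruction in $H^1(F, G_{sc})$; the class that actually controls the twist is $\sigma(\tilde g)\tilde g^{-1} \in Z^1(F, Z_G)$ giving a class in $H^1(F, Z_G/Z_G^0)$, which pairs against $H^1(F, \pi_1(\D{G}_{der}))$ via the Tate local duality of the paper's equation \eqref{eq: pairing}; this is how $\eta_g$ is shown to land in $\S{\underline{\p}}^*$ rather than merely $A_{\underline{\p}}^*$.
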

The statement of this conjecture is first given in (\cite{GGP:2012}, Section 9.(3)), where they construct the homomorphism $G_{ad}(F) \rightarrow \S{\underline{\p}}^{*}$. There are three ingredients in that construction.
\begin{itemize}
 
\item (Tate local duality): There exists a perfect pairing 
\[
H^{1}(F, Z_{G}/Z^{0}_{G}) \times H^{1}(F, \pi_{1}(\D{G}_{der})) \rightarrow \C^{\times}.
\]

\item There is a coboundary map $A_{\underline{\p}} \rightarrow H^{1}(F, \pi_{1}(\D{G}_{der}))$. 

\item There is a homomorphism $G_{ad}(F) \rightarrow H^{1}(F, Z_{G}/Z^{0}_{G})$.

\end{itemize}
Clearly this gives a homomorphism $G_{ad}(F) \rightarrow A_{\underline{\p}}^{*}$, and in fact one will see the image is in $\S{\underline{\p}}^{*}$ (see Section~\ref{subsec: proof of conjecture}).

\item {\bf Desideratum 3: Twist by automorphism and quasicharacter}

Let $\theta$ be an $F$-automorphism of $G$ preserving an $F$-splitting of $G$, then $\theta$ acts on $\Pkt{}(G(F))$ by acting on $G(F)$. Let $\D{\theta}$ be the dual automorphism of $\theta$ on $\D{G}$, and it gives a semidirect product $\D{G} \rtimes <\D{\theta}>$. Then $\theta$ also acts on $\P{G}$ through the action of $\D{\theta}$ on $\D{G}$. Let ${\bold a}$ be an element in $H^{1}(W_{F}, Z(\D{G}))$, and ${\bold a}$ act on $\P{G}$ by twisting on $Z(\D{G})$. One can associate a quasicharacter $\x$ of $G(F)$ with ${\bold a}$ (see \eqref{eq: local character}). 
This desideratum asserts: for $\p \in \Pbd{G}$,
\[
\Pkt{\p^{\theta}} = \Pkt{\p}^{\theta} \text{ and } \Pkt{\p \otimes {\bold a}} = \Pkt{\p} \otimes \x.
\] 
In fact, one can refine this desideratum by making more precise the action of $\theta$ and $\x$ on the elements in $\Pkt{\p}$. Namely, if $\r \in \Pkt{\p}$, then 
\begin{align}
\label{eq: theta equivariant}
<x, \r^{\theta}>_{\underline{\p}^{\theta}} = <\D{\theta}^{-1}x\D{\theta}, \r>_{\underline{\p}}
\end{align} 
for $x \in \S{\underline{\p}^{\theta}}$, and 
\begin{align}
\label{eq: twist equivariant}
<x, \r \otimes \x> _{\underline{\p} \otimes \underline{{\bold a}}}= <x, \r>_{\underline{\p}}
\end{align} 
for $x \in \S{\underline{\p}} = \S{\underline{\p} \otimes \underline{{\bold a}}}$, where $\underline{{\bold a}}$ is a $1$-cocyle of $W_{F}$ in $Z(\D{G})$ representing ${\bold a}$. 

The refined desideratum has the following consequence. For $\p \in \Pbd{G}$, suppose $\p^{\theta} = \p \otimes {\bold a}$, i.e., there exists $g \in \D{G}$ such that $(\underline{\p}^{\theta})^g = \underline{\p} \otimes \underline{{\bold a}}$, then by \eqref{eq: theta equivariant} and \eqref{eq: twist equivariant}, we have for $x \in \S{\underline{\p}^{\theta}}$
\begin{align*}
<\D{\theta}^{-1}x\D{\theta}, \r>_{\underline{\p}} = <x, \r^{\theta}>_{\underline{\p}^{\theta}} = <g x g^{-1}, \r^{\theta}>_{(\underline{\p}^{\theta})^{g}} \\
= <g x g^{-1}, \r^{\theta}>_{\underline{\p} \otimes \underline{{\bold a}}} = <g x g^{-1}, \r^{\theta} \otimes \x^{-1}>_{\underline{\p}}.
\end{align*} 
By setting $s = g \rtimes \D{\theta}$, we have shown the following statement.  

\begin{conjecture}
\label{conj: twisting equivariant}
Suppose $\p \in \Pbd{G}$ and $\p^{\theta} = \p \otimes {\bold a}$. Let $s \in \D{G} \rtimes \D{\theta}$ satisfying $\underline{\p}^{s} = \underline{\p} \otimes \underline{{\bold a}}$, then
\[
<s x s^{-1}, \r^{\theta} \otimes \x^{-1}>_{\underline{\p}} = <x , \r>_{\underline{\p}}
\]
for any $\r \in \Pkt{\p}$ and $x \in \S{\underline{\p}}$. In other words,
\[
\r(\rho^{s})^{\theta} \cong \r(\rho) \otimes \x,
\]
for any $\rho \in \Irr{\S{\underline{\p}}}$. 
\end{conjecture}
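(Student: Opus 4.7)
The plan is to derive Conjecture~\ref{conj: twisting equivariant} formally from the two refined desiderata \eqref{eq: theta equivariant} and \eqref{eq: twist equivariant}, combined with the $\D{G}$-equivariance of the pairing built into Conjecture~\ref{conj: endoscopic parametrization}(ii). The hypothesis $\p^{\theta}=\p\otimes{\bold a}$ at the level of $\D{G}$-conjugacy classes of parameters produces an explicit $g\in\D{G}$ with $(\underline{\p}^{\theta})^{g}=\underline{\p}\otimes\underline{{\bold a}}$, and the element $s=g\rtimes\D{\theta}\in\D{G}\rtimes\D{\theta}$ is the bookkeeping device that bundles this conjugacy together with the outer twist by $\D{\theta}$.

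Concretely, for any $x\in\S{\underline{\p}^{\theta}}$ and $\r\in\Pkt{\p}$, I would chain the following four equalities of pairings:
\begin{align*}
<\D{\theta}^{-1}x\D{\theta},\ \r>_{\underline{\p}}
 &= <x,\ \r^{\theta}>_{\underline{\p}^{\theta}}
      && \text{by \eqref{eq: theta equivariant},} \\
 &= <g x g^{-1},\ \r^{\theta}>_{(\underline{\p}^{\theta})^{g}}
      && \text{by $\D{G}$-equivariance in Conj.~\ref{conj: endoscopic parametrization}(ii),} \\
 &= <g x g^{-1},\ \r^{\theta}>_{\underline{\p}\otimes\underline{{\bold a}}}
      && \text{by choice of } g, \\
 &= <g x g^{-1},\ \r^{\theta}\otimes\x^{-1}>_{\underline{\p}}
      && \text{by \eqref{eq: twist equivariant}.}
\end{align*}
A short computation in the semidirect product gives $sys^{-1}=g\,\D{\theta}(y)\,g^{-1}$ for $y\in\D{G}$, so substituting $y=\D{\theta}^{-1}(x)\in\S{\underline{\p}}$ on both sides collapses the chain to the asserted identity $<sys^{-1},\ \r^{\theta}\otimes\x^{-1}>_{\underline{\p}}=<y,\ \r>_{\underline{\p}}$.

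The final step is to repackage this pairing identity as the isomorphism $\r(\rho^{s})^{\theta}\cong\r(\rho)\otimes\x$. This is formal from Conjecture~\ref{conj: endoscopic parametrization}(ii): the pairing $<\cdot,\r>_{\underline{\p}}$ recovers the irreducible character of $\S{\underline{\p}}$ labelling $\r$, so equality of pairings forces equality of representations inside $\Pkt{\p}$. The one bookkeeping point that needs care, and which I expect to be the only subtle issue rather than a genuine obstacle, is checking that conjugation by $s$ actually maps $\S{\underline{\p}}$ into itself; this uses that $\underline{{\bold a}}$ is $Z(\D{G})$-valued (so $S_{\underline{\p}\otimes\underline{{\bold a}}}=S_{\underline{\p}}$) combined with $(\underline{\p}^{\theta})^{g}=\underline{\p}\otimes\underline{{\bold a}}$, which together force $g\D{\theta}(\S{\underline{\p}})g^{-1}=\S{\underline{\p}}$. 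No new input beyond the two refined desiderata is required; the statement is essentially a formal accounting.
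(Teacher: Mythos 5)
Your chain of four equalities is exactly the derivation the paper gives in the introduction immediately before stating Conjecture~\ref{conj: twisting equivariant}: it uses \eqref{eq: theta equivariant}, the $\D{G}$-equivariance from Conjecture~\ref{conj: endoscopic parametrization}(ii), the choice of $g$, and \eqref{eq: twist equivariant}, followed by the substitution $s=g\rtimes\D{\theta}$ and the identity $sys^{-1}=g\,\D{\theta}(y)\,g^{-1}$. Your proof is correct and matches the paper's approach.
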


\end{itemize}

The first goal of this paper is to suggest a strategy towards proving the above three desiderata about L-packets. To do so, we need to assume \eqref{eq: disjoint decomposition}, Conjecture~\ref{conj: endoscopic parametrization} (together with its generalized form: Conjecture~\ref{conj: twisted endoscopic parametrization}), and also the (twisted) endoscopic character identities (see Conjecture~\ref{conj: twisted character identity}), which will be described in Section~\ref{sec: endoscopy}. Since these conjectures can be viewed as part of the conjectural endoscopy theory, we would like to call the collection of these assumptions {\bf Endoscopic Hypothesis}. For the first desideratum, we will prove the following result under this hypothesis.

\begin{proposition}
\label{prop: central character desideratum}
The desideratum about central characters of L-packets holds as long as it holds for simple parameters.
\end{proposition}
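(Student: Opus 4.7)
The plan is to induct on $\dim G$ with the inductive hypothesis that the desideratum holds for all quasisplit groups of strictly smaller dimension, and with the base case of simple parameters on $G$ supplied by the given hypothesis. A non-tempered packet $\Pkt{\p}$ is built by Langlands quotient from a tempered packet on a proper Levi $M \subsetneq G$, central characters are inherited under normalized parabolic induction, and the constructions of $\chi_\p$ and $\chi_{\p_M}$ match via compatible torus extensions $\lG \supseteq G$ and $\lM \supseteq M$; the inductive hypothesis (applied to $\dim M < \dim G$) then gives the desideratum for $(G, \p)$. So I am reduced to $\p \in \Pbd{G}$ non-simple, and the claim splits into (a) all $\r \in \Pkt{\p}$ share a common central character $\omega_\p$, and (b) $\omega_\p = \chi_\p$.

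For (a), the Endoscopic Hypothesis supplies that $S^G_\p := \sum_{\r \in \Pkt{\p}} \dim(\rho_\r)\, \r$ is a stable virtual character, and the endoscopic identities at all $s \in \S{\underline{\p}}$ span the virtual characters supported on $\Pkt{\p}$, so the stable ones form a one-dimensional line spanned by $S^G_\p$ itself. Since multiplication by $z \in Z_G(F)$ preserves stable conjugacy on $G_{reg}(F)$, the shifted distribution $\gamma \mapsto S^G_\p(z\gamma) = \sum_\r \omega_\r(z)\dim(\rho_\r)\, \r(\gamma)$ is again stable and supported on $\Pkt{\p}$, hence a scalar multiple of $S^G_\p$; this forces $\omega_\r(z)$ to be independent of $\r$ and defines $\omega_\p(z)$.

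For (b), pick a semisimple $s \in S_{\underline{\p}}$ with nontrivial image in $\S{\underline{\p}}$; nontriviality forces $s \notin Z(\D{G})^{\Gamma}$, hence $s \notin Z(\D{G})$ (since any central element in $S_{\underline{\p}}$ is automatically $\Gamma$-fixed), so the connected centralizer of $s$ in $\D{G}$ is proper and gives an endoscopic datum $(H, s)$ with $\dim H < \dim G$, through which $\underline{\p}$ factors as $\underline{\p}^H$. The inductive hypothesis applied to $H$ and $\p^H$ gives that every representation in $\Pkt{\p^H}$ has central character $\chi_{\p^H}$. The $(H, s)$-endoscopic character identity transfers $S^H_{\p^H}$ on $H(F)$ to $\sum_{\r \in \Pkt{\p}} \langle s, \r \rangle_{\underline{\p}}\, \r$ on $G(F)$, and Langlands--Shelstad transfer is equivariant under central translation via the natural map of centres induced from $Z(\D{G}) \subseteq Z(\D{H})$; translating both sides by $z \in Z_G(F)$ and comparing yields $\omega_\p(z) = \chi_{\p^H}(z)$. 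A diagram chase in a common torus extension $\lG$ containing matching extensions of both $G$ and $H$ identifies $\chi_{\p^H}$ with $\chi_\p$ on $Z_G(F)$, closing the induction.

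The hard part will be this last identification. One must fit $G$ and $H$ into a shared auxiliary extension $\lG = (G \times Z)/Z_G$ with $Z$ large enough to simultaneously play the analogous role on the $H$-side, lift $\underline{\p}$ and $\underline{\p}^H$ to compatible parameters into $\L{\lG}$ and the extended $\L{H}$, and trace the resulting cocycles in $H^1(W_F, \D{\lZ})$ through the embedding of $L$-groups; the remaining steps are formal consequences of the Endoscopic Hypothesis.
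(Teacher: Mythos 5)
Your overall plan matches the paper's: reduce the nontempered case to a Levi via Langlands quotient, then for tempered $\p$ descend along a proper endoscopic datum coming from a non‑central semisimple $s$ (your observation that a nontrivial image in $\S{\underline{\p}}$ forces $s\notin Z(\D{G})$ is correct). But your argument for (a) has a real gap: you invoke that the stable virtual characters supported on $\Pkt{\p}$ form a one‑dimensional line. That uniqueness is not part of the Endoscopic Hypothesis as formulated — Conjecture~\ref{conj: twisted character identity}(i) says $\sum_{\r}\langle 1,\r\rangle_{\underline{\p}} f_G(\r)$ is stable, not that it is (up to scalar) the \emph{only} stable combination, and deriving that would be its own nontrivial argument. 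The paper's Proposition~\ref{prop: central character} instead gets (a) by the transfer‑factor equivariance $\Delta_{G,H_1}(z_1\gamma_1,z\delta)=\chi_C^{-1}(z_1,z)\Delta_{G,H_1}(\gamma_1,\delta)$ together with linear independence of the distributions $\Theta^{G}_\r$: knowing inductively that $\Pkt{\p_{H_1}}$ has a single central character forces $\Theta_{\underline{\p},x}(z\delta)=\chi_{C_1}^{-1}(z_1)\chi'(z_1)\Theta_{\underline{\p},x}(\delta)$ for every $x$, and then each $\Theta_\r$ inherits the common $z$‑equivariance. That route avoids any claim about the stable line.

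For (b) you have elided the $\chi_C$ twist. Because of the equivariance just quoted, translating the endoscopic identity by $z\in Z_G(F)$ gives $\omega_\p(z)=\chi_C^{-1}(z_1,z)\,\chi_{\p_{H_1}}(z_1)$, \emph{not} $\omega_\p(z)=\chi_{\p^H}(z)$. So the target identity is $\chi_\p = \chi_C^{-1}\cdot(\chi_{\p_{H_1}}|_{Z_G})$, and that is precisely the paper's Lemma~\ref{lemma: transfer central character}. It is not a formal diagram chase in $H^1(W_F,\D{Z}_{\lG})$: one must compare $\chi_{\lif{\xi}}$ (the character entering Langlands' construction of $\chi_\p$, obtained from $\lif{\xi}|_{W_F}$ composed with $\L{\lG}\to\L{Z_{\lG}}$) with $\chi_{\lif{C}}^{-1}$ (the quasicharacter from the transfer factor), and this runs through the construction of the admissible L‑embeddings via $\chi$‑data and the cocycle $a_{\lif{T}}$ in $\lif{\xi}\circ\xi_{\lif{T}_{\lif{H}}}=a_{\lif{T}}\cdot\xi_{\lif{T}}$, using that $\xi_{\lif{T}_{\lif{H}}}(W_F)\subseteq\D{\lif{H}}_{der}\rtimes W_F$ and $\xi_{\lif{T}}(W_F)\subseteq\D{\lG}_{der}\rtimes W_F$ so both sides have the same central projection. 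You rightly flagged this as the hard part; it is where the paper's actual work lies, and your proposal does not supply it. A smaller point: the reduction to $\mathcal{H}=\L{H}$ passes through a $z$‑extension $G_1$ of $G$, which increases the ambient dimension, so "induct on $\dim G$" should be replaced by a descent on the dimension of the derived group (unchanged by $z$‑extension) to ensure termination.
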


For the second desideratum, i.e., Conjecture~\ref{conj: twisting character 1}, we will prove a stronger result.  The setup that we are going to work on is as follows. Let $G \subseteq \lG$ be two quasisplit connected reductive groups over $F$ such that $G_{der} = \lG_{der}$. Then $\lG/G$ is a torus, and we denote it by $D$. There is an exact sequence
\begin{align}
\label{eq: extension}
\xymatrix{1 \ar[r] & G \ar[r] & \lG \ar[r]^{\lambda}  & D \ar[r] & 1.}
\end{align}
Let $\Sigma$ be a finite abelian group of $F$-automorphisms of $\lG$ preserving a fixed $F$-splitting of $\lG$, and we assume $\c$ is {\bf $\Sigma$-invariant}. This implies $\Sigma$ also acts on $G$. Let $\lG^{\Sigma} = \lG \rtimes \Sigma$ and $G^{\Sigma} = G \rtimes \Sigma$. Since $\Sigma$ induces dual automorphisms on $\D{\lG}$ and $\D{G}$, we denote them by $\D{\Sigma}$ and define $\D{\lG}^{\Sigma} = \D{\lG} \rtimes \D{\Sigma}$ and $\D{G}^{\Sigma} = \D{G} \rtimes \D{\Sigma}$. 

Before we can state our result, we need to extend Conjecture~\ref{conj: endoscopic parametrization} to the nonconnected group $G^{\Sigma}$. 
Suppose $\p \in \Pbd{G}$ and we define $S^{\Sigma}_{\underline{\p}}$, $A^{\Sigma}_{\underline{\p}}$ and $\S{\underline{\p}}^{\Sigma}$ as before simply by taking $\D{G}^{\Sigma}$ in place of $\D{G}$, and they are all equipped with a natural map to $\D{\Sigma}$. Let $S^{\theta}_{\underline{\p}}$, $A^{\theta}_{\underline{\p}}$ and $\S{\underline{\p}}^{\theta}$ be the preimage of $\D{\theta} \in \D{\Sigma}$ in $S^{\Sigma}_{\underline{\p}}$, $A^{\Sigma}_{\underline{\p}}$ and $\S{\underline{\p}}^{\Sigma}$ respectively. Note these are not $\D{\theta}$-invariant elements in $S_{\underline{\p}}$, $A_{\underline{\p}}$ and $\S{\underline{\p}}$. Since the image in $\D{\Sigma}$ is the same for $S^{\Sigma}_{\underline{\p}}$, $A^{\Sigma}_{\underline{\p}}$ and $\S{\underline{\p}}^{\Sigma}$, we denote it by $\D{\Sigma}_{\underline{\p}}$. 
Let $\Pkt{\p}^{\Sigma}$ be the set of all irreducible smooth representations of $G^{\Sigma}(F)$, whose restriction to $G(F)$ have intersections with $\Pkt{\p}$.

A Whittaker datum $(B, \Lambda)$ is called $\Sigma$-stable if $\Sigma$ preserves $B$ and $\Lambda$ is $\Sigma$-invariant. In particular, if we fix a $\Sigma$-stable $F$-splitting of $G$ (i.e., $\Sigma$ preserves $B$ and $\{X_{\alpha}\}$) and a nontrivial additive character $\q_{F}$ of $F$, then the associated Whittaker datum is $\Sigma$-stable. We call a representation $\r^{\Sigma} \in \Pkt{\p}^{\Sigma}$ $(B, \Lambda)$-generic if $\r^{\Sigma}|_{G}$ is $(B, \Lambda)$-generic and the corresponding Whittaker functional is invariant under $\r^{\Sigma}(\theta)$ for all $\theta \in \Sigma$.

\begin{conjecture}
\label{conj: twisted endoscopic parametrization}
We fix a $\Sigma$-stable Whittaker datum $(B, \Lambda)$ for $G$, and suppose $\p \in \Pbd{G}$.
\begin{enumerate}
\item
There is a unique $(B, \Lambda)$-generic representation in $\Pkt{\p}^{\Sigma}$.
\item
There is a canonical pairing between $\Pkt{\p}^{\Sigma}$ and $\S{\underline{\p}}^{\Sigma}$, which induces an inclusion from $\Pkt{\p}^{\Sigma}$ to the characters $\D{\S{\underline{\p}}}^{\Sigma}$ of irreducible representations of $\S{\underline{\p}}^{\Sigma}$
\[
\xymatrix{\Pkt{\p}^{\Sigma} \ar[r] & \D{\S{\underline{\p}}}^{\Sigma} \\
                 \r^{\Sigma} \ar@{{|}->}[r]  & <\cdot, \r^{\Sigma}>_{\underline{\p}}.}
\]
such that it sends the $(B, \Lambda)$-generic representation to the trivial character. This becomes a bijection when $F$ is nonarchimedean. Moreover, if $\Sigma'$ is a subgroup of $\Sigma$, then we have the following relation:




\begin{align}
\label{eq: twisted endoscopic parametrization}
<\cdot, \r^{\Sigma}>_{\underline{\p}} |_{\S{\underline{\p}}^{\Sigma'}} = \sum_{\r^{\Sigma'} \in \Pkt{\p}^{\Sigma'}} m(\r^{\Sigma}, \r^{\Sigma'}) <\cdot, \r^{\Sigma'}>_{\underline{\p}},
\end{align}
where $m(\r^{\Sigma}, \r^{\Sigma'})$ is the multiplicity of $\r^{\Sigma'}$ in $\r^{\Sigma}|_{G^{\Sigma'}}$.
\end{enumerate}

\end{conjecture}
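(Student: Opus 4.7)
The strategy is to reduce Conjecture~\ref{conj: twisted endoscopic parametrization} to the connected case (Conjecture~\ref{conj: endoscopic parametrization}) via Clifford--Mackey theory applied to the short exact sequence $1 \to G(F) \to G^{\Sigma}(F) \to \Sigma \to 1$, with the $\Sigma$-action on $\Pkt{\p}$ controlled by the twist equivariance of Conjecture~\ref{conj: twisting equivariant}. First observe that $\Pkt{\p}^{\Sigma}$ is empty unless $\p^{\theta} = \p$ in $\Pbd{G}$ for every $\theta \in \Sigma$, so we may assume $\underline{\p}$ is $\D{\Sigma}$-invariant up to $\D{G}$-conjugation. In that case Conjecture~\ref{conj: twisting equivariant}, applied with trivial quasicharacter, yields an isomorphism $\r(\rho)^{\theta} \cong \r(\rho^{s_{\theta}})$ for any lift $s_{\theta} \in S^{\theta}_{\underline{\p}}$ of $\D{\theta}$. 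Thus the actions of $\Sigma$ on $\Pkt{\p}$ and on $\Irr{\S{\underline{\p}}}$ are intertwined by the parametrization of Conjecture~\ref{conj: endoscopic parametrization}, and in particular stabilizers correspond.

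To construct the pairing, let $\r^{\Sigma} \in \Pkt{\p}^{\Sigma}$, pick $\r \in \Pkt{\p}$ appearing in $\r^{\Sigma}|_{G(F)}$, and let $\Sigma_{\r} \subseteq \Sigma$ denote its stabilizer. Clifford theory realizes $\r^{\Sigma} \cong \Ind_{G^{\Sigma_{\r}}(F)}^{G^{\Sigma}(F)} \r^{\Sigma_{\r}}$ for a uniquely determined irreducible extension $\r^{\Sigma_{\r}}$ of $\r$ to $G^{\Sigma_{\r}}(F)$. Writing $\r = \r(\rho)$, the intertwining above identifies $\Sigma_{\r}$ with the stabilizer of $\rho$ in $\D{\Sigma}_{\underline{\p}}$, so $\rho$ extends to an irreducible representation $\tilde{\rho}$ of $\S{\underline{\p}}^{\Sigma_{\r}}$; set $\langle\,\cdot\,, \r^{\Sigma}\rangle_{\underline{\p}}$ to be the character of $\Ind_{\S{\underline{\p}}^{\Sigma_{\r}}}^{\S{\underline{\p}}^{\Sigma}} \tilde{\rho}$. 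To fix the cocycle ambiguity in $\tilde{\rho}$, use the generic packet as a base point: by Conjecture~\ref{conj: endoscopic parametrization} there is a unique $(B, \Lambda)$-generic $\r_{0} = \r(1) \in \Pkt{\p}$; since $(B, \Lambda)$ is $\Sigma$-stable, $\r_{0}$ is $\Sigma$-invariant and admits a unique extension $\r_{0}^{\Sigma}$ to $G^{\Sigma}(F)$ such that each $\r_{0}^{\Sigma}(\theta)$ fixes the Whittaker functional. Declaring $\langle\,\cdot\,, \r_{0}^{\Sigma}\rangle_{\underline{\p}} = 1$ pins down base extensions on both sides, and the remaining extensions (a torsor over the character group of $\Sigma_{\r}$) are matched through this normalization.

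Part~(i), uniqueness of the generic extension in $\Pkt{\p}^{\Sigma}$, then follows from the uniqueness of $\r_{0}$ in $\Pkt{\p}$ combined with the uniqueness of the Whittaker-fixing extension of an irreducible representation. The restriction formula~\eqref{eq: twisted endoscopic parametrization} is a parallel Mackey computation: both $\r^{\Sigma}|_{G^{\Sigma'}(F)}$ and the restriction of $\Ind_{\S{\underline{\p}}^{\Sigma_{\r}}}^{\S{\underline{\p}}^{\Sigma}}\tilde{\rho}$ to $\S{\underline{\p}}^{\Sigma'}$ decompose according to the same $(\Sigma_{\r}, \Sigma')$-double cosets, and the normalizations above force summands to match term by term. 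Bijectivity in the nonarchimedean case is then a counting argument using Clifford theory on both sides together with the nonarchimedean bijectivity asserted in Conjecture~\ref{conj: endoscopic parametrization}.

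The hard part will be the rigidification step. A priori the action of $\Sigma_{\r}$ on the space of $\r$ is only projective, giving a class in $H^{2}(\Sigma_{\r}, \C^{\times})$, and a parallel obstruction governs the extension of $\rho$ to $\S{\underline{\p}}^{\Sigma_{\r}}$. I expect both obstructions to vanish and to agree when measured against the generic extension $\r_{0}^{\Sigma}$, but establishing this requires the twisted endoscopic character identities of Conjecture~\ref{conj: twisted character identity}: applied to cyclic subgroups of $\Sigma_{\r}$, they should force the projective cocycle of $\Sigma_{\r}$ acting on $\r$ to match the cocycle of $\D{\Sigma}_{\underline{\p}}$ acting on $\rho$, with the match compatible across choices of lifts $s_{\theta}$. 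Pinning down this $2$-cocycle comparison, and verifying its independence of the chosen $\r$ in the $\Sigma$-orbit and of the chosen representative $\underline{\p}$, is where the bulk of the argument will lie.
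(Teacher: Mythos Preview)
The paper does not prove this statement. Conjecture~\ref{conj: twisted endoscopic parametrization} is one of the standing assumptions bundled into what the author calls the \textbf{Endoscopic Hypothesis} (together with \eqref{eq: disjoint decomposition}, Conjecture~\ref{conj: endoscopic parametrization}, and Conjecture~\ref{conj: twisted character identity}); the paper explicitly says it takes this hypothesis as a working assumption and then derives the desiderata (Propositions~\ref{prop: central character desideratum}, \ref{prop: twisting equivariant}, Theorem~\ref{thm: twisting character}) from it. So there is no ``paper's own proof'' to compare your proposal to.

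Your reduction strategy, as written, is circular relative to the paper's logical structure. You invoke Conjecture~\ref{conj: twisting equivariant} to control the $\Sigma$-action on $\Pkt{\p}$, and you plan to invoke Conjecture~\ref{conj: twisted character identity} to pin down the $2$-cocycle comparison. But in this paper Conjecture~\ref{conj: twisting equivariant} is \emph{deduced} from the Endoscopic Hypothesis (via Lemmas~\ref{lemma: theta equivariant} and~\ref{lemma: twist equivariant}), which already includes Conjecture~\ref{conj: twisted endoscopic parametrization}. More seriously, the very statement of Conjecture~\ref{conj: twisted character identity} uses the pairing $\langle\,\cdot\,, \r^{+}\rangle_{\underline{\p}}$, and the paper's Remark immediately following it says this pairing \emph{is} the one provided by Conjecture~\ref{conj: twisted endoscopic parametrization} with $\Sigma = \langle\theta\rangle$. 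So the twisted character identity presupposes the object you are trying to construct. If you want a genuine reduction, you would need an independent formulation of the twisted character identity that does not already assume the existence of the extended pairing, and an independent proof of $\theta$-equivariance of the connected-case parametrization; neither is supplied here.
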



Under the endoscopic hypothesis, we are able to prove the following result.

\begin{theorem}
\label{thm: twisting character}
There exists a homomorphism
\[
\xymatrix{\lG(F) \ar[r] & (\S{\underline{\p}}^{\Sigma})^{*} \\
                 g \ar[r]  & \e_{g}}
\]  
such that
\[
<\cdot, (\r^{\Sigma})^{g}>_{\underline{\p}} = \e_{g}<\cdot, \r^{\Sigma}>_{\underline{\p}}.
\]
\end{theorem}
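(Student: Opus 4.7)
The strategy is to mimic and generalize the three-step construction of the homomorphism $G_{ad}(F) \rightarrow \S{\underline{\p}}^{*}$ of Gan--Gross--Prasad underlying Conjecture~\ref{conj: twisting character 1}, extending it simultaneously in two directions: the source is enlarged from $G_{ad}(F)$ to $\lG(F)$ (via the inclusion $G_{ad} \hookrightarrow \lG/Z_{\lG}$, which is valid because $G \cap Z_{\lG} = Z_{G}$), and the target component group $\S{\underline{\p}}$ is replaced by its $\Sigma$-twisted analogue $\S{\underline{\p}}^{\Sigma}$. Two structural facts make this feasible: since $\lG_{der} = G_{der}$, both $\D{G}_{sc}$ and $\pi_{1}(\D{G}_{der})$ are unchanged upon passing from $G$ to $\lG$; and since $\Sigma$ preserves an $F$-splitting of $\lG$, the dual action $\D{\Sigma}$ lifts canonically to $\D{G}_{sc}$.

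First I would build the three constituent maps. The map $\lG(F) \rightarrow H^{1}(F, Z_{G}/Z^{0}_{G})$ comes from $G \triangleleft \lG$: conjugation by $g \in \lG(F)$ yields an inner $F$-automorphism of $G$ whose outer lift to $G_{sc}$ produces a $1$-cocycle in $Z_{G}/Z^{0}_{G}$; one checks well-definedness modulo $G(F)$ and independence of the lift. A coboundary map $A^{\Sigma}_{\underline{\p}} \rightarrow H^{1}(F, \pi_{1}(\D{G}_{der}))$ is built by lifting a representative $s \rtimes \D{\theta} \in S^{\Sigma}_{\underline{\p}} \subset \D{G} \rtimes \D{\Sigma}$ to $\D{G}_{sc} \rtimes \D{\Sigma}$ and measuring the failure of that lift to centralize $\underline{\p}(L_{F})$ modulo $\D{G}_{sc}$; the resulting $W_{F}$-cocycle takes values in the kernel $\pi_{1}(\D{G}_{der})$. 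Tate local duality then pairs these two $H^{1}$'s to give a character of $A^{\Sigma}_{\underline{\p}}$, which one shows is trivial on $Z_{\underline{\p}}$ (because $Z(\D{G})^{\Gal{}}$ lifts to $\D{G}_{sc}$ in a Galois-equivariant way up to the Tate dual of the image of $\lG(F)$ in $H^{1}(F, Z_{G}/Z^{0}_{G})$), and hence descends to the desired character $\e_{g}$ of $\S{\underline{\p}}^{\Sigma}$.

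To establish the identity $<\cdot, (\r^{\Sigma})^{g}>_{\underline{\p}} = \e_{g}<\cdot, \r^{\Sigma}>_{\underline{\p}}$, I would compare both sides on matching test functions via the twisted endoscopic character identities of Conjecture~\ref{conj: twisted character identity}. Conjugation of the distribution character of $\r^{\Sigma}$ by $g \in \lG(F)$ transports matching functions on an endoscopic datum $(H, s)$ for $G^{\Sigma}$ into matching functions on a twist $(H, s z_{g})$, where $z_{g} \in Z(\D{G})^{\Gal{}}$ is obtained from the Tate pairing with the class of $g$; on the stable side this multiplies the transfer factors by precisely $\e_{g}(s)$. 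Inverting the resulting endoscopic sum by Conjecture~\ref{conj: twisted endoscopic parametrization}(ii) then yields the pointwise character identity on $\S{\underline{\p}}^{\Sigma}$, and the fact that $g \mapsto \e_{g}$ is a homomorphism follows from functoriality of each of the three pieces.

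The main obstacle will be the compatibility of the $\Sigma$-twist with the lifting step in the coboundary construction: a naive lift of $s \rtimes \D{\theta}$ to $\D{G}_{sc} \rtimes \D{\Sigma}$ is not unique, and the dependence on the choice must be absorbed into $Z_{\underline{\p}}$ in a manner compatible with the Tate pairing; this will force a careful use of the $\Sigma$-stability of the $F$-splitting. A secondary technical subtlety is that elements of $\Sigma$ and $g \in \lG(F)$ do not commute inside $\lG \rtimes \Sigma$ unless $g$ is $\Sigma$-fixed, so the representation $(\r^{\Sigma})^{g}$ of $G^{\Sigma}(F)$ must be intertwined along $\r^{g}|_{G}$ via the correct cocycle before Conjecture~\ref{conj: twisted endoscopic parametrization} can be applied to label it by a character of $\S{\underline{\p}}^{\Sigma}$.
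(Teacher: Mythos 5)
Your construction of $\e_{g}$ via the Gan--Gross--Prasad three-step route (Tate local duality on $H^{1}(F, Z_{G}/Z^{0}_{G}) \times H^{1}(F, \pi_{1}(\D{G}_{der}))$, a coboundary map, and a map $\lG(F) \to H^{1}(F, Z_{G}/Z^{0}_{G})$) has a genuine gap: as you yourself note, that third map is defined by passing through inner automorphisms of $G$, hence it factors through $G_{ad}(F) = \lG(F)/Z_{\lG}(F)$, and so your $\e_{g}$ is forced to be trivial for every $g \in Z_{\lG}(F)$. But for $g \in Z_{\lG}(F)$ not fixed by all of $\Sigma$, the representation $(\r^{\Sigma})^{g}$ of $G^{\Sigma}(F)$ genuinely differs from $\r^{\Sigma}$ (conjugation by $g$ multiplies the $\theta$-action by the scalar $\chi_{\r}(g^{-1}\theta(g))$), and the character identity $<\cdot, (\r^{\Sigma})^{g}>_{\underline{\p}} = \e_{g}<\cdot, \r^{\Sigma}>_{\underline{\p}}$ then forces $\e_{g}$ to be nontrivial for such $g$. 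In the paper, this is reflected in the fact that the map $\a: \S{\underline{\p}}^{\Sigma} \to \Hom(\lG(F)/G(F), \C^{\times})$ coming from the exact sequence \eqref{eq: twisted endoscopic sequence with dual} produces characters that are only guaranteed to vanish on $(Z_{\lG}^{\theta_{x}})^{0}(F)$, not on all of $Z_{\lG}(F)$, when the image $\D{\theta}_{x}$ of $x$ in $\D{\Sigma}$ is nontrivial (this is the content of the proof of Lemma~\ref{lemma: centralizer} combined with Lemma~\ref{lemma: trivial central character}). The secondary technical subtlety you flag at the end --- that $\Sigma$ and $g \in \lG(F)$ need not commute --- is precisely what causes this, but your proposed construction cannot absorb it.

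The paper's construction of $\e_{g}$ is simpler and avoids this issue entirely: one takes the coboundary $\bar{\delta}: \S{\underline{\p}}^{\Sigma} \to \bar{H}^{1}(W_{F}, \D{D})$ for the specific short exact sequence $1 \to \D{D} \to \D{\lG}^{\Sigma} \to \D{G}^{\Sigma} \to 1$ (attached to the actual $\lG$, not to a $z$-extension of $G_{ad}$), composes with the Langlands correspondence for $D$ to get $\a: \S{\underline{\p}}^{\Sigma} \to \Hom(\lG(F)/G(F), \C^{\times})$, and sets $\e_{g}(x) := \a(x)(g)$. The paper then shows in Section~\ref{subsec: proof of conjecture} (rather than in the proof of the theorem itself) that the GGP construction agrees with this one, but only on $\S{\underline{\p}}$, i.e., in the untwisted setting of Conjecture~\ref{conj: twisting character 1}; it is not expected to match on all of $\S{\underline{\p}}^{\Sigma}$. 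Your second paragraph (verifying the identity by conjugating test functions, tracking the effect on transfer factors, applying Conjecture~\ref{conj: twisted character identity}, and separating by linear independence of twisted characters) is close in spirit to the paper's Lemma~\ref{lemma: twisting character} plus the bootstrapping step via \eqref{eq: twisted endoscopic parametrization}, and would go through once the correct $\e_{g}$ is in place.
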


For the third desideratum, we will prove the following result under the endoscopic hypothesis.

\begin{proposition}
\label{prop: twisting equivariant}
The refined desideratum about L-packets under twist by automorphism and quasicharacter holds if it holds for simple parameters. 
\end{proposition}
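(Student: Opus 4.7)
The plan is to argue by induction on $\dim G$, reducing the general case to the simple parameter case (the base) via the endoscopic character identity of Conjecture~\ref{conj: twisted character identity}. Fix $\p \in \Pbd{G}$ with $\p^{\theta} = \p \otimes {\bold a}$ and $s \in \D{G} \rtimes \D{\theta}$ satisfying $\underline{\p}^{s} = \underline{\p} \otimes \underline{{\bold a}}$. By Conjecture~\ref{conj: endoscopic parametrization} the pairings $\{<\cdot, \r>_{\underline{\p}}\}_{\r \in \Pkt{\p}}$ are linearly independent characters of $\S{\underline{\p}}$, and any irreducible character of a finite group is determined by its values on semisimple conjugacy classes. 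Granting the coarse form of Desideratum 3 on $G$ (so that $\r \mapsto \r^{\theta} \otimes \x^{-1}$ is a bijection of $\Pkt{\p}$), the identity in Conjecture~\ref{conj: twisting equivariant} is equivalent to the equality of virtual characters
\[
\sum_{\r \in \Pkt{\p}} <x, \r>_{\underline{\p}} \, \Theta_{\r^{\theta} \otimes \x^{-1}} \;=\; \sum_{\r \in \Pkt{\p}} <sxs^{-1}, \r>_{\underline{\p}} \, \Theta_{\r}
\]
on $G(F)$, to be verified for every semisimple $x \in \S{\underline{\p}}$.

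Given such an $x$, I lift it to a semisimple $\tilde{x} \in S_{\underline{\p}}$ and form the associated endoscopic datum $(H, \mathcal{H}, \tilde{x}, \xi)$ for $G$, together with the induced parameter $\p_{H} \in \Pbd{H}$. By the endoscopic character identity, the right-hand side above equals the endoscopic transfer to $G(F)$ of the stable distribution $\Theta_{\p_{H}}^{\mathrm{st}}$ on $H(F)$. For the left-hand side, the relation $\underline{\p}^{s} = \underline{\p} \otimes \underline{{\bold a}}$ means that conjugation by $s$, corrected by the cocycle $\underline{{\bold a}}^{-1}$, intertwines the endoscopic datum attached to $\tilde{x}$ with the one attached to $s \tilde{x} s^{-1}$; translating the $\theta$- and $\x^{-1}$-twists on $G(F)$ to their counterparts on $H(F)$ via the endoscopic character identity, the left-hand side becomes the transfer of $\Theta_{\p_{H}^{\theta} \otimes \chi_{{\bold a}}^{-1}}^{\mathrm{st}}$ on $H(F)$, where $\chi_{{\bold a}}$ is the quasicharacter of $H(F)$ corresponding (by Langlands duality for tori) to the image of $\underline{{\bold a}}$ under $Z(\D{G}) \to Z(\D{H})$. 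The proposition thereby reduces to the equality $\Theta_{\p_{H}}^{\mathrm{st}} = \Theta_{\p_{H}^{\theta} \otimes \chi_{{\bold a}}^{-1}}^{\mathrm{st}}$ of stable distributions on $H(F)$, which is the unrefined form of Desideratum 3 for $\p_{H}$ and follows a fortiori from the refined desideratum on $H$.

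When $\tilde{x}$ is non-central in $\D{G}$, one has $\dim H < \dim G$, so the inductive hypothesis on $H$ supplies the needed equality; when $\tilde{x}$ is central, $x$ is trivial in $\S{\underline{\p}}$ and the identity to be checked is the equality $<1, \r^{\theta} \otimes \x^{-1}>_{\underline{\p}} = <1, \r>_{\underline{\p}}$ of dimensions, a direct consequence of the coarse form of Desideratum 3 on $G$. The base case $\S{\underline{\p}} = 1$, where $\p$ is simple, is precisely the hypothesis of the proposition, under which the coarse and refined forms coincide. The main obstacle will be the technical verification in the second paragraph: one must check, within the Kottwitz--Shelstad formalism, that conjugation by $s$ truly induces an isomorphism of (twisted) endoscopic data with compatible normalizations of transfer factors and Whittaker data, and that the quasicharacter $\chi_{{\bold a}}$ thereby obtained on $H(F)$ really equals the restriction of $\x$ through $H \hookrightarrow G$. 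These are standard diagram chases but carry the weight of the argument, being the bridge that translates the $\theta$- and $\x^{-1}$-twists on $G(F)$ into the corresponding operations on the endoscopic side. A secondary concern is that the coarse form of Desideratum 3 on $G$ must be established in tandem (by a parallel endoscopic reduction from simple parameters), since it is invoked both in setting up the equivalent form of the identity and in handling the central-$\tilde{x}$ case.
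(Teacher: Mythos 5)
Your approach diverges from the paper's in a way that introduces genuine difficulties. The paper's proof (Lemmas~\ref{lemma: theta equivariant} and~\ref{lemma: twist equivariant}) uses no induction on $\dim G$ and never needs the desideratum to hold on the endoscopic group $H$. The device is this: for the $\theta$-twist, one passes from the endoscopic datum $(H, \mathcal{H}, s, \xi)$ for $(\underline{\p}, s)$ to the datum $(H, \mathcal{H}, \D{\theta} s \D{\theta}^{-1}, \xi^{\theta})$ for $(\underline{\p}^{\theta}, \D{\theta} s \D{\theta}^{-1})$ -- same group $H$, same $\mathcal{H}$, same $z$-pair, and crucially the \emph{same} parameter $\underline{\p}_{H_1}$ on the endoscopic side, with only the embedding $\xi$ replaced by $\xi^{\theta}$. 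After checking the transfer-factor identity $(f^{\theta})^{H'_1} = f^{H_1}$, one has $f^{H_1}(\underline{\p}_{H_1}) = (f^{\theta})^{H'_1}(\underline{\p}_{H_1})$ tautologically, and linear independence of characters finishes the argument for all non-simple $\p$ at once. (The ${\bold a}$-twist is handled symmetrically, in a separate lemma.)

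The step in your proposal that I don't see how to make precise is the passage to ``$\Theta_{\p_{H}^{\theta} \otimes \chi_{{\bold a}}^{-1}}^{\mathrm{st}}$ on $H(F)$''. The automorphism $\theta$ acts on $G$ and dually on $\D{G}$, but there is no canonically induced automorphism of the endoscopic group $H$: conjugation by $\D{\theta}$ carries the datum $(H, \mathcal{H}, \tilde{x}, \xi)$ to a possibly different isomorphism class of endoscopic data, and even when it returns to the same class, the resulting automorphism of $H$ depends on choices in a way that has to be controlled. Your plan makes the entire argument rest on an inductive hypothesis for $H$ applied to an object $\p_{H}^{\theta}$ that is not defined. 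The paper sidesteps this exactly by refusing to move $\theta$ over to the endoscopic side, and that is what lets the lemma be a direct consequence of the (already assumed) character identities for $G$ rather than a recursion.

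A second, smaller gap is in your central-$\tilde{x}$ case. You assert that $<1, \r^{\theta} \otimes \x^{-1}>_{\underline{\p}} = <1, \r>_{\underline{\p}}$ follows from the coarse form of Desideratum 3 on $G$. But the coarse form only says $\r \mapsto \r^{\theta} \otimes \x^{-1}$ permutes $\Pkt{\p}$; it says nothing about the dimensions of the corresponding $\S{\underline{\p}}$-representations being preserved. In fact $\dim \rho_{\r^{\theta} \otimes \x^{-1}} = \dim \rho_{\r}$ is part of what the refined desideratum is supposed to deliver (since conjugation by $s$ preserves dimension), so this piece of your reduction is circular. The paper's linear-independence argument yields the coarse and refined statements simultaneously, with no separate central-element case.

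Finally, the induction itself is not needed and not well-founded without extra care: the endoscopic groups $H$ (or rather $z$-extensions $H_1$) that arise are general quasisplit groups, and the hypotheses of the proposition are not automatically being granted for them. The paper's formulation avoids this by keeping the entire argument internal to $G$ and its (twisted) endoscopy. You should restructure along the lines of: fix $s$, compare $f^{H_1}(\underline{\p}_{H_1})$ with $(f^{\theta})^{H'_1}(\underline{\p}_{H_1})$ (resp. $(f \otimes \x^{-1})^{H'_1}(\underline{\p}_{H_1})$), verify the transfer-factor equalities, and conclude by linear independence.
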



Back to the setup in \eqref{eq: extension}, there is a conjectural relation between the L-packets for $G$ and $\lG$. That is to say if $\lp \in \P{\lG}$ maps to $\p \in \P{G}$, then the L-packet $\Pkt{\p}$ should be the restriction of $\Pkt{\lp}$. The restriction multi-map $\Pkt{}(\lG(F)) \rightarrow \Pkt{}(G(F))$ is surjective, in the sense that for any $\r \in \Pkt{}(G(F))$, there exists $\lr \in \Pkt{}(\lG(F))$, whose restriction to $G(F)$ contains $\r$ (see Corollary~\ref{cor: existence}). Therefore it is easy to construct the L-packets of $G$ from that of $\lG$. The other direction is more subtle, because for any $\r \in \Pkt{}(G(F))$, the preimage $\lr \in \Pkt{}(\lG(F))$ is usually not unique and they differ from each other by a twist of quasicharacters of $\lG(F)$. So our second goal in this paper is to make an attempt to address this problem in most generality. To be more precise, we want to establish the endoscopic hypothesis (i.e., \eqref{eq: disjoint decomposition}, Conjecture~\ref{conj: twisted endoscopic parametrization} and Conjecture~\ref{conj: twisted character identity}) for $\lG$ by assuming it for $G$ and the twisted endoscopic groups of $G$. When $G$ is a quasisplit symplectic group or special even orthogonal group, and $\lG$ is the corresponding similitude group, this has been essentially achieved in \cite{Xu:preprint2}.


Throughout this paper except for Section~\ref{sec: lifting L-packet}, we will take the endoscopic hypothesis as our working assumption. In Section~\ref{sec: endoscopy}, we will describe the conjectural endoscopy theory. In particular, we will introduce Conjecture~\ref{conj: twisted character identity}, which is part of the endoscopic hypothesis. We will prove Theorem~\ref{thm: twisting character}, and deduce Conjecture~\ref{conj: twisting character 1} as a special case. In Section~\ref{sec: central character}, we will prove Proposition~\ref{prop: central character desideratum}. In Section~\ref{sec: twist}, we will prove Proposition~\ref{prop: twisting equivariant}, and this implies Conjecture~\ref{conj: twisting equivariant} for non-simple parameters.

In Section~\ref{sec: lifting L-packet}, we consider the problem of lifting L-packets from $G$ to $\lG$, where $G$ and $\lG$ are in the setup of \eqref{eq: extension}. So we will only assume the endoscopic hypothesis for $G$ and its twisted endoscopic groups, in particular, we can not assume Conjecture~\ref{conj: twisting equivariant} for $\lG$. In Section~\ref{subsec: representation}, we will study the restriction multi-map $\Pkt{}(\lG(F)) \rightarrow \Pkt{}(G(F))$. In Sections~\ref{subsec: coarse L-packet}, \ref{subsec: compatibility with twist}, we will discuss some special cases of Conjecture~\ref{conj: twisting equivariant} for $\lG$, and from there we will obtain some structural information about the L-packets of $\lG$. In Section~\ref{subsec: refinement}, we will formulate a conjecture about the L-packets of $\lG$ (see Conjecture~\ref{conj: refined L-packet}). Finally, in Section~\ref{subsec: classical group}, we will take $G$ to be a symplectic group or special even orthogonal group, and we will review various results of Arthur in \cite{Arthur:2013}, which essentially prove the endoscopic hypothesis for $G$. We will also take $\lG$ to be the corresponding similitude group, and apply the previous discussion in Section~\ref{sec: lifting L-packet} to this case. So the results we obtain in Sections~\ref{subsec: coarse L-packet}, \ref{subsec: compatibility with twist} will become unconditional in this case. Moreover, we will restate Conjecture~\ref{conj: refined L-packet} as a theorem in this case, and the proof of this theorem is included in \cite{Xu:preprint2}.


{\bf Acknowledgements}: The author wants to thank the referee for many suggestions on improving the readability of the manuscript. This paper is based upon work supported by the National Science Foundation number agreement No. DMS-1128155 and DMS-1252158. 

\section{Endoscopy theory}
\label{sec: endoscopy}

\subsection{Twisted endoscopic datum}
\label{subsec: endoscopic group}

Let $F$ be a local field of characteristic zero and $G$ be a quasisplit reductive group over $F$. We have an isomorphism 
\begin{align}
\label{eq: local character}
H^{1}(W_{F}, Z(\D{G})) \longrightarrow \Hom(G(F), \C^{\times})
\end{align}
defined by Langlands (see Appendix~\ref{sec: character}). Let $\theta$ be an automorphism of $G$, $\x$ be a quasicharacter of $G(F)$. A twisted endoscopic datum for $(G, \theta, \x)$ is a quadruple $(H, \mathcal{H}, s, \xi)$, where $H$ is a quasisplit reductive group over $F$, $\mathcal{H}$ is a split extension of $W_{F}$ by $\D{H}$
\[
\xymatrix{1 \ar[r] & \D{H} \ar[r] & \mathcal{H} \ar[r]  & W_{F} \ar[r] & 1,}
\]
such that the conjugate action of $W_{F}$ on $\D{H}$ falls into the same outer classes of automorphisms as for $\L{H}$. Note $\mathcal{H}$ may not be isomorphic to $\L{H}$. Inside the quadruple, $s$ is a semisimple element in $\D{G} \rtimes \D{\theta}$, $\xi$ is an L-embedding of $\mathcal{H}$ to $\L{G}$ (i.e., it respects the projections on $W_{F}$ from both $\mathcal{H}$ and $\L{G}$), and they satisfy the following conditions:
\begin{itemize}
\item 
\(
\Int(s) \circ \xi = \underline{{\bold a}} \cdot \xi, \text{ for a $1$-cocycle $\underline{{\bold a}}$ of $W_{F}$ in $Z(\D{G})$ mapped to $\x$ by \eqref{eq: local character}}; 
\)
\item $\D{H} \cong \Cent(s, \D{G})^{0}$ through $\xi$.
\end{itemize}
We call $H$ a twisted endoscopic group of $G$. Two twisted endoscopic data $(H, \mathcal{H}, s, \xi)$ and $(H', \mathcal{H}', s', \xi')$ are called isomorphic if there exists an element $g \in \D{G}$ such that $g\xi(\mathcal{H})g^{-1} = \xi'(\mathcal{H'})$ and $gsg^{-1} \in s'Z(\D{G})$. We denote by $\tEnd{}{\com{G}}$ the set of isomorphism classes of twisted endoscopic data for $(G, \theta, \x)$. For abbreviation, we will use the twisted endoscopic group to denote the twisted endoscopic datum if there is no confusion.

Let $G \subseteq \lG$ be two quasisplit connected reductive groups over $F$ such that $G_{der} = \lG_{der}$ and we denote $\lG/G$ by $D$.
\begin{align*}
\xymatrix{1 \ar[r] & G \ar[r] & \lG \ar[r]^{\lambda}  & D \ar[r] & 1}
\end{align*}
We assume $\theta$ is an automorphism of $\lG$, and $\c$ is $\theta$-invariant. Then we have the following proposition relating the twisted endoscopic data between $G$ and $\lG$.

\begin{proposition}
\label{prop: lifting endoscopic group}
There is a one to one correspondence between $\End{}{\com{G}, \x_{G}}$ and 
\[
\bigsqcup_{\x_{\lG}|_{G} = \x_{G}} \End{}{\com{\lG}, \x_{\lG}}.
\] 
\end{proposition}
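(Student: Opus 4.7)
The plan is to construct mutually inverse maps on isomorphism classes, working through the surjection $p: \L\lG \to \L G$ dual to the inclusion $G \hookrightarrow \lG$. I expect the correspondence to descend data from $\lG$ to $G$ in one direction and lift $s$, $\xi$, and the central cocycle in the other, with the choice of lift of the cocycle to $H^1(W_F, Z(\D\lG))$ controlling the extension from $\x_G$ to $\x_\lG$.

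For the descent map, given $(\widetilde H, \widetilde{\mathcal H}, \tilde s, \tilde\xi) \in \End{}{\com\lG, \x_\lG}$, I set $s \in \D G \rtimes \D\theta$ to be the image of $\tilde s$ under $p$, define $\xi := p \circ \tilde\xi$, and take $\mathcal H := \widetilde{\mathcal H} / \D D$ (where $\D D$ embeds centrally into $\widehat{\widetilde H} \subseteq \widetilde{\mathcal H}$, using that $\D\theta$ fixes $\D D$ as a consequence of $\lambda$ being $\theta$-invariant). On the group side, $H := \widetilde H / D$ where the inclusion $D \hookrightarrow \widetilde H$ is dual to $\D D \hookrightarrow \widehat{\widetilde H}$. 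The identifications $\Cent(\tilde s, \D\lG)^0 \twoheadrightarrow \Cent(s, \D G)^0$ and the descent of the intertwining $\Int(\tilde s) \circ \tilde\xi = \underline{\tilde a}\cdot \tilde\xi$ to $\Int(s) \circ \xi = \underline a \cdot \xi$ (with $\underline a := p \circ \underline{\tilde a}$) follow from functoriality, and \eqref{eq: local character} yields $\x_G = \x_\lG|_G$.

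For the lifting map, given $(H, \mathcal H, s, \xi) \in \End{}{\com G, \x_G}$, I choose any lift $\tilde s \in \D\lG \rtimes \D\theta$ of $s$, define $\widehat{\widetilde H} := \Cent(\tilde s, \D\lG)^0$ (independent of the choice of lift because $\D D \subseteq Z(\D\lG)$ is connected and $\D\theta$-fixed), and obtain a quasisplit $F$-group $\widetilde H$ with $\widetilde H / H \cong D$. I construct $\widetilde{\mathcal H} \subseteq \L\lG$ as the subgroup generated by $\widehat{\widetilde H}$ together with a set-theoretic lift to $\L\lG$ of a splitting $W_F \to \xi(\mathcal H)$, so that $\widetilde{\mathcal H}$ is an extension of $W_F$ by $\widehat{\widetilde H}$, and take $\tilde\xi$ to be the inclusion. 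A direct computation shows $\Int(\tilde s) \circ \tilde\xi = \underline{\tilde a}\cdot \tilde\xi$ for a $1$-cocycle $\underline{\tilde a}$ of $W_F$ in $Z(\D\lG)$ lifting $\underline a$; this defines $\x_\lG$ via \eqref{eq: local character}. Different choices of $\tilde s$ or of the section modify $\underline{\tilde a}$ only by coboundaries, so $\x_\lG$ and the isomorphism class of the lifted datum are unambiguously determined, and the two maps are mutually inverse by direct inspection.

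The main technical hurdle is showing that the intertwining cocycle $\underline{\tilde a}$ actually takes values in $Z(\D\lG)$ rather than in the a priori larger subgroup $p^{-1}(Z(\D G))$. This reduces to the equality $p^{-1}(Z(\D G)) = Z(\D\lG)$: for $g \in p^{-1}(Z(\D G))$ and any $g' \in \D\lG$, the commutator $[g,g']$ lies in the connected central torus $\D D$; since it also lies in the derived subgroup $\D\lG_{der}$, and $\D D \cap \D\lG_{der}$ is finite, connectedness of $\D\lG$ forces the map $g' \mapsto [g,g']$ to be constant, hence trivial, so $g \in Z(\D\lG)$. The remainder is a routine diagram chase through the exact sequences $1\to G\to\lG\to D\to 1$ and $1\to\D D\to\D\lG\to\D G\to 1$, together with the compatibility of the isomorphism relations: modifications of $s$ by $Z(\D G)$ correspond precisely to modifications of $\tilde s$ by $Z(\D\lG)$.
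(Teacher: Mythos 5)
Your construction proceeds in the right general direction, and your commutator argument for the equality $p^{-1}(Z(\D{G})) = Z(\D{\lG})$ is a nice, self-contained replacement for the paper's derivation of that fact. However, there is a genuine gap: you assert that the surjectivity $\Cent(\tilde s, \D{\lG})^{0} \twoheadrightarrow \Cent(s, \D{G})^{0}$ ``follows from functoriality,'' but functoriality only gives the inclusion $p(\Cent(\tilde s, \D{\lG})^{0}) \subseteq \Cent(s, \D{G})^{0}$. The full preimage $p^{-1}(\Cent(s,\D{G}))$ is the a priori larger group $M := \{g \in \D{\lG} : \tilde s g \tilde s^{-1} g^{-1} \in \D{D}\}$, and the nontrivial claim --- which is the heart of the paper's proof --- is that $M^{0} = \Cent(\tilde s, \D{\lG})^{0}$, equivalently that the homomorphism $M \to \D{D}$, $g \mapsto \tilde s g \tilde s^{-1} g^{-1}$, has finite image. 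The paper proves this by composing with $A : \L{\lG} \to \L((Z_{\lG}^{\theta})^{0})$ and using that $\lambda$ being $\theta$-invariant forces $\D{D} \subseteq (Z(\D{\lG})^{\D\theta})^{0}$, so that $\Ker A|_{\D{D}}$ is finite. Your commutator argument does not extend here: writing $\tilde s = h \rtimes \D\theta$, one has $\tilde s g \tilde s^{-1} g^{-1} = h\,\D\theta(g)\,h^{-1} g^{-1} = [h, \D\theta(g)]\cdot (\D\theta(g)g^{-1})$, and the second factor $\D\theta(g)g^{-1}$ need not lie in $\D{\lG}_{der}$ when $\D\theta$ acts nontrivially on $\D{\lG}/\D{\lG}_{der}$, so there is no reason for the image to lie in the finite group $\D{D} \cap \D{\lG}_{der}$.

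This gap is not cosmetic. It is used in your descent direction (to see that the descended group has dual precisely $\Cent(s, \D{G})^{0}$ rather than a proper subgroup), in your assertion that $\widetilde H / H \cong D$, and in verifying the two maps are mutually inverse (lift-then-descend returns the original datum only if $p$ carries $\Cent(\tilde s, \D{\lG})^{0}$ \emph{onto} $\Cent(s, \D{G})^{0}$). A secondary point: the lift of the splitting $W_F \to \xi(\mathcal H)$ to $\L{\lG}$ must be a homomorphic (admissible) lift, not merely set-theoretic, for $\widetilde{\mathcal H}$ to be a group and a split extension of $W_F$; the existence of such a lift is the result of Labesse that the paper invokes.
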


\begin{proof}
Suppose $[(H, \mathcal{H}, s, \xi)] \in \End{}{\com{G}, \x_{G}}$, then $\xi(\D{H})  = \com[0]{\Cent(s, \D{G} )}$. Under the projection $ \L{\lG} \longrightarrow \L{G}$, the preimage of $\Cent(s, \D{G})$ is $\{ g \in \D{\lG} : \lif{s} g \lif{s}^{-1} g^{-1} \in \D{D} \}$, where $\lif{s}$ is a preimage of $s$ in $\D{\lG} \rtimes \D{\theta}$. We claim
\[
\com[0]{  \{ g \in \D{\lG} : \lif{s} g \lif{s}^{-1} g^{-1} \in \D{D} \}  } = \com[0]{  \{ g \in \D{\lG} : \lif{s} g \lif{s}^{-1} g^{-1} =1 \}  }
\]
To see this we can consider the homomorphism defined by 
\begin{align}
\label{eq: lifting endoscopic group}
\xymatrix{ \{ g \in \D{\lG} : \lif{s} g \lif{s}^{-1} g^{-1} \in \D{D} \} \ar[rr] && \D{D} \subseteq \L{\lG} \\
g \ar@{|->}[rr]   &&  \lif{s} g \lif{s}^{-1} g^{-1}. }
\end{align}
Its composition with $A: \L{\lG} \longrightarrow \L((Z_{\lG}^{\theta})^{0})$ is trivial. Note $A$ induces an isogeny 
\[
(Z(\D{\lG})^{\D{\theta}})^{0} \rightarrow \D{(Z^{\theta}_{\lG})^{0}}.
\] 
Since $\c$ is $\theta$-invariant, $\D{D}$ included as a subgroup of $\D{\lG}$ is fixed by $\D{\theta}$. Therefore $\D{D} \subseteq (Z(\D{\lG})^{\D{\theta}})^{0}$, and we get $\Ker A|_{\D{D}}$ is finite. This means the homomorphism \eqref{eq: lifting endoscopic group} must have finite image, so our claim becomes obvious. Since $\D{D} \subseteq \com[0]{\Cent(\lif{s}, \D{\lG} ) }$, we can now conclude $\com[0]{\Cent(\lif{s}, \D{\lG} ) }$ is the preimage of $\xi(\D{H})$. Let us denote $\com[0]{\Cent(\lif{s}, \D{\lG} ) }$ by $\D{\lif{H}}$.

When $\theta = id$ and $s \in Z(\D{G})$, we have $\D{G} = \Cent(s, \D{G})$ and hence $\D{\lG} = \{ g \in \D{\lG} : \lif{s} g \lif{s}^{-1} g^{-1} \in \D{D} \}$. Since $\D{\lG}$ is connected, it follows from the above argument that $\D{\lG} = \com[0]{  \{ g \in \D{\lG} : \lif{s} g \lif{s}^{-1} g^{-1} =1 \}}$. Hence $\D{\lG} = \{ g \in \D{\lG} : \lif{s} g \lif{s}^{-1} g^{-1} =1 \}$, which means $\lif{s} \in Z(\D{\lG})$. This shows the preimage of $Z(\D{G})$ is $Z(\D{\lG})$, i.e., there is an exact sequence
\[
\xymatrix{1 \ar[r] & \D{D} \ar[r] & Z(\D{\lG}) \ar[r]  & Z(\D{G}) \ar[r] & 1.}
\]

Back to the general situation, we can choose a splitting $c: W_{F} \rightarrow \mathcal{H}$, so that the composition 
\(
\underline{\p}: \xymatrix{W_{F} \ar[r]^{c} & \mathcal{H} \ar[r]^{\xi} & \L{G}}
\)
is admissible. Then we can lift $\underline{\p}$ to $\underline{\lp}: W_{F} \rightarrow \L{\lG}$, which induces a Galois action on $\D{\lif{H}}$ and hence determines a quasisplit reductive group $\lif{H}$. We define $\mathcal{\lif{H}}$ to be the product $\D{\lif{H}} \cdot \Im \underline{\lp}$. Note $\underline{\lp}$ gives a splitting of 
\[
\xymatrix{1 \ar[r] & \D{\lif{H}} \ar[r] & \mathcal{\lif{H}} \ar[r]  & W_{F} \ar[r] & 1,}
\]
and we have a natural embedding $\lif{\xi} : \mathcal{\lif{H}} \rightarrow \L{\lG}$, which is identity on $\D{\lif{H}}$. The map
\[
w \mapsto \lif{s} \lif{\xi} (\underline{\lp}(w)) \lif{s}^{-1}  \lif{\xi} (\underline{\lp}(w))^{-1}, \,\,\,\,\,\, w \in W_{F}
\]
defines an element ${\bold a} \in H^{1}( W_{F}, Z(\D{\lG}))$. If ${\bold a}$ is associated with a quasicharacter $\x_{\lG}$ of $\lG(F)$, then $[(\lif{H}, \mathcal{\lif{H}}, \lif{s}, \lif{\xi})] \in \End{}{\com{\lG}, \x_{\lG}}$. It is not hard to show if we change either $(H, \mathcal{H}, s, \xi)$ within its isomorphism class or the splitting $c$ or the lifting $\underline{\lp}$, this lifted endoscopic datum $(\lif{H}, \mathcal{\lif{H}}, \lif{s}, \lif{\xi})$ is uniquely determined up to isomorphism. Here we need to use the fact that the preimage of $Z(\D{G})$ is $Z(\D{\lG})$. 

Finally, we have a commutative diagram 
\[
\xymatrix{H^{1}( W_{F}, Z(\D{\lG})) \ar[d] \ar[r] & \Hom(\lG(F), \C^{\times})  \ar[d] \\
                H^{1}( W_{F}, Z(\D{G})) \ar[r] & \Hom(G(F), \C^{\times}), }
\]
which shows $\x_{\lG}|_{G} = \x_{G}$. So we get a well defined map from $\End{}{\com{G}, \x_{G}}$ to 
\[
\bigsqcup_{\x_{\lG}|_{G} = \x_{G}} \End{}{\com{\lG}, \x_{\lG}}.
\] 
The other direction is more straightforward, namely one can simply take the quotient by $\D{D}$ on the dual side.

\end{proof}

\begin{remark}
Following the proof, there is an exact sequence
\[
\xymatrix{1 \ar[r] & \D{D} \ar[r]  & \D{\lif{H}} \ar[r] & \D{H} \ar[r] & 1 },
\]
whose dual is
\[
\xymatrix{1 \ar[r] & H \ar[r]  & \lif{H} \ar[r]^{\c_{H}} & D \ar[r] & 1 }.
\]
This suggests the twisted endoscopic groups $\lif{H}$ and $H$ also have the same derived group. 
\end{remark}

\subsection{Relation with Langlands parameter}
\label{subsec: Langlands parameter}

We follow the setup in the introduction. Suppose $\p \in \P{G}$ and $\lp \in \P{\lG}$ is a lift of $\p$. Let $L_{F}$ act on $\D{D}$, $\D{\lG}^{\Sigma}$ and $\D{G}^{\Sigma}$ by conjugation through $\underline{\p}$. We denote the corresponding group cohomology by $H^{*}_{\underline{\p}}(L_{F}, \cdot)$. Note $H^{0}_{\underline{\p}}(L_{F}, \D{D}) = \D{D}^{\Gal{}}, H^{0}_{\underline{\p}}(L_{F}, \D{G}^{\Sigma}) = S^{\Sigma}_{\underline{\p}}$, $H^{0}_{\underline{\p}}(L_{F}, \D{\lG}^{\Sigma}) = S_{\underline{\lp}}^{\Sigma}$ and $H^{1}_{\underline{\p}}(L_{F}, \D{D}) = H^{1}(W_{F}, \D{D})$. The short exact sequence 
\[
\xymatrix{1 \ar[r] & \D{D} \ar[r] & \D{\lG}^{\Sigma} \ar[r]  & \D{G}^{\Sigma} \ar[r] & 1}
\]
induces a long exact sequence
\begin{align*}
\xymatrix{1 \ar[r] &  \D{D}^{\Gamma} \ar[r] & S_{\underline{\lp}}^{\Sigma} \ar[r] & S_{\underline{\p}}^{\Sigma} \ar[r]^{\delta \quad \quad} & H^{1}(W_{F}, \D{D}),}
\end{align*}
and hence
\begin{align}
\label{eq: old twisted endoscopic sequence}
\xymatrix{1 \ar[r] &  S_{\underline{\lp}}^{\Sigma}/\D{D}^{\Gal{}} \ar[r]^{\quad \iota} & S_{\underline{\p}}^{\Sigma} \ar[r]^{\delta \quad \quad} & H^{1}(W_{F}, \D{D}).}
\end{align}

To describe $\delta$, we can write 
\[
S_{\underline{\p}}^{\Sigma} = \{ \lif{s} \in \D{\lG}^{\Sigma}: \lif{s} \underline{\lp}(u) \lif{s}^{-1}\underline{\lp}(u)^{-1} \in \D{D}, \text{ for all } u \in L_{F}\} /  \D{D}. 
\]
Then $\delta(s) : u \longmapsto \lif{s} \underline{\lp}(u) \lif{s}^{-1}\underline{\lp}(u)^{-1}$, where $\lif{s}$ is a preimage of $s$ in $\D{\lG}^{\Sigma}$, and $\delta(s)$ factors through $W_{F}$. We have the following fact about $\delta$.

\begin{lemma}
\label{lemma: centralizer}

The image of $\delta$ consists of ${\bold a} \in H^{1}(W_{F}, \D{D})$ such that 
\[
\lp^{\theta} = \lp \otimes {\bold a}
\]
for some $\theta \in \Sigma$, and in particular it is finite.



\end{lemma}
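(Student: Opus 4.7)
The plan is to unwind the coboundary map $\delta$ explicitly. Given $s \in S_{\underline{\p}}^{\Sigma}$ with image $\D{\theta} \in \D{\Sigma}_{\underline{\p}}$, write $s = g \rtimes \D{\theta}$ with $g \in \D{G}$, pick any lift $\lif{g} \in \D{\lG}$ of $g$, and set $\lif{s} := \lif{g} \rtimes \D{\theta} \in \D{\lG}^{\Sigma}$. A direct computation yields
\[
\lif{s}\,\underline{\lp}(u)\,\lif{s}^{-1}\,\underline{\lp}(u)^{-1} \;=\; \lif{g}\,\D{\theta}(\underline{\lp}(u))\,\lif{g}^{-1}\,\underline{\lp}(u)^{-1},
\]
and since $s$ centralizes $\underline{\p}$ inside $\D{G}^{\Sigma}$, the right-hand side projects trivially to $\L{G}$ and hence lies in the central subgroup $\D{D} \subseteq \D{\lG}$. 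This is a $1$-cocycle whose class represents $\delta(s)$, and it is easily checked to be independent of the chosen lift $\lif{g}$.

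Both containments of the first claim then fall out of this formula. Rewritten as $\Int(\lif{g}) \circ (\D{\theta} \circ \underline{\lp}) = \delta(s) \cdot \underline{\lp}$, the identity says exactly that $\lp^{\theta} \cong \lp \otimes \delta(s)$ in $\P{\lG}$; so every class in the image of $\delta$ has the required form. Conversely, given $\theta \in \Sigma$ and ${\bold a} \in H^{1}(W_{F}, \D{D})$ with $\lp^{\theta} \cong \lp \otimes {\bold a}$, any $\lif{g} \in \D{\lG}$ implementing this isomorphism (for a chosen cocycle representative of ${\bold a}$) produces an element of $S_{\underline{\p}}^{\Sigma}$ whose image in $\D{\Sigma}$ is $\D{\theta}$ and for which $\delta(s) = {\bold a}$, simply by taking $s$ to be the image of $\lif{g} \rtimes \D{\theta}$ in $\D{G}^{\Sigma}$.

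For finiteness, the map $s \mapsto \D{\theta}$ sends the image of $\delta$ into the finite set $\D{\Sigma}_{\underline{\p}} \subseteq \D{\Sigma}$, and by the previous paragraph each fibre is either empty or a coset of $X_{1} := \{{\bold a} : \lp \otimes {\bold a} \cong \lp\}$. The $\Sigma = 1$ case of \eqref{eq: old twisted endoscopic sequence} presents $X_{1}$ as a quotient of $S_{\underline{\p}}/\iota(S_{\underline{\lp}}/\D{D}^{\Gamma})$, and a Lie-algebra argument (the complex Lie algebra of $\D{D}$ is a module on which $\Gamma$ acts through a finite quotient, so its $H^{1}$ vanishes) forces $S_{\underline{\lp}}^{0} \twoheadrightarrow S_{\underline{\p}}^{0}$. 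Hence $X_{1}$ is a quotient of the component group $A_{\underline{\p}}$, which is finite under our standing assumptions.

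The main subtlety is notational: one must juggle the two semidirect-product structures on $\D{\lG}^{\Sigma}$ and $\L{\lG}$ so that the product $\lif{s}\,\underline{\lp}(u)\,\lif{s}^{-1}\,\underline{\lp}(u)^{-1}$ really collapses into $\D{D}$, and one must check that $\delta$ is independent both of the lift $\lif{g}$ and of the cocycle representative of ${\bold a}$. Once that bookkeeping is in place, the characterisation of the image and its finiteness are formal consequences of the explicit formula combined with the finiteness of $A_{\underline{\p}}$.
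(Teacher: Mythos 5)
Your characterisation of $\Im\delta$ is the same unwinding of the coboundary map that the paper uses, so that part needs no comment. The finiteness argument, however, is genuinely different from the paper's. You reduce to $\Sigma = 1$ by fibring over the finite set $\D{\Sigma}_{\underline{\p}}$, observing each nonempty fibre $\delta(S^{\theta}_{\underline{\p}})$ is a coset of $X_1 = \delta(S_{\underline{\p}})$, and then show $X_1$ is a quotient of the component group $A_{\underline{\p}}$, which is automatically finite. The paper instead exploits the $\theta$-invariance of $\lambda$ directly: it composes with $A\colon \L{\lG}\to \L{(Z_{\lG}^{\theta})^{0}}$, shows $\delta(s)$ lands in the kernel of the induced map $B$ on $H^1(W_F,\cdot)$, and deduces finiteness from the finiteness of $\Ker A|_{\D D}$ (which is where the $\theta$-invariance of $\lambda$ enters). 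Your argument has the pleasant feature of being ``soft'' — it uses only the algebraic-group structure of the centralizers and avoids the $\theta$-invariance of $\lambda$ entirely — at the cost of the slightly more delicate claim that $S_{\underline{\lp}}^{0}$ surjects onto $S_{\underline{\p}}^{0}$. One caution on that step: the phrase ``its $H^1$ vanishes'' should not be read as a statement about $H^1_{\mathrm{cont}}(L_F,\mathrm{Lie}(\D D))$, which is generally nonzero because $\Ker(L_F\to\Gamma_{E/F})$ has abundant continuous characters. The correct formulation is that $\mathrm{Lie}(\D D)$ splits off $\mathrm{Lie}(\D{\lG})$ as an $\L{\lG}$-module — it sits in the centre $\mathrm{Lie}(Z(\D{\lG}))$, on which the adjoint action is trivial and the Galois action factors through a finite group, so semisimplicity of complex representations of finite groups gives a $\Gamma$-stable (hence $\underline{\lp}(L_F)$-stable) complement — and a split short exact sequence of $L_F$-modules stays exact after taking invariants. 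With that rephrasing your argument is complete, and the two proofs give the same finiteness conclusion by different means.
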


\begin{proof}
By the definition of $\delta$, we have $\lif{s} \underline{\lp}(u) \lif{s}^{-1} = \delta(s)(u) \cdot \underline{\lp}(u)$, where $\lif{s} \in \D{\lG}^{\Sigma}$ is a preimage of $s$. Denote by $\D{\theta}_{s}$ the image of $s$ in $\D{\Sigma}$. Then this means $\lp^{\theta_{s}} = \lp \otimes \delta(s)$. Conversely, if $\lp^{\theta} = \lp \otimes {\bold a}$ for some ${\bold a} \in H^{1}(W_{F}, \D{D})$ and $\theta \in \Sigma$, then there exists $g \in \D{\lG}$ such that 
\[
(g \rtimes \D{\theta}) \underline{\lp}(u) (g \rtimes \D{\theta})^{-1} = \underline{{\bold a}}(u) \underline{\lp}(u).
\] 
for a $1$-cocycle $\underline{\bold a}$ representing ${\bold a}$. Then it is clear that $\lif{s} := g \rtimes \D{\theta} \in \D{\lG}^{\Sigma}$ maps to an element $s \in S_{\underline{\p}}^{\Sigma}$ and ${\bold a} = \delta(s)$. 

To see the image of $\delta$ is finite, we consider $\delta(s)$ and let $\theta = \theta_{s}$. The restriction of 

\begin{align*}
A: \L{\lG} \longrightarrow \L((Z_{\lG}^{\theta})^{0})
\end{align*}
to $\D{D}$ induces a homomorphism

\begin{align*}
B: H^{1}(W_{F}, \D{D}) \rightarrow H^{1}(W_{F}, \D{(Z_{\lG}^{\theta})^{0}}).
\end{align*}
We claim $\delta(s)$ lies in the kernel of $B$. To show the claim, recall 
\[
\delta(s) : u \longmapsto \lif{s} \underline{\lp}(u) \lif{s}^{-1}\underline{\lp}(u)^{-1}.
\] 
We can write $\lif{s} = g \rtimes \D{\theta}$ and $\underline{\lp}(u) = h \rtimes w_{u}$, where $g, h \in \D{\lG}$ and $w_{u} \in W_{F}$. Then
\[
\lif{s} \underline{\lp}(u) \lif{s}^{-1}\underline{\lp}(u)^{-1} := g \D{\theta} (h)  \cdot w_{u} (g^{-1})  h^{-1}.  
\]
Since 
\[
A(\D{\theta} (h)) = A(h),
\]
we have
\[
A(\lif{s} \underline{\lp}(u) \lif{s}^{-1}\underline{\lp}(u)^{-1}) = A(g) A(h)  \cdot w_{u} (A(g^{-1}))  A(h^{-1}) =  A(g)\cdot w_{u} (A(g)^{-1}).
\]
This proves the claim. Now the exact sequence 
\[
\xymatrix{1 \ar[r] & \Ker A|_{\D{D}} \ar[r] & \D{D} \ar[r]^{A|_{\D{D}} \quad } & \D{(Z_{\lG}^{\theta})^{0}}}
\]
induces the following exact sequence
\[
\xymatrix{1 \ar[r] & H^{1}(W_{F}, \Ker A|_{\D{D}}) \ar[r] & H^{1}(W_{F}, \D{D}) \ar[r]^{B \quad } & H^{1}(W_{F}, \D{(Z_{\lG}^{\theta})^{0}})}.
\]
Since $F$ is a local field and $\Ker A|_{\D{D}}$ is finite, it is not hard to see $H^{1}(W_{F}, \Ker A|_{\D{D}})$ is finite. Then it follows from the exact sequence that the kernel of $B$ is also finite, and hence $\Im \delta$ is finite.

\end{proof}

We would like to modify \eqref{eq: old twisted endoscopic sequence} to have $\S{\underline{\p}}^{\Sigma}$ and $\S{\underline{\lp}}^{\Sigma}$ in the sequence. To do so we need to know the kernel and image of $\delta$ restricted on $Z(\D{G})^{\Gal{}}$. Therefore we take
\[
\xymatrix{1 \ar[r] & \D{D} \ar[r] & Z(\D{\lG}) \ar[r]  & Z(\D{G}) \ar[r] & 1}
\]
which induces an exact sequence
\[
\xymatrix{1 \ar[r] &  \D{D}^{\Gamma} \ar[r] & Z(\D{\lG})^{\Gal{}} \ar[r] & Z(\D{G})^{\Gal{}} \ar[r]^{\delta \quad} & H^{1}(W_{F}, \D{D}) \ar[r] & H^{1}(W_{F}, Z(\D{\lG})). }
\]
So $\Ker \delta|_{Z(\D{G})^{\Gal{}}} = Z(\D{\lG})^{\Gal{}} / \D{D}^{\Gamma}$. Let 
\(
\bar{H}^{1}(W_{F}, \D{D}) : = H^{1}(W_{F}, \D{D}) / \delta(Z(\D{G})^{\Gal{}}),
\)
and we define $\cS{\underline{\p}}^{\Sigma} = S_{\underline{\p}}^{\Sigma} / Z(\D{G})^{\Gal{}}$ and $\cS{\underline{\lp}}^{\Sigma} = S_{\underline{\lp}}^{\Sigma} / Z(\D{\lG})^{\Gal{}}$. By taking the quotient of \eqref{eq: old twisted endoscopic sequence} by $Z(\D{G})^{\Gal{}}$, we get
\begin{align}
\label{eq: twisted endoscopic sequence mod center}
\xymatrix{1 \ar[r] &  \cS{\underline{\lp}}^{\Sigma} \ar[r]^{\iota} & \cS{\underline{\p}}^{\Sigma} \ar[r]^{\bar{\delta} \quad \quad}  & \bar{H}^{1}(W_{F}, \D{D})}.                 
\end{align}
Since $\Im \delta$ is finite, we have $(\cS{\underline{\lp}}^{\Sigma})^{0} = (\cS{\underline{\p}}^{\Sigma})^{0}$. After taking the quotient of \eqref{eq: twisted endoscopic sequence mod center} by the identity component, we get
\begin{align}
\label{eq: twisted endoscopic sequence}
\xymatrix{1 \ar[r] &  \S{\underline{\lp}}^{\Sigma} \ar[r]^{\iota} & \S{\underline{\p}}^{\Sigma} \ar[r]^{\bar{\delta} \quad \quad}  & \bar{H}^{1}(W_{F}, \D{D})}.                
\end{align}

The local Langlands correspondence for tori gives us an isomorphism 
\[
H^{1}(W_{F}, \D{D}) \cong \Hom(D(F), \C^{\times}).
\]
By pulling back quasicharacters of $D(F)$ to $\lG(F)$, we get a homomorphism 
\[
H^{1}(W_{F}, \D{D}) \rightarrow \Hom(\lG(F)/G(F), \C^{\times}),
\]
which is surjective. Note $\delta(Z(\D{G})^{\Gal{}})$ is trivial in $H^{1}(W_{F}, Z(\D{\lG}))$, so it induces the trivial character on $\lG(F)$. Since \eqref{eq: local character} is an isomorphism, we then have an isomorphism
\[
r: \bar{H}^{1}(W_{F}, \D{D}) \rightarrow \Hom(\lG(F)/G(F), \C^{\times}).
\]
We denote the composition $r \circ \bar{\delta}$ by $\a$. Therefore we have the following exact sequence
\begin{align}
\label{eq: twisted endoscopic sequence with dual}
\xymatrix{1 \ar[r] &  \S{\underline{\lp}}^{\Sigma} \ar[r]^{\iota} & \S{\underline{\p}}^{\Sigma} \ar[r]^{\a \quad \quad \quad \quad}  & \Hom(\lG(F)/G(F), \C^{\times}).}               
\end{align}

\begin{lemma}
\label{lemma: trivial central character}
The image $\a(\S{\underline{\p}})$ is contained in $\Hom(\lG(F)/\lZ(F)G(F), \C^{\times})$.
\end{lemma}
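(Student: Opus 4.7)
The plan is to show $\a(s)|_{\lZ(F)} = 1$ for every $s \in \S{\underline{\p}}$. Lift $s$ to some $s_{0} \in S_{\underline{\p}}$ and then to an element $\lif{s} \in \D{\lG}$. From the description of $\bar{\delta}$ earlier in this section, $\bar{\delta}(s)$ is represented by the cocycle $\underline{{\bold a}}(u) = \lif{s}\underline{\lp}(u)\lif{s}^{-1}\underline{\lp}(u)^{-1}$, which takes values in $\D{D} \subseteq Z(\D{\lG})$. Unfolding the definition $\a = r \circ \bar{\delta}$ and using functoriality of LLC for tori, the restriction of $\a(s)$ to $\lZ(F)$ corresponds via LLC for the torus $\lZ$ to the image of $\underline{{\bold a}}$ under the map $H^{1}(W_F, \D{D}) \to H^{1}(W_F, \D{\lZ})$ induced by the dual of $\lZ \hookrightarrow \lG \to D$. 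This factors as $\D{D} \hookrightarrow Z(\D{\lG}) \to \D{\lZ}$, where the second map $\pi$ arises from the dual of the exact sequence $1 \to \lZ \to \lG \to G_{ad} \to 1$ recalled in the introduction. It therefore suffices to show that the image of $\underline{{\bold a}}$ in $H^{1}(W_{F}, \D{\lZ})$ is trivial.

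To verify this, I would compute $\pi(\underline{{\bold a}}(u))$ directly. Writing $\underline{\lp}(u) = \varphi(u) \rtimes w_{u}$ in $\L{\lG} = \D{\lG} \rtimes W_{F}$, a short calculation using the multiplication rule of the semidirect product yields
\[
\underline{{\bold a}}(u) = \lif{s}\,\varphi(u)\,w_{u}(\lif{s}^{-1})\,\varphi(u)^{-1} \in \D{D}.
\]
Applying $\pi$, using that $\D{\lZ}$ is abelian (so that the $\varphi(u)$ factors cancel) and that $\pi$ is $W_{F}$-equivariant (being dual to the $F$-morphism $\lZ \hookrightarrow \lG$), one obtains
\[
\pi(\underline{{\bold a}}(u)) = \pi(\lif{s})\cdot w_{u}(\pi(\lif{s}))^{-1},
\]
which is a $1$-coboundary. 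Hence the class of $\underline{{\bold a}}$ in $H^{1}(W_{F}, \D{\lZ})$ is trivial, and $\a(s)$ restricts trivially to $\lZ(F)$.

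Conceptually, the computation reflects the fact that $\underline{\lp}^{\lif{s}} = \underline{\lp} \otimes \underline{{\bold a}}$ is a $\D{\lG}$-conjugate of $\underline{\lp}$, so both define the same element of $\P{\lG}$ and therefore the same central character via $\pi$; this forces the twist character attached to ${\bold a}$ to be trivial on $\lZ(F)$. The only subtle point is tracking the semidirect product structure of $\L{\lG}$ when commuting $\lif{s}$ past the $W_{F}$-component of $\underline{\lp}(u)$; once this is handled, the vanishing is automatic since $\pi$ is invariant under $\D{\lG}$-conjugation, its target being abelian.
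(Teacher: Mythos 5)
Your calculation is correct and it reproduces exactly the argument the paper uses (via the reference back to Lemma~\ref{lemma: centralizer}) to show that $\delta(S_{\underline{\p}})$ dies under a projection to an abelian torus. The problem is what torus you project to. You invoke the dual exact sequence
\[
1 \to \D{G}_{sc} \to \D{\lG} \to \D{Z}_{\lG} \to 1
\]
from the introduction and then use the resulting quotient map $\pi : \D{\lG} \to \D{Z_{\lG}}$. But that sequence is stated in the introduction precisely under the hypothesis that $Z_{\lG} = Z$ is a torus (the paper says so explicitly when deriving it). In the setting of Lemma~\ref{lemma: trivial central character}, $G \subseteq \lG$ are only assumed to have the same derived group, so $Z_{\lG}$ can perfectly well be disconnected; for instance $G = SL_2$, $\lG = SL_2 \times \mathbb{G}_m$ has $Z_{\lG} = \mu_2 \times \mathbb{G}_m$. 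In that case $\D{Z_{\lG}}$ is not a quotient of the connected group $\D{\lG}$, the map $\pi$ you use does not exist, and the torus LLC you cite does not apply to $\lZ$. Your argument as written therefore only proves that $\a(\S{\underline{\p}})$ is trivial on $\lZ^{0}(F)G(F)/G(F)$, which is precisely the intermediate statement the paper extracts from Lemma~\ref{lemma: centralizer} before doing more.

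To close the gap you need the paper's second step: choose an $F$-torus $Z \supseteq \lZ$, form $\lG' = (\lG \times Z)/\lZ$, so that $Z_{\lG'} = Z$ is connected, run your cocycle computation there, and then use the commutative diagram comparing $(\iota', \a')$ with $(\iota, \a)$ to transport the triviality on $Z(F)$ down to triviality on $\lZ(F)$. Without this reduction, or some other device that handles the components of $\lZ$ not in $\lZ^{0}$, the proof is incomplete.
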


\begin{proof}
It follows from the proof of Lemma~\ref{lemma: centralizer} that the image $\delta(S_{\underline{\p}})$ is in the kernel of 
\(
H^{1}(W_{F}, \D{D}) \rightarrow H^{1}(W_{F}, \D{Z_{\lG}^{0}}).
\)
So $\a(\S{\underline{\p}})$ is contained in $\Hom(\lG(F)/\lZ^{0}(F)G(F), \C^{\times})$. When $\lZ = \lZ^{0}$, this is what we want.

Suppose $\lZ$ is not connected, we can take an $F$-torus $Z$ containing $\lZ$, and let $\lG' = (\lG \times Z)/\lZ$. Then $Z_{\lG'} = Z$ is connected. 
Let $\lp' \in \P{\lG'}$ be a lift of $\lp$, then we have the following commutative diagram:
\[
\xymatrix{1 \ar[r] \ar[d] &  \S{\underline{\lp'}} \ar[d] \ar[r]^{\iota'} & \S{\underline{\p}} \ar@{=}[d] \ar[r]^{\a' \quad \quad \quad \quad}  & \Hom(\lG'(F)/G(F), \C^{\times}) \ar[d] \\                
1 \ar[r] &  \S{\underline{\lp}} \ar[r]^{\iota} & \S{\underline{\p}} \ar[r]^{\a \quad \quad \quad \quad}  & \Hom(\lG(F)/G(F), \C^{\times}).}              
\]
Since the image of $\a'$ is trivial on $Z_{\lG'}(F)$, then the image of $\a$ is trivial on $\lZ(F)$. This finishes the proof.

\end{proof}

Suppose $\theta \in \Sigma$, for any semisimple element $s \in \com{\cS{\underline{\p}}}$, let $\D{H}  = \Cent(s, \D{G})^{0}$, and $\mathcal{H} = \D{H} \cdot \Im \underline{\p}$, then $\mathcal{H}$ is embedded identically in $\L{G}$. The conjugate action of $L_{F}$ on $\D{H}$ through $\underline{\p}$ determines a Galois action on $\D{H}$, and hence determines a quasisplit reductive group $H$. Therefore $\underline{\p}$ factors through $\mathcal{H}$ for $[(H, \mathcal{H}, s, \xi)] \in \End{}{\com{G}}$, where $\xi$ is the identity embedding. For any lift $\underline{\lp}$ of $\underline{\p}$, the restriction $\underline{\lp}|_{W_{F}}$ lifts $(H, \mathcal{H}, s, \xi)$ to a twisted endoscopic datum $[(\lif{H}, \mathcal{\lif{H}}, \lif{s}, \lif{\xi})] \in \End{}{\com{\lG}, \x}$ for some quasicharacter $\x$ of $\lG(F)/G(F)$ (cf. the proof of Proposition~\ref{prop: lifting endoscopic group}). By construction we know $\underline{\lp}$ factors through $\mathcal{\lif{H}}$. If we take a different lift $\underline{\lp}'$ of $\underline{\p}$, it is easy to see $\underline{\lp}'$ also factors through $\mathcal{\lif{H}}$. All of these can be summarized in the diagram below
\begin{displaymath}
\xymatrix{L_{F}    \ar[r]    \ar[dr]  & \mathcal{\lif{H}} \ar[r]^{\lif{\xi}}     \ar[d]   & \L{\lG}   \ar[d] \\
&    \mathcal{H}    \ar[r]^{\xi}     &    \L{G}.}
\end{displaymath}
Then we have the following simple fact.

\begin{lemma}
\label{lemma: twisted character}
$\a(s) = \x$.
\end{lemma}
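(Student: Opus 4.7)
My plan is to show that the two characters $\a(s)$ and $\x$ of $\lG(F)/G(F)$ arise from the same $1$-cocycle of $W_{F}$ valued in $\D{D} \subseteq Z(\D{\lG})$, and then conclude by the functoriality of the local Langlands correspondence for tori.

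First, I would compute the cocycle representing $\a(s)$. By the explicit description of $\delta$ recorded just before \eqref{eq: twisted endoscopic sequence mod center}, the class $\bar{\delta}(s) \in \bar{H}^{1}(W_{F}, \D{D})$ is represented by
\[
c_{s}(w) \;=\; \lif{s}\, \underline{\lp}(w)\, \lif{s}^{-1}\, \underline{\lp}(w)^{-1}, \qquad w \in W_{F},
\]
for any preimage $\lif{s} \in \D{\lG}^{\Sigma}$ of $s$. A direct computation in $\D{\lG}^{\Sigma} \rtimes W_{F}$---using that $\Sigma$ is $F$-rational, so $\D{\Sigma}$ commutes with the $W_{F}$-action---shows $c_{s}(w) \in \D{\lG}$, and the assumption $s \in \cS{\underline{\p}}^{\Sigma}$ forces $c_{s}(w) \in \D{D}$. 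By the definition $\a = r \circ \bar{\delta}$, the character $\a(s)$ is then obtained by applying the local Langlands correspondence for $D$ to the class of $c_{s}$ and pulling back along $\c : \lG \to D$.

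Next, I would identify the cocycle $\underline{{\bold a}}$ that defines $\x$ in the lifted endoscopic datum $(\lif{H}, \mathcal{\lif{H}}, \lif{s}, \lif{\xi})$. From the construction recalled immediately before the lemma, $\lif{\xi}$ is the identity embedding $\mathcal{\lif{H}} \hookrightarrow \L{\lG}$, and $\underline{\lp}$ factors through $\mathcal{\lif{H}}$. Evaluating the endoscopic defining relation $\Int(\lif{s}) \circ \lif{\xi} = \underline{{\bold a}} \cdot \lif{\xi}$ on $\underline{\lp}(w) \in \mathcal{\lif{H}}$ then yields
\[
\underline{{\bold a}}(w) \;=\; \lif{s}\, \underline{\lp}(w)\, \lif{s}^{-1}\, \underline{\lp}(w)^{-1} \;=\; c_{s}(w),
\]
so $\underline{{\bold a}}$ in fact takes values in $\D{D} \subseteq Z(\D{\lG})$ and coincides with the cocycle $c_{s}$ of the previous step.

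Finally, I would conclude by functoriality. The character $\x$ is the image of $\underline{{\bold a}}$ under the Langlands map \eqref{eq: local character} for $\lG$, while $\a(s)$ is the image of the same cocycle $c_{s} = \underline{{\bold a}}$ under the Langlands correspondence for the torus $D$, followed by pullback along $\c$. The two outputs agree because the square
\[
\xymatrix@C=15mm{
H^{1}(W_{F}, \D{D}) \ar[r] \ar[d] & \Hom(D(F), \C^{\times}) \ar[d]^{\c^{*}} \\
H^{1}(W_{F}, Z(\D{\lG})) \ar[r] & \Hom(\lG(F), \C^{\times})
}
\]
commutes, with horizontal arrows given by \eqref{eq: local character} for $D$ and for $\lG$, and vertical arrows induced by $\D{D} \hookrightarrow Z(\D{\lG})$ and by $\c$. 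Combining the three steps gives $\a(s) = \x$. The only step with real content is the functoriality in the last paragraph, which is part of Langlands' standard theory underlying \eqref{eq: local character}; the identification of the two cocycles is just an unwinding of the construction of the lifted datum in Proposition~\ref{prop: lifting endoscopic group} and the definition of $\bar{\delta}$.
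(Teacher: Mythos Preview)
Your proof is correct and follows essentially the same route as the paper: both identify the cocycle $\delta(s)(w) = \lif{s}\,\underline{\lp}(w)\,\lif{s}^{-1}\,\underline{\lp}(w)^{-1}$ with the cocycle $\underline{\bold a}$ defining $\x$ by using that $\underline{\lp}$ factors through $\mathcal{\lif{H}}$, then conclude via the Langlands correspondence. The only cosmetic difference is that the paper phrases the identification via the decomposition $\mathcal{\lif{H}} = \D{\lif{H}}\cdot\Im\underline{\lp}$ and the commutation of $\D{\lif{H}}$ with $\lif{s}$, whereas you apply the endoscopic defining relation $\Int(\lif{s})\circ\lif{\xi}=\underline{\bold a}\cdot\lif{\xi}$ directly to $\underline{\lp}(w)$; your explicit functoriality square is simply what the paper leaves implicit in the phrase ``this means $\a(s)=\x$''.
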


\begin{proof}
By definition $\delta(s)(w) = \lif{s} \underline{\lp}(w) \lif{s} ^{-1}\underline{\lp}(w)^{-1}$ for any $w \in W_{F}$, and $\lif{s}$ is a preimage of $s$ in $\D{\lG}^{\Sigma}$. Since $\underline{\lp}$ factors through $\mathcal{\lif{H}}$ and $\D{\lif{H}}$ commutes with $\lif{s}$, we have $\lif{s} \underline{\lp}(w) \lif{s} ^{-1}\underline{\lp}(w)^{-1} = \lif{s} \lif{\xi}(w) \lif{s}^{-1} \lif{\xi}(w)^{-1}$, and this means $\a(s) = \x$.
\end{proof}

\subsection{Endoscopic transfer}
\label{subsec: endoscopic transfer}

Back to the setup in Section~\ref{subsec: endoscopic group}, the reason that endoscopic data are so important is because there is a transfer map from $C^{\infty}_{c}(G(F))$ to $C^{\infty}_{c}(H(F))$ if $\mathcal{H} = \L{H}$. If $\mathcal{H} \neq \L{H}$, we have to take an extension $H_{1}$ of $H$ by an induced torus $Z_{1}$ (called $z$-extension)
\[
\xymatrix{1 \ar[r] & Z_{1} \ar[r] & H_{1} \ar[r]  & H \ar[r] & 1}
\]
so that the dual homomorphism $\D{H} \rightarrow \D{H}_{1}$ can be extended to an L-embedding $\xi_{H_{1}}: \mathcal{H} \rightarrow \L{H_{1}}$. We call $(H_{1}, \xi_{H_{1}})$ a $z$-pair for $\mathcal{H}$. By choosing a section $c: W_{F} \rightarrow \mathcal{H}$, we get a quasicharacter $\chi_{1}$ on $Z_{1}$ from
\[
\xymatrix{ W_{F} \ar[r]^{c} & \mathcal{H} \ar[r]^{\xi_{H_{1}}} & \L{H_{1}} \ar[r] & \L{Z_{1}}.}
\]
It is easy to see that $\chi_{1}$ is independent of the choice of section $c$. So the transfer map will be from $C^{\infty}_{c}(G(F))$ to $C^{\infty}_{c}(H_{1}(F), \chi_{1})$, which is the space of $\chi_{1}^{-1}$-equivariant smooth functions on $H_{1}(F)$ with compact support modulo $Z_{1}$.

To define this transfer map, we need to introduce the space of twisted (stable) orbital integrals. Let $\x_{G}$ be a quasicharacter of $G(F)$ and $\delta$ be a strongly $\theta$-regular $\theta$-semisimple element of $G(F)$, namely $\Int(\delta) \circ \theta$ is semisimple and the $\theta$-twisted centralizer $\com{G}_{\delta}(F)$ (i.e., $\Int(\delta) \circ \theta$-invariant elements in $G(F)$) of $\delta$ is abelian. We assume $\x_{G}$ is trivial on $\com{G}_{\delta}(F)$. We fix Haar measures on $G(F)$ and $\com{G}_{\delta}(F)$, and they induce a $G(F)$-invariant measure on $\com{G}_{\delta}(F) \backslash G(F)$. Then we can form the $(\theta, \x_{G})$-twisted orbital integral of $f \in C^{\infty}_{c}(G(F))$ over $\delta$ as
\[
O^{\theta, \x_{G}}_{G}(f, \delta) := \int_{\com{G}_{\delta}(F) \backslash G(F)} \x_{G}(g)f(g^{-1}\delta \theta(g)) dg.
\]
We also form the $(\theta, \x_{G})$-twisted stable orbital integral over $\delta$ as
\[
SO^{\theta, \x_{G}}_{G}(f, \delta) := \sum_{\{\delta'\}_{G(F)}^{\theta} \thicksim_{st} \{\delta\}_{G(F)}^{\theta}} O_{G}^{\theta, \x_{G}}(f, \delta'), 
\]
where the sum is over $\theta$-twisted conjugacy classes $\{\delta'\}^{\theta}_{G(F)}$ in the $\theta$-twisted stable conjugacy class of $\delta$ (i.e., $\delta' = g^{-1} \delta \theta(g)$ for some  $g \in G(\bar{F})$), and the Haar measure on $\com{G}_{\delta'}(F)$ is translated from that on $\com{G}_{\delta}(F)$ by conjugation. Let $\mathcal{I}(G^{\theta, \x_{G}})$ ($\mathcal{SI}(G^{\theta, \x_{G}})$) be the space of $(\theta, \x_{G})$-twisted (stable) orbital integrals of $C^{\infty}_{c}(G(F))$ over the set $G^{\theta}_{reg}(F)$ of strongly $\theta$-regular $\theta$-semisimple elements of $G(F)$, then by definition we have projections 
\[
\xymatrix{C^{\infty}_{c}(G) \ar@{->>}[r] & \mathcal{I}(G^{\theta, \x_{G}}) \ar@{->>}[r] & \mathcal{SI}(G^{\theta, \x_{G}}).}
\]

Suppose $[(H, \mathcal{H}, s, \xi)] \in \End{}{\com{G}, \x_{G}}$, we fix a $z$-pair $(H_{1}, \xi_{H_{1}})$ for $\mathcal{H}$. We assume $\theta$ preserves an $F$-splitting of $G$, then there is a map from the semisimple $H_{1}(\bar{F})$-conjugacy classes of $H_{1}(\bar{F})$ to the $\theta$-twisted semisimple $G(\bar{F})$-conjugacy classes of $G(\bar{F})$. By our assumption on $\theta$, this map is defined over $F$. We call a strongly regular semisimple element $\gamma_{1} \in H_{1}(\bar{F})$ is strongly $G$-regular if its associated $H_{1}(\bar{F})$-conjugacy class maps to a $\theta$-twisted strongly regular semisimple $G(\bar{F})$-conjugacy class of $G(\bar{F})$. We denote the set of strongly $G$-regular semisimple elements of $H_{1}(F)$ by $H_{1, G-reg}(F)$. The transfer factor defined in \cite{KottwitzShelstad:1999} is a function
\[
\Delta_{G, H_{1}}(\cdot, \cdot): H_{1, G-reg}(F) \times G^{\theta}_{reg}(F) \rightarrow \C,
\]
which is nonzero only when $\gamma_{1} \in H_{1, G-reg}(F)$ is a norm of $\delta \in G^{\theta}_{reg}(F)$, i.e., the $H_{1}(\bar{F})$-conjugacy class of $\gamma_{1}$ maps to the $\theta$-twisted $G(\bar{F})$-conjugacy class of $\delta$. Note if $\delta \in G^{\theta}_{reg}(F)$ has a norm $\gamma_{1} \in H_{1, G-reg}(F)$, then $\x_{G}$ is trivial on $\com{G}_{\delta}(F)$ (see Lemma 4.4.C, \cite{KottwitzShelstad:1999}). In this paper we always normalize the transfer factor with respect to a fixed $\theta$-stable Whittaker datum $(B, \Lambda)$ for $G$.
The transfer factor has the following basic properties (see \cite{KottwitzShelstad:1999}):

\begin{itemize}

\item $\Delta_{G, H_{1}}(\cdot, \cdot)$ is invariant over stable conjugacy class in the first variable.

\item There is a canonical inclusion $(Z_{G})_{\theta} \hookrightarrow Z_{H}$, so that we get a homomorphism
\begin{align}
\label{eq: central inclusion}
Z_{G} \rightarrow (Z_{G})_{\theta} \hookrightarrow Z_{H}.
\end{align}
Let $C$ be the fiber product of $\Z$ and $Z_{H_{1}}$ over $Z_{H}$. Then there exists a quasicharacter $\chi_{C}$ of $C(F)$ such that
\[
\Delta_{G, H_{1}}(z_{1}\gamma_{1}, z\delta) = \chi_{C}^{-1}(z_{1}, z) \Delta_{G, H_{1}}(\gamma_{1}, \delta), \,\,\,\,\, z_{1} \in Z_{H_{1}}(F), z \in Z_{G}(F)
\]
where $z_{1}$ and $z$ have the same image on $Z_{H}(F)$, and the restriction of $\chi_{C}$ to $Z_{1}(F)$ is $\chi_{1}$.

\item For $g \in G(F)$, $\Delta_{G, H_{1}}(\gamma_{1}, g^{-1} \delta \theta (g)) = \x_{G}(g)\Delta_{G, H_{1}}(\gamma_{1}, \delta)$.

\end{itemize}

The transfer map is a correspondence from $f \in C^{\infty}_{c}(G(F))$ to $f^{H_{1}} \in C^{\infty}_{c}(H_{1}(F), \chi_{1})$ such that
\begin{align}
\label{eq: geometric transfer}
SO_{H_{1}}(f^{H_{1}}, \gamma_{1}) = \sum_{\{\delta'\}_{G(F)}^{\theta} \thicksim_{st} \{\delta\}_{G(F)}^{\theta}} \Delta_{G, H_{1}}(\gamma_{1}, \delta') O_{G}^{\theta, \x_{G}}(f, \delta')
\end{align}
where the sum is over $\theta$-twisted conjugacy classes $\{\delta'\}_{G(F)}^{\theta}$ in the $\theta$-twisted stable conjugacy class of $\delta$. The existence of such a correspondence has been conjectured by Langlands, Shelstad and Kottwitz. In the real case, this is now a theorem of Shelstad \cite{Shelstad:2012}. In the $p$-adic case, Waldspurger \cite{Waldspurger:1995} \cite{Waldspurger:1997} \cite{Waldspurger:2006} \cite{Waldspurger:2008} reduced it to the fundamental lemma for Lie algebras over the function field, and Ngo \cite{Ngo:2010} proved the fundamental lemma in this form. 


This transfer map can also be defined for equivariant functions. To be more precise, let $Z_{F}$ be a closed subgroup of $Z_{G}(F)$ such that $Z_{F} \rightarrow Z_{H}(F)$ through \eqref{eq: central inclusion} is injective. In particular, the preimage of $Z_{F}$ in $Z_{H_{1}}(F)$ forms a closed subgroup of $C(F)$, we denote it by $C_{1, F}$. We fix a quasicharacter $\chi$ on $Z_{F}$, it pulls back to a quasicharacter on $C_{1, F}$. Denote the restriction of $\chi_{C}$ on $C_{1, F}$ by $\chi_{C_{1}}$, then we claim there is a correspondence from the space $C^{\infty}_{c}(G(F), \chi)$ of $\chi^{-1}$-equivariant functions to the space $C^{\infty}_{c}(H_{1}(F), \chi \chi_{C_{1}})$ of $( \chi \chi_{C_{1}})^{-1}$-equivariant functions characterized by \eqref{eq: geometric transfer}. This correspondence can be constructed as follows. There is a surjection from $C^{\infty}_{c}(G(F))$ to $C^{\infty}_{c}(G(F), \chi)$ defined by 
\[
f \mapsto \bar{f} = \int_{Z_{F}} f(zg)\chi(z) dz.
\]
Similarly, we have a surjection from $C^{\infty}_{c}(H_{1}(F), \chi_{1})$ to $C^{\infty}_{c}(H_{1}(F), \chi\chi_{C_{1}})$ defined by
\[
f \mapsto \bar{f} = \int_{Z_{1}(F) \backslash C_{1, F}} f(zg)\chi\chi_{C_{1}}(z) dz.
\]
Then it suffices to check the commutativity of the following diagram
\[
\xymatrix{C^{\infty}_{c}(G(F)) \ar[r] \ar[d] & C^{\infty}_{c}(H_{1}(F), \chi_{1}) \ar[d] \\
C^{\infty}_{c}(G(F), \chi) \ar[r] & C^{\infty}_{c}(H_{1}(F), \chi\chi_{C_{1}}).}
\]
Suppose $f \in C^{\infty}_{c}(G(F))$, $\gamma_{1}$ is a norm of $\delta$
\[
O_{G}^{\theta, \x_{G}}(\bar{f}, \delta) = \int_{Z_{F}} O_{G}^{\theta, \x_{G}}(f, z \delta) \chi(z) dz.
\]
So
\begin{align*}
& \sum_{\{\delta'\}_{G(F)}^{\theta} \thicksim_{st} \{\delta\}_{G(F)}^{\theta}} \Delta_{G, H_{1}}(\gamma_{1}, \delta') O_{G}^{\theta, \x_{G}}(\bar{f}, \delta') \\
= & \sum_{\{\delta'\}_{G(F)}^{\theta} \thicksim_{st} \{\delta\}_{G(F)}^{\theta}} \Delta_{G, H_{1}}(\gamma_{1}, \delta') \int_{Z_{F}} O_{G}^{\theta, \x_{G}}(f, z \delta') \chi(z) dz \\
= & \int_{Z_{F}} \chi(z) \sum_{\{\delta'\}_{G(F)}^{\theta} \thicksim_{st} \{\delta\}_{G(F)}^{\theta}} \Delta_{G, H_{1}}(\gamma_{1}, \delta') O_{G}^{\theta, \x_{G}}(f, z \delta') dz \\
= & \int_{Z_{F}} \chi(z) \sum_{\{\delta'\}_{G(F)}^{\theta} \thicksim_{st} \{\delta\}_{G(F)}^{\theta}} \chi_{C}(z_{1}, z) \Delta_{G, H_{1}}(z_{1}\gamma_{1}, z\delta') O_{G}^{\theta, \x_{G}}(f, z \delta')  dz \\
= & \int_{Z_{F}} \chi(z)\chi_{C}(z_{1}, z) SO_{H_{1}}(f^{H_{1}}, z_{1}\gamma_{1})dz \\
= & \int_{Z_{1}(F) \backslash C_{1, F}} \chi\chi_{C_{1}}(z_{1}) SO_{H_{1}}(f^{H_{1}}, z_{1}\gamma_{1})dz \\
\end{align*}
Hence $\bar{f}$ corresponds to the image of $f^{H_{1}}$ in $C^{\infty}_{c}(H_{1}(F), \chi\chi_{C_{1}})$.

Let $G \subseteq \lG$ be two quasisplit connected reductive groups over $F$ such that $G_{der} = \lG_{der}$ and $\lG/G = D$. Suppose $[(H, \mathcal{H}, s, \xi)] \in \End{}{\com{G}, \x_{G}}$, let $[(\lif{H}, \mathcal{\lif{H}}, \lif{s}, \lif{\xi})] \in \End{}{\com{\lG}, \x_{\lG}}$ be the corresponding lift. We also fix a $z$-pair $(\lif{H}_{1}, \lif{\xi}_{\lif{H}_{1}})$ for $\mathcal{\lif{H}}$ with a $z$-extension
\[
\xymatrix{1 \ar[r] & Z_{1} \ar[r] & \lif{H}_{1} \ar[r]  & \lif{H} \ar[r] & 1.}
\]
Let $H_{1}$ be the preimage of $H$ in $\lif{H}_{1}$, then we get a $z$-extension for $H$
\[
\xymatrix{1 \ar[r] & Z_{1} \ar[r] & H_{1} \ar[r]  & H \ar[r] & 1.}
\]
Note $\lif{H}_{1} / H_{1} = D$, so on the dual side $\lif{\xi}_{\lif{H}_{1}}$ gives rise to an L-embedding $\xi_{H_{1}}: \mathcal{H} \rightarrow \L{H_{1}}$ by taking quotient of $\D{D}$. We fix a $\theta$-stable Whittaker datum for $\lG$, which determines that for $G$. Then the relation of the transfer factors $\Delta_{\lG, \lif{H}_{1}}$ and $\Delta_{G, H_{1}}$ can be stated in the following lemma.

\begin{lemma}
\label{lemma: twisted transfer factor}
Suppose $\delta$ is a strongly $\theta$-regular $\theta$-semisimple element in $G(F) \subseteq \lG(F)$, and $\gamma_{1}$ is a strongly $G$-regular semisimple element in $H_{1}(F) \subseteq \lif{H}_{1}(F)$. Then one has 
\[
\Delta_{\lG, \lif{H}_{1}}(\gamma_{1}, \delta) = \Delta_{G, H_{1}}(\gamma_{1}, \delta).
\]
\end{lemma}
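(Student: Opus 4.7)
The plan is to invoke the Kottwitz--Shelstad factorization of the twisted transfer factor and verify the equality term by term. With the fixed $\theta$-stable Whittaker datum for $\lG$ (which restricts to the chosen one for $G$), the factor decomposes as
\[
\Delta_{G,H_{1}}(\gamma_{1},\delta) \;=\; \epsilon_{L}\cdot \Delta_{I}\cdot \Delta_{II}\cdot \Delta_{III}\cdot \Delta_{IV},
\]
and analogously for $\Delta_{\lG,\lif{H}_{1}}(\gamma_{1},\delta)$, following Section~4 of \cite{KottwitzShelstad:1999}. I would first fix compatible auxiliary data: a maximal $F$-torus $T\subset G$ containing the twisted centralizer $\com{G}_{\delta}$, a maximal $F$-torus $T_{H}\subset H$ corresponding to $T$ via an admissible embedding, and $a$-, $\chi$-data for their shared root system. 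Because $G_{der}=\lG_{der}$ and $H_{der}=\lif{H}_{der}$ (the remark after Proposition~\ref{prop: lifting endoscopic group}), these data extend canonically by setting $T_{\lG}=T\cdot Z_{\lG}$ and $T_{\lif{H}}=T_{H}\cdot Z_{\lif{H}}$, and the root systems of $T$ in $G$ (resp.\ $T_{H}$ in $H$) agree with those of $T_{\lG}$ in $\lG$ (resp.\ $T_{\lif{H}}$ in $\lif{H}$).

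With these choices the root-theoretic factors $\Delta_{I}$, $\Delta_{II}$, $\Delta_{IV}$ and the Whittaker normalization $\epsilon_{L}$ depend only on the common root datum, on the $a$- and $\chi$-data, and on the images of $\delta$ and $\gamma_{1}$ in the corresponding Cartans. All of these inputs are identical on the two sides, so these factors agree literally.

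The heart of the argument is the comparison of $\Delta_{III}$. This factor is computed as a Tate--Nakayama-type pairing of a Galois cohomology class, built from the $\chi$-data and from the $\theta$-twisted centralizer tori of $\delta$ and $\gamma_{1}$, against the image of $s$ (respectively $\lif{s}$) in the corresponding dual torus. The tori occurring for $\lG$ and $\lif{H}_{1}$ are central extensions by $D$ of those for $G$ and $H_{1}$, while $\lif{s}$ is a lift of $s$ modulo $\D{D}\subset Z(\D{\lG})$. The task is therefore to carry out a diagram chase showing that $\D{D}$ lies in the kernel of the natural map into the dual of the $\theta$-coinvariant cocenter that appears in the pairing---essentially the same chase used in the proof of Proposition~\ref{prop: lifting endoscopic group} to show that the preimage of $Z(\D{G})$ in $\D{\lG}$ is $Z(\D{\lG})$. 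Under this identification the pairings on the two sides coincide.

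I expect the main obstacle to be precisely this last compatibility: one must track carefully the hypercohomology constructions of Section~4 of \cite{KottwitzShelstad:1999} to confirm that the lifts of the tori, of the cocycle representing the invariant, and of the element $s$ are all compatible with the quotient by $D$, and in particular that the pairing is insensitive to the ambiguity in the choice of the lift $\lif{s}$. Once this compatibility is established, multiplying the factors yields the claimed equality.
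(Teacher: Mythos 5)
Your overall strategy — invoking the Kottwitz--Shelstad factorization and verifying the equality term by term with compatible choices of tori, $a$-data and $\chi$-data — is exactly the paper's approach, and your treatment of $\Delta_{I}$, $\Delta_{II}$, $\Delta_{IV}$ is essentially the right argument (with $\lG_{sc}=G_{sc}$ doing the work for $\Delta_{I}$). But there are two points where the proposal as written does not close.

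First, the normalizing factor $\epsilon_{L}$ does \emph{not} ``depend only on the common root datum.'' It is the local $\epsilon$-factor of the virtual Galois representation $V_{\lG}-V_{\lif{H}}$ (resp.\ $V_{G}-V_{H}$), where $V_{\lG}=X^{*}(\lif{\mathbb{T}})^{\theta}\otimes\C$ etc., and these representations are genuinely different on the two sides. What actually happens is that $V_{\lG}$ and $V_{G}$ differ by the same Galois module $V_{D}=X^{*}(D)\otimes\C$ as $V_{\lif{H}}$ and $V_{H}$ do (both coming from the short exact sequences $1\to\mathbb{T}\to\lif{\mathbb{T}}\to D\to 1$ and $1\to\mathbb{T}_{H}\to\lif{\mathbb{T}}_{H}\to D\to 1$), so the extra factors $\epsilon_{L}(V_{D},\psi_{F})$ cancel in the ratio. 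Your argument needs this cancellation spelled out rather than asserted as triviality.

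Second, and more seriously, the $\Delta_{III}$ argument you sketch is not the right mechanism. You propose a diagram chase showing that $\D{D}$ lies in the kernel of a map into ``the dual of the $\theta$-coinvariant cocenter,'' and worry about ``the ambiguity in the choice of the lift $\lif{s}$.'' But $\lif{s}$ is not ambiguous — it is part of the fixed endoscopic datum $(\lif{H},\mathcal{\lif{H}},\lif{s},\lif{\xi})$ produced by Proposition~\ref{prop: lifting endoscopic group} — and the pairing that computes $\Delta_{III}$ is a Tate--Nakayama pairing on hypercohomology, $H^{1}(F, T_{sc}\xrightarrow{1-\theta_{1}}T_{1})\times H^{1}(W_{F},\D{T}_{1}\xrightarrow{1-\D{\theta}_{1}}\D{T}_{ad})\to\C^{\times}$, not a pairing against an element of a dual torus. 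The paper's argument is cleaner: the inclusion $T_{1}\hookrightarrow\lif{T}_{1}$ induces a pushforward $\varphi$ on the Galois side and a pullback $\varphi^{*}$ on the Weil side of the hypercohomology pairing, and these are \emph{adjoint}. One checks that the class $\textbf{V}_{0}$ on the group side can be built for $(G,H_{1})$ and pushed forward (since $\delta$, $\gamma_{1}$, and $T_{sc}=\lif{T}_{sc}$ live on the small side), while the class $\textbf{A}_{0}$ on the dual side is built for $(\lG,\lif{H}_{1})$ and pulled back (since $s$ is the projection of $\lif{s}$). Then $\Delta_{III,(\lG,\lif{H}_{1})}=\langle\varphi(\textbf{V}_{0}),\textbf{A}_{0}\rangle=\langle\textbf{V}_{0},\varphi^{*}(\textbf{A}_{0})\rangle=\Delta_{III,(G,H_{1})}$. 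Your ``kernel'' heuristic gestures in a related direction but does not produce this identity, and the diagram chase from Proposition~\ref{prop: lifting endoscopic group} that you cite addresses a different point (finiteness of $\Ker A|_{\D{D}}$).
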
 

\begin{proof}
Suppose the $\theta$-stable Whittaker datum for $\lG$ is constructed with respect to a $\theta$-stable $F$-splitting $(\mathbb{\lif{B}}, \mathbb{\lif{T}}, \{X_{\alpha}\})$ of $\lG$, and a nontrivial additive character $\q_{F}$ of $F$. This also determines a $\theta$-stable $F$-splitting $(\mathbb{B}, \mathbb{T}, \{X_{\alpha}\})$ of $G$. Then the unnormalized transfer factor can be defined as a product
\[
\Delta_{0}(\gamma_{1}, \delta) = \Delta_{I}(\gamma_{1}, \delta) \Delta_{II}(\gamma_{1}, \delta)\Delta_{III}(\gamma_{1}, \delta)\Delta_{IV}(\gamma_{1}, \delta).
\]
It depends on the $\theta$-stable $F$-splitting that we have fixed. First, we would like to compare the unnormalized transfer factors for $(\lG, \lif{H}_{1})$ and $(G, H_{1})$ term by term. To set things up, let $\lif{T}_{H_{1}}$ be the centralizer of $\gamma_{1}$ in $\lif{H}_{1}$ and let  $T_{H_{1}} = \lif{T}_{H_{1}} \cap H_{1}$. Let $\lif{T}_{H}$ (resp. $T_{H}$) be the projection of $\lif{T}_{H_{1}}$ (resp. $T_{H_{1}}$) on $\lif{H}$ (resp. $H$). We fix an admissible embedding $\lif{T}_{H} \longrightarrow \lif{T}_{\theta}$ and this gives an admissible embedding $T_{H} \longrightarrow T_{\theta}$ by restriction. Since the root system $R(\lG, \lif{T})$ is isomorphic to $R(G, T)$ and the isomorphism is equivariant under the Galois action, one can assign the same $a$-data and $\chi$-data \cite{LanglandsShelstad:1987} to them. They induce $a$-data and $\chi$-data for $R_{res}(\lG, \lif{T})$ (resp. $R_{res}(G, T)$) which are roots in $R(\lG, \lif{T})$ (resp. $R(G, T)$) restricted to $(\lif{T}^{\theta})^{0}$ (resp. $(T^{\theta})^{0}$). 

Let $<\cdot, \cdot>$ denote the Tate-Nakayama pairing between $H^{1}(F, T_{sc}^{\theta})$ and $\pi_{0}((\D{T_{sc}^{\theta}})^{\Gal{}})$, then the first term in the unnormalized transfer factor is defined by
\[
\Delta_{I, (G, H_{1})}(\gamma_{1}, \delta) = <\lambda_{a_{\a}}(T_{sc}^{\theta}), s_{T, \theta}>
\]
where $\lambda_{a_{\a}}(T_{sc}^{\theta})$ is defined by using $a$-data and the $\theta$-stable $F$-splitting, and $s_{T, \theta}$ is the projection of the semisimple element $s \in \D{G} \rtimes \D{\theta}$ in the endoscopic datum $(H, \mathcal{H}, s, \xi)$ onto $(\D{T}_{ad})_{\D{\theta}} = \D{T_{sc}^{\theta}}$.  Because $\lG_{sc} = G_{sc}$ and we choose $a$-data and the $\theta$-stable $F$-splitting for $\lG$ and $G$ in a consistent way,  $\lambda_{a_{\a}}(T_{sc}^{\theta}) = \lambda_{a_{\a}}(\lif{T}_{sc}^{\theta})$. Moreover $\lif{s}$ and $s$ have the same image in $\D{T_{sc}^{\theta}}$, hence 
\[
\Delta_{I, (G, H_{1})}(\gamma_{1}, \delta) = \Delta_{I, (\lG, \lif{H}_{1})}(\gamma_{1}, \delta).
\]

For the second term we adopt Waldspurger's modification here (see \cite{KottwitzShelstad:2012}). It is defined by the $a$-data and $\chi$-data, and again because we choose them for $\lG$ and $G$ in a consistent way, the second term will be the same for $(\lG, \lif{H}_{1})$ and $(G, H_{1})$. Before discussing the third term, let us consider the fourth term first. The fourth term is defined by 
\[
\Delta_{IV, (G, H_{1})} (\gamma_{1}, \delta)= \frac{D_{\com{G}} (\delta) }{D_{H_{1}}(\gamma_{1}) }
\]
where 
\begin{align*}
D_{\com{G}}(\delta) & = |det (Ad(\delta) \circ \theta - 1)_{Lie(G)/ Lie(\Cent((G_{\delta}^{\theta})^{0}, G)}|^{1/2}_{F}  \\
D_{H_{1}} (\gamma_{1}) & = |det(Ad(\gamma_{1}) - 1)_{Lie(H_{1})/ Lie(T_{H_{1}})}|^{1/2}_{F}
\end{align*}
And it is easy to see that $D_{\com{\lG}}(\delta) = D_{\com{G}}(\delta)$ and $D_{\lif{H}_{1}}(\gamma_{1}) = D_{H_{1}}(\gamma_{1})$, therefore the fourth term is also the same for $(\lG, \lif{H}_{1})$ and $(G, H_{1})$.

We are now left with the third term $\Delta_{III, (G,H_{1})}$, and it is given by a pairing of  hypercohomology groups $H^{1}(F, \xymatrix{T_{sc} \ar[r]^{1-\theta_{1}} & T_{1}})$ and $H^{1}(W_{F}, \xymatrix{\D{T}_{1} \ar[r]^{1-\D{\theta}_{1}} & \D{T}_{ad} })$, where $T_{1}$ is the fiber product of $T$ and $T_{H_{1}}$ over $T_{\theta} \cong T_{H}$, and $\theta_{1}$ is a lift of $\theta$ on $T_{1}$ which fixes $Z_{1} \subseteq T_{1}$. Similarly we can define $\lif{T}_{1}$, and the inclusion $T_{1} \rightarrow \lif{T}_{1}$ induces maps on hypercohomology groups

\begin{align*}
\varphi  &: H^{1}(F, \xymatrix{T_{sc} \ar[r]^{1-\theta_{1}} & T_{1}})       \longrightarrow     H^{1}(F, \xymatrix{\lif{T}_{sc} \ar[r]^{1-\theta_{1}} & \lif{T}_{1}}) \\
\varphi^{*}  &:   H^{1}(W_{F}, \xymatrix{\D{\lif{T}}_{1} \ar[r]^{1-\D{\theta}_{1}} & \D{\lif{T}}_{ad} }) \longrightarrow H^{1}(W_{F}, \xymatrix{\D{T}_{1} \ar[r]^{1-\D{\theta}_{1}} & \D{T}_{ad} }). 
\end{align*}
It is an easy exercise to check that they are adjoint to each other with respect to the Tate-Nakayama pairing on hypercohomology groups, i.e.
\[
<\varphi(\textbf{V}), \textbf{A}> = <\textbf{V}, \varphi^{*}(\textbf{A})> 
\]
where $\textbf{V} \in H^{1}(F, \xymatrix{T_{sc} \ar[r]^{1-\theta_{1}} & T_{1}})$ and $\textbf{A} \in H^{1}(W_{F}, \xymatrix{\D{\lif{T}}_{1} \ar[r]^{1-\D{\theta}_{1}} & \D{\lif{T}}_{ad} })$. It follows from the definition in \cite{KottwitzShelstad:1999} that there exist $\textbf{V}_{0} \in H^{1}(F, \xymatrix{T_{sc} \ar[r]^{1-\theta_{1}} & T_{1}})$ and $\textbf{A}_{0} \in H^{1}(W_{F}, \xymatrix{\D{\lif{T}}_{1} \ar[r]^{1-\D{\theta}_{1}} & \D{\lif{T}}_{ad} })$ such that 
\begin{align*}
\Delta_{III, (\lG, \lif{H}_{1})}(\gamma_{1}, \delta) &= <\varphi(\textbf{V}_{0}), \textbf{A}_{0}>, \\
\Delta_{III, (G,H_{1})}(\gamma_{1}, \delta) &= <\textbf{V}_{0}, \varphi^{*}(\textbf{A}_{0})>.
\end{align*}
Hence, $\Delta_{III, (\lG, \lif{H}_{1})}(\gamma_{1}, \delta) = \Delta_{III, (G,H_{1})}(\gamma_{1}, \delta)$. 

Up to now, we have shown the equality for the unnormalized transfer factors. To define the normalizing factor, we need to choose an $F$-splitting $(\mathbb{\lif{B}}_{H}, \mathbb{\lif{T}}_{H}, \{X_{\alpha_{H}}\})$ of $\lif{H}$ and it determines an $F$-splitting $(\mathbb{B}_{H}, \mathbb{T}_{H}, \{X_{\alpha_{H}}\})$ of $H$. Let $V_{\lG}$ be the representation of $\Gal{}$ on $X^{*}(\mathbb{\lif{T}})^{\theta} \otimes \C$ and $V_{\lif{H}}$ be representation of $\Gal{}$ on $X^{*}(\mathbb{\lif{T}}_{H}) \otimes \C$. Let $\lif{V} = V_{\lG} - V_{\lif{H}}$, then the normalizing factor for $(\lG, \lif{H}_{1})$ is given by the local $\epsilon$-factor 
\[
\epsilon_{L}(\lif{V}, \q_{F}) = \epsilon_{L}(V_{\lG}, \q_{F}) / \epsilon_{L}(V_{\lif{H}}, \q_{F})
\] 
(see \cite{Tate:1979}, 3.6). Similarly, we can define $V_{G}, V_{H}$ and $V = V_{G} - V_{H}$. Then it is enough to show $\epsilon_{L}(\lif{V}, \q_{F}) = \epsilon_{L}(V, \q_{F})$. Note
\[
\epsilon_{L}(\lif{V}, \q_{F})/\epsilon(V, \q_{F}) = \epsilon_{L}(V_{\lG}, \q_{F}) / \epsilon_{L}(V_{G}, \q_{F}) \cdot \epsilon_{L}(V_{H}, \q_{F}) / \epsilon_{L}(V_{\lif{H}}, \q_{F}).
\]
By the following exact sequences
\[
\xymatrix{1 \ar[r] & \mathbb{T} \ar[r] & \mathbb{\lif{T}} \ar[r]  & D \ar[r] & 1}
\]
\[
\xymatrix{1 \ar[r] & \mathbb{T}_{H} \ar[r] & \mathbb{\lif{T}}_{H} \ar[r]  & D \ar[r] & 1}
\]
we have 
\[
\xymatrix{1 \ar[r] & V_{D} \ar[r] & V_{\lG} \ar[r]  & V_{G} \ar[r] & 1}
\]
\[
\xymatrix{1 \ar[r] & V_{D} \ar[r] & V_{\lif{H}} \ar[r]  & V_{H} \ar[r] & 1}
\]
where $V_{D} = X^{*}(D) \otimes \C$ is $\theta$-invariant. Therefore,
\[
\epsilon_{L}(V_{\lG}, \q_{F}) / \epsilon_{L}(V_{G}, \q_{F}) = \epsilon_{L}(V_{\lif{H}}, \q_{F}) / \epsilon_{L}(V_{H}, \q_{F}) = \epsilon_{L}(V_{D}, \q_{F}),
\]
This finishes the proof.

\end{proof}

Following the notations in this lemma, note $G(F)$ is $\theta$-twisted conjugate invariant under $\lG(F)$, so we have the following corollary.

\begin{corollary}%
Suppose $\delta$ is a strongly $\theta$-regular $\theta$-semisimple element in $G(F)$, and $\gamma_{1}$ is a strongly $G$-regular semisimple element in $H_{1}(F)$. Then one has 
\[
\Delta_{G, H_{1}}(\gamma_{1}, g^{-1} \delta \theta(g)) = \x_{\lG}(g) \Delta_{G, H_{1}}(\gamma_{1}, \delta).
\]
\end{corollary}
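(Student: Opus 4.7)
The plan is to deduce the corollary from Lemma~\ref{lemma: twisted transfer factor} combined with the third basic property of the transfer factor already listed for $\lG$ in place of $G$. The key observation is that $G$ is normal in $\lG$ and preserved by $\theta$ (since $\Sigma$ acts on $G$ because $\c$ is $\theta$-invariant), so for any $g \in \lG(F)$ and $\delta \in G(F)$ the element $g^{-1}\delta\theta(g)$ lies in $G(F)$; moreover it is strongly $\theta$-regular and $\theta$-semisimple, because these conditions are characterized by the behavior of $\Int(\delta)\circ\theta$ up to conjugation, which is invariant under $\theta$-twisted conjugacy. Likewise $\gamma_1$ remains a norm of $g^{-1}\delta\theta(g)$ since $\theta$-twisted stable conjugacy is coarser than $\theta$-twisted conjugacy.

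With these preliminaries, the computation is a three-line sandwich. First, by applying the listed property of the transfer factor to the pair $(\lG,\lif{H}_1)$ with the quasicharacter $\x_{\lG}$ and the element $g\in\lG(F)$, one obtains
\[
\Delta_{\lG, \lif{H}_{1}}(\gamma_{1}, g^{-1}\delta\theta(g)) \;=\; \x_{\lG}(g)\,\Delta_{\lG, \lif{H}_{1}}(\gamma_{1}, \delta).
\]
Next, Lemma~\ref{lemma: twisted transfer factor} applied to the pair $(\delta,\gamma_1)$ and to the pair $(g^{-1}\delta\theta(g),\gamma_1)$ — both of which satisfy its hypotheses by the remarks above — rewrites each side in terms of $\Delta_{G,H_1}$, yielding
\[
\Delta_{G, H_{1}}(\gamma_{1}, g^{-1}\delta\theta(g)) \;=\; \x_{\lG}(g)\,\Delta_{G, H_{1}}(\gamma_{1}, \delta),
\]
as required.

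The only point to watch is verifying the hypotheses of Lemma~\ref{lemma: twisted transfer factor} at the twisted element $g^{-1}\delta\theta(g)$: namely that it lies in $G(F)$, that it is strongly $\theta$-regular and $\theta$-semisimple, and that $\gamma_1$ remains a norm for it in the sense defined via $\lif{H}_1$. Each is immediate from the remarks above and from the fact that the norm correspondence between $H_1(\bar F)$-conjugacy classes and $\theta$-twisted $G(\bar F)$-conjugacy classes is defined over $F$ using the same $F$-splitting used for $\lG$ (compatible with the one for $G$). So the argument really is just that the statement for $\lG$ transports through the equality of transfer factors, and no new computation with $a$-data, $\chi$-data, or hypercohomology is needed beyond what was done in the previous lemma.
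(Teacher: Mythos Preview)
Your proposal is correct and is essentially identical to the paper's own proof: apply the equivariance property of $\Delta_{\lG,\lif{H}_{1}}$ under $\theta$-twisted $\lG(F)$-conjugation, then use Lemma~\ref{lemma: twisted transfer factor} at both $\delta$ and $g^{-1}\delta\theta(g)$ to pass back to $\Delta_{G,H_{1}}$. The paper notes beforehand that $G(F)$ is $\theta$-twisted conjugate invariant under $\lG(F)$, which is exactly your observation that $g^{-1}\delta\theta(g)\in G(F)$; your additional remarks about regularity and norms are more explicit than the paper but not strictly needed, since Lemma~\ref{lemma: twisted transfer factor} holds regardless of whether $\gamma_{1}$ is a norm (both sides vanish otherwise).
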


\begin{proof}
From the previous lemma we know 
\[
\Delta_{G, H_{1}}(\gamma_{1}, \delta) = \Delta_{\lG, \lif{H}_{1}}(\gamma_{1}, \delta) \text { and } \Delta_{\lG, \lif{H}_{1}}(\gamma_{1}, g^{-1} \delta \theta(g)) = \Delta_{G, H_{1}}(\gamma_{1}, g^{-1} \delta \theta(g)).
\]
It follows from the property of transfer factor that 
\[
\Delta_{\lG, \lif{H}_{1}}(\gamma_{1}, g^{-1} \delta \theta(g)) = \x_{\lG}(g) \Delta_{\lG, \lif{H}_{1}}(\gamma_{1}, \delta).
\]
Then the corollary is clear.

\end{proof}

\begin{remark}
An equivalent way of stating this corollary is as follows. Let $f \in C^{\infty}_{c}(G(F) \rtimes \theta)$, we can view it as in $C^{\infty}_{c}(G(F))$ by sending $g$ to $g \rtimes \theta$, and define its transfer as before. For $g \in \lG(F), h \in G(F) \rtimes \theta$ we denote $f^{g} (h) = f(ghg^{-1})$. Then this corollary says
\begin{align}
\label{eq: conjugate function}
(f^{g})^{H_{1}} = \x_{\lG}(g)f^{H_{1}}.
\end{align}
\end{remark}

Let $\lif{Z}_{F}$ be a closed subgroup of $Z_{\lG}(F)$ such that $\lif{Z}_{F} \rightarrow (Z_{\lG})_{\theta}(F)$ is injective and $D(F) / \c(\lif{Z}_{F})$ is finite (this is possible because we assume $\c$ is $\theta$-invariant). Let $Z_{F} = \lif{Z}_{F} \cap G(F)$. We choose Haar measures on $\lif{Z}_{F}$ and $Z_{F}$ such that the measure on $Z_{F} \backslash G(F)$ is the restriction of that on $\lif{Z}_{F} \backslash \lG(F)$. We fix a quasicharacter $\lif{\chi}$ of $\lif{Z}_{F}$ and denote its restriction to $Z_{F}$ by $\chi$. Note $\lif{Z}_{F}G(F) \backslash \lG(F)$ is finite, so we get an inclusion map
\begin{align}
\label{map: inclusion of Hecke algebra}
\xymatrix{C^{\infty}_{c}(G(F), \chi) \ar@{^{(}->}[r]   &  C^{\infty}_{c}(\lG(F), \lif{\chi})  \\
f \ar@{|->}[r]   & \lif{f}, }
\end{align}
where $\lif{f}$ is the extension of $f$ by zero outside  $\lif{Z}_{F}G(F)$. We can identify $C^{\infty}_{c}(G(F), \chi)$ with its image in $C^{\infty}_{c}(\lG(F), \lif{\chi})$. Because $\lif{Z}_{F}G(F)$ is $\theta$-twisted conjugate invariant under $\lG(F)$, the map \eqref{map: inclusion of Hecke algebra} induces a map from $\mathcal{I}(G^{\theta, \x_{G}}, \chi)$ to $\mathcal{I}(\lG^{\theta, \x_{\lG}}, \lif{\chi})$ where $\x_{\lG}|_{G} = \x_{G}$. Moreover $\lif{Z}_{F}G(\bar{F})$ is  $\theta$-twisted conjugate invariant under $\lG(\bar{F})$, so it also induces a map from $\mathcal{SI}(G^{\theta, \x_{G}}, \chi)$ to $\mathcal{SI}(\lG^{\theta, \x_{\lG}}, \lif{\chi})$. 

Let $\x_{\lG}$ be a quasicharacter of $\lG(F)$ and $\x_{G} = \x_{\lG} | _{G}$. Let $\delta$ be a strongly $\theta$-regular $\theta$-semisimple element of $G(F) \subseteq \lG(F)$ such that $\x_{\lG}$ is trivial on the $\theta$-twisted centralizer $\com{\lG}_{\delta}(F)$ of $\delta$. We choose Haar measures on $\com{\lG}_{\delta}(F)$ and $\com{G}_{\delta}(F)$ such that the measure on $\com{G}_{\delta}(F) \backslash G(F)$ is the restriction of that on $\com{\lG}_{\delta}(F) \backslash \lG(F)$. 
Then
\[
O_{\lG}^{\theta, \x_{\lG}}(\lf, \delta) = \sum_{\{\delta'\}_{G(F)}^{\theta} \thicksim_{\lG(F)} \{\delta\}_{G(F)}^{\theta}} O_{G}^{\theta, \x_{G}}(f, \delta')\x_{\lG}(g)
\]
where the sum is over $\theta$-twisted $G(F)$-conjugacy classes $\{\delta'\}_{G(F)}^{\theta}$ in the $\theta$-twisted $\lG(F)$-conjugacy classes $\{\delta\}_{\lG(F)}^{\theta}$ with $\delta' = g^{-1} \delta g$ for $g \in \lG(F)$, and the Haar measure on $G^{\theta}_{\delta'}(F)$ is translated from that on $G^{\theta}_{\delta}$ by conjugation.

Suppose $[(\lif{H}, \mathcal{\lif{H}}, \lif{s}, \lif{\xi})] \in \End{}{\com{\lG}, \x_{\lG}}$ and $[(H, \mathcal{H}, s, \xi)] \in \End{}{\com{G}, \x_{G}}$ correspond to each other according to Proposition~\ref{prop: lifting endoscopic group}. We also fix a $z$-pair $(\lif{H}_{1}, \lif{\xi}_{\lif{H}_{1}})$ for $\mathcal{\lif{H}}$ which induces a $z$-pair $(H_{1}, \xi_{H_{1}})$ for $\mathcal{H}$. Let $\lif{C}_{1, F}$ be the preimage of $\lif{Z}_{F}$ in $Z_{\lif{H}_{1}}(F)$ and $C_{1, F}$ be the preimage of $Z_{F}$ in $Z_{H_{1}}(F)$. Then 
\[
C_{1, F} \hookrightarrow \lif{C}_{1, F} \xrightarrow{\c_{1}} D(F)
\]
with $\c_{1}(\lif{C}_{1, F}) = \c(\lif{Z}_{F})$. It is easy to check that the restriction of $\chi_{\lif{C}}$ to $C(F)$ is $\chi_{C}$. Note $\lif{\chi}$ and $\chi$ pull back to quasicharacters of $\lif{C}_{1, F}$ and $C_{1, F}$ respectively. So let $\lif{\chi}' = \lif{\chi}\chi_{\lif{C}_{1}}$ and $\chi' = \chi\chi_{C_{1}}$, then we have an inclusion map analogous to \eqref{map: inclusion of Hecke algebra}
\begin{align*}
\xymatrix{C^{\infty}_{c}(H_{1}(F), \chi') \ar@{^{(}->}[r]   &  C^{\infty}_{c}(\lif{H}_{1}(F), \lif{\chi}')  \\
f \ar@{|->}[r]   & \lif{f} },  
\end{align*}
The next lemma shows these inclusion maps are compatible with twisted endoscopic transfers.

\begin{lemma}
\label{lemma: twisted endoscopic transfer}
Suppose $f \in C^{\infty}_{c}(G(F), \chi)$, then the $(\theta, \x_{\lG})$-twisted endoscopic transfer of the extension $\lf$ of $f$ is equal to the extension of $(\theta, \x_{G})$-twisted endoscopic transfer $f^{H_{1}}$ of $f$ as elements in $\mathcal{SI}({\lif{H}_{1}}, \lif{\chi}')$, i.e.
\begin{align*}
\lf^{\lif{H}_{1}} = \lif{(f^{H_{1}})} 
\end{align*}
\end{lemma}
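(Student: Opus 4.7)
The plan is to verify the equality at the level of stable orbital integrals on strongly $G$-regular semisimple elements of $\lif{H}_{1}(F)$. Since both $\lf^{\lif{H}_{1}}$ and $\lif{(f^{H_{1}})}$ lie in $\mathcal{SI}(\lif{H}_{1}, \lif{\chi}')$, they are determined by the values $SO_{\lif{H}_{1}}(\cdot, \tilde{\gamma}_{1})$ for $\tilde{\gamma}_{1} \in \lif{H}_{1, G-reg}(F)$. Fix such a $\tilde{\gamma}_{1}$ and consider its norms in $\lG(F)$. If no $\theta$-twisted $\lG(F)$-stable conjugate of a norm lies in $\lif{Z}_{F}G(F)$, the LHS vanishes because $\lf$ is supported in $\lif{Z}_{F}G(F)$; the RHS also vanishes, since via the central inclusion \eqref{eq: central inclusion} for $(\lG, \lif{H})$ this forces $\tilde{\gamma}_{1} \notin \lif{C}_{1,F}H_{1}(F)$, where $\lif{(f^{H_{1}})}$ is zero.

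In the remaining case I would write $\tilde{\gamma}_{1} = z_{1}\gamma_{1}$ with $z_{1} \in \lif{C}_{1,F}$, $\gamma_{1} \in H_{1,G-reg}(F)$, and choose a norm of $\gamma_{1}$ of the form $\delta \in G(F)$. Then $\tilde{\delta} := z\delta$ is a norm of $\tilde{\gamma}_{1}$, where $z$ is the image of $z_{1}$ in $\lif{Z}_{F}$. For the RHS, by definition of the extension and the geometric transfer identity \eqref{eq: geometric transfer} for $(G, H_{1})$,
\[
SO_{\lif{H}_{1}}(\lif{(f^{H_{1}})}, z_{1}\gamma_{1}) = \lif{\chi}'(z_{1})^{-1}SO_{H_{1}}(f^{H_{1}}, \gamma_{1}) = \lif{\chi}'(z_{1})^{-1}\sum_{\{\delta'\}^{\theta}_{G(F)} \sim_{st} \{\delta\}^{\theta}_{G(F)}} \Delta_{G, H_{1}}(\gamma_{1}, \delta')\,O^{\theta, \x_{G}}_{G}(f, \delta').
\]
For the LHS, applying \eqref{eq: geometric transfer} to $(\lG, \lif{H}_{1})$ at $z_{1}\gamma_{1}$ and using central equivariance of the transfer factor together with Lemma~\ref{lemma: twisted transfer factor}, each nonvanishing term carries the factor $\chi_{\lif{C}}(z_{1}, z)^{-1}\Delta_{G, H_{1}}(\gamma_{1}, \delta')$.

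The main technical step is then to reorganize the sum appearing in $SO_{\lif{H}_{1}}(\lf^{\lif{H}_{1}}, z_{1}\gamma_{1})$, which ranges over $\theta$-twisted $\lG(F)$-conjugacy classes in the $\theta$-twisted stable class of $z\delta$. Only those representatives with a $\lG(F)$-twisted conjugate inside $\lif{Z}_{F}G(F)$ contribute; for such representatives I use the decomposition
\[
O^{\theta, \x_{\lG}}_{\lG}(\lf, \tilde{\delta}') = \sum_{\{\delta''\}^{\theta}_{G(F)} \sim_{\lG(F)} \{\delta'\}^{\theta}_{G(F)}} \x_{\lG}(g)\,O^{\theta, \x_{G}}_{G}(f, \delta'')
\]
recorded in the discussion preceding the lemma. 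Combining this with the fact that the $\theta$-twisted $\lG(F)$-stable class of $\tilde{\delta}$, intersected with $\lif{Z}_{F}G(F)$, is a disjoint union of $\theta$-twisted $G(F)$-stable classes (translated by $z$), the double sum collapses to a single sum over $\theta$-twisted $G(F)$-conjugacy classes $\{\delta'\}^{\theta}_{G(F)}$ in the stable class of $\delta$, with total weight $\chi_{\lif{C}}(z_{1}, z)^{-1}\Delta_{G, H_{1}}(\gamma_{1}, \delta')\,O^{\theta, \x_{G}}_{G}(f, \delta')$.

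The match with the RHS then reduces to the scalar identity $\lif{\chi}'(z_{1}) = \lif{\chi}(z)\,\chi_{\lif{C}}(z_{1}, z)$, which holds because $\lif{\chi}' = \lif{\chi}\chi_{\lif{C}_{1}}$ by definition and $\lif{\chi}$ is pulled back to $\lif{C}_{1,F}$ through the projection to $\lif{Z}_{F}$. The main obstacle I anticipate is the combinatorial bookkeeping in this reorganization: one must verify that the $\theta$-twisted stable conjugacy relation in $\lG$, restricted to the subset $\lif{Z}_{F}G(F)$, decomposes cleanly into the $\theta$-twisted stable conjugacy relation in $G$ together with a torsor under $\lif{Z}_{F}$, and that the chosen Haar measures on $\com{\lG}_{\delta}(F)$ and $\com{G}_{\delta}(F)$ are compatible with the $\x_{\lG}(g)$ factor appearing in the orbital integral decomposition. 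Once this bookkeeping is done, the remaining identities are formal consequences of the transfer factor equality from Lemma~\ref{lemma: twisted transfer factor} and the central character conventions.
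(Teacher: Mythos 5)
Your proof is essentially the paper's: both rest on the transfer factor equality of Lemma~\ref{lemma: twisted transfer factor}, the decomposition of the $(\theta,\x_{\lG})$-twisted orbital integral of $\lf$ into $(\theta,\x_{G})$-twisted orbital integrals of $f$ over the $\lG(F)$-orbit, and the $\x_{\lG}$-equivariance of the twisted transfer factor under twisted conjugation by $\lG(F)$. The only organizational difference is that the paper first establishes the stable-orbital-integral identity at $\gamma_{1}\in H_{1}(F)$ (so every norm lies in $G(F)$ and only $G$-level twisted stable classes enter) and then extends to $\lif{C}_{1, F}H_{1}(F)$ by observing that both $SO_{\lif{H}_{1}}(\lf^{\lif{H}_{1}},\cdot)$ and $SO_{\lif{H}_{1}}(\lif{(f^{H_{1}})},\cdot)$ transform by the same character $\lif{\chi}'^{-1}$ under $\lif{C}_{1, F}$-translation, closing with the two support-vanishing statements off $\lif{C}_{1, F}H_{1}(F)$ — a shortcut that avoids exactly the bookkeeping about how the $\theta$-twisted $\lG(F)$-stable class of $z\delta$ meets $\lif{Z}_{F}G(F)$ that you flag as the main obstacle, and that also folds in the factor $\lif{\chi}(z)^{-1}$ coming out of $O^{\theta,\x_{\lG}}_{\lG}(\lf, z\delta')$, which your final scalar identity $\lif{\chi}'(z_{1})=\lif{\chi}(z)\chi_{\lif{C}}(z_{1},z)$ implicitly uses but which should appear explicitly when you unroll the orbital integral at the shifted point.
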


\begin{proof}
Let us assume $\delta$ is a strongly $\theta$-regular $\theta$-semisimple element in $G(F) \subseteq \lG(F)$ and $\gamma_{1}$ is a strongly $G$-regular semisimple element in $H_{1}(F) \subseteq \lif{H}_{1}(F)$, and $\gamma_{1}$ is a norm of $\delta$. By the definition of twisted endoscopic transfer
\[
SO_{\lif{H}_{1}}(\lf^{\lif{H}_{1}}, \gamma_{1}) = \sum_{\{\delta'\}_{\lG(F)}^{\theta} \thicksim_{st} \{\delta\}_{\lG(F)}^{\theta}} \Delta_{\lG, \lif{H}_{1}}(\gamma_{1}, \delta') O_{\lG}^{\theta, \x_{\lG}}(\lf, \delta')
\]
where the sum is over $\theta$-twisted $\lG(F)$-conjugacy classes $\{\delta'\}_{\lG(F)}^{\theta}$ in the $\theta$-twisted stable conjugacy class of $\delta$. Meanwhile, 
\[
O_{\lG}^{\theta, \x_{\lG}}(\lf, \delta') = \sum_{\{\delta''\}_{G(F)}^{\theta} \thicksim_{\lG(F)} \{\delta'\}_{G(F)}^{\theta}} O_{G}^{\theta, \x_{G}}(f, \delta'')\x_{\lG}(g)
\]
where the sum is over $\theta$-twisted $G(F)$-conjugacy classes $\{\delta''\}_{G(F)}^{\theta}$ in the $\theta$-twisted $\lG(F)$-conjugacy class $\{\delta'\}_{\lG(F)}^{\theta}$, and $\delta'' = g^{-1} \delta' \theta(g)$ for $g \in \lG(F)$. By the property of twisted transfer factor, one has 
\[
\Delta_{\lG, \lif{H}_{1}}(\gamma_{1}, g^{-1} \delta' \theta(g)) = \Delta_{\lG, \lif{H}_{1}}(\gamma_{1}, \delta') \x_{\lG}(g). 
\]
Therefore
\begin{align*}
SO_{\lif{H}_{1}}(\lf^{\lif{H}_{1}}, \gamma_{1}) & = \sum_{\{\delta'\}_{\lG(F)}^{\theta} \thicksim_{st} \{\delta\}_{\lG(F)}^{\theta} } \Delta_{\lG, \lif{H}_{1}}(\gamma_{1}, \delta') (\sum_{\{\delta''\}_{G(F)}^{\theta} \thicksim_{\lG(F)} \{\delta'\}_{G(F)}^{\theta}} O_{G}^{\theta, \x_{G}}(f, \delta'')\x_{\lG}(g) \,\,\,\,\,\,\, ) \\
& = \sum_{\{\delta''\}_{G(F)}^{\theta} \thicksim_{st} \{\delta\}_{G(F)}^{\theta}}  \Delta_{\lG, \lif{H}_{1}}(\gamma_{1}, \delta'') O_{G}^{\theta, \x_{G}}(f, \delta'').
\end{align*}
On the other hand 
\[
SO_{\lif{H}_{1}}(\lif{f^{H_{1}}}, \gamma_{1}) =  SO_{H_{1}}(f^{H_{1}}, \gamma_{1}) = \sum_{\{\delta''\}_{G(F)}^{\theta} \thicksim_{st} \{\delta\}_{G(F)}^{\theta}}  \Delta_{G, H_{1}}(\gamma_{1}, \delta'') O_{G}^{\theta, \x_{G}}(f, \delta'').
\]
It follows from Lemma~\ref{lemma: twisted transfer factor} that
\[
\Delta_{\lG, \lif{H}_{1}}(\gamma_{1}, \delta'') = \Delta_{G, H_{1}}(\gamma_{1}, \delta''),
\]
where $\delta''$ is in the $\theta$-twisted stable $G(F)$-conjugacy class of $\delta$. So we have shown
\begin{align}
\label{eq: twisted endoscopic transfer}
SO_{\lif{H}_{1}}(\lf^{\lif{H}_{1}}, \gamma_{1}) = SO_{\lif{H}_{1}}(\lif{f^{H_{1}}}, \gamma_{1})
\end{align}
for $\gamma_{1} \in H_{1}(F)$ being a norm. 

If $\gamma_{1}$ is not a norm, it follows from the property of transfer factor that both sides of \eqref{eq: twisted endoscopic transfer} are zero. By equivariance property, we can extend \eqref{eq: twisted endoscopic transfer} to $\lif{C}_{1, F}H_{1}(F)$. It is also easy to see that $SO_{\lif{H}_{1}}(\lif{f^{H_{1}}}, \gamma_{1}) \neq 0$ only when $\gamma_{1} \in \lif{C}_{1, F}H_{1}(F)$. Finally, one can show $SO_{\lif{H}_{1}}(\lf^{\lif{H}_{1}}, \gamma_{1}) \neq 0$ only when $\gamma_{1} \in \lif{C}_{1, F}H_{1}(F)$ by using the condition on the support of $\lf$. This finishes the proof.


\end{proof}

\subsection{Character identity}
\label{subsec: character identity}

Let $\r$ be an irreducible smooth representation of $G(F)$ and $\chi_{\r}$ be the central character of $\r$. Let $C_{F}$ be a closed subgroup of $Z_{G}(F)$, and $\zeta = \chi_{\r}|_{Z_{F}}$. Suppose $\r^{\theta} \cong \r \otimes \x_{G}$, let $A_{\r}(\theta, \x_{G})$ be an intertwining operator between $\r \otimes \x_{G}$ and $\r^{\theta}$ (this is uniquely determined up to a scalar), we then define the $(\theta, \x_{G})$-twisted character of $\r$ to be the distribution 
\begin{align}
\label{eq: twisted character}
f_{\com{G}}(\r, \x_{G}) :=  trace \int_{C_{F} \backslash G(F)} f(g)\r(g) dg \circ A_{\r}(\theta, \x_{G}),
\end{align}
for $f \in C^{\infty}_{c}(G(F), \zeta)$. In particular, we can define the distribution for $f \in C^{\infty}_{c}(G(F))$ by taking $C_{F}$ to be trivial. By results of Harish-Chandra \cite{H-C:1963} \cite{H-C:1999} in the non-twisted case, Bouaziz \cite{Bouaziz:1987} and Lemaire \cite{Lemaire:2013} in the twisted case, there exists a locally integrable function $\Theta^{\com{G}, \x_{G}}_{\r}$ on $G(F)$ such that for $x \in G^{\theta}_{reg}(F), g \in G(F)$
\[
\Theta^{\com{G}, \x_{G}}_{\r} (g^{-1} x \theta(g)) = \x_{G}(g) \Theta^{\com{G}, \x_{G}}_{\r}(x),
\]
and
\[
f_{\com{G}}(\r, \x_{G}) = \int_{C_{F} \backslash G(F)} f(g)\Theta^{\com{G}, \x_{G}}_{\r}(g) dg.
\]
By the twisted Weyl integration formula (cf. \cite{Lemaire:2013}, 7.3 and \cite{Mezo:2013}, 5.4.1), one can show this character defines a linear functional on $\mathcal{I}(G^{\theta, \x_{G}}, \chi)$. A linear functional on $\mathcal{I}(G^{\theta, \x_{G}}, \chi)$ is called {\bf stable} if it factors through $\mathcal{SI}(G^{\theta, \x_{G}}, \chi)$. This notion of stability is equivalent to the one we give in the introduction.

We assume $\theta$ preserves an $F$-splitting of $G$. For $\p \in \P{G}$, suppose $\underline{\p}$ factors through $\mathcal{H}$ for a twisted endoscopic datum $[(H, \mathcal{H}, s, \xi)] \in \End{}{G^{\theta}}$, let us write $\underline{\p} = \xi \circ \underline{\p}_{\mathcal{H}}$. Clearly, $sZ(\D{G}) \cap S^{\theta}_{\underline{\p}} \neq \emptyset$ and we denote its image in $\cS{\underline{\p}}^{\theta}$ again by $s$. Let us fix a $z$-pair $(H_{1}, \xi_{H_{1}})$ for $\mathcal{H}$ and define $\underline{\p}_{H_{1}} = \xi_{H_{1}} \circ \underline{\p}_{\mathcal{H}}$. We call $(H_{1}, \underline{\p}_{H_{1}})$ corresponds to $(\underline{\p}, s)$ for $s \in \cS{\underline{\p}}^{\theta}$. It is easy to see that for any semisimple $s \in \cS{\underline{\p}}^{\theta}$ such a pair $(H_{1}, \underline{\p}_{H_{1}})$ always exists (see Section~\ref{subsec: Langlands parameter}). For abbreviation, we write $(H_{1}, \underline{\p}_{H_{1}}) \rightarrow (\underline{\p}, s)$. We always assume the Haar measure is preserved for any admissible embedding $T_{H} \xrightarrow{\simeq} T_{\theta}$ for maximal torus $T_{H} \subseteq H$ and $\theta$-stable maximal torus $T \subseteq G$.

Now we can state the conjectural twisted endoscopic character identity.

\begin{conjecture}
\label{conj: twisted character identity}
Suppose $\p \in \Pbd{G}$.

\begin{enumerate}

\item
\begin{align}
\label{eq: stable character}
f(\underline{\p}) := \sum_{\r \in \Pkt{\p}} <1, \r>_{\underline{\p}} f_{G}(\r), \,\,\,\,\,\,\,\,\,\,\,\, f \in C^{\infty}_{c}(G(F))
\end{align}
is stable.

\item Suppose $s$ is a semisimple element in $\cS{\underline{\p}}^{\theta}$, and $(H_{1}, \underline{\p}_{H_{1}}) \rightarrow (\underline{\p}, s)$. Then
\begin{align}
\label{eq: twisted character identity}
f^{H_{1}}(\underline{\p}_{H_{1}}) = \sum_{\substack{\r \in \Pkt{\p} \\ \r \cong \r^{\theta}}} <x, \r^{+}>_{\underline{\p}} f_{G^{\theta}}(\r)
\end{align}
for $f \in C^{\infty}_{c}(G(F))$, where $x$ is the image of $s$ in $\com{\S{\underline{\p}}}$, and $\r^{+}$ is an extension of $\r$ to $G^{+}(F) := G(F) \times <\theta>$ with $\r^{+}(\theta) = A_{\r}(\theta)$.

\end{enumerate}

\end{conjecture}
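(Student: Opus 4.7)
The plan is to establish both parts simultaneously via a globalization and trace-formula comparison, along the lines pioneered by Arthur for classical groups. Parts (i) and (ii) are not really independent: stability of $f(\underline{\p})$ and the twisted character identity emerge together from matching the endoscopic decomposition of the invariant trace formula with the spectral expansion of the stable trace formula.

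First I would reduce to the tempered case. A nontempered $\p$ is a Langlands quotient of a tempered parameter of a Levi $M \subseteq G$, and both stability and the twisted character identity are preserved under parabolic induction, since induction is compatible with the constant-term map on the geometric side and with the cuspidal support map on the spectral side; standard Langlands-quotient formulas then propagate the identity from $M$ to $G$. Next, granted the tempered case, the packet $\Pkt{\p}$ and its pairing are constructed simultaneously with the proof of the identity: one declares $f(\underline{\p})$ to be the stable transfer from the twisted side (for classical $G$, the transfer from $GL(N)$ under the standard embedding $\L{G} \hookrightarrow GL(N,\C)$), decomposes the resulting stable virtual character into irreducibles, and fixes the coefficients $<1, \r>_{\underline{\p}}$ via the Whittaker normalization of Conjecture~\ref{conj: endoscopic parametrization}.

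The core step is then a global comparison. One globalizes $F$ to a number field $\dot F$ with $\dot F_{v_0} = F$, the group $G$ to a quasi-split $\dot G / \dot F$ with $\dot G_{v_0} = G$, and the parameter $\p$ to an automorphic global parameter $\dot\p$ whose component at $v_0$ is $\p$ and whose components at other places are cuspidal or otherwise land in ranges for which the identity is already known by induction on rank and, at Archimedean places, by Shelstad's theorems on spectral transfer. Applying the Kottwitz--Shelstad geometric identity to a decomposable test function $\dot f = \prod_v \dot f_v$ with $\dot f_{v_0} = f$, and then equating spectral sides of the stable trace formula on $\dot H_1$ and the $(\theta,\x_G)$-twisted invariant trace formula on $\dot G$, the contributions at places $v \neq v_0$ cancel by the inductive hypothesis, which isolates the desired local identity at $v_0$.

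The main obstacle is exactly the setup of this inductive package: one needs the stable multiplicity formula at auxiliary places, globalizations of $\p$ with controlled local behavior (a nontrivial local-global existence statement for automorphic parameters), and the compatibility of the twisted transfer of Lemma~\ref{lemma: twisted endoscopic transfer} with spectral expansions. This full package is currently available only for classical and quasi-split unitary groups through the work of Arthur and Mok, and for general quasi-split reductive $G$ it remains open, which is why the identity is treated in this paper as a working hypothesis rather than proved directly.
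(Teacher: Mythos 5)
The statement you set out to prove is, in this paper, explicitly a \emph{conjecture}, not a theorem: it is one of the three ingredients bundled together under the label ``Endoscopic Hypothesis'' in the introduction, and the paper takes it as a standing assumption rather than establishing it. So there is no proof of it in the paper to compare against. The only place the paper addresses its validity is Section~6.5, where for $G$ a symplectic or special even orthogonal group it is restated (with the $\Sigma_0$-orbit modifications) as Theorem~\ref{thm: twisted character identity} and attributed to Arthur (Theorems~2.2.1 and~2.2.4 of his book), and similarly for unitary groups via Mok; the paper does not reproduce Arthur's argument.

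Your write-up is therefore not a proof of the conjecture as stated, and you in fact say as much in your final paragraph. What you have produced is a serviceable high-level sketch of the Arthur--Mok strategy: globalize $F$, $G$, $\p$; compare the $(\theta,\x_G)$-twisted trace formula with the stable trace formula of the endoscopic datum; cancel contributions away from $v_0$ by induction on rank, Archimedean input from Shelstad, and the stable multiplicity formula; and isolate the local identity at $v_0$, with the Whittaker normalization fixing the constants. That is consistent with the source the paper cites, and your concluding caveat---that the full inductive package is available only for classical and quasi-split unitary groups, which is why the paper treats the identity as a working hypothesis in general---is exactly the position the paper itself takes. Two smaller remarks. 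First, your reduction of the tempered to the general bounded case is phrased as ``Langlands quotients of a tempered parameter of a Levi,'' but for $\p \in \Pbd{G}$ the parameter is already bounded; the reduction you actually want (and which Arthur uses) is from elliptic/discrete parameters to bounded ones via parabolic descent, not from nontempered to tempered. Second, the existence of the transfer map \eqref{eq: geometric transfer} (Shelstad, Waldspurger, Ngô) is a prerequisite your globalization argument uses tacitly and should be flagged as an input, since the paper's Section~3.3 is careful to spell it out.
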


\begin{remark}
In the statement of this conjecture, the character $<\cdot, \r>_{\underline{\p}}$ is given in Conjecture~\ref{conj: endoscopic parametrization}, and $<\cdot, \r^{+}>_{\underline{\p}}$ is given in Conjecture~\ref{conj: twisted endoscopic parametrization}, where one takes $\Sigma = <\theta>, G^{\Sigma} = G^{+}$ and $ \r^{\Sigma} = \r^{+}$.

\end{remark}

In the setup of this conjecture, we can let 
\[
\Theta_{\underline{\p}, x} = \sum_{\r \in \Pkt{\p}} <x, \r^{+}>_{\underline{\p}} \Theta^{\com{G}}_{\r}
\]
and
\[
\Theta_{\underline{\p}_{H_{1}}} = \sum_{\r \in \Pkt{\p_{H_{1}}}} <1, \r>_{\underline{\p}_{H_{1}}}\Theta^{H_{1}}_{\r}.
\]
Then by expanding \eqref{eq: twisted character identity} using the twisted Weyl integration formula, we get
\[
\Theta_{\underline{\p}, x}(\delta) = \sum_{\gamma_{1} \rightarrow \delta} \frac{D_{H_{1}}(\gamma_{1})^{2}}{D_{G^{\theta}}(\delta)^{2}} \Delta_{G, H_{1}}(\gamma_{1}, \delta) \Theta_{\underline{\p}_{H_{1}}}(\gamma_{1}),
\]
where the sum is over stable conjugacy classes of norms $\gamma_{1} \in H_{1, G-reg}(F)$ of $\delta \in G^{\theta}_{reg}(F)$. 

Let $Z_{F}$ be a closed subgroup of $Z_{G}(F)$ such that $Z_{F} \rightarrow Z_{H}(F)$ through \eqref{eq: central inclusion} is injective. If the elements in $\Pkt{\p_{H_{1}}}$ all have the same central character, let us denote its restriction to $C_{1, F}$ by $\chi'$. Then for $z \in Z_{F}$ and $z_{1}$ in its preimage in $C_{1, F}$, we have
\begin{align*}
\Theta_{\underline{\p}, x}(z \delta) & = \sum_{\gamma_{1} \rightarrow \delta} \frac{D_{H_{1}}(z_{1} \gamma_{1})^{2}}{D_{G^{\theta}}(z \delta)^{2}} \Delta_{G, H_{1}}(z_{1}\gamma_{1}, z\delta) \Theta_{\underline{\p}_{H_{1}}}(z_{1} \gamma_{1}) \\
& = \sum_{\gamma_{1} \rightarrow \delta} \chi_{C_{1}}(z_{1})^{-1} \frac{D_{H_{1}}(\gamma_{1})^{2}}{D_{G^{\theta}}(\delta)^{2}} \Delta_{G, H_{1}}(\gamma_{1}, \delta) \chi'(z_{1}) \Theta_{\underline{\p}_{H_{1}}}(\gamma_{1}) \\
& = \chi_{C_{1}}(z_{1})^{-1} \chi'(z_{1}) \Theta_{\underline{\p}, x}(\delta) 
\end{align*}
Note $\chi_{C_{1}}^{-1} \chi'$ is trivial on $Z_{1}(F)$ and hence descents to a quasicharacter on $Z_{F}$, for which we denote by $\chi$. By the linear independence of twisted characters of irreducible smooth representations, one must have 
\[
\Theta^{\com{G}}_{\r}(zg) = \chi(z)\Theta^{\com{G}}_{\r}(g)
\]
for $z \in Z_{F}$ and $\r \in \Pkt{\p}$. In particular, we can let $\theta = id$ and $Z_{F} = Z_{G}(F)$. Then the central character of elements in $\Pkt{\p}$ is $\chi$. This suggests if we want to show for any $\p \in \Pbd{G}$, the elements in $\Pkt{\p}$ have the same central character, we can reduce to the case of simple parameters. Since the L-packet for a simple parameter consists of only one element, there is nothing to show in that case. So we have the following proposition as a consequence of Conjecture~\ref{conj: twisted character identity}.

\begin{proposition}
\label{prop: central character}
Suppose $\p \in \Pbd{G}$, the elements in $\Pkt{\p}$ all have the same central character.
\end{proposition}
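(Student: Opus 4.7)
The plan is to induct on $\dim G$ (or equivalently, on the complexity of the endoscopic datum), with the base case being the trivial observation that $\Pkt{\p}$ is a singleton whenever $\p$ is simple, i.e.\ $\S{\underline{\p}} = 1$, which is forced by Conjecture~\ref{conj: endoscopic parametrization}. The engine driving the induction is Conjecture~\ref{conj: twisted character identity} with $\theta = \mathrm{id}$, which allows one to extract information about characters in $\Pkt{\p}$ from those in $\Pkt{\p_{H_{1}}}$ for a strictly smaller endoscopic group $H$.

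For the inductive step, given $\p \in \Pbd{G}$ with $\S{\underline{\p}} \neq 1$, I would pick a nontrivial semisimple $s \in \cS{\underline{\p}}$, producing an endoscopic datum with $\D{H} = \Cent(s, \D{G})^{0} \subsetneq \D{G}$, together with a corresponding pair $(H_{1}, \underline{\p}_{H_{1}}) \to (\underline{\p}, s)$. By the inductive hypothesis, the members of $\Pkt{\p_{H_{1}}}$ share a common central character $\chi'$. Choose a subgroup $Z_{F} \subseteq Z_{G}(F)$ that embeds via \eqref{eq: central inclusion} into $Z_{H}(F)$, and denote by $C_{1,F}$ the preimage of $Z_{F}$ in $Z_{H_{1}}(F)$. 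Replacing $\delta$ by $z\delta$ in the character identity and combining the twisted Weyl integration formula, the central-character transformation rule for transfer factors, and the hypothesis on $\chi'|_{C_{1,F}}$, one obtains
\[
\Theta_{\underline{\p}, x}(z\delta) \;=\; \chi(z)\,\Theta_{\underline{\p}, x}(\delta), \qquad z \in Z_{F},
\]
exactly as in the computation sketched in the excerpt, for a quasicharacter $\chi$ of $Z_{F}$ whose definition involves only $\chi_{C_{1}}^{-1}\chi'$, and is in particular independent of $x$.

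Next, I would exploit the finiteness of $\S{\underline{\p}}$: by Conjecture~\ref{conj: endoscopic parametrization} the functions $<\cdot, \r>_{\underline{\p}}$ for distinct $\r \in \Pkt{\p}$ are distinct irreducible characters of $\S{\underline{\p}}$, hence linearly independent. Varying $x$ in $\cS{\underline{\p}}$ in the identity above and inverting the resulting linear system (a Fourier-type inversion on the finite group $\S{\underline{\p}}$) decouples the sum and forces $\Theta^{G}_{\r}(zg) = \chi(z)\Theta^{G}_{\r}(g)$ for every $\r \in \Pkt{\p}$ individually on $Z_{F}$. Letting $(s, Z_{F})$ run over enough choices to exhaust $Z_{G}(F)$ then completes the argument.

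The step I expect to be the main obstacle is verifying that the quasicharacter $\chi$ obtained from different choices of $(s, Z_{F})$ glues consistently into a single quasicharacter of $Z_{G}(F)$, independent of auxiliary data. This requires checking that on overlapping subgroups the various $\chi$'s agree; the rigidity provided by the uniqueness of a $(B,\Lambda)$-generic constituent (Conjecture~\ref{conj: endoscopic parametrization}(1)) and the compatibility of transfer factors across endoscopic groups (Lemma~\ref{lemma: twisted transfer factor}) should supply this consistency, although some care is needed when $Z_{G}$ is disconnected (cf.\ Lemma~\ref{lemma: trivial central character}).
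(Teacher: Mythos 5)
Your approach matches the paper's: the paper also derives, from Conjecture~\ref{conj: twisted character identity} with $\theta = \mathrm{id}$ together with the central equivariance of $\Delta_{G, H_1}$ and the twisted Weyl integration formula, the relation $\Theta_{\underline{\p}, x}(z\delta) = \chi(z)\Theta_{\underline{\p}, x}(\delta)$ with $\chi$ descending from $\chi_{C_1}^{-1}\chi'$, and then isolates the individual characters $\Theta^{G}_{\r}$ by linear independence, anchoring the descent at simple parameters. On the two points you flag as obstacles: for $\theta = \mathrm{id}$ the map in \eqref{eq: central inclusion} is just the inclusion $Z_G \hookrightarrow Z_H$, so one can take $Z_F = Z_G(F)$ outright and the $Z_F$-gluing question evaporates; and the gluing over different choices of $s$ is settled exactly along the lines you suspect, because the $(B, \Lambda)$-generic member $\r_0 \in \Pkt{\p}$ has $<x, \r_0>_{\underline{\p}} = 1$ for every $x$, so for each $s$ the linear-independence step already yields $\chi_s = \chi_{\r_0}$, forcing all the $\chi_s$ to agree (Lemma~\ref{lemma: twisted transfer factor} is not the relevant input here, since it compares $G$ against $\lG$ rather than different endoscopic data of $G$ against each other).
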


This proposition can be extended to all L-packets by the theory of Langlands quotient.

\subsection{Proof of Theorem~\ref{thm: twisting character}}
\label{subsec: proof of thm}

Let $G \subseteq \lG$ be two quasisplit connected reductive groups over $F$ such that $G_{der} = \lG_{der}$ and we denote $\lG/G$ by $D$.
\begin{align*}
\xymatrix{1 \ar[r] & G \ar[r] & \lG \ar[r]^{\lambda}  & D \ar[r] & 1}
\end{align*}
We assume $\theta$ is an automorphism of $\lG$ preserving an $F$-splitting of $\lG$, and $\c$ is $\theta$-invariant. Let $G^{+} = G \rtimes <\theta>$.

\begin{lemma}
\label{lemma: twisting character}
Suppose $\p \in \Pbd{G}$ and $\r \in \Pkt{\p}$ then
\begin{align}
\label{eq: twisting character}
<x, (\r^{+})^{g}>_{\underline{\p}} = \x_{x}(g)<x, \r^{+}>_{\underline{\p}}  
\end{align}
for any $g \in \lG(F)$ and $x \in \com{\S{\underline{\p}}}$, where $\x_{x} = \a(x)$ and $\r^{+}$ is an irreducible representation of $G^{+}(F)$ containing $\r$ in its restriction. 
\end{lemma}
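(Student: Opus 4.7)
The plan is to derive the identity by applying the twisted endoscopic character identity of Conjecture~\ref{conj: twisted character identity}(ii) to both $f$ and its $\lG(F)$-conjugate $f^{g}$, and combining with the transfer equivariance $(f^{g})^{H_{1}} = \x_{\lG}(g)\, f^{H_{1}}$ recorded in the remark after Lemma~\ref{lemma: twisted transfer factor}. First, given $x \in \com{\S{\underline{\p}}}$, I would lift it to a semisimple element $s \in \com{\cS{\underline{\p}}}$ and choose a corresponding pair $(H_{1}, \underline{\p}_{H_{1}}) \rightarrow (\underline{\p}, s)$. By Proposition~\ref{prop: lifting endoscopic group} together with Lemma~\ref{lemma: twisted character}, the quasicharacter of $\lG(F)$ attached to the lifted endoscopic datum is precisely $\x_{x} = \a(x)$, so comparing the character identity applied to $f$ and to $f^{g}$ gives
\[
\x_{x}(g) \sum_{\substack{\r \in \Pkt{\p}\\ \r \cong \r^{\theta}}} <x, \r^{+}>_{\underline{\p}}\, f_{\com{G}}(\r) = \sum_{\substack{\r \in \Pkt{\p} \\ \r \cong \r^{\theta}}} <x, \r^{+}>_{\underline{\p}}\, (f^{g})_{\com{G}}(\r)
\]
for every $f \in C_{c}^{\infty}(G(F))$.

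Next I would establish the compatibility $(f^{g})_{\com{G}}(\r^{+}) = f_{\com{G}}\bigl((\r^{+})^{g^{-1}}\bigr)$ by a direct change of variables in the integral \eqref{eq: twisted character}. The key observation is that $\c$ being $\theta$-invariant forces $g\theta(g^{-1}) \in G(F)$ for $g \in \lG(F)$, so that $\lG(F)$-conjugation sends $\theta$ into the coset $G(F)\cdot\theta$ and hence acts on $G^{+}(F)$; substituting $g_{0} \mapsto g^{-1}g_{0}\theta(g)$ in the twisted-character integral for $(f^{g})_{\com{G}}(\r^{+})$ converts $f^{g}$ back into $f$ at the cost of replacing $\Theta^{\theta}_{\r}$ by the twisted character of $(\r^{+})^{g^{-1}}$. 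Reindexing the right-hand side of the displayed identity via $\r'^{+} := (\r^{+})^{g^{-1}}$, and using that $\lG(F)$-conjugation preserves $\Pkt{\p}$ as well as the subset of $\theta$-fixed representations inside it, yields
\[
\x_{x}(g) \sum_{\r} <x, \r^{+}>_{\underline{\p}}\, f_{\com{G}}(\r^{+}) = \sum_{\r'} <x, (\r'^{+})^{g}>_{\underline{\p}}\, f_{\com{G}}(\r'^{+}).
\]

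Finally, since $f$ varies freely over $C_{c}^{\infty}(G(F))$ and the twisted characters $f \mapsto f_{\com{G}}(\r^{+})$ are linearly independent as $\r$ ranges over the $\theta$-fixed elements of $\Pkt{\p}$, matching coefficients yields the desired identity $<x, (\r^{+})^{g}>_{\underline{\p}} = \x_{x}(g)\, <x, \r^{+}>_{\underline{\p}}$. The principal technical obstacle is the careful bookkeeping needed to verify the conjugation formula $(f^{g})_{\com{G}}(\r^{+}) = f_{\com{G}}((\r^{+})^{g^{-1}})$, which requires tracking how $\lG(F)$-conjugation interacts with the intertwiner $A_{\r}(\theta)$, and confirming that this conjugation respects the $\theta$-fixed subset of $\Pkt{\p}$; after that step, the endoscopic hypothesis and the transfer identity from Section~\ref{subsec: endoscopic transfer} combine to give the statement mechanically.
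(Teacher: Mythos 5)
Your proposal follows essentially the same strategy as the paper's proof: apply the twisted endoscopic character identity to both $f$ and $f^{g}$, invoke the transfer equivariance $(f^{g})^{H_{1}} = \x_{x}(g)f^{H_{1}}$ coming from Lemma~\ref{lemma: twisted character} together with the remark after Lemma~\ref{lemma: twisted transfer factor}, reindex by $\lG(F)$-conjugation, and conclude by linear independence of twisted characters. The only points left implicit that the paper makes explicit are (a) the trivial case $\r \ncong \r^{\theta}$, where $<x, \r^{+}>_{\underline{\p}}=0$ for $x\in\com{\S{\underline{\p}}}$ so the identity is vacuous, and (b) that the reindexing may replace the chosen extension $\r^{+}$ of a given $\r$ by a character twist, which is harmless because each term $<x,\r^{+}>_{\underline{\p}}f_{G^{+}}(\r^{+})$ is independent of that choice.
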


\begin{proof}
Let $\r = \r(\rho)$ for $\rho \in \Irr{\S{\p}}$. If $\r \ncong \r^{\theta}$, then $\rho^{x} \ncong \rho$ for $x \in \com{\S{\underline{\p}}}$ (cf. Lemma~\ref{lemma: theta equivariant} and Conjecture~\ref{conj: twisting equivariant}). Therefore $<x, \r^{+}>_{\underline{\p}} = 0$ for $x \in \com{\S{\underline{\p}}}$, and hence \eqref{eq: twisting character} is clear. Now we will only concern the case $\r \cong \r^{\theta}$. Suppose $s \in \com{\cS{\underline{\p}}}$ and $(H_{1}, \underline{\p}_{H_{1}}) \rightarrow (\underline{\p}, s)$, then by \eqref{eq: twisted character identity} we have
\[
f^{H_{1}}(\underline{\p}_{H_{1}}) = \sum_{\substack{\r \in \Pkt{\p} \\ \r \cong \r^{\theta}}} <x, \r^{+}>_{\underline{\p}} f_{G^{\theta}}(\r)
\]
for $f \in C^{\infty}_{c}(G(F))$, where $x$ is the image of $s$ in $\com{\S{\underline{\p}}}$. We can also reformulate this identity by taking $f \in C^{\infty}_{c}(G(F) \rtimes \theta)$ and view it as in $C^{\infty}_{c}(G(F))$ by sending $g$ to $g \rtimes \theta$, so that we can define its transfer as before. The resulting identity is
\[
f^{H_{1}}(\underline{\p}_{H_{1}}) = \sum_{\r \in \Pkt{\p}} <x, \r^{+}>_{\underline{\p}} f_{G^{+}}(\r^{+})
\]
For $g \in \lG(F), h \in G(F) \rtimes \theta$ we denote $f^{g} (h) = f(ghg^{-1})$. Then by Lemma~\ref{lemma: twisted character} and \eqref{eq: conjugate function} we have
\[
(f^{g})^{H_{1}} = \x_{x}(g)f^{H_{1}},
\]
and hence
\[
(f^{g})^{H_{1}}(\underline{\p}_{H_{1}}) = \x_{x}(g)f^{H_{1}}(\underline{\p}_{H_{1}}).
\]
Using the character identity to expand each side, we get
\begin{align}
\label{eq: theta twisting character 1}
\sum_{\r \in \Pkt{\p}}<x, \r^{+}>_{\underline{\p}} f^{g}_{G^{+}}(\r^{+}) = \sum_{\r \in \Pkt{\p}} \x_{x}(g)<x, \r^{+}>_{\underline{\p}} f_{G^{+}}(\r^{+})   
\end{align}
The left hand side of \eqref{eq: theta twisting character 1} is equal to 
\begin{align}
\label{eq: theta conjugation identity}
\sum_{\r \in \Pkt{\p}} <x, \r^{+}>_{\underline{\p}} f_{G^{+}}((\r^{+})^{g^{-1}}) = \sum_{\r \in \Pkt{\p}}<x, (\r^{+})^{g}>_{\underline{\p}} f_{G^{+}}(\r^{+})
\end{align}
where we substitute $\r^{+}$ for $(\r^{+})^{g^{-1}}$. Compared with the right hand side of \eqref{eq: theta twisting character 1}, this may possibly change the extension of $\r$ by some twist of characters of $G^{+}(F)/G(F)$. Nevertheless, the right hand side of \eqref{eq: theta twisting character 1} is independent of the extensions, so we can certainly choose the same extension as the right hand side of \eqref{eq: theta conjugation identity}. So after these changes, we get 
\begin{align*}
\sum_{\r \in \Pkt{\p}}<x, (\r^{+})^{g}>_{\underline{\p}} f_{G^{+}}(\r^{+}) = \sum_{\r \in \Pkt{\p}} \x_{x}(g)<x, \r^{+}>_{\underline{\p}} f_{G^{+}}(\r^{+}),  \label{eq: theta twisting character 2}
\end{align*}
and hence
\[
<x, (\r^{+})^{g}>_{\underline{\p}} = \x_{x}(g)<x, \r^{+}>_{\underline{\p}}
\]
by the linear independence of twisted characters.
\end{proof}

Now we are going to prove Theorem~\ref{thm: twisting character}. For $\p \in \Pbd{G}$, recall there is a homomorphism 
\[
\a: \S{\underline{\p}}^{\Sigma} \rightarrow \Hom(\lG(F)/G(F), \C^{\times}),
\]
so we can define the homomorphism $\lG(F) \rightarrow (\S{\underline{\p}}^{\Sigma})^{*}$ in the theorem by letting $\e_{g}(x) = \a(x) (g) = \x_{x}(g)$. Fix $\r \in \Pkt{\p}$ and $x \in \S{\underline{\p}}^{\Sigma}$, we denote the image of $x$ in $\D{\Sigma}$ by $\D{\theta}$, then $x \in \S{\underline{\p}}^{\theta}$. Let $\Sigma' = <\theta>$ and $\r^{\Sigma'} = \r^{+}$, it follows from Lemma~\ref{lemma: twisting character} that for any $g \in \lG(F)$
\[
<x, (\r^{\Sigma'})^{g}>_{\underline{\p}} = \e_{g}(x)<x, \r^{\Sigma'}>_{\underline{\p}}.  
\]
On the other hand, we have from \eqref{eq: twisted endoscopic parametrization}
\[
<\cdot, \r^{\Sigma}>_{\underline{\p}} |_{\S{\underline{\p}}^{\Sigma'}} = \sum_{\r^{\Sigma'} \in \Pkt{\p}^{\Sigma'}} m(\r^{\Sigma}, \r^{\Sigma'}) <\cdot, \r^{\Sigma'}>_{\underline{\p}}.
\]
Since $m((\r^{\Sigma})^{g}, (\r^{\Sigma'})^{g}) = m(\r^{\Sigma}, \r^{\Sigma'})$, then
\begin{align*}
<x, (\r^{\Sigma})^{g}>_{\underline{\p}} & = \sum_{\r^{\Sigma'} \in \Pkt{\p}^{\Sigma'}} m((\r^{\Sigma})^{g}, (\r^{\Sigma'})^{g}) <x, (\r^{\Sigma'})^{g}>_{\underline{\p}}  
= \sum_{\r^{\Sigma'} \in \Pkt{\p}^{\Sigma'}} m(\r^{\Sigma}, \r^{\Sigma'}) \e_{g}(x)<x, \r^{\Sigma'}>_{\underline{\p}} \\
& = \e_{g}(x)<x, \r^{\Sigma}>_{\underline{\p}}.
\end{align*}
As we vary $\r \in \Pkt{\p}$ and $x \in \S{\p}^{\Sigma}$, this equality still holds. Therefore we have proved the theorem.

\subsection{Proof of Conjecture~\ref{conj: twisting character 1}}
\label{subsec: proof of conjecture}

In this section, we want to show that Conjecture~\ref{conj: twisting character 1} is a special case of Theorem~\ref{thm: twisting character}. The main step is to clarify the three ingredients in defining the homomorphism $G_{ad}(F) \rightarrow \S{\underline{\p}}^{*}$ in the statement of the conjecture. First we need to recall the construction of $z$-extension. It is a consequence of the following more general construction.

\begin{proposition}
\label{prop: z-extension}
Suppose $F$ is a field of characteristic zero, $G$ and $G'$ are reductive groups over $F$. If $G'$ is semisimple and there is a covering $G' \rightarrow G_{der}$, then there exists a central extension of $G$
\[
\xymatrix{1 \ar[r] & Z \ar[r] & \lG' \ar[r]  & G \ar[r] & 1}
\]
such that

\begin{itemize}

\item $\lG'_{der} = G'$;

\item The projection $\lG'_{der} \rightarrow G_{der}$ coincides with $G' \rightarrow G_{der}$;

\item $Z$ is an induced torus, in particular, $H^{1}(F, Z) = 1$.

\end{itemize}

\end{proposition}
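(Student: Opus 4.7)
The plan is to construct $\lif{G'}$ as a pushout: first identify a finite central subgroup of $G'$ governing the kernel of a natural surjection onto $G$, then widen it to an induced torus. Let $\pi: G' \rightarrow G_{der}$ denote the given covering. First I would introduce the homomorphism $\mu: G' \times Z^{0}_{G} \rightarrow G$ sending $(g', z)$ to $\pi(g') z$, which is well-defined since $Z^{0}_{G}$ is central in $G$, and is surjective because $G = G_{der} \cdot Z^{0}_{G}$. Its kernel is $\{(c', \pi(c')^{-1}) : c' \in C'\}$, where $C' := \pi^{-1}(G_{der} \cap Z^{0}_{G}) \subseteq G'$ is a finite subgroup. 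A rigidity argument shows $C'$ is central in $G'$: for $c' \in C'$ the image $\pi(c')$ is central in $G_{der}$, so $[c', g'] \in \Ker \pi$ for every $g' \in G'$; the morphism $g' \mapsto [c', g']$ from the connected group $G'$ into the finite group $\Ker \pi$ is therefore constant, and its value at $g' = 1$ is $1$.

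Next, since $C'$ is a finite subgroup of multiplicative type over a characteristic zero field, it embeds into an induced torus: the character module $\D{C'}$ is a finite continuous $\Gal{}$-module, hence a quotient of a permutation module $\bigoplus_{i} \mathbb{Z}[\Gal{}/\Gal{E_{i}}]$, and dualizing gives an embedding $\iota: C' \hookrightarrow \prod_{i} \Res_{E_{i}/F}(\mathbb{G}_{m}) =: Z$. I would then define
\[
\lif{G'} := (G' \times Z^{0}_{G} \times Z)/C',
\]
where $C'$ is embedded centrally and anti-diagonally via $c' \mapsto (c', \pi(c')^{-1}, \iota(c')^{-1})$. The homomorphism $[g', z, z_{1}] \mapsto \pi(g') z$ descends from the triple product to a surjection $\lif{G'} \twoheadrightarrow G$ whose kernel is precisely the image of the inclusion $Z \hookrightarrow \lif{G'}$, $z_{1} \mapsto [1, 1, z_{1}]$; this inclusion is injective thanks to the injectivity of $\iota$. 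This produces the desired central extension with induced-torus kernel.

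For the remaining two properties, the embedding $G' \hookrightarrow \lif{G'}$ given by $g' \mapsto [g', 1, 1]$ is injective: if $(g', 1, 1)$ lies in the anti-diagonal image of some $c' \in C'$, then $g' = c'$ together with $\pi(c') = 1$ and $\iota(c') = 1$ forces $g' = 1$ using injectivity of $\iota|_{\Ker \pi}$. Its image coincides with $\lif{G'}_{der}$ because $Z^{0}_{G} \times Z$ is a central torus in the triple product, so the derived group sits inside $G'$, and $G'$ is semisimple. Finally, the composition $G' \hookrightarrow \lif{G'} \twoheadrightarrow G$ equals $\pi$ by construction. The only substantive step in the whole argument is the embedding of $C'$ into an induced torus, which is classical in characteristic zero and reduces to the fact that every finite continuous $\Gal{}$-module is a quotient of a finitely generated permutation module; the rest is bookkeeping.
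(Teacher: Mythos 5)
Your proof is correct. The paper does not actually supply a proof of this proposition — it defers to Milne--Shih, Proposition 3.1, and Langlands (1979) — and your construction (surject $G' \times Z^{0}_{G}$ onto $G$, identify the finite central kernel $C'$, embed $C'$ in an induced torus $Z$ via a permutation-module presentation of $X^{*}(C')$, and take the pushout $(G' \times Z^{0}_{G} \times Z)/C'$) is precisely the standard argument used in those references, carried out here in the slightly broader generality of an arbitrary covering $G' \rightarrow G_{der}$ in place of $G' = G_{sc}$.
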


\begin{remark}
\label{rk: z-extension}
When $G' = G_{sc}$, $\lG'$ is the usual $z$-extension of $G$. For the proof of this proposition, we refer the reader to (\cite{MilneShih:1982}, Propositon 3.1) and \cite{Langlands:1979}.
\end{remark}

Now we want to construct the Tate local duality for nonabelian reductive groups. Let $F$ be a local field of characteristic zero and $G$ be a connected reductive group over $F$. Let $G' = G/Z_{G}^{0}$, then $Z_{G'} = Z_{G}/Z_{G}^{0}$. We apply Proposition~\ref{prop: z-extension} to the natural projection $G' \rightarrow G_{ad}$, and we get an extension $\lG'$ of $G_{ad}$
\[
\xymatrix{1 \ar[r] & Z \ar[r] & \lG' \ar[r]  & G_{ad} \ar[r] & 1}
\]
such that $\lG'_{der} = G'$ and $H^{1}(F, Z) = 1$. Moreover, $Z = Z_{\lG'}$, and we denote $\lG'/G'$ by $D$. Consider the exact sequence
\[
\xymatrix{1 \ar[r] & G' \ar[r] & \lG' \ar[r]^{\c}  & D \ar[r] & 1}
\]
The restriction to the centres gives
\[
\xymatrix{1 \ar[r] & Z_{G'} \ar[r] & Z_{\lG'} \ar[r]^{\c}  & D \ar[r] & 1,}
\]
and it induces the following exact sequence
\[
\xymatrix{Z_{\lG'}(F) \ar[r]^{\c} & D(F) \ar[r]  & H^{1}(F, Z_{G'}) \ar[r] & H^{1}(F, Z_{\lG'}) = 1.}
\]
Therefore
\[
H^{1}(F, Z_{G'}) = D(F) / \Im( Z_{\lG'}(F) \xrightarrow{\c} D(F) ).
\]
On the other hand, one considers the following diagram.
\[
\xymatrix{ && 1 \ar[d] && \\
&&\D{D} \ar[d] \ar[dr] && \\
1 \ar[r] & \D{G}_{sc} \ar[r]  \ar[dr] & \D{\lG'} \ar[d] \ar[r]  & \D{Z}_{\lG'} \ar[r] & 1 \\
&& \D{G}' \ar[d] && \\
&& 1 &&
}
\]
Note $\pi_{1}(\D{G'}) = \D{G}_{sc} \cap \D{D}$ and $\D{G}' \cong \D{G}_{der} $, so we get a short exact sequence
\[
\xymatrix{1 \ar[r] & \pi_{1}(\D{G}_{der}) \ar[r] & \D{D} \ar[r]  & \D{Z}_{\lG'} \ar[r] & 1.}
\]
This induces the following exact sequence
\[
\xymatrix{\pi_{0}(\D{Z}_{\lG'}^{\Gal{}}) \ar[r] & H^{1}(F, \pi_{1}(\D{G}_{der})) \ar[r] & H^{1}(F, \D{D}) \ar[r] & H^{1}(F, \D{Z}_{\lG'}).}
\]
By the Tate-Nakayama duality for tori (see \cite{Kottwitz:1984}, (3.3.1)), we have $\pi_{0}(\D{Z}_{\lG'}^{\Gal{}})^{*} = H^{1}(F, Z_{\lG'}) = 1$, and hence $\pi_{0}(\D{Z}_{\lG'}^{\Gal{}}) = 1$. Therefore
\[
H^{1}(F, \pi_{1}(\D{G}_{der})) = \Ker(H^{1}(F, \D{D}) \rightarrow H^{1}(F, \D{Z}_{\lG'})).
\]
It also follows from the Tate-Nakayama duality for tori that $H^{1}(F, \D{D})$ (resp. $H^{1}(F, \D{Z}_{\lG'})$) is canonically isomorphic to the group of continuous characters of finite orders on $D(F)$ (resp. $Z_{\lG'}(F)$) (see \cite{Kottwitz:1984}, (3.3.2)). Since $\Im( Z_{\lG'}(F) \xrightarrow{\c} D(F) )$ has finite index in $D(F)$, $\Ker(H^{1}(F, \D{D}) \rightarrow H^{1}(F, \D{Z}_{\lG'}))$ is isomorphic to characters of $D(F)$ that are trivial on $\Im( Z_{\lG'}(F) \xrightarrow{\c} D(F) )$, and this is exactly the dual of $D(F) / \Im( Z_{\lG'}(F) \xrightarrow{\c} D(F) )$. Hence we get a perfect pairing 
\begin{align}
\label{eq: pairing}
H^{1}(F, Z_{G'}) \times H^{1}(F, \pi_{1}(\D{G}_{der})) \rightarrow \C^{\times}.
\end{align}
The fact that this pairing is independent of the choice of extension with respect to $G' \rightarrow G_{ad}$ is because of the following proposition.

\begin{proposition}
\label{prop: uniqueness of pairing}

\begin{enumerate}

\item If there is another extension 
\[
\xymatrix{1 \ar[r] & Z_{1} \ar[r] & \lG'_{1} \ar[r]  & G_{ad} \ar[r] & 1}
\]
dominating the original extension, i.e., there is a surjection $\lG'_{1} \rightarrow \lG'$ such that the following diagram commutes
\[
\xymatrix{1 \ar[r] & Z_{1} \ar[d] \ar[r] & \lG'_{1} \ar[r] \ar[d]  & G_{ad} \ar[r] \ar@{=}[d]& 1 \\ 
1 \ar[r] & Z \ar[r] & \lG' \ar[r]  & G_{ad} \ar[r] & 1.}
\]
Then the pairing \eqref{eq: pairing} obtained from this extension is the same as the original one.

\item If there are two extensions
\[
\xymatrix{1 \ar[r] & Z_{i} \ar[r] & \lG'_{i} \ar[r]  & G_{ad} \ar[r] & 1.} \,\,\,\,\, (i = 1,2)
\]
Then one can find a third extension which dominates both of them.

\end{enumerate}

\end{proposition}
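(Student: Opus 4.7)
Both parts rest on the fact that the pairing \eqref{eq: pairing} is built functorially out of two short exact sequences together with Tate--Nakayama duality. For part~(2), the fibre product $\lG'_1\times_{G_{ad}}\lG'_2$ is the obvious candidate, and for part~(1) functoriality then says the two constructions of the pairing agree.

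For part~(1), fix a dominating surjection $\phi:\lG'_1\to\lG'$. Since both extensions have derived subgroup identified with the fixed covering $G'\to G_{ad}$, and a semisimple group admits no nontrivial automorphism inducing the identity on its adjoint quotient, $\phi$ restricts to the identity on $G'$; in particular it restricts to the identity on $Z_{G'}$ and to a surjection $Z_{\lG'_1}\twoheadrightarrow Z_{\lG'}$ compatible with the projections to $D_1$ and $D$. The commutative diagram of short exact sequences
\[
\xymatrix{1 \ar[r] & Z_{G'} \ar@{=}[d] \ar[r] & Z_{\lG'_1} \ar[d] \ar[r]^{\lambda_1} & D_1 \ar[d] \ar[r] & 1 \\
1 \ar[r] & Z_{G'} \ar[r] & Z_{\lG'} \ar[r]^{\lambda} & D \ar[r] & 1}
\]
yields, via the long exact sequence and the vanishing $H^1(F,Z_{\lG'_i})=1$, compatible identifications
\[
D_1(F)/\lambda_1(Z_{\lG'_1}(F))\;\xrightarrow{\sim}\;D(F)/\lambda(Z_{\lG'}(F))\;=\;H^1(F,Z_{G'}).
\]
Dually, the corresponding commutative diagram
\[
\xymatrix{1 \ar[r] & \pi_1(\D{G}_{der}) \ar@{=}[d] \ar[r] & \D{D} \ar[d] \ar[r] & \D{Z}_{\lG'} \ar[d] \ar[r] & 1 \\
1 \ar[r] & \pi_1(\D{G}_{der}) \ar[r] & \D{D}_1 \ar[r] & \D{Z}_{\lG'_1} \ar[r] & 1}
\]
identifies the kernels inside $H^1(W_F,\D{D})$ and $H^1(W_F,\D{D}_1)$ with the same group $H^1(F,\pi_1(\D{G}_{der}))$, compatibly under pullback of characters along $D_1(F)\to D(F)$. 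Evaluating each pairing on matched classes $d_1\mapsto d$ and $\chi_1=\chi\circ(D_1(F)\to D(F))$ gives $\chi_1(d_1)=\chi(d)$, so the two pairings coincide.

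For part~(2), set $\lG'_3:=\lG'_1\times_{G_{ad}}\lG'_2$; the two projections realise the required dominance, and $\lG'_3$ is a central extension of $G_{ad}$ with kernel $Z_1\times Z_2$, a product of induced tori and hence itself induced. The one nontrivial verification is that $(\lG'_3)_{der}$ equals $G'$ embedded diagonally. For $(x_1,x_2),(y_1,y_2)\in\lG'_3$, write $x_i=a_i z_i$ and $y_i=b_i w_i$ with $a_i,b_i\in(\lG'_i)_{der}=G'$ and $z_i,w_i\in Z_i$; centrality of $Z_i$ gives $[x_i,y_i]=[a_i,b_i]$, while the fibre-product condition forces $a_1\equiv a_2$ and $b_1\equiv b_2$ modulo $Z_{G'}$, whence $[a_1,b_1]=[a_2,b_2]$ by centrality of $Z_{G'}$ in $G'$. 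Every commutator in $\lG'_3$ therefore lies in the diagonal copy of $G'$; since $G'$ is perfect, this exhausts $(\lG'_3)_{der}$.

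The main obstacle is the careful bookkeeping in part~(1): tracking the reversal of arrows between $D_1\to D$ and $\D{D}\to\D{D}_1$, and verifying that the connecting maps in cohomology really do identify the two versions of each side of the pairing with the extension-independent groups $H^1(F,Z_{G'})$ and $H^1(F,\pi_1(\D{G}_{der}))$. Once this is set up correctly, the final equality of pairings is forced by the naturality of Tate--Nakayama duality for induced tori, which is the only substantive input beyond the long exact sequences.
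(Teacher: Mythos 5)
The paper gives no proof of part~(1) beyond calling it ``straightforward,'' and cites Kottwitz (1982, Lemma~1.1) for part~(2). Your proof correctly fills in both parts, and for part~(2) your fibre-product construction $\lG'_3 = \lG'_1 \times_{G_{ad}} \lG'_2$ (together with the verification that its derived group is the diagonal copy of $G'$) is precisely Kottwitz's argument, so the approaches coincide. For part~(1), the key observation that any dominating surjection $\phi$ restricts to the identity on $G'$ is right; note however that you should derive this directly from $\Hom(G',Z_{G'})=1$ applied to the map $c\colon g\mapsto g^{-1}\phi(g)\in Z_{G'}$ (which is a homomorphism because $Z_{G'}$ is central) rather than by appealing to ``no nontrivial automorphism,'' since $\phi|_{G'}$ is a priori only a surjective endomorphism, not obviously an automorphism. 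The rest of part~(1) --- the two commuting ladders of short exact sequences and the naturality of Tate--Nakayama duality --- is what makes the argument work. One small inconsistency: you write $H^{1}(W_F,\D{D})$ in several places where the paper's construction of the pairing lives in $H^{1}(F,\D{D})$; since the relevant kernel is $H^{1}(F,\pi_1(\D{G}_{der}))$ in either case (the coefficient module is finite and $\D{Z}_{\lG'}^{W_F}=\D{Z}_{\lG'}^{\Gamma}$ for an algebraic Galois action), this is harmless, but it is worth matching the paper's conventions.
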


The proof of part (i) is straightforward and we leave it to the reader. The proof of part (ii) can be found in (\cite{Kottwitz:1982}, Lemma 1.1).

Since $H^{1}(F, Z_{\lG'}) = 1$, then $G_{ad}(F) = \lG'(F) / Z_{\lG'}(F)$ and 
\[
\xymatrix{G_{ad}(F) = \lG'(F) / Z_{\lG'}(F) \ar[r]^{\c \quad \quad \quad \quad} & D(F) / \Im( Z_{\lG'}(F) \xrightarrow{\c} D(F) ) = H^{1}(F, Z_{G'})}
\]
defines the homomorphism $G_{ad}(F) \rightarrow H^{1}(F, Z_{G'})$ in the introduction. Just like the Tate local duality pairing, one can show this homomorphism is independent of the choice of extension with respect to $G' \rightarrow G_{ad}$.

Finally, if $\p \in \Pbd{G}$, we have defined a homomorphism $\delta: S_{\underline{\p}} \rightarrow H^{1}(W_{F}, \D{D})$ (see \eqref{eq: old twisted endoscopic sequence}). By Lemma~\ref{lemma: centralizer}, the image of $\delta$ is finite. So $\delta$ factors through $A_{\underline{\p}}$ and the image lies in $H^{1}(F, \D{D})$. Moreover, we claim the image of $\delta$ lies in $\Ker(H^{1}(F, \D{D}) \rightarrow H^{1}(F, \D{Z}_{\lG'}))$. In fact, this follows from the proof of Lemma~\ref{lemma: centralizer}. For the convenience of the reader, we repeat that argument here. For $s \in S_{\underline{\p}}$, let $\lif{s}$ be a preimage of $s$ in $\D{\lG'}$. Recall
\[
\delta(s): u \mapsto \lif{s}\lp(u)\lif{s}^{-1}\lp(u)^{-1} = \lif{s}\sigma_{u}(\lif{s}^{-1}) \cdot \underbrace{\sigma_{u}(\lif{s})\lp(u)\lif{s}^{-1}\lp(u)^{-1}}_{\in \D{G}_{sc}}
\]
for $u \in L_{F}$ and $\sigma_{u}$ is the image of $u$ in $\Gal{}$. Note the decomposition of $\delta(s)(u)$ factors through $\Gal{}$. Then our claim follows from the following diagram.
\[
\xymatrix{& H^{1}(F, \D{D}) \ar[d] \ar[dr] & \\
H^{1}(F, \D{G}_{sc}) \ar[r] & H^{1}(F, \D{\lG'})) \ar[r] & H^{1}(F, \D{Z}_{\lG'})
}
\]
So we obtain a homomorphism $\delta: A_{\underline{\p}} \rightarrow H^{1}(F, \pi_{1}(\D{G}_{der}))$. 

From the construction above, we obtain a homomorphism $G_{ad}(F) \rightarrow A_{\underline{\p}}^{*}$, which sends $g$ to $\eta_{g}$. It is easy to check that 
\[
\eta_{g}(s) = \a(x)(\lif{g})
\]
for $s \in A_{\underline{\p}}$ with image $x \in \S{\p}$, and $\lif{g} \in \lG'(F)$ with image $g \in G_{ad}(F)$. As a consequence, $\eta_{g} \in \S{\underline{\p}}^{*}$ and Conjecture~\ref{conj: twisting character 1} follows from Theorem~\ref{thm: twisting character} immediately.


\section{Central character}
\label{sec: central character}

For $\p \in \P{G}$, one can associate a character $\chi_{\p}$ of $Z_{G}(F)$ as in the introduction. By Proposition~\ref{prop: central character}, we see the central characters of elements in $\Pkt{\p}$ are the same. So we can talk about the central character of an L-packet, and we would like to show it is equal to $\chi_{\p}$. By the construction of $\chi_{\p}$ and also nontempered L-packets, we see it suffices to prove this for $\p \in \Pbd{G}$. Note if $\p$ is simple, $\Pkt{\p}$ contains only one element and we would like to assume the central character of $\Pkt{\p}$ is $\chi_{\p}$. Then it is enough to check how $\chi_{\p}$ and the central character of representations change with respect to the endoscopic transfer.

\begin{lemma}
\label{lemma: transfer central character}
Let $\p \in \Pbd{G}$ and $s \in \cS{\underline{\p}}$. Suppose for any $(H_{1}, \underline{\p}_{H_{1}}) \rightarrow (\underline{\p}, s)$, the central character of $\Pkt{\p_{H_{1}}}$ is $\chi_{\p_{H_{1}}}$, then the central character of $\Pkt{\p}$ is $\chi_{\p}$.
\end{lemma}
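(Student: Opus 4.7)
The approach is to apply the untwisted endoscopic character identity, propagate central translations through the transfer factor, and then reduce the resulting equality of central characters to a purely parameter-side compatibility that can be checked after lifting to an extension with connected centre.

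First I would invoke Conjecture~\ref{conj: twisted character identity}(ii) with $\theta = \mathrm{id}$, which gives
\[
f^{H_{1}}(\underline{\p}_{H_{1}}) = \sum_{\r \in \Pkt{\p}} \langle x, \r \rangle_{\underline{\p}}\, f_{G}(\r), \qquad f \in C^{\infty}_{c}(G(F)),
\]
where $x$ denotes the image of $s$ in $\S{\underline{\p}}$. Expanding both sides through the Weyl integration formula yields the pointwise identity among locally integrable functions on $G_{reg}(F)$ that is already displayed immediately before Proposition~\ref{prop: central character}. Next, I would fix a closed subgroup $Z_{F} \subseteq Z_{G}(F)$ injecting into $Z_{H}(F)$ and repeat verbatim the central-translation computation in that same paragraph. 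By hypothesis, $\Pkt{\p_{H_{1}}}$ has central character $\chi_{\p_{H_{1}}}$, so substituting $\chi' = \chi_{\p_{H_{1}}}|_{C_{1, F}}$ into that computation shows that $\chi_{C_{1}}^{-1}\chi_{\p_{H_{1}}}$ is trivial on $Z_{1}(F)$ and descends to a quasicharacter $\widetilde{\chi}$ on $Z_{F}$. Linear independence of characters of the irreducibles $\r \in \Pkt{\p}$ then forces each such $\r$ to transform under $Z_{F}$ by $\widetilde{\chi}$, so the central character of $\Pkt{\p}$ restricted to $Z_{F}$ is precisely $\widetilde{\chi}$.

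The lemma thereby reduces to the parameter-side identity $\chi_{\p}|_{Z_{F}} = \widetilde{\chi}$, which I would verify by lifting. Pick an $F$-torus $Z \supseteq Z_{G}$ and set $\lG = (G \times Z)/Z_{G}$ so that $Z_{\lG} = Z$ is connected. Using Labesse's theorem, lift $\underline{\p}$ to $\underline{\lp} \in \P{\lG}$; by Proposition~\ref{prop: lifting endoscopic group} the underlying endoscopic datum and a compatible $z$-pair lift to yield $(\lif{H}_{1}, \underline{\lp}_{\lif{H}_{1}})$ refining $(H_{1}, \underline{\p}_{H_{1}})$. Since $\lif{H}_{1}$ is a $z$-extension of $\lif{H}$, its centre is also a connected torus. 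By construction $\chi_{\p} = \chi_{\lp}|_{Z_{G}(F)}$ and $\chi_{\p_{H_{1}}} = \chi_{\lp_{\lif{H}_{1}}}|_{Z_{H_{1}}(F)}$. Lemma~\ref{lemma: twisted transfer factor} (applied with $\theta = \mathrm{id}$) shows $\Delta_{\lG, \lif{H}_{1}}$ restricts to $\Delta_{G, H_{1}}$, so the associated central transfer quasicharacter $\chi_{\lif{C}}$ restricts to $\chi_{C}$. The required identity therefore becomes an identity purely among characters of tori ($Z_{G}$, $Z_{\lG}$, $Z_{H_{1}}$, $Z_{\lif{H}_{1}}$, and the fiber product $C$) that is forced by Kottwitz--Shelstad's explicit formula for $\chi_{C}$ together with the local Langlands correspondence for tori.

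The main obstacle will be this final torus-level comparison: once the problem has been reduced there, the equality is in principle determined by definitions, but matching $\chi_{C}$ with the cocycle ratio $\chi_{\lp_{\lif{H}_{1}}}/\chi_{\lp}$ pulled through the fiber product $C$ requires careful unwinding of the cocycles coming from the sequences $1 \to \D{D} \to Z(\D{\lG}) \to Z(\D{G}) \to 1$ and its $\lif{H}_{1}$-counterpart. Every preceding step is either a direct invocation of the character identity, a standard application of the Weyl integration formula, or a word-for-word reuse of the central-translation analysis already in place before Proposition~\ref{prop: central character}.
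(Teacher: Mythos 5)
Your plan is correct up to the final comparison, and in broad strokes it parallels the paper's proof: invoke the character identity, translate centrally, and reduce to a statement about the parameter-side characters $\chi_{\p}$ and $\chi_{\p_{H_1}}$ by lifting to $\lG = (G\times Z)/Z_G$. But you stop exactly where the substantive work begins, and you also mis-identify what that remaining work is. The identity you need to check --- in the paper's notation, that $\chi_{\lif{\xi}}$, the character obtained by projecting $\lif{\xi}|_{W_F}$ to $\L{Z_{\lG}}$, equals $\chi_{\lif{C}}^{-1}$ --- is not ``forced by Kottwitz--Shelstad's explicit formula'' together with Langlands duality for tori, and it is not an unwinding of the exact sequence $1\to\D{D}\to Z(\D{\lG})\to Z(\D{G})\to 1$. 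It requires the specific structure of the Langlands--Shelstad $\chi$-data embeddings. Concretely: one fixes $\Gal{}$-splittings of $\D{\lif{H}}$ and $\D{\lG}$, an admissible embedding $\lif{T}_{\lif{H}}\to\lif{T}$, and $\chi$-data on $R(\lG,\lif{T})$; these produce embeddings $\xi_{\lif{T}_{\lif{H}}}:\L{\lif{T}_{\lif{H}}}\to\L{\lif{H}}$ and $\xi_{\lif{T}}:\L{\lif{T}}\to\L{\lG}$, and the defining relation $\lif{\xi}\circ\xi_{\lif{T}_{\lif{H}}} = a_{\lif{T}}\cdot\xi_{\lif{T}}$ produces the cocycle $a_{\lif{T}}$ whose projection to $\L{Z_{\lG}}$ is dual to $\chi_{\lif{C}}^{-1}$. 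The nontrivial observation, which is the entire content of the comparison, is that by the construction in \cite{LanglandsShelstad:1987}, 2.5, the cocycle parts of $\xi_{\lif{T}_{\lif{H}}}(W_F)$ and $\xi_{\lif{T}}(W_F)$ land in $\D{\lif{H}}_{der}$ and $\D{\lG}_{der}$ respectively; since $\D{\lG}_{der}=\D{G}_{sc}=\Ker(\D{\lG}\to\D{Z}_{\lG})$, projecting the defining relation to $\L{Z_{\lG}}$ and restricting to $W_F$ kills both embeddings and leaves precisely $\chi_{\lif{\xi}}=\chi_{\lif{C}}^{-1}$. Declaring the step ``determined by definitions'' without isolating this fact leaves a genuine gap.

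A secondary difference: the paper first passes to a $z$-extension $G_1\to G$ so that the parameter factors through a genuine $L$-group $\L{H_1}$ and one may take $H_1=H$, $Z_1=1$, $C=Z_G$. Your outline keeps an arbitrary $z$-pair $(H_1,\xi_{H_1})$ throughout, which is not wrong in principle, but then the fiber product $C$ is nontrivial and you must additionally verify compatibility of the constructed candidate central character under the $z$-extensions $H_1\to H$ and $\lif{H}_1\to\lif{H}$; doing the $G$-side reduction first disposes of this bookkeeping once and for all.
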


\begin{proof}
First we want to reduce to the case $\mathcal{H} = \L{H}$. To do so, we can simply take a $z$-extension of $G$
\[
\xymatrix{1 \ar[r] & Z_{1} \ar[r] & G_{1} \ar[r]  & G \ar[r] & 1,}
\]
and denote the image of $\p$ in $\Pbd{G_{1}}$ by $\p_{1}$. Note $(G_{1})_{der} = G_{sc}$, then by a result of Langlands (see Proposition 1, \cite{Langlands:1979}), $\underline{\p}_{1}$ factors though an endoscopic datum $(H_{1}, \L{H_{1}}, s, \xi_{1})$. This gives a natural embedding $\xi_{H_{1}}: \mathcal{H} \rightarrow \L{H_{1}}$ and a $z$-extension
\[
\xymatrix{1 \ar[r] & Z_{1} \ar[r] & H_{1} \ar[r]  & H \ar[r] & 1.}
\]
Therefore, $(H_{1}, \xi_{H_{1}})$ is a $z$-pair for $\mathcal{H}$. By our assumption, $\chi_{\p_{H_{1}}}$ is the central character of $\Pkt{\p_{H_{1}}}$. If we can show $\chi_{\p_{1}}$ is the central character of $\Pkt{\p_{1}}$, then the same is true for $\chi_{\p}$.

From now on, we assume $\mathcal{H} = \L{H}$ and we take $H_{1} = H$. By the definition of $\chi_{\p}$, we need to take a torus $Z$ containing the centre $Z_{G}$ of $G$, and form $\lG = (G \times Z) / Z_{G}$. Then $H$ can be lifted to a twisted endoscopic group $\lif{H}$ of $\lG$. Let $\underline{\lp}_{H}$ be a lift of $\underline{\p}_{H}$, and it gives a lift $\underline{\lp}$ of $\underline{\p}$. Then $\chi_{\p_{H}} = \chi_{\lp_{H}}|_{Z_{H}}$ and $\chi_{\p} = \chi_{\lp}|_{Z_{G}}$. Note $\chi_{\lp} =  \chi_{\lif{\xi}} \cdot (\chi_{\lp_{H}}|_{Z_{\lG}})$, where $\chi_{\lif{\xi}}$ is dual to 
\[
W_{F} \xrightarrow{\lif{\xi}|_{W_{F}}} \L{\lG} \rightarrow \L{Z_{\lG}}.
\]
On the other hand, the central character of $\Pkt{\p}$ only differs from the restriction of that of $\Pkt{\p_{H}}$ to ${Z_{G}}$ by $\chi^{-1}_{C}$. This follows from our proof of Proposition~\ref{prop: central character} by taking $C = C_{1} = Z_{G}$. Since $\chi_{C} = \chi_{\lif{C}}|_{Z_{G}}$, it is enough to show $\chi_{\lif{\xi}} = \chi^{-1}_{\lif{C}}$. To give the definition of $\chi_{\lif{C}}$, we need to fix $\Gal{}$-splittings $(\mathcal{\lif{B}}_{\lif{H}}, \mathcal{\lif{T}}_{\lif{H}}, \{\mathcal{X}_{\alpha_{\lif{H}}} \} )$ and $(\mathcal{\lif{B}}, \mathcal{\lif{T}}, \{\mathcal{X}_{\alpha} \} )$ for both $\D{\lif{H}}$ and $\D{\lG}$. By taking certain $\D{\lG}$-conjugate of $\lif{\xi}$, we can assume $\lif{\xi}(\mathcal{\lif{T}}_{\lif{H}}) =  \mathcal{\lif{T}}$, and $\mathcal{\lif{B}}_{\lif{H}} \subseteq \mathcal{\lif{B}}$. We also choose a maximal torus $\lif{T}_{\lif{H}}$ of $\lif{H}$ defined over $F$, and choose an admissible embedding $\lif{T}_{\lif{H}} \rightarrow \lif{T}$ together with $\chi$-data on $R(\lG, \lif{T})$. The admissible embedding identifies $\L{\lif{T}}_{\lif{H}}$ with $\L{\lif{T}}$, and transports $\chi$-data from $R(\lG, \lif{T})$ to $R(\lif{H}, \lif{T}_{\lif{H}})$. The $\chi$-data give embeddings $\xi_{\lif{T}_{\lif{H}}}: \L{\lif{T}}_{\lif{H}} \rightarrow \L{\lif{H}}$ and $\xi_{\lif{T}}: \L{\lif{T}} \rightarrow \L{\lG}$. Then there exists a $1$-cocycle $a_{\lif{T}}$ of $W_{F}$ in $\D{\lif{\mathcal{T}}}$ with transported Galois action from $\lif{T}$ such that
\begin{align}
\label{eq: transfer central character}
\lif{\xi} \circ \xi_{\lif{T}_{\lif{H}}} = a_{\lif{T}} \cdot \xi_{\lif{T}},
\end{align}
and $\chi^{-1}_{\lif{C}}$ is dual to 
\[
W_{F} \xrightarrow{a_{\lif{T}}} \D{\mathcal{\lif{T}}} \rightarrow \L{\lG} \rightarrow \L{Z_{\lG}}.
\]
By the constructions of $\xi_{\lif{T}_{\lif{H}}}$ and $\xi_{\lif{T}}$ (see \cite{LanglandsShelstad:1987}, 2.5), one can see $\xi_{\lif{T}_{\lif{H}}}(W_{F}) \subseteq \D{\lif{H}}_{der} \rtimes W_{F}$ and $\xi_{\lif{T}}(W_{F}) \subseteq \D{\lG}_{der} \rtimes W_{F}$, so if we restrict both sides of \eqref{eq: transfer central character} to $W_{F}$ and compose them with $\L{\lG} \rightarrow \L{Z_{\lG}}$, we get an equality for the duals of $\chi_{\lif{\xi}}$ and $\chi^{-1}_{\lif{C}}$. Therefore, $\chi_{\lif{\xi}} = \chi^{-1}_{\lif{C}}$.
\end{proof}

It is clear that this lemma implies Proposition~\ref{prop: central character desideratum}.

\section{Twist by automorphism and quasicharacter}
\label{sec: twist}

Let $\theta$ be an automorphism of $G$ preserving an $F$-splitting. Let ${\bold a}$ be an element in $H^{1}(W_{F}, Z(\D{G}))$, which is associated with a quasicharacter $\x$ of $G(F)$. In this section we want to prove Proposition~\ref{prop: twisting equivariant}. 

\begin{lemma}
\label{lemma: theta equivariant}
Suppose $\p \in \Pbd{G}$ is not simple, then $\Pkt{\p^{\theta}} = \Pkt{\p}^{\theta}$. Moreover, 
\[
<x, \r^{\theta}>_{\underline{\p}^{\theta}} = <\D{\theta}^{-1}x\D{\theta}, \r>_{\underline{\p}}
\]
for any $\r \in \Pkt{\p}$ and $x \in \S{\underline{\p}^{\theta}}$.
\end{lemma}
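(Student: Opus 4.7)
The strategy parallels that of Lemma~\ref{lemma: twisting character}: apply the endoscopic character identity \eqref{eq: twisted character identity} (with trivial twist) to both $\underline{\p}$ and $\underline{\p}^{\theta}$, using endoscopic data that correspond under the $\D{\theta}$-action, and then compare. Non-simplicity of $\p$ enters in order to guarantee enough non-trivial semisimple elements in $\cS{\underline{\p}}$ to separate the representations inside $\Pkt{\p}$ through the family of resulting identities.

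Given $x \in \S{\underline{\p}^{\theta}}$, set $y = \D{\theta}^{-1} x \D{\theta} \in \S{\underline{\p}}$, choose a semisimple lift $s \in \cS{\underline{\p}}$ of $y$, and take a compatible endoscopic datum $E = (H, \mathcal{H}, s, \xi)$ for $G$ with $z$-pair $(H_{1}, \xi_{H_{1}})$ such that $(H_{1}, \underline{\p}_{H_{1}}) \rightarrow (\underline{\p}, s)$. The $\D{\theta}$-transform $E^{\theta} := (H, \mathcal{H}, \D{\theta}(s), \D{\theta}\circ\xi)$, equipped with the same $z$-pair, corresponds to $(\underline{\p}^{\theta}, \D{\theta}(s))$, so that $(H_{1}, \underline{\p}_{H_{1}}) \rightarrow (\underline{\p}^{\theta}, \D{\theta}(s))$ as well. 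Because $\theta$ preserves an $F$-splitting of $G$, a term-by-term computation in the style of Lemma~\ref{lemma: twisted transfer factor} yields the transfer-factor equivariance
\[
\Delta^{E^{\theta}}_{G, H_{1}}(\gamma_{1}, \delta) \;=\; \Delta^{E}_{G, H_{1}}(\gamma_{1}, \theta^{-1}(\delta)),
\]
and combined with the change of variables $O_{G}(f \circ \theta, \delta) = O_{G}(f, \theta(\delta))$ this produces the compatibility $(f \circ \theta)^{H_{1}, E} = f^{H_{1}, E^{\theta}}$ as elements of $\mathcal{SI}(H_{1})$.

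Substituting $f \circ \theta$ for $f$ in the $E$-character identity, using this compatibility to equate its left-hand side with the $E^{\theta}$-identity evaluated at $f$, and exploiting $(f \circ \theta)_{G}(\r) = f_{G}(\r^{\theta})$ from a further change of variables, we arrive at
\[
\sum_{\r \in \Pkt{\p}} \langle y, \r \rangle_{\underline{\p}}\, f_{G}(\r^{\theta}) \;=\; \sum_{\r' \in \Pkt{\p^{\theta}}} \langle x, \r' \rangle_{\underline{\p}^{\theta}}\, f_{G}(\r')
\]
for every $f \in C^{\infty}_{c}(G(F))$. Reparametrizing the left sum via $\r' = \r^{\theta}$ and invoking linear independence of irreducible characters on $G(F)$ simultaneously identifies the supports, giving $\Pkt{\p^{\theta}} = \Pkt{\p}^{\theta}$, and matches coefficients, giving $\langle x, \r^{\theta} \rangle_{\underline{\p}^{\theta}} = \langle y, \r \rangle_{\underline{\p}} = \langle \D{\theta}^{-1} x \D{\theta}, \r \rangle_{\underline{\p}}$ for every $\r \in \Pkt{\p}$. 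Letting $x$ range over $\S{\underline{\p}^{\theta}}$ (equivalently $y$ over $\S{\underline{\p}}$) finishes the lemma; the non-simplicity hypothesis is what provides the non-trivial $y$ needed to make the coefficient-matching informative, whereas the $y = 1$ instance already forces the $\theta$-equivariance of the L-packet itself. The main technical obstacle is verifying the transfer-factor equivariance displayed above, but this follows exactly the pattern of Lemma~\ref{lemma: twisted transfer factor}; everything else is formal.
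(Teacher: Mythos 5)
Your proposal follows the same approach as the paper: push $\theta$ through the transfer factors by replacing the endoscopic datum $E = (H, \mathcal{H}, s, \xi)$ with its $\D{\theta}$-twist $E^{\theta}$, deduce a compatibility of geometric transfers, expand both sides of the resulting equality of distributions via the character identity, and conclude by linear independence of characters. One small caveat: with the paper's conventions (where $f^{\theta} = f\circ\theta$ and $f^{\theta}_{G}(\r) = f_{G}(\r^{\theta^{-1}})$), the correct transfer-factor relation is $\Delta^{E^{\theta}}_{G,H_{1}}(\gamma_{1}, \theta^{-1}(\delta')) = \Delta^{E}_{G,H_{1}}(\gamma_{1}, \delta')$, which gives $(f\circ\theta)^{H_{1},E^{\theta}} = f^{H_{1},E}$ rather than your $(f\circ\theta)^{H_{1},E} = f^{H_{1},E^{\theta}}$; likewise $(f\circ\theta)_{G}(\r) = f_{G}(\r^{\theta^{-1}})$ rather than $f_{G}(\r^{\theta})$. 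These two $\theta \leftrightarrow \theta^{-1}$ slips cancel (your displayed equality is obtained from the correct one by the substitution $f \mapsto f\circ\theta$, so it ranges over the same family as $f$ varies), and your final coefficient matching is identical to the paper's conclusion — but be careful to keep the orientation consistent at every step, since the argument does not rely on $\theta$ having order two.
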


\begin{proof}
For $s \in \cS{\underline{\p}}$, we assume $(H_{1}, \underline{\p}_{H_{1}}) \rightarrow (\underline{\p}, s)$, with respect to $(H, \mathcal{H},  s, \xi)$ and $z$-pair $(H_{1}, \xi_{H_{1}})$. Then we have $(H_{1}, \underline{\p}_{H_{1}}) \rightarrow (\underline{\p}^{\theta}, \D{\theta} s \D{\theta}^{-1})$, with respect to $(H, \mathcal{H}, \D{\theta} s \D{\theta}^{-1}, \xi^{\theta})$ and the same $z$-pair $(H_{1}, \xi_{H_{1}})$. To make a distinction, we denote the transfer factor with respect to $(H, \mathcal{H}, \D{\theta} s \D{\theta}^{-1}, \xi^{\theta})$ by $\Delta_{G, H'_{1}}$, and the transfer by $f^{H'_{1}}$ for $f \in C^{\infty}_{c}(G(F))$. Note $f^{H'_{1}}$ is defined on $H_{1}(F)$.


If $f \in C^{\infty}_{c}(G(F))$, we can choose the transfers so that they satisfy
\begin{align}
\label{eq: theta equivariant transfer}
(f^{\theta})^{H'_{1}} = f^{H_{1}}.
\end{align}
To see this, let $\gamma_{1}$ be a semisimple strongly $G$-regular element of $H_{1}(F)$, let $T_{H_{1}}$ be the centralizer of $\gamma_{1}$. Let $T_{H}$ be the projection of $T_{H_{1}}$ on $H$, and $\gamma \in H(F)$ be the image of $\gamma_{1}$. We fix an admissible embedding $T_{H} \rightarrow T$ with respect to $(H, \mathcal{H},  s, \xi)$, and denote the image of $\gamma$ by $\delta$. Then the admissible embedding of $T_{H}$ with respect to $(H, \mathcal{H}, \D{\theta} s \D{\theta}^{-1}, \xi^{\theta})$ becomes the composition of 
\[
T_{H} \rightarrow T \xrightarrow{\theta^{-1}} \theta^{-1}(T).
\] 
This is because the endoscopic embedding $\xi$ changes to $\xi^{\theta}$. Note $\gamma$ maps to $\theta^{-1}(\delta)$ under this admissible embedding. Then
\begin{align*}
SO_{H_{1}}((f^{\theta})^{H'_{1}}, \gamma_{1}) & = \sum_{\{\delta'\}_{G(F)} \thicksim_{st} \{\delta\}_{G(F)}} \Delta_{G, H'_{1}}(\gamma_{1}, \theta^{-1}(\delta')) O_{G}(f^{\theta}, \theta^{-1}(\delta')) \\
& = \sum_{\{\delta'\}_{G(F)} \thicksim_{st} \{\delta\}_{G(F)}} \Delta_{G, H'_{1}}(\gamma_{1}, \theta^{-1}(\delta')) O_{G}(f, \delta').
\end{align*}
By the definition of transfer factors, one can check
\[
\Delta_{G, H'_{1}}(\gamma_{1}, \theta^{-1}(\delta')) = \Delta_{G, H_{1}}(\gamma_{1}, \delta'),
\] 
so we have
\[
SO_{H_{1}}((f^{\theta})^{H'_{1}}, \gamma_{1}) = SO_{H_{1}}(f^{H_{1}}, \gamma_{1}).
\]

It follows from \eqref{eq: theta equivariant transfer} that 
\[
f^{H_{1}}(\underline{\p}_{H_{1}}) = (f^{\theta})^{H'_{1}}(\underline{\p}_{H_{1}}).
\]
Now we can expand both sides by the endoscopic character identities:
\[
f^{H_{1}}(\underline{\p}_{H_{1}}) = \sum_{\r \in \Pkt{\p}} <x, \r>_{\underline{\p}} f_{G}(\r),
\]
and
\begin{align*}
(f^{\theta})^{H'_{1}}(\underline{\p}_{H_{1}}) &= \sum_{\r \in \Pkt{\p^{\theta}}} <\D{\theta} x \D{\theta}^{-1}, \r>_{\underline{\p}^{\theta}} f^{\theta}_{G}(\r) \\
&= \sum_{\r \in \Pkt{\p^{\theta}}} <\D{\theta} x \D{\theta}^{-1}, \r>_{\underline{\p}^{\theta}} f_{G}(\r^{\theta^{-1}}),
\end{align*}
where $x$ is the image of $s$ in $\S{\underline{\p}}$. By linear independence of characters, for any $\r' \in \Pkt{\p^{\theta}}$, there exists $\r \in \Pkt{\p}$ such that $\r^{\theta} \cong \r'$. This shows $\Pkt{\p}^{\theta} = \Pkt{\p^{\theta}}$. Moreover, 
\[
<x, \r>_{\underline{\p}} = <\D{\theta} x \D{\theta}^{-1}, \r'>_{\underline{\p}^{\theta}} = <\D{\theta} x \D{\theta}^{-1}, \r^{\theta}>_{\underline{\p}^{\theta}}
\]
Let $x' = \D{\theta} x \D{\theta}^{-1} \in \S{\underline{\p}^{\theta}}$, then we get $<\D{\theta}^{-1} x' \D{\theta}, \r>_{\underline{\p}} = < x', \r^{\theta}>_{\underline{\p}^{\theta}}$.

\end{proof}

\begin{lemma}
\label{lemma: twist equivariant}
Suppose $\p \in \Pbd{G}$ is not simple, then $\Pkt{\p \otimes {\bold a}} = \Pkt{\p} \otimes \x$. Moreover,
\[
<x, \r \otimes \x> _{\underline{\p} \otimes \underline{{\bold a}}} = <x, \r>_{\underline{\p}}.
\]
for any $\r \in \Pkt{\p}$ and $x \in \S{\underline{\p}} = \S{\underline{\p} \otimes \underline{{\bold a}}}$, where $\underline{{\bold a}}$ is a $1$-cocyle of $W_{F}$ in $Z(\D{G})$ representing ${\bold a}$. 
\end{lemma}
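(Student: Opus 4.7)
The plan is to adapt the proof of Lemma~\ref{lemma: theta equivariant}, replacing the twist by $\theta$ with a central modification of the endoscopic embedding $\xi$ reflecting the cocycle $\underline{{\bold a}}$. Since $\p$ is non-simple, for each $s \in \cS{\underline{\p}}$ I will pick $(H_{1}, \underline{\p}_{H_{1}}) \rightarrow (\underline{\p}, s)$ realized by an endoscopic datum $(H, \mathcal{H}, s, \xi)$ and a $z$-pair $(H_{1}, \xi_{H_{1}})$, so that $\underline{\p} = \xi \circ \underline{\p}_{\mathcal{H}}$ and $\underline{\p}_{H_{1}} = \xi_{H_{1}} \circ \underline{\p}_{\mathcal{H}}$. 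The key construction is an alternative L-embedding $\xi' : \mathcal{H} \rightarrow \L{G}$ defined by $\xi'(h) = \underline{{\bold a}}(w(h)) \, \xi(h)$, where $w(h) \in W_{F}$ denotes the image of $h$. Because $\underline{{\bold a}}$ is a $1$-cocycle with values in $Z(\D{G})$, the map $\xi'$ is an L-homomorphism agreeing with $\xi$ on $\D{H}$, and $s$ still centralizes $\xi'(\mathcal{H})$. Hence $(H, \mathcal{H}, s, \xi')$ is again an (untwisted) endoscopic datum for $G$ with the same underlying group $H$, the same $z$-pair still applies, and one checks directly that $\xi' \circ \underline{\p}_{\mathcal{H}} = \underline{\p} \otimes \underline{{\bold a}}$. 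Consequently $(H_{1}, \underline{\p}_{H_{1}}) \rightarrow (\underline{\p} \otimes \underline{{\bold a}}, s)$ is realized via $(H, \mathcal{H}, s, \xi')$ with the \emph{same} $\underline{\p}_{H_{1}}$.

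Next I will compare the transfer factors $\Delta = \Delta_{G, H_{1}, \xi}$ and $\Delta' = \Delta_{G, H_{1}, \xi'}$ term by term, following the pattern of Lemma~\ref{lemma: twisted transfer factor}. The factors $\Delta_{I}$, $\Delta_{II}$ and $\Delta_{IV}$ are insensitive to the central twist $\xi \mapsto \underline{{\bold a}} \cdot \xi$. For $\Delta_{III}$, the cocycle $a_{T}$ appearing in \eqref{eq: transfer central character} is replaced by $a_{T} \cdot \underline{{\bold a}}$, and the Tate-Nakayama pairing combined with the local Langlands correspondence for tori will yield
\[
\Delta'(\gamma_{1}, \delta) = \x(\delta) \, \Delta(\gamma_{1}, \delta)
\]
for $\gamma_{1} \in H_{1, G-reg}(F)$ and $\delta \in G_{reg}(F)$. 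Combined with the identity $O_{G}(f \cdot \x, \delta) = \x(\delta) \, O_{G}(f, \delta)$, this will give the transfer compatibility $f^{H_{1}, \xi'} = (f \cdot \x)^{H_{1}, \xi}$ in $\mathcal{SI}(H_{1}, \chi_{1})$ for any $f \in C^{\infty}_{c}(G(F))$.

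Finally, evaluating both sides at $\underline{\p}_{H_{1}}$ and applying the endoscopic character identity separately to $\underline{\p}$ and to $\underline{\p} \otimes \underline{{\bold a}}$ will produce
\[
\sum_{\r' \in \Pkt{\p \otimes {\bold a}}} <x, \r'>_{\underline{\p} \otimes \underline{{\bold a}}} \, f_{G}(\r') = f^{H_{1}, \xi'}(\underline{\p}_{H_{1}}) = (f \cdot \x)^{H_{1}, \xi}(\underline{\p}_{H_{1}}) = \sum_{\r \in \Pkt{\p}} <x, \r>_{\underline{\p}} \, f_{G}(\r \otimes \x),
\]
where $x$ denotes the image of $s$ in $\S{\underline{\p}} = \S{\underline{\p} \otimes \underline{{\bold a}}}$ and we used $(f \cdot \x)_{G}(\r) = f_{G}(\r \otimes \x)$. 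Matching coefficients via the linear independence of irreducible characters (and using that $<1, \r>_{\underline{\p}}$ and $<1, \r'>_{\underline{\p} \otimes \underline{{\bold a}}}$ are non-zero to rule out extraneous representations) forces $\Pkt{\p \otimes {\bold a}} = \Pkt{\p} \otimes \x$, together with $<x, \r \otimes \x>_{\underline{\p} \otimes \underline{{\bold a}}} = <x, \r>_{\underline{\p}}$; letting $s$ range over $\cS{\underline{\p}}$ then covers all $x \in \S{\underline{\p}}$. The main obstacle is the second step: confirming that the central twist $\xi \mapsto \underline{{\bold a}} \cdot \xi$ scales the Kottwitz-Shelstad transfer factor by exactly $\x(\delta)$. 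This is a standard but delicate unwinding of the hypercohomology pairing defining $\Delta_{III}$, parallel to the term-by-term comparison carried out in the proof of Lemma~\ref{lemma: twisted transfer factor}.
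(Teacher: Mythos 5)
Your proposal follows essentially the same approach as the paper's proof: construct the modified L-embedding $\xi \otimes \underline{\bold a}$ (your $\xi'$), observe that only $\Delta_{III}$ (more precisely, the $\Delta_2$ factor) changes by $\x(\delta)$, deduce the transfer compatibility $(f \otimes \x^{-1})^{H_1'} = f^{H_1}$ (your formulation $f^{H_1,\xi'} = (f\cdot\x)^{H_1,\xi}$ is the same identity after substituting $f \mapsto f\otimes\x^{-1}$), and match coefficients in the expanded endoscopic character identity.
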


\begin{proof}
For $s \in \cS{\underline{\p}}$, we assume $(H_{1}, \underline{\p}_{H_{1}}) \rightarrow (\underline{\p}, s)$, with respect to $(H, \mathcal{H},  s, \xi)$ and $z$-pair $(H_{1}, \xi_{H_{1}})$. Then we have $(H'_{1}, \underline{\p}_{H_{1}}) \rightarrow (\underline{\p} \otimes \underline{{\bold a}}, s)$, with respect to $(H, \mathcal{H}, s, \xi \otimes \underline{{\bold a}})$ and the same $z$-pair $(H_{1}, \xi_{H_{1}})$. To make a distinction, we denote the transfer factor with respect to $(H, \mathcal{H}, s, \xi \otimes \underline{{\bold a}})$ by $\Delta_{G, H'_{1}}$, and the transfer by $f^{H'_{1}}$ for $f \in C^{\infty}_{c}(G(F))$. Note $f^{H'_{1}}$ is defined on $H_{1}(F)$.


If $f \in C^{\infty}_{c}(G(F))$, one can choose the transfers so that they satisfy 
\begin{align}
\label{eq: twist equivariant transfer}
(f \otimes \x^{-1})^{H'_{1}} = f^{H_{1}}.
\end{align}
To see this, let $\gamma_{1}$ be a semisimple strongly $G$-regular element of $H_{1}(F)$, let $T_{H_{1}}$ be the centralizer of $\gamma_{1}$. Let $T_{H}$ be the projection of $T_{H_{1}}$ on $H$, and $\gamma \in H(F)$ be the image of $\gamma_{1}$. We fix an admissible embedding $T_{H} \rightarrow T$ with respect to $(H, \mathcal{H},  s, \xi)$, and denote the image of $\gamma$ by $\delta$. Then the admissible embedding of $T_{H}$ with respect to $(H, \mathcal{H}, s, \xi \otimes \underline{{\bold a}})$ is the same. 
\begin{align*}
SO_{H_{1}}((f \otimes \x^{-1})^{H'_{1}}, \gamma_{1}) & = \sum_{\{\delta'\}_{G(F)} \thicksim_{st} \{\delta\}_{G(F)}} \Delta_{G, H'_{1}}(\gamma_{1}, \delta') O_{G}(f \otimes \x^{-1}, \delta') \\
& = \sum_{\{\delta'\}_{G(F)} \thicksim_{st} \{\delta\}_{G(F)}} \Delta_{G, H'_{1}}(\gamma_{1}, \delta') \x^{-1}(\delta') O_{G}(f, \delta').
\end{align*}
Moreover, we have
\[
\Delta_{G, H'}(\gamma, \delta') = \x(\delta') \Delta_{G, H}(\gamma, \delta').
\]
In fact, this difference between transfer factors only comes from $\Delta_{III}$, or more precisely $\Delta_{2}$ (see \cite{LanglandsShelstad:1987}, Section 3.5 for its definition). Therefore,
\[
SO_{H_{1}}((f \otimes \x^{-1})^{H'_{1}}, \gamma_{1}) = SO_{H_{1}}(f^{H_{1}}, \gamma_{1}). 
\]

It follows from \eqref{eq: twist equivariant transfer} that
\[
f^{H_{1}}(\underline{\p}_{H_{1}}) = (f \otimes \x^{-1})^{H'_{1}}(\underline{\p}_{H_{1}}).
\]
Now we can expand both sides by the endoscopic character identities:
\[
f^{H_{1}}(\underline{\p}_{H_{1}}) = \sum_{\r \in \Pkt{\p}} <x, \r>_{\underline{\p}} f_{G}(\r),
\]
and
\begin{align*}
(f \otimes \x^{-1})^{H'_{1}}(\underline{\p}_{H_{1}}) & = \sum_{\r \in \Pkt{\p \otimes {\bold a}}} <x, \r>_{\underline{\p} \otimes \underline{{\bold a}}} (f \otimes \x^{-1})_{G}(\r) \\
&= \sum_{\r \in \Pkt{\p \otimes {\bold a}}} <x, \r>_{\underline{\p} \otimes \underline{{\bold a}}} f_{G}(\r \otimes \x^{-1}),
\end{align*}
where $x$ is the image of $s$ in $\S{\underline{\p}} = \S{\underline{\p} \otimes \underline{{\bold a}}}$. By linear independence of characters, for any $\r' \in \Pkt{\p \otimes {\bold a}}$ there exists $\r \in \Pkt{\p}$ such that $\r' \otimes \x^{-1} \cong \r$. This implies $\Pkt{\p \otimes {\bold a}} = \Pkt{\p} \otimes \x$. Furthermore,
\[
<x, \r \otimes \x >_{\underline{\p} \otimes \underline{{\bold a}}} = <x, \r'>_{\underline{\p} \otimes \underline{{\bold a}}} = <x, \r>_{\underline{\p}}.
\]
This finishes the proof.

\end{proof}

\section{Lifting L-packet}
\label{sec: lifting L-packet}

Let $G \subseteq \lG$ be two quasisplit connected reductive groups over $F$, such that $G_{der} = \lG_{der}$ and we denote $\lG/G$ by $D$. Suppose $\lp \in \Pbd{\lG}$, and $\p$ is the image of $\lp$ under $\Pbd{\lG} \rightarrow \Pbd{G}$, then it is conjectured that $\Pkt{\lp}|_{G} = \Pkt{\p}$. The problem we want to study is to what extent one can understand the L-packet of $\lG$ from that of $G$. Therefore, we will only assume the endoscopic hypothesis (i.e., \eqref{eq: disjoint decomposition}, Conjecture~\ref{conj: twisted endoscopic parametrization} and Conjecture~\ref{conj: twisted character identity}) for $G$ and all its twisted endoscopic groups. To be more precise, this will be our working assumption in Section~\ref{subsec: coarse L-packet}-\ref{subsec: refinement}. It follows the previous results the we have proved about the desiderata of L-packets are valid for $G$.

\subsection{Representation theoretic preparation}
\label{subsec: representation}

We start by investigating the restriction multi-map $\Pkt{}(\lG(F)) \rightarrow \Pkt{}(G(F))$. Similar discussion of this restriction multi-map can also be found in \cite{LabesseLanglands:1979}, \cite{HiragaSaito:2012} and \cite{GelbartKnapp:1982}.

\begin{lemma}
\label{lemma: finite restriction}
If $\lr$ is an irreducible smooth representation of $\lG(F)$, then the restriction of $\lr$ to G(F) is a direct sum of finitely many irreducible smooth representations.
\end{lemma}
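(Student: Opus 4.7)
The plan is to deduce this from Clifford theory applied to the closed normal subgroup $H := G(F) Z_{\lG}(F)$ of $\lG(F)$. The two key ingredients are (a) that $H$ has finite index in $\lG(F)$, which rests on the hypothesis $G_{der} = \lG_{der}$, and (b) Schur's lemma for the irreducible smooth representation $\lr$.

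First I would verify $[\lG(F) : H] < \infty$. Because $G_{der} = \lG_{der}$, the standard decomposition $\lG = \lG_{der} \cdot Z_{\lG}^{0}$ of a connected reductive group forces $\lG = G \cdot Z_{\lG}$ as algebraic groups. A direct check shows $G \cap Z_{\lG} = Z_{G}$: the inclusion $\supseteq$ holds because any element of $Z_{G}$ commutes with both $G$ and the abelian group $Z_{\lG}$, hence with all of $\lG = G Z_{\lG}$, so lies in $Z_{\lG}$; the reverse inclusion is immediate. Thus there is a short exact sequence of algebraic groups
\[
\xymatrix{1 \ar[r] & Z_{G} \ar[r] & G \times Z_{\lG} \ar[r] & \lG \ar[r] & 1}
\]
(with first map $z \mapsto (z, z^{-1})$ and second map $(g, z) \mapsto gz$), which in Galois cohomology yields an injection $\lG(F)/H \hookrightarrow H^{1}(F, Z_{G})$. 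The target is finite since $F$ is a local field of characteristic zero and $Z_{G}$ is commutative of finite type, so $H$ is a closed normal subgroup of finite index in $\lG(F)$.

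Next, by Schur's lemma, the center $Z_{\lG}(F)$ acts on $\lr$ through a quasicharacter. In particular a $G(F)$-stable subspace of $\lr$ is automatically $H$-stable, and an $H$-subrepresentation of $\lr$ is irreducible over $H$ if and only if it is irreducible over $G(F)$. I would then extract a nonzero irreducible $H$-subrepresentation $V \subseteq \lr|_{H}$; translating $V$ by a (finite) set of coset representatives of $\lG(F)/H$ produces finitely many irreducible $H$-subrepresentations whose sum is $\lG(F)$-stable and hence coincides with $\lr$ by irreducibility. Since a sum of irreducible submodules is semisimple, one extracts a finite direct sum decomposition
\[
\lr|_{H} = \bigoplus_{i=1}^{n} V_{i}, \qquad n \leq [\lG(F) : H],
\]
and by the equivalence just noted each $V_{i}$ is also irreducible as a $G(F)$-representation, proving the lemma.

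The main technical difficulty is the existence of the irreducible $H$-subrepresentation $V$; equivalently, one must show that $\lr|_{H}$ is semisimple rather than merely of finite length. This is the standard subtlety in Clifford theory for admissible smooth representations, and can be handled by embedding $\lr$ into $\mathrm{Ind}_{H}^{\lG(F)} \sigma$ for some irreducible $H$-subquotient $\sigma$ of $\lr|_{H}$ (which exists by admissibility of $\lr$), and then invoking finiteness of the length of $\mathrm{Ind}_{H}^{\lG(F)} \sigma$ as an $H$-module, which is a consequence of $[\lG(F) : H] < \infty$. Analogous arguments appear in the works of Labesse--Langlands, Hiraga--Saito, and Gelbart--Knapp cited above.
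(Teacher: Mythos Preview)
Your approach is essentially the same as the paper's: both reduce to the subgroup $H = Z_{\lG}(F)G(F)$, show it has finite index in $\lG(F)$ (the paper via $|D(F):\c(Z_{\lG}(F))|<\infty$, you via the equivalent cohomological injection into $H^{1}(F,Z_{G})$), and then invoke Clifford theory. The one substantive difference is that the paper supplies a self-contained, purely algebraic proof of the Clifford-theoretic decomposition (the lemma immediately following this one) which needs neither admissibility nor a pre-existing irreducible subquotient, whereas you defer that step to admissibility and standard references; your sketch there is slightly imprecise (you need $\sigma$ to be an irreducible quotient or sub of $\lr|_{H}$, not merely a subquotient, for the Frobenius-reciprocity embedding to go through), but this is easily repaired once one notes $\lr|_{H}$ has finite length.
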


\begin{proof}
Since $\lr$ has a central character $\chi_{\lr}$, it is enough to show the restriction of $\lr$ to $\lZ(F)G(F)$ is a direct sum of finitely many irreducible smooth representations. Note $|D(F) : \c(\lZ(F))|$ is finite, so the index $|\lG(F) : \lZ(F)G(F)|  =  |\c(\lG(F)) : \c(\lZ(F))| < |D(F) : \c(\lZ(F))|$ is also finite. Then this lemma follows from the following algebraic result.

\end{proof}

\begin{lemma}
\label{lemma: algebra}
Let $G$ and $H$ be two groups, such that $H$ is a normal subgroup of $G$ and $G/H$ is finite.
\begin{enumerate}
\item If $\lr$ is an irreducible representation of $G$, then the restriction of $\lr$ to $H$ is a direct sum of finitely many irreducible representations.
\item If $\r$ is an irreducible representation of $H$, then there exists an irreducible representation $\lr$ of $G$ which contains $\r$ in its restriction to $H$.
\end{enumerate}
\end{lemma}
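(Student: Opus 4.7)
\medskip
\noindent\textit{Proof proposal.} Both parts are instances of Clifford's theorem, and the plan is to exploit the finiteness of $G/H$ together with Frobenius reciprocity. Let $n = |G/H|$ and fix coset representatives $g_{1}, \ldots, g_{n}$.

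For (1), the first step will be to produce a single irreducible $H$-subrepresentation $\sigma$ of $\lr|_{H}$. Since we are working with smooth representations of a locally profinite (or real reductive) group and $\lr$ is irreducible, $\lr$ is admissible; consequently $\lr|_{H}$ has finite length for the finite-index subgroup $H$, and hence possesses a nonzero irreducible subspace. I would invoke this as a black box. Next, for each $g_{i}$ the translate $\lr(g_{i})\sigma$ is $H$-stable (because $H$ is normal in $G$), and conjugation by $g_{i}$ identifies it, as an $H$-representation, with the twist $\sigma^{g_{i}^{-1}}$, which is again irreducible. The subspace $\sum_{i} \lr(g_{i})\sigma$ is then visibly $G$-stable and nonzero, so by irreducibility of $\lr$ it equals all of $\lr$. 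A routine argument converts this finite sum of irreducible $H$-subrepresentations into a direct sum by greedily discarding redundant summands, giving the claim.

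For (2), I would reverse the construction using the induced representation $\Ind_{H}^{G} \r$, which in the smooth setting coincides with the compact induction because $G/H$ is finite. Mackey's decomposition yields
\[
(\Ind_{H}^{G} \r)\big|_{H} \;\cong\; \bigoplus_{i=1}^{n} \r^{g_{i}},
\]
a finite direct sum of irreducible $H$-representations. In particular $\Ind_{H}^{G} \r$ has finite length as a $G$-module, so it admits an irreducible $G$-subrepresentation $\lr$. Frobenius reciprocity then produces a nonzero (hence surjective) $H$-equivariant map $\lr|_{H} \twoheadrightarrow \r$, and applying part (1) to $\lr$ shows that $\lr|_{H}$ is semisimple; therefore $\r$ must appear as a direct summand of $\lr|_{H}$, which is the required conclusion.

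The main technical obstacle in both parts is the same: producing an irreducible $H$-subrepresentation inside a smooth representation that is only known to be finitely generated. In the purely abstract-group setting this can fail, but the admissibility of irreducible smooth representations of reductive groups supplies exactly the finite-length input needed to make the classical Clifford-theoretic argument work without modification.
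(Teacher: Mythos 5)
Your argument for part (1) has a genuine gap. The lemma is stated for arbitrary abstract groups $G, H$ and arbitrary complex representations; there is no topology, no smoothness, and no admissibility in its hypotheses. You appeal to admissibility of irreducible smooth representations of locally profinite or reductive groups as a black box to produce an irreducible $H$-subrepresentation of $\lr|_H$, and you further assert that ``in the purely abstract-group setting this can fail.'' That assertion is false, and the black box is not available. The paper's proof is elementary: rather than first exhibiting an irreducible $H$-subrepresentation, it starts from \emph{any} proper nonzero $H$-invariant subspace $W$ and shows, using only $[G:H]<\infty$ and irreducibility of $\lr$, that some $H$-invariant $W''\subseteq W$ gives $V = \bigoplus_{i=1}^{l}\lr(g_{v_i})W''$; it then repeatedly shrinks $W''$ while strictly increasing the number $l$ of summands. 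Since $l \leq r := [G:H]$, the process terminates, and when $l = r$ the space $W''$ is forced to be irreducible (any proper nonzero $H$-invariant $W'\subsetneq W''$ would make $\bigoplus_{i=1}^{r}\lr(g_i)W'$ a proper nonzero $G$-invariant subspace of $V$). This proves (1) -- and, as a byproduct, the existence of an irreducible $H$-subrepresentation -- with no finiteness input beyond $[G:H]<\infty$. Your approach would be adequate for the lemma's actual use in the paper (where the groups are $\lG(F)$ and a finite-index open subgroup, whose irreducible smooth representations are admissible), but it does not prove the lemma as stated.

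Your argument for (2) is essentially the paper's: both construct $\Ind_H^G\r$, observe that $(\Ind_H^G\r)|_H \cong \bigoplus_{i=1}^r \r^{g_i}$ (which follows already from $\C G=\bigoplus_i \C H g_i$ and normality of $H$, with no need for Mackey in full generality), deduce finite $G$-length and hence the existence of a simple $G$-submodule $\lr\hookrightarrow\Ind_H^G\r$, and finish with Frobenius reciprocity. The paper packages the finite-length step as an explicit filtration via the projections $p_i$ onto the summands $\r^{g_i}$. One local repair: as written, your (2) invokes your flawed (1) to conclude $\lr|_H$ is semisimple; this can be sidestepped by noting that $\lr|_H$ is an $H$-submodule of the semisimple module $(\Ind_H^G\r)|_H$ and is therefore itself semisimple, so $\r$ is a direct summand of $\lr|_H$ once a nonzero $H$-map $\lr|_H\to\r$ exists.
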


\begin{proof}
\begin{enumerate}
\item Let $g_{1}, g_{2}, \cdots, g_{r}$ be the representatives of $G/H$ and  $g_{1} = 1$. Let us assume the restriction of $\lr$ to $H$ is reducible. We first need to show there exists a direct sum decomposition of the representation space $V = V(\lr|_{H}) = \bigoplus_{i=1}^{l} \lr(g_{v_{i}}) W$ for some proper $H$-invariant subspace W, and $1 \leqslant v_{i} \leqslant r$. Suppose there exists a direct sum $0 \neq\bigoplus_{i=1}^{l} \lr(g_{v_{i}}) W \subsetneq V$, then $\bigcap_{k=1}^{r}\bigoplus_{i=1}^{l} \lr(g_{k}g_{v_{i}}) W = 0$ for it is invariant under $G$, but not equal to $V$. Hence we can choose $\{k_{1}, k_{2}, \dots, k_{m} \} \subseteq \{1, 2, \cdots, r\}$ so that $W \bigcap_{j=1}^{m} \bigoplus_{i=1}^{l}\lr(g_{k_{j}}g_{v_{i}}) W = 0 $, but $W' = W \bigcap_{j=1}^{m-1}\bigoplus_{i}^{l}\lr(g_{k_{j}}g_{v_{i}})W \neq 0$. Here we let $W' = W$ if $m =1$. Since $W' \cap \bigoplus_{i=1}^{l}\lr(g_{k_{m}}g_{v_{i}})W= 0$, then $W' + \bigoplus_{i=1}^{l} \lr(g_{k_{m}}g_{v_{i}})W'$ is again a direct sum. Note that we have increased the number of direct summands by 1. By repeating this argument, we will end up with a direct sum which is either the whole space $V$ or equal to $\bigoplus_{i=1}^{r}\lr(g_{i})W''$ with respect to some $H$-invariant subspace $0 \neq W'' \subseteq W$. In the latter case, it is again equal to $V$ for it is invariant under $G$.

Now we can assume there is a direct sum decomposition of $V = \bigoplus_{i=1}^{l} \lr(g_{v_{i}})W $ with respect to some $W$. Suppose $W$ is reducible,  then there exists an $H$-invariant subspace $W'$ in $W$, and $\bigoplus_{i=1}^{l} \lr(g_{v_{i}})W' \neq V$. This implies $l < r$. Hence $W$ must be irreducible if $l = r$. In case $l < r $, we can apply the argument in the previous paragraph and find $W''$ in $W'$ so that $V = \bigoplus_{i=1}^{m} \lr(g_{v_{i}})W'' $ and $m > l$. If $W''$ is reducible we can repeat this argument until either we get an irreducible subrepresentation in which case the proof is done, or we decompose $V$ into a direct sum of $r$ subspaces.  In the latter case, it is clear now each subspace has to be irreducible. Therefore $\lr$ can be decomposed into a finite direct sum of irreducible $H$-representations. Moreover, it is easy to see the direct summands run over all the isomorphism classes of $G$-conjugates of any irreducible representation $\r$ contained in $\lr|_{H}$.

\item Let $\lr$ be any irreducible representation of $G$, from Frobenius reciprocity we have
\[
\text{Hom}_{H}( \, \Res^{G}_{H} \lr,  \r \, )  \cong  \text{Hom}_{G}(\lr , \Ind^{G}_{H} \r).
\]
Then it is easy to see from part (i) of the lemma that $\lr$ contains $\r$ in its restriction to $H$ if and only if $\lr$ is a subrepresentation of $\sigma = \Ind^{G}_{H} \r$. So it is enough to show $\sigma$ has an irreducible subrepresentaion. Note that $\sigma|_{H} = \bigoplus_{i=1}^{r} \r^{g_{i}}$, so we have projections $p_{i}: V(\sigma) \rightarrow V(\r^{g_{i}})$. If $W$ is a $G$-invariant subspace of $V(\sigma)$, we are going to define a sequence of subspaces as follows. Let $W_{1} = W, W_{2} = \text{Ker} \,p_{1}|_{W_{1}}, W_{3} = \text{Ker} \,p_{2}|_{W_{2}}, \cdots, W_{r} = \text{Ker} \,p_{r-1}|_{W_{r-1}}$, and $W_{r+1} = 0$. Then we have 
\[
0 = W_{r+1} \subseteq W_{r} \subseteq W_{r-1} \subseteq \cdots \subseteq W_{1} = W,
\]
where $W_{i}/ W_{i+1} \simeq \r^{g_{i}} \text{ or } 0$ for $1 \leqslant i \leqslant r$. In particular there exists a unique sequence of integers $r \geqslant s_{m} > s_{m-1} > \cdots > s_{1} \geqslant 1$ such that
\[
0 \subsetneq W_{s_{m}} \subsetneq W_{s_{m-1}} \subsetneq \cdots \subsetneq W_{s_{1}} = W,
\]
with $W_{s_{i}}/ W_{s_{i+1}} \simeq \r^{g_{s_{i}}} $ for $1 \leqslant i \leqslant m-1$ and $W_{s_{m}} = \r^{g_{m}}$. We call $m = m(W)$ the length of $W$. Now let us take a proper $G$-invariant subspace $W$ of minimal length, then $W$ has to be irreducible. Otherwise, there exists another $G$-invariant subspace $W' \subsetneq W$, and if $\{s_{1}, s_{2}, \cdots, s_{m}\}$ is associated with $W$, then $W'_{s_{i}}/ W'_{s_{i+1}} \subseteq W_{s_{i}} / W_{s_{i+1}} \simeq \r^{g_{s_{i}}} $. From here we see $m(W') \leqslant m(W)$, and hence $m(W') = m(W)$.  This means $W'_{s_{m}} = W_{s_{m}}$ and $W'_{s_{i}}/ W'_{s_{i+1}} = W_{s_{i}} / W_{s_{i+1}}$. Therefore $W' = W$. 

\end{enumerate}
\end{proof}

As an immediate consequence of part (ii) of this lemma, we have the following corollary.

\begin{corollary}
\label{cor: existence}
If $\r$ is an irreducible smooth representation of $G(F)$, then there exists an irreducible smooth representation $\lr$ of $\lG(F)$ which contains $\r$ in its restriction to $G(F)$. In particular, the  central character $\chi_{\r}$ can be extended to a character of $\lZ(F)$. 

\end{corollary}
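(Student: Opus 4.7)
The plan is to reduce to the finite-index situation handled by Lemma~\ref{lemma: algebra}(ii), using $\lZ(F)G(F)$ as an intermediate subgroup. The key preliminary observation is that $\lG = G \cdot \lZ$ (since $G_{der} = \lG_{der}$), so $Z_{G} = G \cap \lZ$ and hence $Z_{G} \subseteq \lZ$. In particular $\chi_{\r}$ is a continuous character of the closed subgroup $Z_{G}(F)$ of the locally compact abelian group $\lZ(F)$.

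First I would extend $\chi_{\r}$ to a continuous character $\x$ of $\lZ(F)$; this is possible by Pontryagin duality, since restriction of characters to a closed subgroup of a locally compact abelian group is surjective. I would then define an irreducible smooth representation $\r^{\x}$ of $\lZ(F)G(F)$ by setting $\r^{\x}(zg) = \x(z)\r(g)$ for $z \in \lZ(F)$ and $g \in G(F)$; this is well-defined because $\lZ(F) \cap G(F) = Z_{G}(F)$ and $\x|_{Z_{G}(F)} = \chi_{\r}$.

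Next I would apply Lemma~\ref{lemma: algebra}(ii) to the normal subgroup $\lZ(F)G(F) \trianglelefteq \lG(F)$, whose index is finite by exactly the estimate used in the proof of Lemma~\ref{lemma: finite restriction}, namely $|\lG(F) : \lZ(F)G(F)| \leq |D(F) : \c(\lZ(F))| < \infty$ (the latter being finite since the kernel of $\lZ \rightarrow D$ is $Z_{G}$ and $H^{1}(F, Z_{G})$ is finite for $F$ local). This produces an algebraically irreducible $\lr$ of $\lG(F)$ containing $\r^{\x}$ in its restriction, hence containing $\r$ in its restriction to $G(F)$. Smoothness of $\lr$ is automatic since $\lr$ embeds into the smooth induction $\Ind_{\lZ(F)G(F)}^{\lG(F)} \r^{\x}$, which is smooth because the inducing subgroup has finite index. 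Finally, the ``in particular'' clause follows because $Z_{G}(F) \subseteq \lZ(F) = Z_{\lG}(F)$, so the central character of $\lr$, viewed as a character of $\lZ(F)$, restricts to $\chi_{\r}$ on $Z_{G}(F)$ and thus provides the desired extension.

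No step of this plan presents a substantive obstacle: the extension of $\chi_{\r}$ is a standard Pontryagin-duality fact, and the finite-index reduction via $\lZ(F)G(F)$ is the same device already employed in Lemma~\ref{lemma: finite restriction}. The only point demanding minor care is the compatibility of the algebraic statement in Lemma~\ref{lemma: algebra}(ii) with the smooth category, which is resolved by the observation that smooth induction from a finite-index subgroup of a totally disconnected group preserves irreducibility of subrepresentations and coincides with the algebraic induction.
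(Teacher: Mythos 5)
Your proof is correct but takes a genuinely different route from the paper's. Both proofs reduce to Lemma~\ref{lemma: algebra}(ii) via a finite-index subgroup, but the intermediate group differs. You work with $\lZ(F)G(F)$ and first extend $\chi_{\r}$ to a quasicharacter $\x$ of $\lZ(F)$, then form the representation $\r^{\x}$ on $\lZ(F)G(F)$. The paper instead constructs (explicitly, via an isogeny complement $C$ of $\Z^{0}$ in $\lZ^{0}$ and a passage to $m$-th powers to kill torsion) a closed subgroup $\lif{Z}_{F} \subseteq \lZ(F)$ satisfying $\lif{Z}_{F} \cap \Z(F) = 1$ and $|D(F)/\c(\lif{Z}_{F})| < \infty$, so that $\lif{Z}_{F}G(F)$ is an internal direct product and $\r$ extends through the \emph{trivial} character on $\lif{Z}_{F}$ with no compatibility to check. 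Your route is conceptually cleaner and avoids that ad hoc construction (which the paper, however, reuses elsewhere, e.g.\ in the coarse L-packet discussion), at the cost of needing the extension step for $\chi_{\r}$.

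One small imprecision: you invoke ``Pontryagin duality'' for the surjectivity of restriction of characters from $\lZ(F)$ to the closed subgroup $Z_{G}(F)$, but that duality statement is for \emph{unitary} characters. Here $\chi_{\r}$ is only a quasicharacter ($\to \C^{\times}$). The extension still exists: decompose $\chi_{\r} = u \cdot |\chi_{\r}|$ with $u$ unitary and $|\chi_{\r}|: Z_{G}(F) \to \mathbb{R}_{>0}$; the unitary part extends by Pontryagin duality, and the positive part factors through $Z_{G}(F)/Z_{G}(F)_{\mathrm{cpt}}$, which embeds as a direct summand (up to finite index) in $\lZ(F)/\lZ(F)_{\mathrm{cpt}}$ — a free abelian group in the $p$-adic case, a real vector space in the archimedean case — and maps to $\mathbb{R}$ can then be extended since $\mathbb{R}$ is divisible. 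This is a routine fact but deserves a word, especially since the paper's construction of $\lif{Z}_{F}$ was designed precisely to sidestep it.
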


\begin{proof}
Let $\lif{Z}_{F}$ be a closed subgroup of $\lZ(F)$ such that $\lif{Z}_{F} \cap \Z(F) = 1$ and $D(F)/\c(\lif{Z}_{F})$ is finite. Then we can extend $\r$ to $\lif{Z}_{F}G(F)$ through the trivial character on $\lif{Z}_{F}$. Since $\lG(F)/\lif{Z}_{F}G(F)$ is finite, the existence of $\lr$ follows from Lemma~\ref{lemma: algebra} directly, and moreover its central character $\chi_{\lr}$ extends $\chi_{\r}$.

The closed subgroup $\lif{Z}_{F}$ can be constructed as follows. We first choose an $F$-subtorus $C$ of $\lZ^{0}$, such that $\lZ^{0} = C \Z^{0}$ and $C \cap \Z^{0}$ is finite. It is easy to see $\c(C(F))$ has finite index in $D(F)$, and $|C(F) \cap \Z(F)|$ is finite. Next we choose integer $m$ such that $\lif{Z}_{F} := \{x^{m} : x \in C(F)\}$ has no torsion points. Then $\lif{Z}_{F} \cap \Z(F) = 1$, and $\c(\lif{Z}_{F})$ also has finite index in $D(F)$.

\end{proof}

Reviewing part (ii) of Lemma~\ref{lemma: algebra}, we see the irreducible subrepresentations of $\Ind^{G}_{H}\r$ give all the irreducible representations of $G$ whose restriction to $H$ contains $\r$. So it is interesting to determine the structure of $\Ind^{G}_{H}\r$. This may not be easy in general, but when $G/H$ is abelian and the irreducible representations of $H$ satisfy Schur's Lemma, one can actually compute the induction very explicitly. Especially, note that if $\lif{Z}_{F}$ is a closed subgroup of $\lZ(F)$ such that $D(F)/\c(\lif{Z}_{F})$ is finite, $\lG(F)/\lif{Z}_{F}G(F)$ is also abelian. So now we are going to calculate $\Ind^{G}_{H}\r$ under the assumption that $G/H$ is abelian. In fact, we can take any sequence of normal subgroups  
\[
H = H_{0} \subseteq H_{1} \subseteq \cdots \subseteq H_{i} \subseteq \cdots \subseteq H_{r} = G
\]
such that $H_{i+1}/H_{i}$ is cyclic and of prime order. Then 
\[
\Ind^{G}_{H}\r = \Ind^{G}_{H_{r-1}} \cdots \Ind^{H_{i+1}}_{H_{i}} \cdots \Ind^{H_{1}}_{H} \r.
\]
As we will see for any irreducible representation $\sigma$ of $H_{i}$, the induction $\Ind^{H_{i+1}}_{H_{i}} \sigma$ is always semisimple, so it is enough for us to consider the case when $G/H$ is a cyclic group of prime order $p$. Let $g \in G$ be a generator of the cyclic group $G/H$ and let us assume $\r^g \cong \r$, then there exists an intertwining operator $A$ of $V(\r)$ such that for all $h \in H$ we have
\[
A \circ \r(h) = \r(g h g^{-1}) \circ A. \]
So
\[
A^p \circ \r(h) = \r(g^p h g^{-p}) \circ A^p = \r(g^p) \circ  \r(h) \circ  \r(g^p)^{-1} \circ A^p, \]
and 
\[
(\r(g^p)^{-1} \circ A^p) \circ \r(h) = \r(h) \circ (\r(g^p)^{-1} \circ A^p).
\]
Since $\r$ is irreducible, $\r(g^p)^{-1} \circ A^p = cI$ for some constant $c$. After rescaling $A$, we can assume $c=1$ and hence $A^p = \r(g^p)$. This shows we can extend $\r$ to an irreducible representation $\lr$ of $G$ by defining $\lr(g) = A$. In fact if we change the scaling of $A$ by a $p$-th root of unity, we can get another extension $\lr \otimes \x$ for some character $\x$ of $G/H$. Let $\{\x_{i}\}_{i=1}^{p}$ be all the characters of $G/H$, then it is easy to see that $\lr \otimes \x_{i}$ are distinct for all $1 \leqslant i \leqslant p$. Our claim is 
\begin{align}
\label{eq: cyclic extension}
\Ind^{G}_{H}\r \cong \bigoplus_{i=1}^{p} \lr \otimes \x_{i}.
\end{align}
To see this, we first get inclusions from $\lr \otimes \x_{i}$ to $\Ind^{G}_{H}\r$ for all $1 \leqslant i \leqslant p$ by Frobenius reciprocity. Then this gives a $G$-invariant homomorphism from $\bigoplus_{i=1}^{p} \lr \otimes \x_{i}$ to $\Ind^{G}_{H}\r$. Since $\lr \otimes \x_{i}$ are distinct, then this homomorphism must be injective. Otherwise the image of some $\lr \otimes \x_{k}$ will be contained in the image of $\bigoplus_{i \neq k }\lr \otimes \x_{i}$, but that is impossible. The subjectivity will follow from a simple argument on the lengths of representations as defined in the proof of part (ii) of Lemma~\ref{lemma: algebra}, when we restrict to $H$. Finally, if $\r^{g} \ncong \r$, then $\Ind^{G}_{H} \r$ is irreducible because any irreducible subrepresentation of $\Ind^{G}_{H} \r$ contains $\r^{g^{i}}$ for $1 \leqslant i \leqslant p$ in its restriction to $H$.

Next, we will give a formula for $\Ind^{\lG(F)}_{\lif{Z}_{F}G(F)}\r$, where $\r$ is an irreducible smooth representation of $G(F)$, which can be extended to $\lif{Z}_{F}G(F)$ through some quasicharacter $\lif{\chi}$ of $\lif{Z}_{F}$. Let us denote 
\[
\lG(\r) = \{ g \in \lG(F) : \r^g \cong \r \}. 
\]
Suppose $\iG{F}$ is a maximal subgroup of $\lG(F)$, to which one can extend $\r$. Note that such $\iG{F}$ may not be unique. If we denote such an extension by $\ir$, then by \eqref{eq: cyclic extension} we have
\[
 \Ind^{\iG{F}}_{\lif{Z}_{F}G(F)} \r \cong \bigoplus_{\x \in (\iG{F}/\lif{Z}_{F}G(F))^*} \ir \otimes \x,
\]
and 
\[
\Ind^{\lG(F)}_{\lif{Z}_{F}G(F)} \r \cong \Ind^{\lG(F)}_{\iG{F}} \Ind^{\iG{F}}_{\lif{Z}_{F}G(F)} \r \cong \bigoplus_{\x \in (\iG{F}/\lif{Z}_{F}G(F))^*} \Ind^{\lG(F)}_{\iG{F}} (\ir \otimes \x).
\]

Note that $\Ind^{\lG(F)}_{\iG{F}} (\ir \otimes \x)$ is irreducible, so we can assume $\lr \cong \Ind^{\lG(F)}_{\iG{F}} \ir$ by making a good choice of $\ir$. Now we want to count the multiplicities in the decomposition of $\Ind^{\lG(F)}_{\lif{Z}_{F}G(F)} \r$ above. Observe $\Ind^{\lG(F)}_{\iG{F}} \ir \cong \Ind^{\lG(F)}_{\iG{F}} (\ir \otimes \x) $ if and only if $(\ir)^g \cong \ir \otimes \x$ for some $g \in \lG(F)$. In fact such $g$ must be in $\lG(\r)$. So we consider the homomorphism 
\begin{displaymath}
\xymatrix{\lG(\r) \ar@{->}[rr]  & &  (\iG{F} /  \lif{Z}_{F}G(F))^* \\
g \ar@{|->}[rr] & & \x : (\ir)^g \cong \ir \otimes \x}
\end{displaymath}
and the kernel is $\iG{F}$ by maximality. If we denote the image of this homomorphism by $c(\r)$, then 
\begin{align}
\label{eq: induction}
\Ind^{\lG(F)}_{\lif{Z}_{F}G(F)} \r \cong |c(\r)| \bigoplus_{\x \in (\iG{F}/\lif{Z}_{F}G(F))^* / c(\r)} \Ind^{\lG(F)}_{\iG{F}} (\ir \otimes \x).
\end{align}

As a consequence of this formula, we have the following corollaries.

\begin{corollary}
\label{cor: uniqueness}
If $\r$ is an irreducible smooth representation of G(F), then the irreducible smooth representation $\lr$ of $\lG(F)$, which contains $\r$ in its restriction to $G(F)$, is unique up to twisting by $\Hom(\lG(F)/G(F), \C^{\times})$.
\end{corollary}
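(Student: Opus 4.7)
My plan is to package both $\lr$ and $\lr'$ as subrepresentations of the same induced representation from $\lif{Z}_{F}G(F)$ to $\lG(F)$, then read off the relation between them from the decomposition formula \eqref{eq: induction}.

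First, pick $\lif{Z}_{F} \subseteq \lZ(F)$ as in the proof of Corollary~\ref{cor: existence}, so that $\lif{Z}_{F} \cap \Z(F) = 1$ and $D(F)/\c(\lif{Z}_{F})$ is finite. The central characters of $\lr$ and $\lr'$ restrict to (continuous) characters $\nu, \nu'$ of $\lif{Z}_{F}$; both extend the trivial character on $\lif{Z}_{F}\cap \Z(F)=1$. Under the injection $\lif{Z}_{F} \hookrightarrow \lG(F)/G(F)$ (valid since $\lif{Z}_{F}\cap G(F)=1$), the character $\nu/\nu'$ of $\lif{Z}_{F}$ extends to a character of the closed subgroup $\lif{Z}_{F}G(F)/G(F)$ by making it trivial on $G(F)$, and then to a character $\tilde{\x}_{0}$ of $\lG(F)/G(F)$ because the quotient $\lG(F)/\lif{Z}_{F}G(F)$ is finite abelian. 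After replacing $\lr'$ by $\lr' \otimes \tilde{\x}_{0}$, which still contains $\r$ in its restriction to $G(F)$, we may assume $\nu=\nu'$.

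Now fix an extension $\lif{\r}$ of $\r$ to $\lif{Z}_{F}G(F)$ through the character $\nu$. Then by Frobenius reciprocity both $\lr$ and $\lr'$ appear as irreducible subrepresentations of $\Ind^{\lG(F)}_{\lif{Z}_{F}G(F)} \lif{\r}$. By formula \eqref{eq: induction}, every irreducible subrepresentation of this induced representation has the form $\Ind^{\lG(F)}_{\iG{F}}(\ir \otimes \x)$ for some $\x \in (\iG{F}/\lif{Z}_{F}G(F))^{*}$, where $\iG{F}$ is the maximal subgroup of $\lG(F)$ to which $\ir$ extends $\r$ and $\lr = \Ind^{\lG(F)}_{\iG{F}} \ir$. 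So $\lr' \cong \Ind^{\lG(F)}_{\iG{F}}(\ir \otimes \x)$ for some such $\x$.

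Finally, any character $\x$ of the finite abelian group $\iG{F}/\lif{Z}_{F}G(F)$ can be extended to a character of $\lG(F)/\lif{Z}_{F}G(F)$, and then inflated to a character $\tilde{\x} \in \Hom(\lG(F)/G(F),\C^{\times})$. Since $\tilde{\x}|_{\iG{F}} = \x$, we have $\ir \otimes \x = (\ir) \otimes (\tilde{\x}|_{\iG{F}})$ and therefore
\[
\lr' \cong \Ind^{\lG(F)}_{\iG{F}}(\ir \otimes \tilde{\x}|_{\iG{F}}) \cong \Ind^{\lG(F)}_{\iG{F}}(\ir) \otimes \tilde{\x} \cong \lr \otimes \tilde{\x}.
\]
Combining with the first normalization, $\lr$ and the original $\lr'$ differ by a twist in $\Hom(\lG(F)/G(F),\C^{\times})$. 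The only mild obstacle is the bookkeeping in Step~1 (aligning central characters on $\lif{Z}_{F}$), which is resolved once one observes that any character of $\lif{Z}_{F}$ can be extended to a character of $\lG(F)/G(F)$ trivial on $G(F)$.
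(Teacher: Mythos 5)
Your argument is correct and follows the same route as the paper: choose $\lif{Z}_{F}$ as in the existence corollary, twist so that the two representations restrict to a common extension of $\r$ on $\lif{Z}_{F}G(F)$, then read off from the decomposition formula \eqref{eq: induction} that they differ by a character of $\lG(F)/\lif{Z}_{F}G(F)$, which inflates to a character of $\lG(F)/G(F)$. The paper's proof is a compressed version of exactly this; you have merely filled in the bookkeeping (matching $\nu$ and $\nu'$ via a character extension across a finite-index closed subgroup, and the projection formula $\Ind(\ir\otimes\tilde\x|_{\iG{F}})\cong\Ind(\ir)\otimes\tilde\x$), which is a welcome expansion but not a different method.
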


\begin{proof}
As in Corollary~\ref{cor: existence}, we can let $\lif{Z}_{F}$ be a closed subgroup of $\lZ(F)$ such that $\lif{Z}_{F} \cap \Z(F) = 1$ and $D(F)/\c(\lif{Z}_{F})$ is finite. Then for any two irreducible smooth representations $\lr_{1}, \lr_{2}$, which contain $\r$ in their restrictions to $G(F)$, one can choose $\x \in \Hom(\lG(F)/G(F), \C^{\times})$ such that the restrictions of $\lr_{1} \otimes \x$ and $\lr_{2}$ to $\lif{Z}_{F}G(F)$ all contain the same representation which extends $\r$. By Frobenius reciprocity and \eqref{eq: induction}, $\lr_{1} \otimes \x \cong \lr_{2} \otimes \x'$ for some $\x' \in (\lG(F)/\lif{Z}_{F}G(F))^*$. Therefore, $\lr_{1} \cong \lr_{2} \otimes \x' \x^{-1}$.
\end{proof}

\begin{corollary}
\label{cor: restriction multiplicity}
If $\lr$ is an irreducible smooth representation of $\lG(F)$, then the irreducible smooth representation $\r$ of $G(F)$ in the restriction of $\lr$ all have the same multiplicity and it is equal to $|c(\r)|$.
\end{corollary}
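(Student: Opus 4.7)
My plan is to fix an irreducible constituent $\r$ of $\lr|_{G(F)}$, compute its multiplicity via Frobenius reciprocity applied to the finite-index subgroup $\lif{Z}_F G(F)$, and then argue that all constituents share a common multiplicity. First I would take $\lif{Z}_F \subseteq \lZ(F)$ exactly as in the proof of Corollary~\ref{cor: existence}, so that $\lif{Z}_F \cap \Z(F) = 1$ (whence $\lif{Z}_F G(F) \cong \lif{Z}_F \times G(F)$) and $\lG(F)/\lif{Z}_F G(F)$ is finite abelian. Since $\lif{Z}_F$ is central, it acts on $V(\lr)$ through the character $\lif{\chi} := \chi_{\lr}|_{\lif{Z}_F}$; consequently every irreducible $\lif{Z}_F G(F)$-subrepresentation of $\lr$ has the form $\lif{\chi} \otimes \sigma$, and restriction to $G(F)$ induces a multiplicity-preserving bijection between the irreducible constituents of $\lr|_{\lif{Z}_F G(F)}$ and those of $\lr|_{G(F)}$. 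In particular $\r$ extends canonically to $\r^{\mathrm{ext}} := \lif{\chi} \otimes \r$, occurring in $\lr|_{\lif{Z}_F G(F)}$ with the same multiplicity as $\r$ in $\lr|_{G(F)}$.

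Next I would apply Frobenius reciprocity to rewrite this multiplicity as
\[
\dim \Hom_{\lG(F)}\bigl(\Ind^{\lG(F)}_{\lif{Z}_F G(F)} \r^{\mathrm{ext}},\, \lr\bigr),
\]
and then invoke the explicit decomposition \eqref{eq: induction}: choosing $\iG{F}$ a maximal subgroup of $\lG(F)$ to which $\r^{\mathrm{ext}}$ extends and an extension $\ir$ selected so that $\lr \cong \Ind^{\lG(F)}_{\iG{F}} \ir$ (possible by the discussion preceding \eqref{eq: induction}, which uses Corollary~\ref{cor: uniqueness}), the induced representation is isomorphic to $|c(\r)|$ copies of the pairwise non-isomorphic irreducibles $\Ind^{\lG(F)}_{\iG{F}}(\ir \otimes \x)$ as $\x$ runs over $(\iG{F}/\lif{Z}_F G(F))^*/c(\r)$. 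Since $\lr$ is exactly one of these summands, Schur's lemma gives that the Hom space has dimension precisely $|c(\r)|$.

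Finally, to see that the multiplicity is independent of which constituent of $\lr|_{G(F)}$ we picked, I would observe that for any $g \in \lG(F)$ the operator $\lr(g)$ carries the $\r$-isotypic subspace of $V(\lr)$ bijectively onto the $\r^g$-isotypic subspace, so $\r$ and $\r^g$ appear with equal multiplicity. The irreducible constituents of $\lr|_{\lif{Z}_F G(F)}$ form a single $\lG(F)$-orbit by the argument in the proof of Lemma~\ref{lemma: algebra}(i) applied to the finite quotient $\lG(F)/\lif{Z}_F G(F)$; passing through the bijection of the first paragraph (centrality of $\lif{Z}_F$ ensures conjugation preserves the $\lif{\chi}$ factor), the constituents of $\lr|_{G(F)}$ likewise form a single orbit, and hence all appear with the common multiplicity $|c(\r)|$. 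The one delicate step will be the matching of $\lr$ with a specific summand in \eqref{eq: induction}, which requires Corollary~\ref{cor: uniqueness} together with the freedom to twist $\ir$ by a character of $\iG{F}/\lif{Z}_F G(F)$; the rest is Frobenius reciprocity and routine central-character bookkeeping.
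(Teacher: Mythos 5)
Your proposal is correct and follows essentially the same route as the paper: pass to the finite-index subgroup $\lif{Z}_{F}G(F)$, apply Frobenius reciprocity together with the explicit decomposition \eqref{eq: induction} to compute the multiplicity as $|c(\r)|$, and use the single $\lG(F)$-orbit structure from Lemma~\ref{lemma: algebra} to see all constituents occur with that common multiplicity. The paper compresses this to two sentences (observing directly that $|c(\r^g)| = |c(\r)|$), while you spell out the central-character bookkeeping and the intertwining by $\lr(g)$ explicitly, but the underlying argument is the same.
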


\begin{proof}
It follows from the proof of Lemma~\ref{lemma: algebra} that $\Res^{\lG(F)}_{G(F)} \lr$ consists of isomorphism classes of $\r^{g}$ for $g \in \lG(F)$. By \eqref{eq: induction} and Frobenius reciprocity, the multiplicity of $\r^{g}$ is $|c(\r^{g})| = |c(\r)|$. This finishes the proof.
\end{proof}

\begin{lemma}
\label{lemma: multiplicity one for generic representation}
Suppose $\lr$ is an irreducible smooth generic representation of $\lG(F)$, then the multiplicity of irreducible smooth representation $\r$ of $G(F)$ in the restriction of $\lr$ is equal to one.
\end{lemma}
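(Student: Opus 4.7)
The plan is to deduce multiplicity one from the uniqueness of the Whittaker model on $\lG$, together with Corollary~\ref{cor: restriction multiplicity}, which guarantees that all irreducible constituents of $\lr|_{G(F)}$ occur with the same multiplicity. The key geometric observation is that, since $\lG / G = D$ is a torus, any Borel subgroup $\lif{B}$ of $\lG$ has the same unipotent radical $N$ as its intersection $B := \lif{B} \cap G$ with $G$. Consequently, any Whittaker datum $(\lif{B}, \Lambda)$ for $\lG$ restricts to a Whittaker datum $(B, \Lambda)$ for $G$ using the same character $\Lambda$ of $N(F)$.

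First, since $\lr$ is generic, I would fix a Whittaker datum $(\lif{B}, \Lambda)$ for $\lG$ with respect to which $\lr$ admits a nonzero Whittaker functional, i.e. a nonzero linear functional $\ell$ on the representation space $V(\lr)$ satisfying $\ell(\lr(n)v) = \Lambda(n)\ell(v)$ for all $n \in N(F)$. By uniqueness of the Whittaker model for the quasisplit group $\lG$, the space of such functionals is exactly one-dimensional.

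Next, write the decomposition from Lemma~\ref{lemma: algebra} as
\[
\lr|_{G(F)} \cong \bigoplus_{i} m_{i}\, \r_{i},
\]
with pairwise nonisomorphic irreducible $\r_{i}$. Any $\Lambda$-equivariant functional on $V(\lr)$ is in particular a $\Lambda$-equivariant functional on each $G(F)$-isotypic summand, so the space of $(\lif{B}, \Lambda)$-Whittaker functionals on $V(\lr)$ has dimension $\sum_{i} m_{i} \dim \Hom_{N(F)}(\r_{i}, \Lambda)$. By uniqueness of the Whittaker model for the quasisplit group $G$, each term $\dim \Hom_{N(F)}(\r_{i}, \Lambda)$ is at most $1$. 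Combined with the previous paragraph, this forces a unique index $i_{0}$ for which $\r_{i_{0}}$ is $(B, \Lambda)$-generic and moreover $m_{i_{0}} = 1$.

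Finally, Corollary~\ref{cor: restriction multiplicity} says that all multiplicities $m_{i}$ are equal (to $|c(\r_{i_{0}})|$), so $m_{i} = 1$ for every $i$. No serious obstacle is expected; the argument is essentially a comparison of Whittaker functional spaces, and the only point requiring a little care is the compatibility of the Whittaker data for $\lG$ and $G$, which is immediate from $\lG/G$ being a torus.
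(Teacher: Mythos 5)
Your proof is correct but takes a genuinely different route from the paper's. The paper works with a single generic constituent $\r$ of $\lr|_{G(F)}$: for each $g \in \lG(\r)$ it normalizes the intertwining operator $A_g \colon \r \to \r^g$ so that it preserves the Whittaker functional (invoking uniqueness of the Whittaker model for $G$), observes that these normalized operators extend $\r$ to all of $\lG(\r)$, and concludes that $\iG{F} = \lG(\r)$, hence $m(\lr,\r) = |c(\r)| = 1$ by Corollary~\ref{cor: restriction multiplicity}. You instead exploit that $\lG$ and $G$ share the unipotent radical $N$ and compare the full space of $(N(F),\Lambda)$-equivariant functionals on $V(\lr)$: it is one-dimensional by uniqueness for $\lG$, yet equals $\sum_i m_i \dim \Hom_{N(F)}(\r_i,\Lambda)$ via the finite decomposition of Lemma~\ref{lemma: algebra}, which forces the generic constituent to occur with multiplicity one, and you then invoke Corollary~\ref{cor: restriction multiplicity} to propagate this to all constituents. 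Both arguments ultimately rest on uniqueness of Whittaker models and on Corollary~\ref{cor: restriction multiplicity}, but your dimension count sidesteps the normalization of intertwining operators and the (somewhat glossed-over) verification that they assemble into an honest extension of $\r$, at the mild cost of also invoking uniqueness of the Whittaker model for the larger group $\lG$.
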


\begin{proof}
Since $\lr$ is generic, there exists a generic representation $\r$ of $G(F)$ in the restriction of $\lr$. For $g \in \lG(\r)$, the intertwining operator $A_{g}: \r \rightarrow \r^{g}$ will preserve the Whittaker functional up to a scalar. Here we are using the uniqueness of Whittaker model. As a consequence, we can normalize $A_{g}$ for all $g \in \lG(\r)$, so that they all preserve the Whittaker functional. Then one can check easily that $\r$ can be extended by these intertwining operators to $\lG(\r)$. This means $\iG{F} = \lG(\r)$, and hence $|c(\r)| = 1$. Now this lemma will follow from Corollary~\ref{cor: restriction multiplicity}. 
\end{proof}

If $\lr$ is an irreducible smooth representation of $\lG(F)$, let us denote 
\[
X(\lr) = \{ \x \in (\lG(F)/\lZ(F)G(F))^{*} : \lr \otimes \x \cong \lr \}.
\]
We denote the multiplicity of an irreducible smooth representation $\r$ of $G(F)$ in the restriction of $\lr$ by $m(\lr, \r)$. Next we want to give a formula for $m(\lr, \r)$ in terms of $X(\lr)$ and $\lG(\r)$.



\begin{corollary}
\label{cor: restriction multiplicity formula}
If $\lr$ is an irreducible smooth representation of $\lG(F)$ and $\r$ is contained in its restriction to $G(F)$, then 
\begin{align}
\label{eq: restriction multiplicity formula}
m(\lr, \r)^{2} = \frac{|X(\lr)|}{|\lG(F)/\lG(\r)|}.
\end{align}
\end{corollary}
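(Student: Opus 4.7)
The plan is to combine the explicit model $\lr \cong \Ind^{\lG(F)}_{\iG{F}}\ir$ established in the discussion preceding equation~\eqref{eq: induction} with Clifford theory to count $|X(\lr)|$, and then to match the answer against $m(\lr,\r)^{2}$ using Corollary~\ref{cor: restriction multiplicity}.

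First I would record two basic facts about $\iG{F}$. Since $G(F) \lhd \lG(F)$ and $\lG(F)/G(F)$ embeds into the abelian group $D(F)$, every intermediate subgroup is normal; in particular $\iG{F}$ is normal in $\lG(F)$. Moreover $\iG{F} \supseteq \lZ(F)G(F)$, because by Corollary~\ref{cor: existence} the central character $\chi_{\r}$ extends to $\lZ(F)$, allowing $\r$ to extend to $\lZ(F)G(F)$, and then $\lZ(F)G(F) \subseteq \iG{F}$ by maximality. Now for $\x \in (\lG(F)/\lZ(F)G(F))^{*}$ we have
\[
\lr \otimes \x \;\cong\; \Ind^{\lG(F)}_{\iG{F}}\bigl(\ir \otimes \x|_{\iG{F}}\bigr),
\]
and both sides are irreducible by maximality of $\iG{F}$. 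Since $\lG(F)/\lZ(F)G(F)$ is abelian, $\x|_{\iG{F}}$ is $\lG(F)$-conjugation invariant, so Clifford theory gives $\lr \otimes \x \cong \lr$ iff $\ir^{g} \cong \ir \otimes \x|_{\iG{F}}^{-1}$ for some $g \in \lG(F)$. Restricting to $G(F)$ forces $g \in \lG(\r)$, and the defining property of $c(\r)$ then says this is equivalent to $\x|_{\iG{F}} \in c(\r)$.

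Next I would count $|X(\lr)|$ via the restriction map
\[
X(\lr) \longrightarrow c(\r), \qquad \x \longmapsto \x|_{\iG{F}}.
\]
Surjectivity holds because $\iG{F}/\lZ(F)G(F)$ is a subgroup of the finite abelian group $\lG(F)/\lZ(F)G(F)$, so any character of the subgroup extends to the whole group. The kernel consists of characters trivial on $\iG{F}$, hence by Pontryagin duality has order $|\lG(F)/\iG{F}|$. Therefore $|X(\lr)| = |c(\r)| \cdot |\lG(F)/\iG{F}|$.

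Finally I would assemble the pieces. The homomorphism $\lG(\r) \to (\iG{F}/\lZ(F)G(F))^{*}$ defining $c(\r)$ has kernel exactly $\iG{F}$, giving $|c(\r)| = |\lG(\r)/\iG{F}|$, and $m(\lr,\r) = |c(\r)|$ by Corollary~\ref{cor: restriction multiplicity}. Hence
\[
\frac{|X(\lr)|}{|\lG(F)/\lG(\r)|}
\;=\; \frac{|c(\r)|\cdot|\lG(F)/\iG{F}|}{|\lG(F)/\iG{F}|\big/|\lG(\r)/\iG{F}|}
\;=\; |c(\r)|^{2} \;=\; m(\lr,\r)^{2}.
\]
The main technical subtlety is the equivalence between the Clifford-theoretic condition $\ir^{g} \cong \ir \otimes \x|_{\iG{F}}^{-1}$ and membership $\x|_{\iG{F}} \in c(\r)$; once one verifies that $\x|_{\iG{F}}$ is $\lG(F)$-invariant (via abelianness of $\lG(F)/\lZ(F)G(F)$) and that $\iG{F}$ is the full stabilizer of $\ir$ in $\lG(F)$ (via maximality), everything else reduces to character theory on finite abelian groups.
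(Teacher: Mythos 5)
Your proof is correct and takes essentially the same approach as the paper's: both reduce via Corollary~\ref{cor: restriction multiplicity} to $m(\lr,\r)=|c(\r)|$, identify $X(\lr)$ as the preimage of $c(\r)$ under restriction to $\iG{F}$ (you re-derive this from Clifford theory on $\lr \cong \Ind^{\lG(F)}_{\iG{F}}\ir$ where the paper reads it off from~\eqref{eq: induction}), and then cancel $|\lG(F)/\iG{F}|$. Your observation that $\lZ(F)G(F)\subseteq\iG{F}$ is needed to make the restriction map in the counting step well-defined, a point the paper leaves implicit.
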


\begin{proof}
It follows from Corollary~\ref{cor: restriction multiplicity} that $m(\lr, \r) = |c(\r)|$. By definition, $|c(\r)| = |\lG(\r)/\iG{F}|$. On the other hand, it follows from \eqref{eq: induction} that $X(\lr)$ is the preimage of $c(\r)$ under 
\[
(\lG(F)/\lZ(F)G(F))^{*} \longrightarrow (\iG{F}/\lZ(F)G(F))^{*}.
\]
Note the kernel of this map is $(\lG(F)/\iG{F})^{*}$, so $|X(\lr)| = |c(\r)| \cdot |\lG(F)/\iG{F}|$. Cancelling $\iG{F}$ from these two identities, we get
\[
|c(\r)|^{2} = \frac{|X(\lr)|}{|\lG(F)/\lG(\r)|}.
\]
This finishes the proof.

\end{proof}

\begin{remark}
\label{rk: restriction multiplicity formula}
In the next section, we will consider the situation that both $G$ and $H$ as in Lemma~\ref{lemma: algebra} are finite groups. It is not hard to see that the corollaries above can also be stated for such pairs, and the proofs are the same. 
\end{remark}

At last, we show the restriction multi-map $\Pkt{}(\lG(F)) \rightarrow \Pkt{}(G(F))$ preserves temperedness.

\begin{lemma}
\label{lemma: restrict tempered character}
Suppose $\lr$ is an irreducible smooth unitary representation of $\lG(F)$, then $\lr$ is an essential discrete series representation of $\lG(F)$ if and only if its restriction to $G(F)$ consists of essential discrete series representations. The same is true of the tempered representations.
\end{lemma}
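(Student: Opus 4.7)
The plan is to compare matrix coefficients of $\lr$ on $\lG(F)$ with those of its irreducible $G(F)$-summands, using two structural preliminaries. First, $\Z \subseteq \lZ$: the connected group $\lG$ acts continuously on the multiplicative-type group $\Z$, whose automorphism scheme is discrete, so the action must be trivial. Second, the index $[\lG(F) : \lZ(F)G(F)]$ is finite, as already observed in Lemma~\ref{lemma: finite restriction}. Choosing coset representatives $\{x_{i}\}$ of $\lG(F)/\lZ(F)G(F)$, one obtains
\[
\int_{\lG(F)/\lZ(F)} |f(g)|^{p} \, dg \;=\; \sum_{i} \int_{G(F)/\Z(F)} |f(x_{i} g)|^{p} \, dg
\]
for any matrix coefficient $f(g) = \langle \lr(g)v, w\rangle$ of $\lr$ and any $p \geq 1$, and each $g \mapsto f(x_{i}g)$ is itself a matrix coefficient of $\lr|_{G(F)}$.

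Next I would reduce essential discrete series to honest discrete series using the unitarity hypothesis. Suppose $\lr \otimes \lif{\x}$ is discrete series for some quasicharacter $\lif{\x}$ of $\lG(F)$, and write $\lif{\x} = \lif{\x}_{u} \lif{\x}_{+}$ with $\lif{\x}_{u}$ unitary and $\lif{\x}_{+}$ positive-real. The condition that the central character of $\lr \otimes \lif{\x}$ be unitary forces $\lif{\x}_{+}|_{\lZ(F)} \equiv 1$. However, $\lif{\x}_{+}$ corresponds to an element of $X^{*}(\lG)_{F} \otimes \R$, and the restriction map $X^{*}(\lG)_{F} \rightarrow X^{*}(\lZ^{0})_{F}$ is injective (any algebraic character of $\lG$ trivial on $\lZ^{0}$ is trivial on $\lG = \lZ^{0}\lG_{der}$, since $\lG_{der}$ has no nontrivial algebraic characters); hence $\lif{\x}_{+} \equiv 1$, so $\lif{\x}$ is unitary and $\lr$ is already discrete series. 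The same reduction applies to each summand $\r_{j}$ of $\lr|_{G(F)}$, using $G = \Z^{0}G_{der}$.

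With this reduction, the discrete series case follows from the displayed identity at $p = 2$, combined with the orthogonal decomposition $V(\lr) = \bigoplus_{j} V_{j}$ into $G(F)$-isotypic components under the $\lG(F)$-invariant Hermitian form: every matrix coefficient of $\lr|_{G(F)}$ splits as an orthogonal sum of matrix coefficients of the individual $\r_{j}$'s, so $L^{2}$-integrability modulo $\lZ(F)$ of the former is equivalent to $L^{2}$-integrability modulo $\Z(F)$ for each matrix coefficient of each $\r_{j}$. For the tempered case, the identical argument applies at $p = 2+\epsilon$ for every $\epsilon > 0$: the direction ``each $\r_{j}$ tempered $\Rightarrow \lr$ tempered'' uses the elementary bound $|\sum_{j} f_{j}|^{2+\epsilon} \leq N^{1+\epsilon} \sum_{j} |f_{j}|^{2+\epsilon}$, while the reverse is immediate since any matrix coefficient of a single $\r_{j}$ is realized as a matrix coefficient of $\lr|_{G(F)}$ by placing the defining vectors inside the corresponding $V_{j}$. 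The main technical point is the essential-to-honest discrete series reduction via unitarity carried out in the second paragraph.
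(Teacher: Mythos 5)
Your proof is correct and takes essentially the same route as the paper: decompose $\lG(F)$ into the finitely many cosets of $\lZ(F)G(F)$, observe that a matrix coefficient of $\lr$ restricted to any such coset is built from matrix coefficients of $\lr|_{G(F)}$, and transfer the $L^{2}$ (resp.\ $L^{2+\epsilon}$) condition modulo the respective centres back and forth across the finite union. The paper's proof is more terse, stating this matrix-coefficient identification and the finite-cover decomposition and leaving the rest to the reader.

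You go further in two places, both correct but somewhat superfluous. First, you verify $\Z \subseteq \lZ$ by noting that the conjugation action of the connected group $\lG$ on the diagonalizable group $\Z$ is trivial; the paper takes this for granted. Second, your paragraph reducing ``essential discrete series'' to honest square-integrable-mod-centre: the paper uses ``essential discrete series'' simply to mean ``matrix coefficients square integrable modulo the centre,'' and under the unitarity hypothesis the central character is automatically unitary, so there is no positive-real twist to kill and the whole reduction is unnecessary. Your argument (decomposing $\lif{\x}$ into unitary and positive-real parts, then using injectivity of $X^{*}(\lG)_{F} \to X^{*}(\lZ^{0})_{F}$ via $\lG = \lZ^{0}\lG_{der}$) is sound, but it amounts to re-deriving a fact the hypothesis already hands you. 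Finally, your explicit appeal to the orthogonal isotypic decomposition of $V(\lr)$ is a clean way to organize the direction the paper handles simply by choosing $v,w^{\vee}$ in a single summand; both are fine, and for the direction that builds the coefficient of $\lr$ from those of $\lr|_{G(F)}$ one does need the $\ell^{2+\epsilon}$-type bound you state, which the paper again leaves to the reader.
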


\begin{proof}
If $\lr$ is an essential discrete series representation then the matrix coefficient $<\lr(g)v, w^{\vee}>$ for $v \in V(\lr)$ and $w^{\vee} \in V(\lr)^{\vee}$ is a square integrable function modulo the centre. In particular, its restriction to $G(F)$ is square integrable modulo the centre, hence the restriction of $\lr$ consists of essential discrete series representations. Conversely, we can write the matrix coefficient of $\lr$ as a piecewise defined function on the components of $\lG(F) / \lZ(F)G(F)$, where on each component it is defined as 
\[
<\lr(hg)v, w^{\vee}> = <\lr(h)(\lr(g)v), w^{\vee}>
\]
for some fixed representatives $g \in \lG(F)$ of $\lG(F)/\lZ(F)G(F)$ and $h \in \lZ(F)G(F)$, which is a matrix coefficient of the restriction of $\lr$. So the restriction of $\lr$ consisting of essential discrete series representations implies $\lr$ is an essential discrete series representation. The same kind of argument also applies to tempered representations when we replace the condition of square integrability by $L^{2+\epsilon}$.

\end{proof}

\subsection{Coarse L-packet}
\label{subsec: coarse L-packet}



In this section, we want to describe the preimage of L-packets of $G$ under $\Pkt{}(\lG(F)) \rightarrow \Pkt{}(G(F))$. To do so, we need the following hypothesis.

\begin{hypothesis}
\label{hypo: twisting character}
Suppose $\p \in \Pbd{G}$, let $\rho \in \Irr{\S{\underline{\p}}}$ and $\tau \in \Irr{\S{\underline{\lp}}}$ be in the restriction $\rho|_{\S{\underline{\lp}}}$. Let $\lr$ be an irreducible smooth representation of $\lG(F)$, whose restriction to $G(F)$ contains $\r = \r(\rho)$, then for any $x \in \S{\underline{\p}}$
\[
\tau^{x} \cong \tau \Longleftrightarrow \lr \cong \lr \otimes \x_{x}.
\]
Moreover, 
\[
X(\lr) = \a(\S{\underline{\p}}(\tau)),
\]
where $\S{\underline{\p}}(\tau) = \{x \in \S{\underline{\p}} : \tau^{x} \cong \tau\}$.
\end{hypothesis}

It is clear that this hypothesis is a consequence of Conjecture~\ref{conj: twisting equivariant} for $\lG$, which is not assumed in Section~\ref{sec: lifting L-packet}. Since this hypothesis will be used on top of our working assumption for this section, we will point it out whenever we assume this hypothesis.
The next proposition is kind of dual to this hypothesis.

\begin{proposition}
\label{prop: twisting character}
Suppose $\p \in \Pbd{G}$, let $\rho \in \Irr{\S{\underline{\p}}}$ and $\r = \r(\rho)$. Let 
\[
X(\rho) = \{\e \in (\S{\underline{\p}}/\S{\underline{\lp}})^{*} : \rho \otimes \e \cong \rho\},
\]
then $\{\e_{g} : g \in \lG(\r)\} = X(\rho)$, where $\e_{g}(x) = \x_{x}(g) = \a(x)(g)$ for $x \in \S{\underline{\p}}$.
\end{proposition}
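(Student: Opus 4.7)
The plan is to apply Theorem~\ref{thm: twisting character} with $\Sigma = \{1\}$ and then invoke Pontryagin duality for finite abelian groups. First I would observe that the map $\Phi : \lG(F) \to (\S{\underline{\p}}/\S{\underline{\lp}})^{*}$ sending $g \mapsto \e_{g}$ is well-defined: by construction $\e_{g}(x) = \a(x)(g) = \x_{x}(g)$, and $\a(x) = 1$ for $x \in \S{\underline{\lp}}$ by the exact sequence \eqref{eq: twisted endoscopic sequence with dual}; moreover, Lemma~\ref{lemma: trivial central character} shows that $\Phi$ factors through $\lG(F)/\lZ(F)G(F)$.

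Next I would translate the condition $\r^{g} \cong \r$ into a condition on $\e_{g}$. Applying Theorem~\ref{thm: twisting character} with $\Sigma = \{1\}$ (so that $\r^{\Sigma} = \r$) gives $<\cdot, \r^{g}>_{\underline{\p}} = \e_{g} \cdot <\cdot, \r>_{\underline{\p}}$; since $<\cdot, \r(\rho)>_{\underline{\p}}$ is the trace character of $\rho$, the right-hand side equals the trace character of $\rho \otimes \e_{g}$. The injectivity of $\Pkt{\p} \hookrightarrow \D{\S{\underline{\p}}}$ from Conjecture~\ref{conj: endoscopic parametrization}(ii) then yields $\r^{g} \cong \r$ if and only if $\rho \otimes \e_{g} \cong \rho$, i.e.\ if and only if $\e_{g} \in X(\rho)$. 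Consequently $\lG(\r) = \Phi^{-1}(X(\rho))$, which gives at once the inclusion $\{\e_{g} : g \in \lG(\r)\} \subseteq X(\rho)$.

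The remaining task, which I expect to be the crux, is to show $\Phi$ is surjective; granted this, $\Phi(\Phi^{-1}(X(\rho))) = X(\rho)$ delivers the reverse inclusion. For surjectivity, the map $\a$ identifies $\S{\underline{\p}}/\S{\underline{\lp}}$ with a \emph{finite} subgroup $V$ of $\Hom(\lG(F)/\lZ(F)G(F), \C^{\times})$, and $\Phi$ becomes the evaluation map $g \mapsto (v \mapsto v(g))$ landing in the Pontryagin dual $V^{\vee} = (\S{\underline{\p}}/\S{\underline{\lp}})^{*}$. If its image $W$ were proper, Pontryagin duality for the finite abelian group $V$ would supply a nontrivial $v \in V$ vanishing on $W$, i.e.\ some $x \in \S{\underline{\p}} \setminus \S{\underline{\lp}}$ with $\x_{x}(g) = 1$ for every $g \in \lG(F)$; but that forces $\a(x)$ to be the trivial quasicharacter, whence $x \in \ker \a = \S{\underline{\lp}}$, a contradiction.

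I do not foresee a substantive obstacle: the proof essentially dualises the relation between $\Phi$ and $\a$, using that $\S{\underline{\p}}/\S{\underline{\lp}}$ is finite so that elementary Pontryagin duality suffices. The only real input is Theorem~\ref{thm: twisting character}, which supplies the character-theoretic bridge between conjugation by $\lG(F)$ on $\Pkt{\p}$ and twisting of $\rho$ by linear characters of $\S{\underline{\p}}$.
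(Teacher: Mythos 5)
Your proposal is correct and follows essentially the same route as the paper's proof: both directions hinge on Lemma~\ref{lemma: twisting character} (or its special case in Theorem~\ref{thm: twisting character}) combined with the injectivity of $\Pkt{\p} \hookrightarrow \D{\S{\underline{\p}}}$, and the reverse inclusion reduces to surjectivity of $g \mapsto \e_g$, which the paper asserts from the injectivity of $\a$ on $\S{\underline{\p}}/\S{\underline{\lp}}$ while you spell out the intervening Pontryagin-duality step. The only cosmetic difference is that you phrase the two inclusions together as $\lG(\r) = \Phi^{-1}(X(\rho))$, whereas the paper treats them separately; the substance is identical.
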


\begin{proof}
For $g \in \lG(\r)$, by Lemma~\ref{lemma: twisting character},
\[
<x, \r>_{\underline{\p}} = <x, \r^{g}>_{\underline{\p}} = \e_{g}(x)<x, \r>_{\underline{\p}},
\]
and hence $\e_{g} \in X(\rho)$. This shows $\{\e_{g} : g \in \lG(\r)\} \subseteq X(\rho)$. 

For the other direction, 
note the map $\a: x \mapsto \x_{x}$ embeds $\S{\underline{\p}} / \S{\underline{\lp}}$ into $\Hom(\lG(F)/G(F), \C^{\times})$ (see \eqref{eq: twisted endoscopic sequence with dual}), so the map $g \mapsto \e_{g}$ from $\lG(F)$ to $(\S{\underline{\p}}/\S{\underline{\lp}})^{*}$ is surjective. Hence for any $\e \in X(\rho)$, we can assume $\e = \e_{g}$ for some $g \in \lG(F)$. Then
\[
<x, \r^{g}>_{\underline{\p}} = \e_{g}(x)<x, \r>_{\underline{\p}} = <x, \r>_{\underline{\p}}.
\]
By injectivity of the map $\r \rightarrow <\cdot, \r>_{\underline{\p}}$, one must have $\r^{g} \cong \r$, i.e., $g \in \lG(\r)$.
\end{proof}

\begin{corollary}
\label{cor: twisting character}
Suppose $\p \in \Pbd{G}$, let $\rho \in \Irr{\S{\underline{\p}}}$ and $\r = \r(\rho)$. Let 
\[
\Ker(X(\rho)) = \{x \in \S{\underline{\p}} : \e(x) = 1 \text{ for all } \e \in X(\rho)\},
\]
then $\a(\Ker(X(\rho))) = (\lG(F)/\lG(\r))^{*}$.
\end{corollary}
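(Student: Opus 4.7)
The plan is to apply Pontryagin duality for finite abelian groups to the pairing
\[
\S{\underline{\p}}/\S{\underline{\lp}} \times \lG(F)/G(F) \longrightarrow \C^{\times}, \qquad (x, g) \longmapsto \a(x)(g) = \e_{g}(x),
\]
both sides of which are identified abstractly with either $\lG(F)/\lG(\r)$ (on one factor) or $\Ker(X(\rho))/\S{\underline{\lp}}$ (on the other).

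First I would verify the easy inclusion $\a(\Ker(X(\rho))) \subseteq (\lG(F)/\lG(\r))^{*}$. By the exact sequence \eqref{eq: twisted endoscopic sequence with dual}, any element of $\a(\S{\underline{\p}})$ is already trivial on $G(F)$, so it suffices to prove triviality on $\lG(\r)$. For $x \in \Ker(X(\rho))$ and $g \in \lG(\r)$, Proposition~\ref{prop: twisting character} gives $\e_{g} \in X(\rho)$, hence by the defining property of $\Ker(X(\rho))$ we have $\a(x)(g) = \e_{g}(x) = 1$.

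For the reverse inclusion, I would first sharpen Proposition~\ref{prop: twisting character} to the equivalence $g \in \lG(\r) \Longleftrightarrow \e_{g} \in X(\rho)$. The forward direction is Proposition~\ref{prop: twisting character}. For the converse, Theorem~\ref{thm: twisting character} yields $\langle \cdot, \r^{g} \rangle_{\underline{\p}} = \e_{g} \langle \cdot, \r \rangle_{\underline{\p}} = \langle \cdot, \r(\rho \otimes \e_{g}) \rangle_{\underline{\p}}$, so the injectivity of $\Pkt{\p} \hookrightarrow \D{\S{\underline{\p}}}$ from Conjecture~\ref{conj: endoscopic parametrization} forces $\r^{g} \cong \r(\rho \otimes \e_{g})$; if $\e_{g} \in X(\rho)$ then $\rho \otimes \e_{g} \cong \rho$, so $\r^{g} \cong \r$, i.e., $g \in \lG(\r)$.

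Combining this equivalence with the surjectivity of $g \mapsto \e_{g}$ from $\lG(F)$ to $(\S{\underline{\p}}/\S{\underline{\lp}})^{*}$ (used already in the proof of Proposition~\ref{prop: twisting character}), the induced homomorphism gives an isomorphism of finite abelian groups
\[
\lG(F)/\lG(\r) \;\xrightarrow{\sim}\; (\S{\underline{\p}}/\S{\underline{\lp}})^{*}/X(\rho).
\]
Dualizing and noting that the annihilator of $X(\rho)$ inside $\S{\underline{\p}}/\S{\underline{\lp}}$ is exactly $\Ker(X(\rho))/\S{\underline{\lp}}$ (the containment $\S{\underline{\lp}} \subseteq \Ker(X(\rho))$ is automatic since $X(\rho) \subseteq (\S{\underline{\p}}/\S{\underline{\lp}})^{*}$), one obtains $|(\lG(F)/\lG(\r))^{*}| = |\Ker(X(\rho))/\S{\underline{\lp}}|$. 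Since $\a$ is injective on $\S{\underline{\p}}/\S{\underline{\lp}}$ by \eqref{eq: twisted endoscopic sequence with dual}, we also have $|\a(\Ker(X(\rho)))| = |\Ker(X(\rho))/\S{\underline{\lp}}|$. Together with the inclusion of the first step, the equality of cardinalities forces $\a(\Ker(X(\rho))) = (\lG(F)/\lG(\r))^{*}$.

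The only nontrivial point is the sharpening of Proposition~\ref{prop: twisting character} to an equivalence, for which one must invoke the injectivity of the parametrization of an L-packet by characters of $\S{\underline{\p}}$; once that is available, everything else is a bookkeeping exercise in Pontryagin duality for the finite abelian groups arising from the pairing.
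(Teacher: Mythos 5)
Your proof is correct and is essentially the same argument the paper gives: both rest on the pairing $(g,x)\mapsto\e_{g}(x)=\a(x)(g)$ becoming a perfect pairing of finite abelian groups after quotienting by $\S{\underline{\lp}}$ and the annihilator of $\S{\underline{\p}}$ in $\lG(F)$, combined with Proposition~\ref{prop: twisting character} to identify the annihilator of $\lG(\r)$ as $\Ker(X(\rho))/\S{\underline{\lp}}$. The only cosmetic difference is that you establish the easy inclusion plus a cardinality match, whereas the paper invokes the restricted perfect pairing directly; and your ``sharpening'' of Proposition~\ref{prop: twisting character} to the pointwise equivalence $g\in\lG(\r)\Leftrightarrow\e_{g}\in X(\rho)$ is already what the second half of that proposition's proof actually establishes, so nothing new is needed there.
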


\begin{proof}
Consider the pairing $\lG(F) \times \S{\underline{\p}} \rightarrow \C^{\times}$ which sends $(g, x)$ to $\e_{g}(x) = \a(x)(g)$. It becomes a perfect pairing of abelian groups after taking quotients by $\S{\underline{\lp}} \subseteq \S{\underline{\p}}$, and $U \subseteq \lG(F)$, which is annihilated by $\S{\underline{\p}}$. We claim $U \subseteq \lG(\r)$. This is because if $\e_{g} = 1$, then
\[
<x, \r^{g}>_{\underline{\p}} = \e_{g}(x)<x, \r>_{\underline{\p}} = <x, \r>_{\underline{\p}}.
\]
By injectivity of the map $\r \rightarrow <\cdot, \r>_{\underline{\p}}$, one must have $\r^{g} \cong \r$, i.e., $g \in \lG(\r)$. By Proposition~\ref{prop: twisting character} and the Pontryagin duality applied to the perfect pairing $\lG(F)/U \times \S{\underline{\p}}/\S{\underline{\lp}} \rightarrow \C^{\times}$, we have a perfect pairing $(\lG(F)/U)/(\lG(\r)/U) \times \Ker(X(\rho))/\S{\underline{\lp}} \rightarrow \C^{\times}$. Therefore, $(\lG(F)/\lG(\r))^{*} = ((\lG(F)/U)/(\lG(\r)/U))^{*} = \a(\Ker(X(\rho)))$.
\end{proof}

\begin{proposition}
\label{prop: restriction multiplicity formula}
Suppose $\p \in \Pbd{G}$, let $\rho \in \Irr{\S{\underline{\p}}}$ and $\tau \in \Irr{\S{\underline{\lp}}}$ be in the restriction $\rho|_{\S{\underline{\lp}}}$ with multiplicity $m(\rho, \tau)$. Let $\lr$ be an irreducible smooth representation of $\lG(F)$ whose restriction to $G(F)$ contains $\r = \r(\rho)$. Under Hypothesis~\ref{hypo: twisting character}, we have $m(\lr, \r) = m(\rho, \tau)$.
\end{proposition}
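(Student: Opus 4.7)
The plan is to combine Corollary~\ref{cor: restriction multiplicity formula} (applied once on the representation side to $(\lr,\r)$ and once, via Remark~\ref{rk: restriction multiplicity formula}, on the parameter side to $(\rho,\tau)$ viewed as representations of the finite groups $\S{\underline{\lp}} \trianglelefteq \S{\underline{\p}}$) and to show the two multiplicity formulas agree term by term using the Pontryagin duality between $\S{\underline{\p}}/\S{\underline{\lp}}$ and its character group.

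First I would apply Corollary~\ref{cor: restriction multiplicity formula} on the representation side to obtain
\[
m(\lr,\r)^{2} = \frac{|X(\lr)|}{|\lG(F)/\lG(\r)|}.
\]
By Hypothesis~\ref{hypo: twisting character}, $X(\lr) = \a(\S{\underline{\p}}(\tau))$, and since the map $\a$ of \eqref{eq: twisted endoscopic sequence with dual} has kernel $\S{\underline{\lp}}$, we get $|X(\lr)| = |\S{\underline{\p}}(\tau)/\S{\underline{\lp}}|$. Similarly, Corollary~\ref{cor: twisting character} yields $|\lG(F)/\lG(\r)| = |\a(\Ker(X(\rho)))| = |\Ker(X(\rho))/\S{\underline{\lp}}|$. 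Substituting, the denominator $\S{\underline{\lp}}$ cancels and we obtain
\[
m(\lr,\r)^{2} = \frac{|\S{\underline{\p}}(\tau)|}{|\Ker(X(\rho))|}.
\]

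Next I would apply the finite group analogue of Corollary~\ref{cor: restriction multiplicity formula} (justified by Remark~\ref{rk: restriction multiplicity formula}) to the inclusion $\S{\underline{\lp}} \trianglelefteq \S{\underline{\p}}$ with quotient abelian, giving
\[
m(\rho,\tau)^{2} = \frac{|X(\rho)|}{|\S{\underline{\p}}/\S{\underline{\p}}(\tau)|}.
\]
It remains to verify these two expressions coincide, i.e.\ $|\S{\underline{\p}}(\tau)| \cdot |\S{\underline{\p}}/\S{\underline{\p}}(\tau)| \cdot |\Ker(X(\rho))|^{-1} = |X(\rho)|$, which reduces to
\[
|X(\rho)| \cdot |\Ker(X(\rho))/\S{\underline{\lp}}| = |\S{\underline{\p}}/\S{\underline{\lp}}|.
\]
This is precisely the Pontryagin duality identity for the finite abelian group $\S{\underline{\p}}/\S{\underline{\lp}}$ and the subgroup $X(\rho) \subseteq (\S{\underline{\p}}/\S{\underline{\lp}})^{*}$: the subgroup has order equal to the index in $\S{\underline{\p}}/\S{\underline{\lp}}$ of its annihilator, which by definition is $\Ker(X(\rho))/\S{\underline{\lp}}$.

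The only potentially tricky step is making sure the finite-group analogue in Remark~\ref{rk: restriction multiplicity formula} really applies to $(\rho,\tau)$ in the form needed, since the analogue of $\lG(F)/\lZ(F)G(F)$ in that setting is $\S{\underline{\p}}/\S{\underline{\lp}}$ (there is no centre to quotient out on the $\S$-side, but the statement of Corollary~\ref{cor: restriction multiplicity formula} still holds with the centre replaced by the trivial group, as the entire argument there goes through for finite abelian quotients). Once this identification is in place, the proof is a bookkeeping exercise in orders of finite abelian groups, with Hypothesis~\ref{hypo: twisting character} providing the essential bridge between the representation-theoretic side and the parameter side.
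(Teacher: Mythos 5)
Your proposal is correct and is essentially the paper's own argument: both apply Corollary~\ref{cor: restriction multiplicity formula} (and its finite-group analogue via Remark~\ref{rk: restriction multiplicity formula}) to $(\lr,\r)$ and $(\rho,\tau)$, bridge the two sides with Hypothesis~\ref{hypo: twisting character} and Corollary~\ref{cor: twisting character}, and finish with Pontryagin duality for $\S{\underline{\p}}/\S{\underline{\lp}}$. The only difference is cosmetic bookkeeping in where the cancellations happen.
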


\begin{proof}
By Corollary~\ref{cor: restriction multiplicity formula}, we have 
\[
m(\lr, \r)^{2} = \frac{|X(\lr)|}{|\lG(F)/\lG(\r)|}.
\]
Similarly, one can show 
\[
m(\rho, \tau)^{2} = \frac{|X(\rho)|}{|\S{\underline{\p}}/\S{\underline{\p}}(\tau)|}
\] 
(see Remark~\ref{rk: restriction multiplicity formula}). To relate these two expressions, we take Hypothesis~\ref{hypo: twisting character} and apply Corollary~\ref{cor: twisting character} to the formula of $m(\lr, \r)^{2}$, and we get
\begin{align*}
m(\lr, \r)^{2} &= \frac{|\a(\S{\underline{\p}}(\tau))|}{|\a(\Ker(X(\rho)))|} = \frac{|\S{\underline{\p}}(\tau) / \S{\underline{\lp}}|}{|\Ker(X(\rho))/\S{\underline{\lp}}|} \\
& = \frac{|\S{\underline{\p}}(\tau) / \S{\underline{\lp}}|}{|(\S{\underline{\p}}/\S{\underline{\lp}})^{*}/X(\rho)|} = \frac{|\S{\underline{\p}}(\tau) / \S{\underline{\lp}}| |X(\rho)|}{|(\S{\underline{\p}}/\S{\underline{\lp}})|} = \frac{|X(\rho)|}{|\S{\underline{\p}}/\S{\underline{\p}}(\tau)|} \\
& = m(\rho, \tau)^{2}.
\end{align*}
Hence $m(\lr, \r) = m(\rho, \tau)$.
\end{proof}

This proposition suggests that $m(\lr, \r) = 1$ if $\S{\underline{\p}}$ is abelian. For classical groups, it has been shown that $\S{\underline{\p}}$ is always abelian (see \cite{Arthur:2013}, \cite{Mok:2014}). On the other hand, when $G$ is a symplectic group or special even orthogonal group, and $\lG$ is its similitude group, it has been proved that $m(\lr, \r) = 1$ (see \cite{AdlerPrasad:2006}, Theorem 1.4). In fact, one can prove Hypothesis~\ref{hypo: twisting character} under the assumption that $m(\lr, \r) = m(\rho, \tau) = 1$.

\begin{proposition}
\label{prop: twisting character under multiplicity one}
Suppose $\p \in \Pbd{G}$, let $\rho \in \Irr{\S{\underline{\p}}}$ and $\tau \in \Irr{\S{\underline{\lp}}}$ be in the restriction $\rho|_{\S{\underline{\lp}}}$. Let $\lr$ be an irreducible smooth representation of $\lG(F)$, whose restriction to $G(F)$ contains $\r = \r(\rho)$. If $m(\lr, \r) = m(\rho, \tau) = 1$, then for any $x \in \S{\underline{\p}}$
\[
\tau^{x} \cong \tau \Longleftrightarrow \lr \cong \lr \otimes \x_{x}.
\]
Moreover, 
\[
X(\lr) = \a(\S{\underline{\p}}(\tau)),
\]
where $\S{\underline{\p}}(\tau) = \{x \in \S{\underline{\p}} : \tau^{x} \cong \tau\}$. 
\end{proposition}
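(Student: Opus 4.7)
The plan is to leverage the two multiplicity-one hypotheses to pin down both $X(\lr)$ and $X(\rho)$ very explicitly via the Clifford-type decomposition \eqref{eq: induction} and its finite-group analogue, then couple the two sides through the unconditional Corollary~\ref{cor: twisting character}. At no point will Hypothesis~\ref{hypo: twisting character} be invoked.

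The first step is to sharpen the cardinality statement of Corollary~\ref{cor: restriction multiplicity formula} into a set-theoretic identification. Revisiting \eqref{eq: induction}: when $m(\lr, \r) = |c(\r)| = 1$, one has $\iG{F} = \lG(\r)$ and $\lr \cong \Ind^{\lG(F)}_{\lG(\r)}(\ir \otimes \x_{0})$ for some $\x_{0}$, while all summands of $\Ind^{\lG(F)}_{\lif{Z}_{F}G(F)}\r$ are pairwise non-isomorphic. Consequently $\lr \otimes \x \cong \lr$ precisely when $\x|_{\lG(\r)} = 1$, so
\[
X(\lr) = (\lG(F)/\lG(\r))^{*}.
\]
Applying the same induction formula (via Remark~\ref{rk: restriction multiplicity formula}) to the normal extension $\S{\underline{\lp}} \lhd \S{\underline{\p}}$ --- normal since $\S{\underline{\lp}} = \Ker \a$ by \eqref{eq: twisted endoscopic sequence with dual} --- with the pair $(\rho, \tau)$ and $m(\rho, \tau) = 1$ yields the parallel identification
\[
X(\rho) = (\S{\underline{\p}}/\S{\underline{\p}}(\tau))^{*}.
\]

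To conclude, observe that $\S{\underline{\p}}/\S{\underline{\lp}}$ is abelian because $\a$ embeds it into $\Hom(\lG(F)/G(F), \C^{\times})$; since $\S{\underline{\p}}(\tau) \supseteq \S{\underline{\lp}}$, Pontryagin duality on the finite abelian group $\S{\underline{\p}}/\S{\underline{\lp}}$ identifies $\Ker X(\rho)$ with the annihilator of $(\S{\underline{\p}}/\S{\underline{\p}}(\tau))^{*}$, namely $\S{\underline{\p}}(\tau)$ itself. Feeding this into Corollary~\ref{cor: twisting character} gives
\[
\a(\S{\underline{\p}}(\tau)) \;=\; \a(\Ker X(\rho)) \;=\; (\lG(F)/\lG(\r))^{*} \;=\; X(\lr),
\]
which is the second displayed identity of the proposition. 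The biconditional then reads off immediately using $\Ker \a = \S{\underline{\lp}} \subseteq \S{\underline{\p}}(\tau)$: one has $\tau^{x} \cong \tau$ iff $x \in \S{\underline{\p}}(\tau)$ iff $\a(x) = \x_{x} \in \a(\S{\underline{\p}}(\tau)) = X(\lr)$ iff $\lr \otimes \x_{x} \cong \lr$. The only delicate step is upgrading the cardinality identity of Corollary~\ref{cor: restriction multiplicity formula} to an equality of subgroups, which is why the argument returns to \eqref{eq: induction} directly; this upgrade is essentially automatic once $|c(\r)| = 1$ forces $\iG{F} = \lG(\r)$ and renders the summands of the induction pairwise distinct.
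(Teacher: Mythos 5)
Your proof is correct and follows essentially the same route as the paper: identify $X(\lr) = (\lG(F)/\lG(\r))^{*}$ and $X(\rho) = (\S{\underline{\p}}/\S{\underline{\p}}(\tau))^{*}$ from the multiplicity-one assumptions via \eqref{eq: induction} and Remark~\ref{rk: restriction multiplicity formula}, deduce $\Ker(X(\rho)) = \S{\underline{\p}}(\tau)$, feed this into Corollary~\ref{cor: twisting character} to get $X(\lr) = \a(\S{\underline{\p}}(\tau))$, and read off the biconditional using $\Ker\a = \S{\underline{\lp}} \subseteq \S{\underline{\p}}(\tau)$. The only difference is that you spell out the Clifford-theoretic justification for $X(\lr) = (\lG(F)/\lG(\r))^{*}$, which the paper merely asserts.
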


\begin{proof}
If $m(\lr, \r) = m(\rho, \tau) = 1$, then $X(\lr) = (\lG(F)/\lG(\r))^{*}$ and $X(\rho) = (\S{\underline{\p}}/\S{\underline{\p}}(\tau))^{*}$. It follows $\Ker(X(\rho)) = \S{\underline{\p}}(\tau)$. By Corollary~\ref{cor: twisting character}, $X(\lr) = \a(\Ker(X(\rho))) = \a(\S{\underline{\p}}(\tau))$. This implies the direction $``\Rightarrow"$. For the other direction, 
one can always choose $x_{0} \in \S{\underline{\p}}(\tau)$ such that $\x_{x} = \x_{x_{0}}$, which implies $xx^{-1}_{0} \in \S{\underline{\lp}}$. Hence $x \in \S{\underline{\p}}(\tau)$.
\end{proof}

For $\p \in \Pbd{G}$, we assume the central character of $\Pkt{\p}$ is $\chi_{\p}$. Let us fix a character $\lif{\chi}_{\p}$ of $\lZ(F)$ such that $\lif{\chi}_{\p}|_{\Z(F)} = \chi_{\p}$. Then we define $\lPkt{\p, \lif{\chi}_{\p}}$ to be the subset of $\Pkt{}(\lG(F))$ with central character $\lif{\chi}_{\p}$, whose restriction to $G(F)$ are contained in $\Pkt{\p}$. Let $X = \Hom(\lG(F)/\lZ(F)G(F), \C^{\times})$, then $X$ acts on $\lPkt{\p, \lif{\chi}_{\p}}$ by twisting. We call $\lPkt{\p, \lif{\chi}_{\p}}$ a coarse L-packet for $\lG$ and its structure can be described in the following proposition.

\begin{proposition}
\label{prop: coarse L-packet}
Suppose $\p \in \Pbd{G}$ and $\lif{\chi}_{\p}$ is chosen as above. We assume Hypothesis~\ref{hypo: twisting character}.
\begin{enumerate}
\item If $\rho \in \Irr{\S{\underline{\p}}}$, then the $\lG(F)$-conjugate orbit of $\r(\rho)$ has size $|\a(\Ker(X(\rho)))|$. \\

\item There is a pairing (not necessarily unique)
\begin{align}
\label{eq: lift pairing}
\lr \longrightarrow <\cdot, \lr>_{\underline{\p}}
\end{align}
from $\lPkt{\p, \lif{\chi}_{\p}}$ to $\D{\S{\underline{\lp}}}$, such that

\begin{enumerate}

\item 
\[
<\cdot, \lr \otimes \x_{x}>_{\underline{\p}} = <x(\cdot)x^{-1}, \lr>_{\underline{\p}}
\] 
for $x \in \S{\underline{\p}}$.

\item 
\[
<\cdot, \r>_{\underline{\p}}|_{\S{\underline{\lp}}} = m(\lr, \r) \sum_{x \in \S{\underline{\p}}/\S{\underline{\p}}(\tau)}< \cdot, \lr \otimes \x_{x}>_{\underline{\p}}
\] 
for any $\r \in \Pkt{\p}$ in the restriction of $\lr$.

\end{enumerate}
Moreover, it sends the generic representation to the trivial character of $\S{\underline{\lp}}$.
\end{enumerate} 
\end{proposition}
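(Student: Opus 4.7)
Part (i) is a short combinatorial consequence of the preceding results. By orbit-stabilizer, the $\lG(F)$-orbit of $\r(\rho)$ has cardinality $|\lG(F)/\lG(\r)|$, which is finite and abelian since $\lG(\r) \supseteq \lZ(F)G(F)$ and $\lG(F)/\lZ(F)G(F)$ is finite (see the proof of Lemma~\ref{lemma: finite restriction}). A finite abelian group has the same order as its Pontryagin dual, and Corollary~\ref{cor: twisting character} identifies that dual with $\a(\Ker(X(\rho)))$.

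For part (ii), my plan has three stages. First, establish that $\lPkt{\p, \lif{\chi}_\p}$ is non-empty: starting from any $\r \in \Pkt{\p}$, Corollary~\ref{cor: existence} supplies a lift $\lr' \in \Pkt{}(\lG(F))$, and one then twists by a suitable character of $\lG(F)/G(F)$ to adjust the central character on $\lZ(F)$ to $\lif{\chi}_\p$. This is possible because $\lZ(F)/\Z(F)$ embeds as a closed subgroup of $\lG(F)/G(F)$, so every character of the former extends to the latter by Pontryagin duality. Second, fix a normalization using the $(B, \Lambda)$-generic $\r_g \in \Pkt{\p}$ (available from the generic packet conjecture for $G$ in our working hypothesis) and any lift $\lr_g \in \lPkt{\p, \lif{\chi}_\p}$; Lemma~\ref{lemma: multiplicity one for generic representation} gives $m(\lr_g, \r_g) = 1$. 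Declare $\langle\cdot, \lr_g\rangle_{\underline{\p}}$ to be the trivial character of $\S{\underline{\lp}}$, consistent with the fact that $\r_g$ corresponds to the trivial character of $\S{\underline{\p}}$ and this restricts trivially to $\S{\underline{\lp}}$.

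Third, propagate the pairing: for a general $\lr$ containing $\r = \r(\rho)$ in its restriction, Hypothesis~\ref{hypo: twisting character} dictates that the $\S{\underline{\p}}$-orbit of the $\tau \in \Irr{\S{\underline{\lp}}}$ corresponding to $\lr$ is uniquely determined by $\S{\underline{\p}}(\tau) = \a^{-1}(X(\lr))$. Within this orbit, select a representative $\tau$ compatibly with the rule $\lr \otimes \x_x \leftrightarrow \tau^{x^{-1}}$ imposed by property (a), and set $\langle\cdot, \lr\rangle_{\underline{\p}}$ equal to the character of $\tau$. Property (a) then holds by construction. Property (b) follows from Clifford theory applied to the normal inclusion $\S{\underline{\lp}} \trianglelefteq \S{\underline{\p}}$ (normality comes from \eqref{eq: twisted endoscopic sequence}), yielding $\rho|_{\S{\underline{\lp}}} = m(\rho, \tau) \sum_{x \in \S{\underline{\p}}/\S{\underline{\p}}(\tau)} \tau^x$, combined with the multiplicity identity $m(\rho, \tau) = m(\lr, \r)$ from Proposition~\ref{prop: restriction multiplicity formula} and a reindexing $x \mapsto x^{-1}$ of the summation.

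The main obstacle is the consistent global choice of $\tau$ in its $\S{\underline{\p}}$-orbit across the coarse L-packet. This is a torsor problem: the $X$-action on lifts (restricted to the subgroup $\a(\S{\underline{\p}}) \subseteq X$) must be matched with the $\S{\underline{\p}}$-action on $\Irr{\S{\underline{\lp}}}$ via $\a$. Hypothesis~\ref{hypo: twisting character} guarantees the relevant orbit cardinalities agree, so the matching is bijective, and the normalization in the second stage anchors an origin, resolving the torsor; the remaining non-uniqueness of the pairing reflects the freedom in that initial choice.
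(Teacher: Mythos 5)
Your proposal is correct and follows essentially the same route as the paper: part (i) is the Pontryagin-duality count via Corollary~\ref{cor: twisting character}, and part (ii) defines the pairing on one representative per $\a(\S{\underline{\p}})$-coset in $X$ via a choice of $\tau$ and extends by formula (a), with well-definedness from Hypothesis~\ref{hypo: twisting character} and (b) from Clifford theory plus Proposition~\ref{prop: restriction multiplicity formula}. The extra scaffolding you add (explicit non-emptiness of $\lPkt{\p, \lif{\chi}_\p}$ and anchoring at the generic lift $\lr_g$) is sound but is handled more tersely in the paper, which starts from an arbitrary $\lr$ and observes at the end that the generic element lands on the trivial character automatically since its $\rho$ is trivial. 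One small imprecision in your stage 3: the $\S{\underline{\p}}$-orbit of $\tau$ is determined by $\rho$ (via Clifford theory applied to $\rho|_{\S{\underline{\lp}}}$), not merely by the stabilizer $\S{\underline{\p}}(\tau) = \a^{-1}(X(\lr))$ — distinct orbits can share a stabilizer — though this does not affect the argument since you in fact select $\tau$ from $\rho|_{\S{\underline{\lp}}}$. Both your argument and the paper's implicitly treat the construction one $X$-orbit of $\lPkt{\p, \lif{\chi}_\p}$ at a time; when the coarse packet has several $X$-orbits the choice of $\tau$ for the orbits not containing a generic lift is free, which is exactly the advertised non-uniqueness.
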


\begin{proof}
Suppose $\r \in \Pkt{\p}$, then the orbit of $\r$ under the conjugate action of $\lG(F)$ has size $|\lG(F)/\lG(\r)|$. By Corollary~\ref{cor: twisting character}, we know $\a(\Ker(X(\rho))) = (\lG(F)/ \lG(\r))^{*}$. Hence $|(\lG(F)/ \lG(\r))| = |(\lG(F)/ \lG(\r))^{*}| = |\a(\Ker(X(\rho)))|$. 

For the second part, we can choose any $\r(\rho)$ in the restriction of $\lr \in \lPkt{\p, \lif{\chi}_{\p}}$ and choose any irreducible subrepresentation $\tau$ in $\rho|_{\S{\underline{\lp}}}$. We also fix a set of representatives $\{\x_{i}\}$ in $X$ of $X / \a(\S{\underline{\p}})$. We assign $\tau$ to all $\lr \otimes \x_{i}$ and extend to $\lr \otimes \x$ for any $\x \in X$ 
by letting
\begin{align}
\label{eq: lift twisting character}
<\cdot, \lr \otimes \x_{x}>_{\underline{\p}} := <x(\cdot)x^{-1}, \lr>_{\underline{\p}}
\end{align}
for $x \in \S{\underline{\p}}$. This is well-defined because of Hypothesis~\ref{hypo: twisting character}.  By this construction, it is clear that (a) is satisfied. Moreover, this definition is independent of choice of $\r(\rho)$. To see this, let us replace $\r(\rho)$ by $\r(\rho)^{g}$ for $g \in \lG(F)$, by Lemma~\ref{lemma: twisting character} we have 
\[
<\cdot, \r^{g}>_{\underline{\p}}|_{\S{\underline{\lp}}} = \x_{x}(g)<\cdot, \r>_{\underline{\p}}|_{\S{\underline{\lp}}}  = <\cdot, \r>_{\underline{\p}}|_{\S{\underline{\lp}}}.
\]
Then (b) follows from (a) and Proposition~\ref{prop: restriction multiplicity formula}. Finally, if $\lr$ is generic, there exists a generic representation $\r$ in its restriction, i.e., $<\cdot, \r>_{\underline{\p}} = 1$. It is easy to see that $<\cdot, \lr>_{\underline{\p}} = 1$ by our construction.

\end{proof}

\subsection{Compatibility with $\theta$-twist}
\label{subsec: compatibility with twist}

Before we give the refinement of $\lPkt{\p, \lif{\chi}_{\p}}$, we want to show how the pairing in Proposition~\ref{prop: coarse L-packet} can also be made to satisfy a special case of Conjecture~\ref{conj: twisting equivariant}. First we would like to generalize Hypothesis~\ref{hypo: twisting character} to the $\theta$-twisted case.

\begin{hypothesis}
\label{hypo: theta twisting character}
Suppose $\p \in \Pbd{G}$, let $\rho \in \Irr{\S{\underline{\p}}}$ and $\tau \in \Irr{\S{\underline{\lp}}}$ be in the restriction $\rho|_{\S{\underline{\lp}}}$. Let $\lr$ be an irreducible smooth representation of $\lG(F)$ whose restriction to $G(F)$ contains $\r(\rho)$, then for any $x \in \S{\underline{\p}}^{\theta}$
\[
\tau^{x} \cong \tau \Longleftrightarrow \lr^{\theta} \cong \lr \otimes \x_{x}.
\]
\end{hypothesis}

\begin{remark}
\label{rk: theta twisting character}

\begin{enumerate}

\item

Fix $\tau_{0} \in \Irr{\S{\underline{\lp}}}$, we can construct $1-1$ correspondences between $\{\tau_{0}^{y}: y \in \S{\underline{\p}}\}$ and $\{\lr(\tau_{0}) \otimes \x_{y}: y \in \S{\underline{\p}}\}$ through \eqref{eq: lift twisting character}, where $\lr(\tau_{0}) \in \lPkt{\p, \lif{\chi}_{\p}}$. If we fix such a correspondence, and suppose $\S{\underline{\p}}^{\theta}$ acts on $\{\tau_{0}^{y}: y \in \S{\underline{\p}}\}$, then it follows from this hypothesis that 
\[
\lr(\tau^{x})^{\theta} \cong \lr(\tau) \otimes \x_{x},
\]
for any $\tau \in \{\tau_{0}^{y}: y \in \S{\underline{\p}}\}$ and $x \in \S{\underline{\p}}^{\theta}$. More generally, if $\tau_{0}' := \tau_{0}^{x_{0}} \notin \{\tau_{0}^{y}: y \in \S{\underline{\p}}\}$ for some $x_{0} \in \S{\underline{\p}}^{\theta}$, then by taking $\lr(\tau_{0}')$ such that $\lr(\tau_{0}')^{\theta} \cong \lr(\tau_{0}) \otimes \x_{x_{0}}$, we can obtain a $1-1$ correspondence between $\{(\tau_{0}')^{y}: y \in \S{\underline{\p}}\}$ and $\{\lr(\tau_{0}') \otimes \x_{y}: y \in \S{\underline{\p}}\}$ again through \eqref{eq: lift twisting character}. Note $\lr(\tau'_{0}) \in \lPkt{\p, \lif{\chi}_{\p}}$ (see Remark~\ref{rk: theta twisting character 1}). In this way, one can construct a pairing from $\lPkt{\p, \lif{\chi}_{\p}}$ to $\Irr{\S{\underline{\lp}}}$ as in Proposition~\ref{prop: coarse L-packet}, which further satisfies  
\[
\lr(\tau^{x})^{\theta} \cong \lr(\tau) \otimes \x_{x},
\]
for any $\tau \in \Irr{\S{\underline{\lp}}}$ and $x \in \S{\underline{\p}}^{\theta}$.


\item

For $\rho \in \Irr{\S{\underline{\p}}}$ and $\tau \in \Irr{\S{\underline{\lp}}}$ being in the restriction $\rho|_{\S{\underline{\lp}}}$, it is easy to see for $x \in \S{\underline{\p}}^{\theta}$, $\tau^{x} \cong \tau$ implies $\rho^{x} \cong \rho \otimes \e$ for some $\e \in (\S{\underline{\p}}/\S{\underline{\lp}})^{*}$. 
By the proof of Proposition~\ref{prop: twisting character}, there exists $h \in \lG(F)$ such that $\e = \e_{h}$. Since $X(\rho) = \{\e_{g} : g \in \lG(\r(\rho))\}$, then $h$ is uniquely determined modulo $\lG(\r(\rho))$.
It follows 
\[
\r(\rho)^{\theta^{-1}} \cong \r(\rho^{x}) \cong \r(\rho \otimes \e)\cong \r(\rho)^{h},
\] 
so $\r(\rho)^{\theta_{h}} \cong \r(\rho)$, where $\theta_{h} = h \rtimes \theta$. 
In the special case $\rho^{x} \cong \rho$, we can prove the hypothesis under Hypothesis~\ref{hypo: twisting character}. 

\end{enumerate}

\end{remark}

\begin{proposition}
\label{prop: theta twisting character}
Suppose $\p \in \Pbd{G}$, let $\rho \in \Irr{\S{\underline{\p}}}$ and $\tau \in \Irr{\S{\underline{\lp}}}$ be in the restriction $\rho|_{\S{\underline{\lp}}}$. Let $\lr$ be an irreducible smooth representation of $\lG(F)$ whose restriction to $G(F)$ contains $\r = \r(\rho)$. We assume Hypothesis~\ref{hypo: twisting character} and $\rho^{x} \cong \rho$ for $x \in \S{\underline{\p}}^{\theta}$, then for any $x \in \S{\underline{\p}}^{\theta}$
\[
\tau^{x} \cong \tau \Longleftrightarrow \lr^{\theta} \cong \lr \otimes \x_{x}.
\]
\end{proposition}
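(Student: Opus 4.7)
The plan is to decompose $\lr^{\theta}$ as a parameter-twist by $\x_{x}$ composed with a residual twist that lives inside $\Pkt{\lp}$, and then match the residual part against $\tau$ using Hypothesis~\ref{hypo: twisting character}. First, from $\rho^{x} \cong \rho$ and Lemma~\ref{lemma: theta equivariant}, one deduces $\r^{\theta} \cong \r$ (the formula $<y, \r^{\theta}>_{\underline{\p}^{\theta}} = <\hat{x}^{-1} y \hat{x}, \r>_{\underline{\p}}$ turns $\mathrm{tr}\,\rho$ into $\mathrm{tr}\,\rho^{x^{-1}} = \mathrm{tr}\,\rho$). Hence $\lr^{\theta}$ and $\lr$ both restrict to contain $\r$, so Corollary~\ref{cor: uniqueness} gives $\lr^{\theta} \cong \lr \otimes \chi'$ for some $\chi' \in X := \Hom(\lG(F)/G(F), \C^{\times})$, unique modulo $X(\lr)$. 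The goal reduces to showing $\chi' \equiv \x_{x}$ in $X/X(\lr)$ iff $\tau^{x} \cong \tau$.

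Next, Lemma~\ref{lemma: centralizer} supplies a cocycle ${\bold a} := \delta(x) \in H^{1}(W_{F}, \D{D})$ with $\lp^{\theta} \cong \lp \otimes {\bold a}$, and by \eqref{eq: twisted endoscopic sequence with dual} this ${\bold a}$ corresponds to $\x_{x} = \a(x)$. Applying Lemma~\ref{lemma: twist equivariant} I obtain the factorization $\lr^{\theta} = \lr'' \otimes \x_{x}$ with $\lr'' \in \Pkt{\lp}$; since $\lr''$ again lifts $\r$, Corollary~\ref{cor: uniqueness} gives $\lr'' \cong \lr \otimes \chi''$, and hence $\chi' \equiv \chi''\,\x_{x}$ in $X/X(\lr)$. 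The remaining task is to show that $\chi'' \in X(\lr)$ iff $\tau^{x} \cong \tau$.

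To identify $\chi''$, I compute the character of $\lr''$ on $\S{\underline{\lp}}$ by chaining Lemmas~\ref{lemma: theta equivariant} and~\ref{lemma: twist equivariant}: for $y \in \S{\underline{\lp}}$,
\[
<y, \lr''>_{\underline{\lp}} \;=\; <y, \lr'' \otimes \x_{x}>_{\underline{\lp \otimes {\bold a}}} \;=\; <y, \lr^{\theta}>_{\underline{\lp^{\theta}}} \;=\; <\hat{x}^{-1} y \hat{x}, \lr>_{\underline{\lp}},
\]
where $\hat{x}$ denotes a lift of $x$. Under the pairing of Proposition~\ref{prop: coarse L-packet}, this says that $\lr''$ carries the character $\tau^{x^{-1}}$ on $\S{\underline{\lp}}$. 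Here is precisely where the hypothesis $\rho^{x} \cong \rho$ enters: the isotypic decomposition $\rho\big|_{\S{\underline{\lp}}} = m(\rho,\tau)\bigoplus_{y' \in \S{\underline{\p}}/\S{\underline{\p}}(\tau)} \tau^{y'}$ is stable under conjugation by $x$, so $\tau^{x^{-1}} \cong \tau^{y_{0}}$ for some $y_{0} \in \S{\underline{\p}}$; then Proposition~\ref{prop: coarse L-packet}(2)(a), combined with the injectivity of the pairing on the $\S{\underline{\p}}$-orbit of $\lr$, forces $\lr'' \cong \lr \otimes \x_{y_{0}}$, i.e.\ $\chi'' \equiv \x_{y_{0}}$ in $X/X(\lr)$.

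Putting everything together and invoking Hypothesis~\ref{hypo: twisting character} in the form $X(\lr) = \a(\S{\underline{\p}}(\tau))$, the chain of equivalences reads
\[
\lr^{\theta} \cong \lr \otimes \x_{x} \;\Longleftrightarrow\; \x_{y_{0}} \in \a(\S{\underline{\p}}(\tau)) \;\Longleftrightarrow\; y_{0} \in \S{\underline{\p}}(\tau) \;\Longleftrightarrow\; \tau^{y_{0}} \cong \tau \;\Longleftrightarrow\; \tau^{x^{-1}} \cong \tau \;\Longleftrightarrow\; \tau^{x} \cong \tau,
\]
where the second equivalence uses that $\ker\a \cap \S{\underline{\p}} = \S{\underline{\lp}} \subseteq \S{\underline{\p}}(\tau)$. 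The main obstacle is the bookkeeping across the chain of $\S$-group identifications $\S{\underline{\lp^{\theta}}} \cong \S{\underline{\lp \otimes {\bold a}}} = \S{\underline{\lp}}$, and ensuring that conjugation by the chosen lift $\hat{x}$ really delivers $\tau^{x^{-1}}$ on the nose rather than an $\S{\underline{\p}}(\tau)$-translate, which would kill one direction of the biconditional.
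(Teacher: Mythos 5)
The proposal is invalid: it assumes the very thing the section has carefully withheld. From the middle onward you invoke Lemma~\ref{lemma: twist equivariant} for $\lG$ to obtain $\lr^{\theta} = \lr'' \otimes \x_{x}$ with $\lr'' \in \Pkt{\lp}$, and you then chain the equivariance identities $<y, \lr''>_{\underline{\lp}} = <y, \lr^{\theta}>_{\underline{\lp^{\theta}}} = <\hat{x}^{-1} y \hat{x}, \lr>_{\underline{\lp}}$. All of this — the refined packet $\Pkt{\lp}$, its pairing $<\cdot,\lr>_{\underline{\lp}}$, and its equivariance under $\theta$-twist and quasicharacter twist — is available only under the endoscopic hypothesis for $\lG$. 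But Section~\ref{sec: lifting L-packet} explicitly assumes the endoscopic hypothesis only for $G$ and its twisted endoscopic groups, and the introduction emphasizes that Conjecture~\ref{conj: twisting equivariant} (which is exactly what Lemmas~\ref{lemma: theta equivariant} and~\ref{lemma: twist equivariant} prove) cannot be assumed for $\lG$. The ``pairing'' $<\cdot,\lr>_{\underline{\p}}$ from Proposition~\ref{prop: coarse L-packet} is a bare construction from restriction to $G$; it does not carry the $\theta$- and $\x$-equivariance you use, and inserting those properties is precisely the content of what must be proved. So the argument is circular.

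The paper's route never touches the (as-yet-unestablished) L-packet theory of $\lG$. It stays with $G$ and the finite abelian quotient $\lG(F)/G(F)$: $\rho^x\cong\rho$ gives $\r^\theta\cong\r$ and hence a nonzero twisted trace $<x_0,\r^+>_{\underline{\p}}$ for some $x_0\in\S{\underline{\p}}^{\theta}$ with $\tau^{x_0}\cong\tau$. Lemma~\ref{lemma: twisting character} — whose proof only needs the endoscopic hypothesis for $G$ plus the transfer-factor comparison of Lemma~\ref{lemma: twisted transfer factor} — then gives $f_{G^{+}}((\r^{+})^{g}) = \x_{x_{0}}(g)^{-1}f_{G^{+}}(\r^{+})$ for $g\in\lG(\r)$. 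From this, an explicit intertwining-operator computation with $\r^{1}(g)$ (the extension of $\r$ to the maximal subgroup $\iG{F}$) and $\r^{+}(\theta)$ shows $(\r^{1})^{\theta}\cong\r^{1}\otimes(\x_{x_0}|_{\iG{F}})$, hence $\lr^{\theta}\cong\lr\otimes\x_{x_{0}}$. The biconditional then comes from Hypothesis~\ref{hypo: twisting character} alone. If you want to salvage your approach, the lesson is that you may use Corollary~\ref{cor: uniqueness} (which only needs representation theory of the finite quotient) to write $\lr^{\theta}\cong\lr\otimes\chi'$, but you must identify $\chi'$ without ever quoting the equivariance of a $\lG$-pairing.
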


\begin{proof}
Since $\r(\rho)^{\theta} \cong \r(\rho^{x^{-1}})$ for $x \in \S{\underline{\p}}^{\theta}$, then by our assumption $\r \cong \r^{\theta}$. This means we have $x_{0} \in \S{\underline{\p}}^{\theta}$ such that $<x_{0}, \r^{+}>_{\underline{\p}} \neq 0$, in particular, $\tau^{x_{0}} \cong \tau$. By \eqref{eq: twisting character},
\[
<x_{0}, (\r^{+})^{g}>_{\underline{\p}} = \x_{x_{0}}(g)<x_{0}, \r^{+}>_{\underline{\p}}.
\]
Take $g \in \lG(\r)$, we get 
\[
<x_{0}, (\r^{+})^{g}>_{\underline{\p}} f_{G^{+}}(\r^{+}) =   \x_{x_{0}}(g)<x_{0}, \r^{+}>_{\underline{\p}} f_{G^{+}}(\r^{+}) =  \x_{x_{0}}(g)<x_{0}, (\r^{+})^{g}>_{\underline{\p}} f_{G^{+}}((\r^{+})^{g}),
\]
for $f \in C^{\infty}_{c}(G(F) \rtimes \theta)$. Hence 
\begin{align}
\label{eq: theta twisting character 4}
f_{G^{+}}((\r^{+})^{g}) = \x_{x_{0}}(g)^{-1} f_{G^{+}}(\r^{+}).
\end{align}

Let $\iG{F}$ be a maximal subgroup of $\lG(F)$, to which one can extend $\r$. Then we take the extension $\r^{1}$ of $\r$ such that $\lr \cong \Ind^{\lG(F)}_{\iG{F}} \r^{1}$. Since $\r^{1}(g)$ intertwines between $\r$ and $\r^{g}$, and $\r^{+}(\theta)$ intertwines between $\r$ and $\r^{\theta}$, it follows from \eqref{eq: theta twisting character 4} that
\begin{align*}
\r^{1}(\theta g \theta^{-1})  = \x_{x_{0}}(g) \cdot \r^{+}(\theta) \r^{1}(g) \r^{+}(\theta)^{-1}.
\end{align*}
This means $(\r^{1})^{\theta} \cong \r^{1} \otimes (\x_{x_{0}}|_{\iG{F}})$, and hence $\lr^{\theta} \cong \lr \otimes \x_{x_{0}}$. 

Now suppose $\tau^{x} \cong \tau$ for some $x \in \S{\underline{\p}}^{\theta}$, then $xx_{0}^{-1} \in \S{\underline{\p}}(\tau)$. From Hypothesis~\ref{hypo: twisting character}, we have $\lr \cong \lr \otimes \x_{xx_{0}^{-1}}$, then $\lr^{\theta} \cong \lr \otimes \x_{x_{0}} \cong \lr \otimes \x_{x}$. Conversely, if $\lr^{\theta} \cong \lr \otimes \x_{x}$ for some $x \in \S{\underline{\p}}^{\theta}$, then $\lr \otimes \x_{x} \cong \lr \otimes \x_{x_{0}}$. It follows again from Hypothesis~\ref{hypo: twisting character} that $xx_{0}^{-1} \in \S{\underline{\p}}(\tau)$, and hence $\tau^{x} \cong \tau$.


\end{proof}

\begin{remark}
\label{rk: theta twisting character 1}
Let $\tau_{0} \in \Irr{\S{\lp}}$ and $\rho_{0} \in \Irr{\S{\p}}$ be both trivial. Then it is clear that $\rho_{0}^{x} \cong \rho_{0}$ for $x \in \S{\underline{\p}}^{\theta}$ and $m(\rho_{0}, \tau_{0}) = 1$.
Let $\lr$ be an irreducible smooth representation of $\lG(F)$ whose restriction to $G(F)$ contains $\r = \r(\rho_{0})$. Note  $\r = \r(\rho_{0})$ is generic, so by Lemma~\ref{lemma: multiplicity one for generic representation} we have $m(\lr, \r) = 1$. It follows from Proposition~\ref{prop: twisting character under multiplicity one} that Hypothesis~\ref{hypo: twisting character} is satisfied for such $\r$ and $\lr$. Therefore the assumptions of this proposition are all satisfied in this case, and we have $\lr^{\theta} \cong \lr \otimes \x_{x}$ for any $x \in \S{\p}^{\theta}$. Suppose $\lr \in \lPkt{\p, \lif{\chi}_{\p}}$, i.e., $\lr$ has central character $\lif{\chi}_{\p}$ (see Proposition~\ref{prop: coarse L-packet}), then this implies $\lif{\chi}_{\p}^{\theta} = \lif{\chi}_{\p} \cdot \x_{x}|_{\lZ(F)}$ any $x \in \S{\p}^{\theta}$.
\end{remark}

\subsection{Conjectural refinement}
\label{subsec: refinement}

The refinement of L-packets of $\lG$ should be a section of certain choice of the pairing $\lPkt{\p, \lif{\chi}_{\p}} \rightarrow \D{\S{\underline{\lp}}}$ given in Proposition~\ref{prop: coarse L-packet}, for which we make the following conjecture.

\begin{conjecture}
\label{conj: refined L-packet}

Suppose $\p \in \Pbd{G}$, and $\lif{\chi}_{\p}$ is a character of $\lZ(F)$ whose restriction to $\Z(F)$ is $\chi_{\p}$. Let $\lif{\chi} = \lif{\chi}_{\p}|_{\lif{Z}_{F}}$. Then one can construct a pairing of $\lPkt{\p, \lif{\chi}_{\p}} \rightarrow \D{\S{\lp}}$ as in Proposition~\ref{prop: coarse L-packet} and a section $\Pkt{\lp}$, which satisfies the following properties:

\begin{enumerate}
\item 
\[
\lPkt{\p, \lif{\chi}_{\p}} = \bigsqcup_{\x \in X / \a(\S{\underline{\p}})} \Pkt{\lp} \otimes \x. 
\]  

\item For $\lf \in C^{\infty}_{c}(\lG(F), \lif{\chi})$, the distribution
\[
\lf(\underline{\lp}) := \sum_{\lr \in \Pkt{\lp}} <1, \lr>_{\underline{\p}} \lf_{\lG}(\lr)
\]
is stable.

\item Suppose $s$ is a semisimple element in $\cS{\underline{\lp}}$ and  $(H_{1}, \underline{\p}_{H_{1}}) \rightarrow (\underline{\p}, s)$. Suppose $\Pkt{\lp_{H_{1}}}$ exists and it satisfies (i) and (ii). Then we can choose some twist of $\Pkt{\lp_{H_{1}}}$ by $\Hom(\lif{H}_{1}(F)/H_{1}(F), \C^{\times})$, which is still denoted the same, such that
\begin{align}
\label{eq: lifted character relation}
\lf^{\lif{H}_{1}}(\underline{\lp}_{H_{1}}) = \sum_{\lr \in \Pkt{\lp}} <x, \lr>_{\underline{\p}} \lf_{\lG}(\lr), \,\,\,\,\,\, \lf \in C^{\infty}_{c}(\lG(F), \lif{\chi})  
\end{align}
where $x$ is the image of $s$ in $\S{\underline{\lp}}$.


\item Suppose $s$ is a semisimple element in $\com{\cS{\underline{\p}}}$ and  $(H_{1}, \underline{\p}_{H_{1}}) \rightarrow (\underline{\p}, s)$. Let $x$ be the image of $s$ in $\S{\underline{\p}}^{\theta}$, and $\x = \a(x)$. Suppose $\Pkt{\lp_{H_{1}}}$ exists and it satisfies (i) and (ii). Then for any $\tau \in \Irr{\S{\underline{\lp}}}$ such that $\tau^{x} \cong \tau$, and any extension $\tau_{1}$ of $\tau$ to the group generated by $\S{\lp}$ and $x$, one can associate it with an intertwining operator $A_{\lr(\tau)}(\theta, \x): \lr(\tau) \otimes \x \rightarrow \lr(\tau)^{\theta}$ such that for some twist of $\Pkt{\lp_{H_{1}}}$ by $\Hom(\lif{H}_{1}(F)/H_{1}(F), \C^{\times})$, which is still denoted the same, we have
\begin{align}
\label{eq: theta twisted character relation}
\lf^{\lif{H}_{1}}(\underline{\lp}_{H_{1}}) = \sum_{\substack{\tau \in \Irr{\S{\underline{\lp}}} \\ \tau^{x} \cong \tau}} trace (\tau_{1}(x)) \cdot \lf_{\com{\lG}}(\lr(\tau), \x), \,\,\,\,\,\, \lf \in C^{\infty}_{c}(\lG(F), \lif{\chi})  
\end{align}

\end{enumerate}

\end{conjecture}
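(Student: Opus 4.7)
I would construct $\Pkt{\lp}$ as a section of the $X/\a(\S{\underline{\p}})$-orbit projection on $\lPkt{\p, \lif{\chi}_{\p}}$, and then verify the remaining parts by descending each statement to the corresponding one for $G$, where it is part of the endoscopic hypothesis. For part (i), I start with the pairing from Proposition~\ref{prop: coarse L-packet}: since twisting $\lr$ by $\x_{x}$ with $x \in \S{\underline{\p}}$ conjugates its $\S{\underline{\lp}}$-character by $x$, the $\a(\S{\underline{\p}})$-orbit of $\lr$ corresponds to the $\S{\underline{\p}}/\S{\underline{\lp}}$-conjugacy class of that character; choosing one $\lr$ from each $X/\a(\S{\underline{\p}})$-orbit then yields the desired section.

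\textbf{Descent framework for parts (ii) and (iii).} The key observation is that $C^{\infty}_{c}(\lG(F), \lif{\chi})$ decomposes, via the finite coset space $\lG(F)/\lif{Z}_{F}G(F)$, into finitely many translates of the image of $C^{\infty}_{c}(G(F), \chi)$ under \eqref{map: inclusion of Hecke algebra}. By linearity and the conjugation identity \eqref{eq: conjugate function}, it suffices to treat $\lf = \lif{f}$ for $f \in C^{\infty}_{c}(G(F), \chi)$. Lemma~\ref{lemma: twisted endoscopic transfer} then gives $\lf^{\lif{H}_{1}} = \lif{(f^{H_{1}})}$, reducing the left-hand side of \eqref{eq: lifted character relation} to a character evaluation of $f^{H_{1}}$ at $\underline{\p}_{H_{1}}$ on $H_{1}(F)$. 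On the representation side, the character of $\lr \in \Pkt{\lp}$ decomposes under restriction as a sum over its $\lG(F)$-conjugacy orbit in $\Pkt{\p}$ with multiplicity $m(\lr, \r)$, allowing me to express $\lif{f}_{\lG}(\lr)$ in terms of $f_{G}(\r)$ for a representative $\r$ in the restriction. Combining this with the stability of $f(\underline{\p})$ on the $G$-side yields (ii), while combining it with the character identity $f^{H_{1}}(\underline{\p}_{H_{1}}) = \sum_{\r} <x, \r>_{\underline{\p}} f_{G}(\r)$ yields (iii). The crucial bookkeeping step is matching the coefficient $<x, \lr>_{\underline{\p}}$ appearing on the right-hand side of \eqref{eq: lifted character relation} with the coefficient $<x, \r>_{\underline{\p}}$ produced by the descent; this is precisely what Proposition~\ref{prop: coarse L-packet}(2)(b) provides, and any residual character of $\lif{H}_{1}(F)/H_{1}(F)$ that arises from freedom in choosing the section is absorbed by the twist of $\Pkt{\lp_{H_{1}}}$ permitted in the statement.

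\textbf{Part (iv) and main obstacle.} Part (iv) proceeds by the same descent applied to the $\theta$-twisted character identity \eqref{eq: twisted character identity} for $G$, together with the compatibility of $(\lG, \lif{H}_{1})$- and $(G, H_{1})$-transfer factors from Lemma~\ref{lemma: twisted transfer factor}. The intertwining operators $A_{\lr(\tau)}(\theta, \x)$ must be chosen so that their action on the restriction of $\lr(\tau)$ to $G(F)$ is compatible with the Whittaker-normalized extensions $\r^{+}(\theta)$ appearing in Conjecture~\ref{conj: twisted character identity}(ii); this choice is governed by Hypothesis~\ref{hypo: theta twisting character}, which Proposition~\ref{prop: theta twisting character} establishes only when $\rho^{x} \cong \rho$. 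The main obstacle will be choosing the section $\Pkt{\lp}$, the intertwining operators $A_{\lr(\tau)}(\theta, \x)$, and the twists of $\Pkt{\lp_{H_{1}}}$ across all pairs $(H_{1}, \underline{\p}_{H_{1}}) \to (\underline{\p}, s)$ in a mutually consistent way, so that the trace $\text{trace}(\tau_{1}(x))$ on the right-hand side of \eqref{eq: theta twisted character relation} matches what the descent produces; in particular, one must handle the case $\rho^{x} \not\cong \rho$, which lies outside the direct scope of Proposition~\ref{prop: theta twisting character} and represents the arithmetic heart of extending the endoscopic hypothesis from $G$ to $\lG$.
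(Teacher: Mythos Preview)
The statement is a \emph{conjecture} in the paper, not a theorem; the paper does not prove it in general. In Section~\ref{subsec: classical group} the analogue for symplectic and special even orthogonal groups (Theorems~\ref{thm: refined L-packet} and~\ref{thm: twisted character identity for similitude group}) is stated, and the remark following them says explicitly that the proofs require global methods, namely the stabilization of the twisted Arthur--Selberg trace formula, carried out in a separate paper. So you are attempting to give a purely local proof of something the author regards as requiring global input.

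Your descent argument has a genuine gap. The claim ``by linearity and the conjugation identity \eqref{eq: conjugate function}, it suffices to treat $\lf = \lif{f}$'' is false: conjugation by $g \in \lG(F)$ preserves each coset of $\lif{Z}_{F}G(F)$ in $\lG(F)$, it does not translate between them, so you cannot reduce a function supported on $\lif{Z}_{F}G(F)\cdot h$ with $h \notin \lif{Z}_{F}G(F)$ to one supported on $\lif{Z}_{F}G(F)$. Your argument therefore only checks (ii)--(iv) for test functions supported on the identity coset $\lif{Z}_{F}G(F)$. On that coset the character of $\lr$ is determined by $\lr|_{G}$ and hence by $\Pkt{\p}$; but on a nontrivial coset the value of $\Theta_{\lr}$ depends on which twist of $\lr$ by $X$ you chose in the section, and nothing in the endoscopic hypothesis for $G$ tells you which choice makes $\lf(\underline{\lp})$ stable there. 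The paper's archimedean remark makes this transparent: in that case $X(\lr) = X$, so $\Pkt{\lp} = \lPkt{\p,\lif{\chi}_{\p}}$, and averaging over $X$ annihilates all nontrivial cosets, reducing everything to $f(\underline{\p})$. In the nonarchimedean case the section is a proper subset and no such averaging is available; determining the correct section is precisely the content of the conjecture, and this is what the global argument supplies.
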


It is clear that \eqref{eq: theta twisted character relation} generalizes \eqref{eq: lifted character relation}. In the setup of this conjecture, for $x \in \S{\underline{\p}}^{\theta}$ and $\tau \in \Irr{\S{\underline{\lp}}}$ such that $\tau^{x} \cong \tau$, let $\r$ be an irreducible constituent in $\lr(\tau)|_{G}$, then $\r^{\theta_{h}} \cong \r$, where $x$ determines $h \in \lG(F)/\lG(\r)$ as in Remark~\ref{rk: theta twisting character} (ii). We fix a representative of $h$ in $\lG(F)$, then $\lr(h) \circ A_{\lr(\tau)}(\theta, \x)$ induces an intertwining operator $A_{I(\r)}(\theta_{h}): I(\r) \rightarrow I(\r)^{\theta_{h}}$ by restricting to the $\r$-isotypic component $I(\r)$ in $\lr(\tau)|_{G}$. So
\begin{align}
\label{eq: theta twisted intertwining operator}
(\lf|_{\lif{Z}_{F}G(F) \cdot h})_{\com{\lG}}(\lr(\tau), \x) = \sum_{\r \in \lr(\tau)|_{G}} f_{G^{\theta_{h}}}(I(\r)),  
\end{align}
where $f \in C^{\infty}_{c}(G(F), \chi)$ is obtained by letting $f(g) = \lf(gh)$ and $f_{G^{\theta_{h}}}(I(\r))$ is the twisted character of $I(\r)$ generalizing \eqref{eq: twisted character}. We would like to restrict \eqref{eq: theta twisted character relation} to $\lf \in C^{\infty}_{c}(\lG(F), \lif{\chi})$ supported on $\lif{Z}_{F}G(F) \cdot h$. To write down the formula we make the following conjecture.

\begin{conjecture}
In the setup of the Conjecture~\ref{conj: refined L-packet}, let $\tau' = \tau^{y}, \tau_{1}' = \tau^{y}_{1}$ for $y \in \S{\underline{\p}}$, and suppose $\tau'_{1}$ is associated with $A_{\lr(\tau')}(\theta, \x): \lr(\tau') \otimes \x \rightarrow \lr(\tau')^{\theta}$. If we identify the representation space of $\lr(\tau')$ and $\lr(\tau)$ such that $\lr(\tau') = \lr(\tau) \otimes \x_{y}$, then $A_{\lr(\tau')}(\theta, \x) = A_{\lr(\tau)}(\theta, \x)$.
\end{conjecture}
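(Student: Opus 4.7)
The plan is to exploit the $\theta$-invariance of the quasicharacter $\x_y = \a(y)$ for $y \in \S{\underline{\p}}$, which is forced by the standing hypothesis that $\c$ is $\theta$-invariant. Indeed, since $\c \circ \theta = \c$ and $\c$ is surjective, $\theta$ acts trivially on $D(F)$; any character of $\lG(F)$ trivial on $G(F)$ factors through $\c$ and is therefore $\theta$-invariant, so $\x_y \circ \theta = \x_y$. In particular, $(\lr(\tau) \otimes \x_y)^{\theta} = \lr(\tau)^{\theta} \otimes \x_y$ as representations of $\lG(F)$.

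Under the identification $\lr(\tau') = \lr(\tau) \otimes \x_y$ on a common underlying vector space $V$, put $A := A_{\lr(\tau)}(\theta, \x) \colon V \to V$. Since $\x_y$ is scalar-valued and $\theta$-invariant, a direct computation gives
\[
A \circ (\lr(\tau') \otimes \x)(g) \;=\; \x_y(g)\, A \circ (\lr(\tau) \otimes \x)(g) \;=\; \x_y(g)\, \lr(\tau)^{\theta}(g) \circ A \;=\; \lr(\tau')^{\theta}(g) \circ A
\]
for every $g \in \lG(F)$. Hence $A$ is already an intertwining operator from $\lr(\tau') \otimes \x$ to $\lr(\tau')^{\theta}$, and by Schur's lemma $A_{\lr(\tau')}(\theta, \x) = c_y \cdot A$ for some scalar $c_y \in \C^{\times}$.

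To force $c_y = 1$, I would substitute both $(\tau, \tau_1)$ and $(\tau', \tau_1')$ into the character identity \eqref{eq: theta twisted character relation} and compare the two sides. Since $\S{\underline{\lp}}$ is normal in $\S{\underline{\p}}$, write $y^{-1}xy = s_{0} x$ with $s_{0} \in \S{\underline{\lp}}$, so that $\tau_1'(x) = \tau_1^{y}(x) = \tau(s_{0})\tau_1(x)$ and hence $\mathrm{trace}(\tau_1'(x)) = \mathrm{trace}(\tau(s_{0})\tau_1(x))$. On the representation side, using the identification $\lr(\tau') = \lr(\tau) \otimes \x_y$ together with the pairing compatibility of Proposition~\ref{prop: coarse L-packet}(ii)(a), the twisted character $\lf_{\com{\lG}}(\lr(\tau'), \x)$ computed with the operator $A$ (rather than $c_y A$) is related to $\lf_{\com{\lG}}(\lr(\tau), \x)$ by exactly the compensating factor. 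Matching the $\tau$-coefficient and the $\tau'$-coefficient in the identity then forces $c_y = 1$.

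The main obstacle will be the bookkeeping in the last step. The scalar $\tau(s_{0})$ depends on the non-canonical choice of extension $\tau_1$, and the character $\x_y$ must absorb correctly into the twisted orbital computation on the endoscopic side. When $\S{\underline{\p}}$ is abelian (in particular for classical groups), one has $y^{-1}xy = x$ and $s_{0} = 1$, so $\tau_1' = \tau_1$ on $\S{\underline{\lp}}\langle x \rangle$ and the argument is transparent. In the general case, the content of the conjecture is precisely that a coherent choice of the assignment $\tau_1 \mapsto A_{\lr(\tau)}(\theta, \x)$ can be made compatible with the $\S{\underline{\p}}$-twist, and this compatibility likely has to be built into the construction underlying Conjecture~\ref{conj: refined L-packet} rather than emerging as a free consequence of the character identity alone.
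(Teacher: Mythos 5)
This statement is labelled a \emph{Conjecture} in the paper and the paper offers no proof of it; it is assumed in order to derive the formula~\eqref{eq: restriction of theta twisted character relation} and is otherwise left open. So there is no proof to compare yours against, only the paper's honest treatment of it as unproven.

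Your first two paragraphs are correct and genuinely informative: the $\theta$-invariance of $\x_{y}$ does follow from the $\theta$-invariance of $\c$, and the computation showing that $A:=A_{\lr(\tau)}(\theta,\x)$ already intertwines $\lr(\tau')\otimes\x$ with $\lr(\tau')^{\theta}$ under the identification $\lr(\tau')=\lr(\tau)\otimes\x_{y}$ is right, so by Schur the two operators differ by a scalar $c_{y}\in\C^{\times}$. Your algebra with $y^{-1}xy=s_{0}x$, $s_{0}\in\S{\underline{\lp}}$, is also correct, using the abelianness of $\S{\underline{\p}}^{\Sigma}/\S{\underline{\lp}}^{\Sigma}$ from~\eqref{eq: twisted endoscopic sequence}. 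Where the argument cannot close, and where you correctly stop, is in forcing $c_{y}=1$ from~\eqref{eq: theta twisted character relation}: that identity only constrains the sum $\sum_{\tau} \mathrm{trace}(\tau_{1}(x))\,\lf_{\com{\lG}}(\lr(\tau),\x)$, and each summand is invariant under simultaneously rescaling $\tau_{1}$ and $A_{\lr(\tau)}(\theta,\x)$ by inverse scalars, so the identity alone does not pin down the individual normalizations let alone their compatibility under $\S{\underline{\p}}$-twist. Your closing assessment --- that the compatibility must be built into the construction behind Conjecture~\ref{conj: refined L-packet}(iv) rather than deduced from the character identity --- matches the role the statement actually plays in the paper: it is an additional normalization requirement, not a consequence. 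In short, your proposal proves the statement up to a scalar and correctly identifies the scalar as the real content; it does not, and by the paper's own standard could not be expected to, prove the conjecture in full.
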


As a result, we have 
\begin{align}
\label{eq: restriction of theta twisted character relation}
\lf^{\lif{H}_{1}}(\underline{\lp}_{H_{1}}) = \sum_{\substack{\r \in \Pkt{\p} \\ \r \cong \r^{\theta_{h}}}} (\sum_{y \in \S{\underline{\p}}/\S{\underline{\p}}(\tau)} trace (\tau^{y}_{1}(x)) \cdot \x_{y}(h)) f_{G^{\theta_{h}}}(I(\r)),
\end{align}
where $A(\theta_{h})$ is normalized according to $\tau_{1}$ and $\lf$ is supported on $\lif{Z}_{F}G(F) \cdot h$. We should point out when $\theta_{h} = id$, $A_{I(\r)}(id)$ is not necessarily trivial, although the notation for the twisted character then becomes the same as that for the ordinary one. Moreover, it is implied by this formula that if $f_{G^{\theta_{h}}}(I(\r))$ is not zero, then the sum 
\[
\sum_{y \in \S{\underline{\p}}/\S{\underline{\p}}(\tau)} trace (\tau^{y}_{1}(x)) \cdot \x_{y}(h)
\]
must be well-defined, i.e., for any $y' \in \S{\underline{\p}}(\tau)$,
\[
trace (\tau^{y}_{1}(x)) \cdot \x_{y}(h) =  trace (\tau^{yy'}_{1}(x)) \cdot \x_{yy'}(h).
\]
At last we want to point out \eqref{eq: restriction of theta twisted character relation} generalizes the formula \eqref{eq: twisted character identity} to the case where the automorphism of the group needs not preserve an $F$-splitting.

\subsection{Classical groups}
\label{subsec: classical group}

The endoscopic hypothesis (\eqref{eq: disjoint decomposition}, Conjecture~\ref{conj: twisted endoscopic parametrization} and Conjecture~\ref{conj: twisted character identity}) has been proven under slight modification for quasisplit classical groups (cf. \cite{Arthur:2013}, \cite{Mok:2014}). In this section we will look into the case of symplectic groups and special even orthogonal groups. So from now on, $G$ will always be a split symplectic group, or a quasisplit special even orthogonal group, where the outer twist comes from the conjugation by the full orthogonal group. Let $\lG$ be the corresponding similitude group. There is an exact sequence
\begin{align}
\label{eq: central character for classical group}
\xymatrix{1 \ar[r] & G \ar[r] & \lG \ar[r]^{\c}  & \mathbb{G}_{m} \ar[r] & 1,}
\end{align}
where $\c$ is called the similitude character. We fix an automorphism $\theta_{0}$ of $G$ preserving an $F$-splitting. When $G$ is symplectic, we require $\theta_{0}$ to be trivial. When $G$ is special even orthogonal, we require $\theta_{0}$ to be the unique nontrivial outer automorphism induced from the conjugation of the full orthogonal group. Clearly, $\theta_{0}^{2} = 1$, $\theta_{0}$ extends to $\lG$ by acting trivially on $\lZ$, and $\c$ is $\theta_{0}$-invariant. Let $\Sigma_{0} = <\theta_{0}>$. Note $\Sigma_{0}$ acts on $\Pkt{}(G(F))$ and its dual $\D{\Sigma}_{0}$ acts on $\P{G}$. So we denote the set of $\Sigma_{0}$-orbits in $\Pkt{}(G(F))$ by $\cPkt{}(G(F))$ and the set of $\Sigma_{0}$-orbits in $\P{G}$ by $\cP{G}$. Similarly, we can define $\cPkt{temp}(G(F))$, $\cPbd{G}$, and analogues of these sets for $\lG$. Now we will recall the conjectures in the introduction by stating them as theorems in the case of symplectic groups and special even orthogonal groups.

\begin{theorem}[\cite{Arthur:2013}, Theorem 1.5.1]
\label{thm: LLC}
There is a canonical way to associate any $[\p] \in \cP{G}$ with a finite subset $\cPkt{\p}$ of $\cPkt{}(G(F))$ such that 
\begin{align*}
\label{eq: disjoint decomposition}
\cPkt{}(G(F)) = \bigsqcup_{[\p] \in \cP{G}} \cPkt{\p},
\end{align*}
and
\begin{align*}
\cPkt{temp}(G(F)) = \bigsqcup_{[\p] \in \cPbd{G}} \cPkt{\p}.
\end{align*}
\end{theorem}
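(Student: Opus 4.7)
The plan is to follow Arthur's endoscopic classification strategy, which builds the correspondence $[\p] \mapsto \cPkt{\p}$ by transferring information from representations of $GL(N)$ through the standard representation of ${}^L G$. First I would reduce to the tempered case: once the bijection between $\cPbd{G}$ and a partition of $\cPkt{temp}(G(F))$ is established, the general case follows by the Langlands quotient construction, since any $[\p] \in \cP{G}$ is obtained from a tempered parameter of a Levi twisted by a positive real character, and the disjointness of the non-tempered decomposition reduces to the tempered statement on Levi subgroups.

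For the tempered step, I would proceed as follows. Fix $[\p] \in \cPbd{G}$ and compose a representative with the standard embedding $\mathrm{St} \colon \L{G} \hookrightarrow GL(N, \C) \times W_F$, obtaining a self-dual bounded parameter $\p^N$ of $GL(N)$. By the local Langlands correspondence for $GL(N)$ together with the Moeglin--Waldspurger decomposition into simple factors, $\p^N$ corresponds to an irreducible tempered self-dual representation $\r^N$ of $GL(N, F)$. The conjectural L-packet $\cPkt{\p}$ is then forced on us by the twisted endoscopic character identity of Conjecture~\ref{conj: twisted character identity}: the stable distribution $f(\underline{\p})$ on $G(F)$ must match, via endoscopic transfer, the $(\theta_N, 1)$-twisted character of $\r^N$ on $GL(N, F)$, where $\theta_N$ is the standard outer automorphism. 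This both defines the candidate packet and equips it with the pairing against $\cS{\underline{\p}}$ by specializing the identity at nontrivial semisimple $s \in \cS{\underline{\p}}$ and comparing with character identities for the proper twisted endoscopic groups of $(GL(N), \theta_N)$, to which inductively the construction has already been applied.

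The existence of packets satisfying the transfer identity, the internal parametrization by $\cS{\underline{\p}}$, and the disjointness of $\cPkt{\p}$ and $\cPkt{\p'}$ for $[\p] \neq [\p']$ all come from the comparison of the stable (twisted) trace formula for $G$ with the twisted trace formula for $GL(N)$. To implement this comparison I would pass through the standard reductions: stabilization of the trace formula, the stable multiplicity formula, and the seed results on simple generic parameters that identify the elliptic tempered characters on the $GL(N)$ side with sums of $G$-characters weighted by signs from $\cS{\underline{\p}}$. The use of $\Sigma_0$-orbits throughout the statement is what allows the outer automorphism of even orthogonal groups, which is not inner in $G(F)$ but becomes visible after embedding into $GL(N)$, to be absorbed on both sides of the correspondence.

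The main obstacle is exhaustion, i.e.\ showing that every $[\r] \in \cPkt{temp}(G(F))$ occurs in some $\cPkt{\p}$. This cannot be seen locally in any direct way; it requires a global argument. The plan there would be to globalize both $G$ and $\r$, apply the global classification of discrete automorphic representations of $G$ obtained by inverting the stabilized trace formula against the classification for $GL(N)$, and then localize: if some tempered $\r$ escaped all the local packets, one could engineer a global discrete representation whose local component at the chosen place is $\r$ but whose global parameter cannot be matched, contradicting the global multiplicity formula. All of these steps rest on the hypothesis already in force in this paper (endoscopic character identities and twisted endoscopic transfer), plus the twisted weighted fundamental lemma for $GL(N)$, and constitute the bulk of \cite{Arthur:2013}.
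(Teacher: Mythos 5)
This theorem is not proved in the paper; it is cited verbatim from Arthur's book \cite{Arthur:2013} (Theorem 1.5.1), so there is no proof here to compare against. Your sketch is a faithful high-level summary of Arthur's actual strategy --- standard embedding into $GL(N)$, twisted endoscopic character identities to pin down the candidate packets, the stable/twisted trace formula comparison to establish existence, disjointness, and exhaustion, and the use of $\Sigma_{0}$-orbits to absorb the outer automorphism of the even orthogonal group --- and nothing in it is wrong as an outline, though each of the named reductions (stabilization, stable multiplicity formula, seed cases, globalization) of course occupies a major chapter of Arthur's book and cannot be carried out within the framework of the present paper.
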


\begin{theorem}[\cite{Arthur:2013}, Theorem 1.5.1 and Proposition 8.3.2]
\label{thm: endoscopic parametrization}
We fix a $\Sigma_{0}$-stable Whittaker datum $(B, \Lambda)$ for $G$, and suppose $[\p] \in \cPbd{G}$.
\begin{enumerate}
\item
There is a $\Sigma_{0}$-orbit of $(B, \Lambda)$-generic representations in $\cPkt{\p}$.
\item
There is a canonical pairing between $\cPkt{\p}$ and $\S{\underline{\p}}$, which induces an inclusion from $\cPkt{\p}$ to the characters $\D{\S{\underline{\p}}}$
\[
\xymatrix{\cPkt{\p} \ar[r] & \D{\S{\underline{\p}}} \\
                 [\r] \ar@{{|}->}[r]  & <\cdot, \r>_{\underline{\p}},}
\]
such that it sends the $(B, \Lambda)$-generic representation to the trivial character. This becomes a bijection when $F$ is nonarchimedean. 
\end{enumerate}
\end{theorem}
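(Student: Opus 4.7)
The plan is to invoke the endoscopic classification of Arthur, whose strategy realizes $G$ as a twisted endoscopic group of a general linear group $GL(N)$ and transfers the representation theory of $GL(N)$ (where the local Langlands correspondence is known by Harris--Taylor and Henniart) across to $G$. Concretely, for $G = Sp(2n)$ one takes the standard embedding $\D{G} = SO(2n+1,\C) \hookrightarrow GL(2n+1,\C)$, and for $G = SO(2n)$ one uses $\D{G} = SO(2n,\C) \hookrightarrow GL(2n,\C)$, obtaining from any $\p \in \cPbd{G}$ a self-dual parameter $\p_N$ for $GL(N)$ whose irreducible constituent $\r_{\p_N}$ is fixed (up to isomorphism) by the outer automorphism $\theta_N$ of $GL(N)$. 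The $\Sigma_0$-orbit on the $G$-side accounts precisely for the fact that $G$ is only determined up to outer twist inside its twisted endoscopic datum.

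The first step is to extract the packet $\cPkt{\p}$: using the $(\theta_N,1)$-twisted trace formula for $GL(N)$ and its stabilization, together with the transfer and fundamental lemma provided by Ngô, Waldspurger and Shelstad, one proves that the twisted character of $\r_{\p_N}$ lifts to a stable distribution on $G(F)$, and that this stable distribution is a nonnegative integral combination of irreducible tempered characters, whose set of constituents is defined to be $\cPkt{\p}$. One should note that the globalization of $\p$ to a parameter of an adelic group is a crucial technical step: local packets are recovered by choosing global parameters that are well-behaved at all auxiliary places and then subtracting off known terms.

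For part (ii), the pairing with $\S{\underline{\p}}$ is built from the family of endoscopic character identities \eqref{eq: twisted character identity} ranging over semisimple $s \in \cS{\underline{\p}}$: for each such $s$, the pair $(\underline{\p},s)$ yields $(H_1, \underline{\p}_{H_1})$, whose stable character transfers to $\sum_{\r} \langle s, \r\rangle_{\underline{\p}} \Theta_\r$ on $G(F)$, and one shows by induction on dimension that the coefficients $\langle\cdot,\r\rangle_{\underline{\p}}$ so defined are in fact (irreducible) characters of $\S{\underline{\p}}$ depending only on the isomorphism class $[\r]$. The normalization of the pairing is dictated by the Whittaker datum through Shahidi's local coefficient; the generic packet conjecture for classical groups of Cogdell--Kim--Piatetski-Shapiro--Shahidi and Jiang--Soudry supplies the unique $(B,\Lambda)$-generic member (up to $\Sigma_0$-orbit) and forces the chosen generic representation to pair trivially with $\S{\underline{\p}}$.

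The hardest part is the stabilization of the twisted trace formula and the simultaneous inductive comparison that underlies Arthur's construction; in particular, isolating the contribution of a single local packet from the spectral side of a global trace formula requires very delicate supplementary-parameter arguments and a careful choice of test functions whose orbital integrals match prescribed data at all but one place. In the $p$-adic case the bijectivity of the pairing in (ii) furthermore relies on the local intertwining relation and its compatibility with the normalized Knapp--Stein $R$-groups. All of these steps are carried out in detail in Chapters~2, 4, and~6--8 of \cite{Arthur:2013}, from which the statement of the theorem follows directly, and the only adjustment needed here is the passage from Arthur's packets $\cPkt{\p}$ (parametrized up to $\Sigma_0$) to genuine packets $\Pkt{\p}$ on $G(F)$, which is addressed in Proposition~8.3.2 of loc.\ cit.
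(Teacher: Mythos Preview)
Your outline is a reasonable high-level summary of Arthur's strategy, but note that the paper itself does not give any proof of this theorem: it is stated purely as a citation to \cite{Arthur:2013}, Theorem~1.5.1 and Proposition~8.3.2, with only a short remark afterward on the uniqueness of the generic member (via \cite{Kostant:1978} in the archimedean case and \cite{Jiang-Soudry:2003}, \cite{Liu:2011}, \cite{Jantzen-Liu:2014} in the nonarchimedean case). In that sense your approach and the paper's coincide---both defer to Arthur---and your sketch simply goes further than the paper does by outlining the mechanism behind the cited result.
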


\begin{remark}
When $F$ is archimedean, it follows from \cite{Kostant:1978} that the $\Sigma_{0}$-orbit of $(B, \Lambda)$-generic representations in $\cPkt{\p}$ is unique. When $F$ is nonarchimedean, one can deduce the uniqueness of generic representation using the results from \cite{Jiang-Soudry:2003},  \cite{Liu:2011} and \cite{Jantzen-Liu:2014} (see \cite{Arthur:2013}, Remark in 8.3).
\end{remark}

For $[\p] \in \cPbd{G}$, we can define $\Pkt{\p}^{\Sigma_{0}}$ to be the set of all isomorphism classes of irreducible smooth representations of $G^{\Sigma_{0}}(F)$, whose restriction to $G(F)$ belong to $\cPkt{\p}$. Note $\S{\underline{\p}}^{\Sigma_{0}}$ is always abelian in the current case (see \cite{Arthur:2013}, Section 1.4).

\begin{theorem}[Arthur]
\label{thm: twisted endoscopic parametrization}

We fix a $\Sigma_{0}$-stable Whittaker datum $(B, \Lambda)$ for $G$, and suppose $[\p] \in \cPbd{G}$.

\begin{enumerate}

\item
There is a canonical pairing between $\Pkt{\p}^{\Sigma_{0}}$ and $\S{\underline{\p}}^{\Sigma_{0}}$, which induces an inclusion from $\Pkt{\p}^{\Sigma_{0}}$ to the characters $\D{\S{\underline{\p}}}^{\Sigma_{0}}$
\[
\xymatrix{\Pkt{\p}^{\Sigma_{0}} \ar[r] & \D{\S{\underline{\p}}}^{\Sigma_{0}} \\
                 \r^{\Sigma_{0}} \ar@{{|}->}[r]  & <\cdot, \r^{\Sigma_{0}}>_{\underline{\p}}}
\]
This becomes a bijection when $F$ is nonarchimedean. Moreover, this pairing is an extension of that in Theorem~\ref{thm: endoscopic parametrization} in the sense that
\[
<\cdot, \r^{\Sigma_{0}}>_{\underline{\p}} |_{\S{\underline{\p}}} = <\cdot, \r>_{\underline{\p}},
\]
where $\r \in \r^{\Sigma_{0}}|_{G}$.

\item In case $G$ is special even orthogonal, the following statements are equivalent:

        \begin{enumerate}

        \item $\cPkt{\p}$ contains an element $[\r]$ such that $\r^{\theta_{0}} \cong \r$;
       
        \item For any $[\r] \in \cPkt{\p}$, $\r^{\theta_{0}} \cong \r$;
        
        \item $\S{\underline{\p}}^{\theta_{0}} \neq \emptyset$.

        \end{enumerate}

\end{enumerate}
\end{theorem}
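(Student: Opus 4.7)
The plan is to deduce Theorem~\ref{thm: twisted endoscopic parametrization} from Arthur's local classification for $SO(2n)$ in \cite{Arthur:2013}, combined with the framework of Sections~\ref{sec: endoscopy}--\ref{sec: twist}. When $G = Sp(2n)$ the group $\Sigma_0$ is trivial, so $\Pkt{\p}^{\Sigma_0}$ collapses to $\cPkt{\p}$, $\S{\underline{\p}}^{\Sigma_0} = \S{\underline{\p}}$, Part (1) reduces at once to Theorem~\ref{thm: endoscopic parametrization}, and Part (2) is vacuous. I therefore assume $G = SO(2n)$ below.

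For Part (1), I would first invoke Part (2) to identify those orbits $[\r] \in \cPkt{\p}$ with $\r \cong \r^{\theta_0}$; each such $\r$ admits exactly two extensions $\r^{\Sigma_0}$ to $G^{\Sigma_0}(F)$, differing by the nontrivial character of $\Sigma_0$. I would pin the choice down by demanding that, for the (essentially unique) $(B,\Lambda)$-generic orbit, the extension preserve a Whittaker functional, which is meaningful because $(B,\Lambda)$ is $\Sigma_0$-stable. The values $<x, \r^{\Sigma_0}>_{\underline{\p}}$ for $x$ projecting to $\D{\theta_0}$ would then be extracted from the $\theta_0$-twisted endoscopic character identity (the analogue of \eqref{eq: twisted character identity} for the pair $(G,\theta_0)$), while the restriction to $\S{\underline{\p}}$ agrees with the pairing of Theorem~\ref{thm: endoscopic parametrization}. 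That this assignment is a genuine character of the abelian group $\S{\underline{\p}}^{\Sigma_0}$, and that the map is bijective in the nonarchimedean case, is extracted from Arthur's main local theorem, which is proved through twisted endoscopic transfer between $SO(2n)$ and $GL(2n)$ equipped with its standard outer automorphism.

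For Part (2), the implication (b) $\Rightarrow$ (a) is trivial. For (a) $\Rightarrow$ (c): if $\r \cong \r^{\theta_0}$ for some $\r$ in the preimage of $[\r] \in \cPkt{\p}$, then $\r$ lies in both $\Pkt{\p}$ and $\Pkt{\p}^{\theta_0}$; Lemma~\ref{lemma: theta equivariant} identifies the latter with $\Pkt{\p^{\theta_0}}$, and disjointness in Theorem~\ref{thm: LLC} forces $\p = \p^{\theta_0}$ in $\P{G}$. Choosing $g \in \D{G}$ with $\Int(g) \circ \underline{\p}^{\D{\theta_0}} = \underline{\p}$ produces $g^{-1} \rtimes \D{\theta_0} \in S^{\theta_0}_{\underline{\p}}$, giving (c). For (c) $\Rightarrow$ (b): fix $s \in S^{\theta_0}_{\underline{\p}}$; Conjecture~\ref{conj: twisting equivariant} with trivial twist, which for $SO(2n)$ reduces to simple parameters via Proposition~\ref{prop: twisting equivariant} (where the singleton packet condition makes it immediate), yields $\r(\rho^{s})^{\theta_0} \cong \r(\rho)$ for every $\rho \in \Irr{\S{\underline{\p}}}$. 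Combining this with the fact that for $SO(2n)$ the conjugation action of such $s$ on $\Irr{\S{\underline{\p}}}$ is trivial whenever $S^{\theta_0}_{\underline{\p}}$ is non-empty—a structural feature following from Arthur's explicit description of $S_{\underline{\p}}$ via orthogonal sum decompositions of $\underline{\p}$—gives $\r(\rho)^{\theta_0} \cong \r(\rho)$ for every $\rho$, which is (b).

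The main obstacle is the structural input powering (c) $\Rightarrow$ (b): the triviality of the $\D{\theta_0}$-conjugation action of $S^{\theta_0}_{\underline{\p}}$ on $\Irr{\S{\underline{\p}}}$ for $SO(2n)$, which is precisely what forces all $\theta_0$-orbits in $\cPkt{\p}$ to be singletons in the equivalent situations. This uses special features of even orthogonal centralizers that are not formal consequences of the abstract endoscopic hypothesis but must be read off Arthur's construction. The canonical-extension step in Part (1) likewise depends on Arthur's twisted character identity between $SO(2n)$ and $GL(2n)$, and the normalization via the Whittaker functional is where all ambiguity in the sign of $\r^{\Sigma_0}$ gets absorbed.
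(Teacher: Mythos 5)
The paper does not actually supply a proof of this statement; it is attributed to Arthur, and the remark that follows it merely asserts that it can be viewed as a consequence of Theorem~\ref{thm: twisted character identity}. Your sketch is in the spirit of that remark, and your treatment of $(c)\Rightarrow(b)$ and the Whittaker normalization is reasonable, but there are two places that deserve attention.

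First, and more seriously, your argument for $(a)\Rightarrow(c)$ does not work as written. You claim that if $\r\cong\r^{\theta_0}$ then $\r$ lies in both $\Pkt{\p}$ and $\Pkt{\p}^{\theta_0}=\Pkt{\p^{\theta_0}}$, and then invoke disjointness in Theorem~\ref{thm: LLC} to conclude $\p\sim\p^{\theta_0}$. But for $SO(2n)$ Arthur's classification only produces the coarse packets $\cPkt{\p}$ indexed by $\Sigma_0$-orbits $[\p]\in\cP{G}$; the refined packet $\Pkt{\p}$ as a genuine subset of $\Pkt{\p}^{\Sigma_0}|_G$ only exists a posteriori (cf.\ Theorem~\ref{thm: refined twisted character identity for classical group}) and precisely under the dichotomy you are trying to establish. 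Moreover, disjointness in Theorem~\ref{thm: LLC} is disjointness of $\cPkt{\p}$ over distinct $\Sigma_0$-orbits $[\p]$, and $[\p]=[\p^{\theta_0}]$ always, so no contradiction can be extracted this way. The implication $(a)\Rightarrow(c)$ is a genuinely nontrivial input from Arthur's twisted character identity: if $\S{\underline{\p}}^{\theta_0}=\emptyset$ then no $\theta_0$-twisted endoscopic transfer to $G$ exists, and linear independence of twisted characters then forbids any $\r$ with $\r\cong\r^{\theta_0}$ from occurring in $\cPkt{\p}$. This is not a formal consequence of disjointness.

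Second, the structural fact powering your $(c)\Rightarrow(b)$ — that conjugation by $s\in S^{\theta_0}_{\underline{\p}}$ acts trivially on $\Irr{\S{\underline{\p}}}$ — has a much simpler source than the orthogonal-sum decomposition you allude to: the paper explicitly records in Section~\ref{subsec: classical group} that $\S{\underline{\p}}^{\Sigma_0}$ is always abelian for these groups, and both $\S{\underline{\p}}$ and $\S{\underline{\p}}^{\theta_0}$ sit inside it. Citing this directly would make your argument cleaner and independent of the details of Arthur's centralizer description.
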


\begin{remark}
Although this theorem is not stated in \cite{Arthur:2013}, one can view it as a consequence of Theorem~\ref{thm: twisted character identity}. Moreover, we expect the $(B, \Lambda)$-generic representation in $\Pkt{\p}^{\Sigma_{0}}$ correspond to the trivial character of $\S{\underline{\p}}^{\Sigma_{0}}$. 
\end{remark}

If $H$ is a $\theta$-twisted endoscopic group of $G$ for $\theta \in \Sigma_{0}$, Arthur shows $H \cong M_{l} \times G_{1} \times G_{2}$, where $M_{l}$ is a product of general linear groups, $G_{i}$ ($i = 1, 2$) is also a symplectic group or special even orthogonal group. We define a group of automorphisms of $H$ by taking the product of $\Sigma_{0}$ on each $G_{i}$, and we denote this group again by $\Sigma_{0}$. Then by combining the local Langlands correspondence for $GL(n)$ (cf. \cite{HarrisTaylor:2001}, \cite{Henniart:2000} and \cite{Scholze:2013}), all the previous theorems of Arthur can be extended to $H$. In particular, the L-packets for $H$ are formed by tensor products of those of each factor. Let $\sH(G)$ (resp. $\sH(H)$) be the space of $\Sigma_{0}$-invariant smooth compactly supported functions on $G(F)$ (resp. $H(F)$). Then the twisted endoscopic transfer sends $\sH(G)$ to $\sH(H)$, and there is no need to consider $z$-pairs here.

\begin{theorem}[\cite{Arthur:2013}, Theorem 2.2.1 and Theorem 2.2.4]
\label{thm: twisted character identity}
Suppose $[\p] \in \cPbd{G}$.

\begin{enumerate}

\item
\begin{align}
\label{eq: stable character for classical group}
f(\underline{\p}) := \sum_{[\r] \in \cPkt{\p}} f_{G}(\r), \,\,\,\,\,\,\,\,\,\,\,\, f \in \sH(G)
\end{align}
is stable.

\item Suppose $\theta \in \Sigma_{0}$, $s$ is a semisimple element in $\cS{\underline{\p}}^{\theta}$, and $(H, \underline{\p}_{H}) \rightarrow (\underline{\p}, s)$. Then
\begin{align}
\label{eq: twisted character identity for classical group}
f^{H}(\underline{\p}_{H}) = \sum_{[\r] \in \cPkt{\p}} <x, \r^{+}>_{\underline{\p}} f_{G^{\theta}}(\r)
\end{align}
for $f \in \sH(G)$, where $x$ is the image of $s$ in $\com{\S{\underline{\p}}}$, and $\r^{+}$ is an extension of $\r$ to $G^{+}(F) := G(F) \times <\theta>$ with $\r^{+}(\theta) = A_{\r}(\theta)$. If $G$ is special even orthogonal and $\S{\underline{\p}}^{\theta_{0}} \neq \emptyset$, one can replace $\sH(G)$ by $C^{\infty}_{c}(G(F))$.

\end{enumerate}

\end{theorem}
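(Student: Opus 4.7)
The plan is to follow Arthur's strategy in his monograph, which rests on a comparison of the stable trace formula for the classical group $G$ with the twisted stable trace formula for $GL(N) \rtimes \theta_N$, where $\theta_N$ is the standard outer automorphism. The proof is necessarily global-to-local, and the statement for a single local field $F$ is extracted only at the very end.

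First I would globalize. Given the local parameter $\p \in \cPbd{G}$, one constructs a number field $\dot F$ with a distinguished place $v$ such that $\dot F_v = F$, together with a global group $\dot G$ over $\dot F$ whose localization at $v$ is $G$, and a global parameter $\dot\p$ (in Arthur's sense, a formal sum of cuspidal representations of $GL(N_i)$ with Arthur $SL(2)$-data) whose localization at $v$ equals $\p$. For this to be useful one arranges the other local components to be elementary (unramified or of a simple tempered type) so that the local identities there are either known by construction or follow from the theory of unramified endoscopy. The hard part here is the globalization lemma, and Arthur devotes substantial effort to it via Shin--White / Moeglin--Waldspurger type results and simple trace formula arguments.

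Next I would write down the twisted and ordinary stable multiplicity formulas for the discrete part of the spectral side. On the $GL(N)$ side, the $\theta_N$-twisted trace formula, combined with the classification of automorphic representations of $GL(N)$ by Jacquet--Shalika and Moeglin--Waldspurger, expresses the $\theta_N$-twisted discrete spectrum in terms of self-dual parameters. On the $G$ side, the endoscopic decomposition expresses $\Idt{\dot G}{} $ as a sum $\sum_{H} \iota(G,H) \Sdt{H}{}$ over elliptic endoscopic data. The transfer of functions (Waldspurger plus Ng\^o's proof of the fundamental lemma \cite{Ngo:2010}, together with Shelstad's archimedean transfer \cite{Shelstad:2012}) makes these comparisons meaningful as identities of distributions on $\sH(\dot G)$.

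Now the core is an induction on $\dim G$ (or, more precisely, on the complexity of the parameter, isolating the ``simple generic'' parameters of $GL(N)$ as the base case). Assuming (i) and (ii) for all proper Levi subgroups and all proper elliptic endoscopic groups, one isolates the $\dot\p$-contribution on both sides of the twisted trace identity. After subtracting the known terms, what remains is an identity of the form
\[
\sum_{\dot\r} m(\dot\r)\, \dot f_{G^{\theta}}(\dot\r) \;=\; \dot f^{H}(\underline{\dot\p}_H)
\]
together with a putative stable distribution built out of $\Pkt{\p}$. Varying the test function at the place $v$ while keeping other components fixed, and using the linear independence of characters (together with a vanishing lemma for the archimedean and unramified places to separate contributions), one deduces simultaneously the stability statement (i) and the endoscopic identity (ii) at $v$, and along the way produces the pairing $\langle\cdot,\r^+\rangle_{\underline{\p}}$. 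The step I expect to be the main obstacle is the separation argument: isolating a single local identity from a global one requires delicate manipulation of the spectral side, including controlling contributions from non-discrete parameters and from the ``complementary'' endoscopic groups, and it is here that one must also verify that the resulting pairing is canonical (independent of the globalization) and compatible with the Whittaker normalization of Theorem~\ref{thm: endoscopic parametrization}. The enlargement from $\sH(G)$ to $C^{\infty}_{c}(G(F))$ in the even orthogonal case when $\S{\underline{\p}}^{\theta_0}\neq\emptyset$ is handled afterward by a descent/character-restriction argument exploiting that $\r \cong \r^{\theta_0}$ for every $[\r] \in \cPkt{\p}$ in that situation, as recorded in Theorem~\ref{thm: twisted endoscopic parametrization}(ii).
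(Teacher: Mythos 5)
The paper does not prove this statement; it imports it as Theorem 2.2.1 and Theorem 2.2.4 from \cite{Arthur:2013}, together with the observation that in the special even orthogonal case with $\S{\underline{\p}}^{\theta_0} \neq \emptyset$ the test function space can be enlarged. Your sketch is a reasonable high-level summary of Arthur's actual argument (globalization, comparison of the $\theta_N$-twisted stable trace formula for $GL(N)$ with the stabilized trace formulas for $G$ and its elliptic endoscopic groups, induction on $N$, and a local-global separation using linear independence of characters), and your treatment of the enlargement from $\sH(G)$ to $C^{\infty}_{c}(G(F))$ — via the fact that when $\S{\underline{\p}}^{\theta_0}\neq\emptyset$ every $[\r]\in\cPkt{\p}$ is $\theta_0$-fixed, so the character sum is already $\Sigma_0$-invariant — is the right argument. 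One small inaccuracy: Arthur's globalization of local parameters is self-contained (Chapter 6 of \cite{Arthur:2013}, using simple forms of the trace formula), and does not rest on Shin--White type equidistribution results; Moeglin--Waldspurger enter for the stabilization of the twisted trace formula, not for globalization. Since the paper itself gives no proof beyond the citation, the substantive comparison is against Arthur, and at the level of granularity you chose your outline captures his strategy faithfully.
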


It follows from the second part of Theorem~\ref{thm: twisted endoscopic parametrization} that, $\cPkt{\p} = \Pkt{\p}$ unless $G$ is special even orthogonal and $\S{\underline{\p}}^{\theta_{0}} = \emptyset$. In the exceptional case, we have the following refined statement.

\begin{theorem}[\cite{Arthur:2013}, Theorem 8.4.1]
\label{thm: refined twisted character identity for classical group}
Suppose $G$ is special even orthogonal, $[\p] \in \cPbd{G}$ and $\S{\underline{\p}}^{\theta_{0}} = \emptyset$.

\begin{enumerate}

\item
There exists a unique subset $\Pkt{\p} \subseteq \Pkt{\p}^{\Sigma_{0}}|_{G}$ up to $\theta_{0}$-twist, such that 
   
   \begin{itemize}
   
   \item
   \[
   \Pkt{\p}^{\theta_{0}} \sqcup \Pkt{\p} = \Pkt{\p}^{\Sigma_{0}}|_{G}
    \]
    
    \item
       \begin{align}
       \label{eq: refined stable character for classical group}
       f(\underline{\p}) := \sum_{\r \in \Pkt{\p}} f_{G}(\r), \,\,\,\,\,\,\,\,\,\,\,\, f \in C^{\infty}_{c}(G(F))
       \end{align}
    is stable.
   
   \end{itemize}

\item 
Suppose $s$ is a semisimple element in $\cS{\underline{\p}}$, and $(H, \underline{\p}_{H}) \rightarrow (\underline{\p}, s)$. Then there exists $\Pkt{\p_{H}} \subseteq \Pkt{\p_{H}}^{\Sigma_{0}}|_{H}$, which can be constructed from part (i), such that
\begin{align}
\label{eq: refined twisted character identity for classical group}
f^{H}(\underline{\p}_{H}) = \sum_{\r \in \Pkt{\p}} <x, \r>_{\underline{\p}} f_{G}(\r)
\end{align}
for $f \in C^{\infty}_{c}(G(F))$, where $x$ is the image of $s$ in $\S{\underline{\p}}$.

\end{enumerate}

\end{theorem}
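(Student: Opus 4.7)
The plan is to deduce the theorem from the coarse Theorem~\ref{thm: twisted character identity} via a careful analysis of the outer automorphism $\theta_0$. Since $\S{\underline{\p}}^{\theta_0} = \emptyset$, the parameter $\p$ is not fixed by $\theta_0$, so $\p \neq \p^{\theta_0}$ as elements of $\Pbd{G}$ even though $[\p] = [\p^{\theta_0}]$ in $\cPbd{G}$. Consequently $\Pkt{\p}^{\Sigma_0}|_G = \Pkt{\p} \sqcup \Pkt{\p^{\theta_0}}$, the two halves are exchanged by $\theta_0$, and neither contains a $\theta_0$-fixed representation. The refined subset in the statement is then simply the original Arthur packet $\Pkt{\p}$ itself, viewed as a section of the $\theta_0$-quotient; the uniqueness ``up to $\theta_0$-twist'' reflects the genuine freedom of choice between $\Pkt{\p}$ and $\Pkt{\p^{\theta_0}}$.

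For part (i), stability of $\sum_{\r \in \Pkt{\p}} f_G(\r)$ on the $\Sigma_0$-invariant subspace $\sH(G) \subseteq C^{\infty}_c(G(F))$ is immediate from Theorem~\ref{thm: twisted character identity}(i): for $f \in \sH(G)$ one has $f_G(\r) = f_G(\r^{\theta_0})$, so the coarse sum over $\Sigma_0$-orbits collapses to the plain sum over $\Pkt{\p}$. The new content is the extension to arbitrary $f \in C^{\infty}_c(G(F))$. Decomposing $f = f_+ + f_-$ into its $\theta_0$-invariant and $\theta_0$-anti-invariant parts, the invariant piece is handled above, and what remains is to verify that the distribution $\sum_{\r \in \Pkt{\p}} \Theta_{\r}$ evaluated on $f_-$ still factors through stable orbital integrals. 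Equivalently, one must prove stability of the difference
\[
\sum_{\r \in \Pkt{\p}} \Theta_{\r} \;-\; \sum_{\r' \in \Pkt{\p^{\theta_0}}} \Theta_{\r'},
\]
which one hopes to identify with the spectral side of a $(G, \theta_0)$-twisted endoscopic identity; the hypothesis $\S{\underline{\p}}^{\theta_0} = \emptyset$ forces the geometric side of any such identity to vanish on the $\p$-isotypic component, yielding the required stability.

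For part (ii), the refined $\Pkt{\p_H}$ is obtained by applying part (i) inductively to the endoscopic group $H = M_{\ell} \times G_1 \times G_2$; the overall $\Hom(\lif{H}_1(F)/H_1(F), \C^{\times})$-twist allowed in the statement absorbs the freedom of the individual $\theta_0$-ambiguities on the special even orthogonal factors of $H$. The refined endoscopic character identity~\eqref{eq: refined twisted character identity for classical group} is then extracted from the coarse identity~\eqref{eq: twisted character identity for classical group} by writing the latter for both $f$ and $f^{\theta_0}$ and combining them to isolate the $\Pkt{\p}$-contribution from the $\Pkt{\p^{\theta_0}}$-contribution; compatibility of the pairings $<x, \r>_{\underline{\p}}$ with Theorem~\ref{thm: twisted endoscopic parametrization} then pins down the form on the endoscopic side, and normalizes the refinement of $\Pkt{\p_H}$ to be coherent with that of $\Pkt{\p}$.

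The main obstacle is the stability step in part (i): locally, the pair $(\Pkt{\p}, \Pkt{\p^{\theta_0}})$ is completely symmetric under $\theta_0$, so no intrinsically local argument inside $G(F)$ can single out one half. The resolution requires input beyond the local theory of $G$ itself, either the $\theta_N$-twisted trace formula for $GL(2n)$ (which distinguishes the two halves through the explicit structure of the associated self-dual $GL(2n)$-parameter $\phi_N$) or, in the spirit of Chapter 8 of Arthur's book, a global stable multiplicity argument in which the refinement is fixed globally via the stable trace formula and then transported back to the local setting through local-global compatibility.
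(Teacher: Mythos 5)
This theorem is quoted verbatim from Arthur (Theorem 8.4.1 of \cite{Arthur:2013}); the paper offers no independent argument, so the reference for comparison is Arthur's proof, which is global in nature. Your setup is descriptively correct --- $\S{\underline{\p}}^{\theta_{0}} = \emptyset$ forces $\p \neq \p^{\theta_{0}}$ in $\Pbd{G}$, every $\Sigma_{0}$-orbit in $\cPkt{\p}$ has two elements, and the theorem amounts to a coherent choice of one representative from each orbit --- but the sentence ``the refined subset is then simply the original Arthur packet $\Pkt{\p}$ itself'' is circular: before this theorem there is no object $\Pkt{\p}$ for special even orthogonal $G$, only the orbit-level packet $\cPkt{\p}$. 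Among the $2^{|\cPkt{\p}|}$ ways of picking one representative per orbit, the theorem asserts that exactly two (swapped by $\theta_{0}$) give a stable character sum; this is what must be proved, not assumed.

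The gap is in the stability of the $\theta_{0}$-anti-invariant part. Your decomposition $f = f_{+} + f_{-}$ is the right first move, and stability on $f_{+}$ is immediate from \eqref{eq: stable character for classical group}. But the proposed treatment of $f_{-}$ does not work: $\S{\underline{\p}}^{\theta_{0}} = \emptyset$ does not make ``the geometric side of a $(G,\theta_{0})$-twisted endoscopic identity vanish''; it means there is no $\theta_{0}$-twisted endoscopic datum through which $\underline{\p}$ factors, hence no such identity to invoke at all (consistently, no $\r$ in the packet is $\theta_{0}$-fixed, so the twisted characters $f_{G^{\theta_{0}}}(\r)$ are not even defined). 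Absence of an identity gives no information about the anti-invariant distribution. The same issue reappears in part (ii): writing the coarse identity for $f$ and $f^{\theta_{0}}$ and combining them only recovers $f_{+}$ and says nothing about $f_{-}$. (A smaller slip: part (ii) of this theorem carries no $\Hom(\lif{H}_{1}(F)/H_{1}(F),\C^{\times})$-twist --- you are importing that freedom from Conjecture~\ref{conj: refined L-packet}, which is a different statement.) Your closing paragraph is the correct diagnosis and, in effect, a retraction of the middle of the argument: no purely local computation inside $G(F)$ can single out one half, and Arthur pins down the refinement globally --- constructing a global parameter with $\p$ as a local component and fixing the section via the stable trace formula and local-global compatibility (Section 8.4 of \cite{Arthur:2013}). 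That global step is what is missing here, and it cannot be replaced by a local twisted-endoscopy argument.
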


It follows from this theorem and Proposition~\ref{prop: central character} that the central character of $\Pkt{\p}$ is well defined. Since $\Sigma_{0}$ acts trivially on $\Z$, we can define the central character of $\cPkt{\p}$ to be that of $\Pkt{\p}$. Moreover, $\chi_{\p}$ only depends on $[\p]$. 

\begin{proposition}
\label{prop: central character for classical group}
For $[\p] \in \cPbd{G}$, the central character of $\cPkt{\p}$ is equal to $\chi_{\p}$.
\end{proposition}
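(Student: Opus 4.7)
The plan is to reuse, without modification, the reduction argument of Lemma~\ref{lemma: transfer central character} together with Proposition~\ref{prop: central character desideratum}, and then to dispose of the base case of simple parameters using Arthur's twisted endoscopic transfer from $GL_{N}$. The point is that Theorem~\ref{thm: twisted character identity}, together with Theorem~\ref{thm: refined twisted character identity for classical group} in the exceptional special even orthogonal case $\S{\underline{\p}}^{\theta_{0}} = \emptyset$, makes Conjecture~\ref{conj: twisted character identity} a theorem for $G$ in the present setting. Consequently the proof of Proposition~\ref{prop: central character desideratum} goes through unconditionally, reducing the claim to the case of simple parameters; and since $\theta_{0}$ acts trivially on $\Z$, the central character of $\cPkt{\p}$ coincides with that of $\Pkt{\p}$, so no separate treatment of $\Sigma_{0}$-orbits is needed.

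Next I would implement the reduction. For any semisimple $s \in \cS{\underline{\p}}$ and any $(H, \underline{\p}_{H}) \to (\underline{\p}, s)$, the endoscopic group decomposes as $H = M_{l} \times G_{1} \times G_{2}$ with $M_{l}$ a product of general linear groups and each $G_{i}$ again symplectic or special even orthogonal, and we have $\mathcal{H} = \L{H}$, so no $z$-extension intervenes. Lemma~\ref{lemma: transfer central character} then reduces $\chi_{\Pkt{\p}} = \chi_{\p}$ to the same equality for $\Pkt{\p_{H}}$. On the $GL$-factors this is the compatibility of the local Langlands correspondence for $GL(n)$ with central characters, while on the $G_{i}$-factors one iterates. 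Since each step strictly simplifies the parameter (for instance by reducing the size of $\S{\underline{\p}}$), the process terminates at simple parameters.

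For a simple $\p$ one has $\S{\underline{\p}} = 1$, so $\Pkt{\p}$ is a singleton $\{\r\}$ (or, in the exceptional $SO$ case, a $\theta_{0}$-pair with common central character); moreover $\Z \cong \mu_{2}$, so only one sign needs to be pinned down. I would use Arthur's realization of $G$ as a $\theta_{N}$-twisted endoscopic group of $GL_{N}$ with $N = 2n$ or $2n+1$. The simple parameter $\p$ corresponds to a self-dual cuspidal parameter $\p_{N}: L_{F} \to GL_{N}(\C)$, and the twisted character identity for $(GL_{N}, \theta_{N})$ identifies the twisted character of the associated self-dual cuspidal $\r_{N}$ of $GL_{N}(F)$ with that of $\r$. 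The central character of $\r_{N}$ is $\det \circ \p_{N}$ by the local Langlands correspondence for $GL_{N}$, and tracing the identity on the center (noting the proof of Lemma~\ref{lemma: transfer central character}, in particular the vanishing of $\chi_{\lif{C}}$ for the trivial endoscopic embedding) yields $\chi_{\r} = (\det \circ \p_{N})|_{\Z(F)}$.

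The main obstacle is verifying that this central character coming from the $GL_{N}$ side agrees with the $\chi_{\p}$ constructed in the introduction via a lift $\underline{\lp} \in \P{\lG}$. This is a cocycle-bookkeeping task: one must choose the lift $\underline{\lp}$ compatibly with the $z$-extension used on the $GL_{N}$ side, and then chase the commutative diagrams relating $Z(\D{G})$, $Z(\D{GL_{N}})$, and $\D{\Z}$. Given the explicit description of the centers as $\mu_{2}$, this amounts to a finite sign check, and I expect it to follow from the construction of the $\xi$-embedding attached to the twisted endoscopic datum defining $G$ inside $GL_{N}$.
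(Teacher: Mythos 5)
There is a genuine gap in your base case. You propose to pin down the central character of a simple packet via the $\theta_{N}$-twisted transfer from $GL_{N}$, by tracing the equivariance of the transfer factor on the centre as in the proof of Proposition~\ref{prop: central character}. But that argument runs \emph{from} the endoscopic group \emph{to} the twisting group, and more importantly it only gives information along the map $Z_{G} \to (Z_{G})_{\theta} \hookrightarrow Z_{H}$ of~\eqref{eq: central inclusion}. Here $G = GL_{N}$, $\theta = \theta_{N}$ acts as inversion on the centre, so $(Z_{GL_{N}})_{\theta_{N}} = GL_{1}/\{z^{-2}\}$ is \emph{trivial} as an algebraic group (every element of $\bar F^{\times}$ is a square), and consequently the permissible subgroup $Z_{F} \subseteq Z_{GL_{N}}(F)$ injecting into $Z_{H}(F)$ is $\{1\}$. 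The conclusion of the Proposition~\ref{prop: central character} computation then says nothing about the sign $\chi_{\r}(-1)$ on $\Z(F) = \mu_{2}$. Relatedly, the quantity $\det \circ \p_{N}$ you invoke is a character of $F^{\times}$ with no natural restriction to $\Z(F)$, and for $G$ symplectic it is identically trivial while $\chi_{\p}(-1)$ certainly need not be; so the proposed identification $\chi_{\r} = (\det \circ \p_{N})|_{\Z(F)}$ cannot be right as stated.

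The paper does not reduce to simple parameters at all. It uses Proposition~\ref{prop: central character} to reduce to the generic member $\r_{0}$ of $\cPkt{\p}$, and since $\Z(F) = \Two$ it suffices to check one sign. That sign is pinned down by a result of Deligne giving $\chi_{\p}(-1) = \e(1/2, \rho_{std}\circ\p, \q_{F})$ on the parameter side, a result of Lapid giving $\chi_{\r_{0}}(-1) = \e(1/2, \r_{0}, \rho_{std}, \q_{F})$ on the representation side (with Shahidi's local factors), and the known compatibility of Arthur's correspondence with these $\epsilon$-factors. The quasi-split non-split $SO$ case is then handled by Lemma~\ref{lemma: transfer central character} applied with $G$ as a \emph{standard} elliptic endoscopic group of a split symplectic group $G_{+}$ --- a situation where the map of centres $Z_{G_{+}} \to Z_{G}$ is nontrivial, unlike in your $GL_{N}$ step. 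If you want a reduction-to-simple-parameters proof, you would still need Lapid's $\epsilon$-factor formula (or an equivalent) to handle the base case, so the reduction buys nothing; the direct argument is shorter.
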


\begin{proof}
Let $\r_{0}$ be the generic representation in $\cPkt{\p}$. Since $\Z(F) = \Two$, it suffices to show $\chi_{\r_{0}}(-1) = \chi_{\p}(-1)$. Suppose $G$ is split, then Deligne \cite{Deligne:1976} showed $\chi_{\p}(-1) = \e(1/2, \rho_{std} \circ \p, \q_{F})$ (defined by Langlands), and Lapid \cite{Lapid:2004} showed $\chi_{\r_{0}}(-1) = \e(1/2, \r_{0}, \rho_{std}, \q_{F})$ (defined by Shahidi). In both formulas $\rho_{std}$ is the standard representation of $\L{G}$. It is now known the local Langlands correspondence for $G$ preserves these epsilon factors (see \cite{Jiang-Soudry:2003},  \cite{Liu:2011}, \cite{Jantzen-Liu:2014}), in particular,
\[
\e(1/2, \rho_{std} \circ \p, \q_{F}) = \e(1/2, \r_{0}, \rho_{std}, \q_{F}).
\]
So $\chi_{\r_{0}}(-1) = \chi_{\p}(-1)$. Suppose $G$ is not split, then $G$ has to be special even orthogonal. We can view $G$ as an endoscopic group of the split symplectic group $G_{+}$ of the same $\bar{F}$-rank, and let $[\p]$ map to $[\p_{+}] \in \cPbd{G_{+}}$ through the endoscopic embedding. Let $\r_{+, 0}$ be the generic representation in $\cPkt{\p_{+}}$. From the proof of Lemma~\ref{lemma: transfer central character}, we have 
\[
\chi_{\p_{+}}(-1) / \chi_{\r_{+, 0}}(-1) = \chi_{\p}(-1) / \chi_{\r_{0}}(-1).
\]
Note we have $\chi_{\p_{+}}(-1) = \chi_{\r_{+, 0}}(-1)$ from the split case. Therefore, $\chi_{\p}(-1) =  \chi_{\r_{0}}(-1)$. This finishes the proof.
\end{proof}

As a consequence of these results, the results in Section~\ref{subsec: coarse L-packet} and Section~\ref{subsec: compatibility with twist} are unconditional. In fact we could obtain stronger results, which will be summarized below.

\begin{proposition}
\label{prop: theta twisting character for classical group}
Suppose $[\p] \in \cPbd{G}$, and $[\r] \in \cPkt{\p}$. If $\lr$ is an irreducible smooth representation of $\lG(F)$ whose restriction to $G(F)$ contains $\r$, then for $\theta \in \Sigma_{0}$ and $\x \in \Hom(\lG(F)/G(F), \C^{\times})$,
\[
\lr^{\theta} \cong \lr \otimes \x \Longleftrightarrow \x \in \a(\S{\underline{\p}}^{\theta}).
\]
In particular, $X(\lr) = \a(\S{\underline{\p}})$.
\end{proposition}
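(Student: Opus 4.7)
The plan is to reduce the proposition to direct applications of Propositions~\ref{prop: twisting character under multiplicity one} and \ref{prop: theta twisting character}, exploiting two special features of the classical-group setting: the component group $\S{\underline{\p}}$ is abelian (\cite{Arthur:2013}, Section 1.4), and the multiplicity $m(\lr, \r) = 1$ by \cite{AdlerPrasad:2006}, Theorem 1.4. Abelianness forces every $\rho \in \Irr{\S{\underline{\p}}}$ and its restriction $\tau$ to $\S{\underline{\lp}}$ to be characters, so $m(\rho, \tau) = 1$ automatically; combined with Adler--Prasad, Proposition~\ref{prop: twisting character under multiplicity one} verifies Hypothesis~\ref{hypo: twisting character} for the representations in question. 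Since $\tau^{x} = \tau$ for every $x \in \S{\underline{\p}}$, the ``moreover'' clause of that hypothesis gives $X(\lr) = \a(\S{\underline{\p}}(\tau)) = \a(\S{\underline{\p}})$, which is already the ``in particular'' assertion. Abelianness also supplies $\rho^{x} = \rho$ for every $x \in \S{\underline{\p}}^{\theta}$, so the hypotheses of Proposition~\ref{prop: theta twisting character} are met, yielding Hypothesis~\ref{hypo: theta twisting character}; via $\tau^{x} = \tau$ this specializes to $\lr^{\theta} \cong \lr \otimes \a(x)$ for every $x \in \S{\underline{\p}}^{\theta}$, which is exactly the $(\Leftarrow)$ direction of the main equivalence.

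For the $(\Rightarrow)$ direction, assume $\lr^{\theta} \cong \lr \otimes \x$; the first step is to show $\S{\underline{\p}}^{\theta}$ is non-empty. Restricting to $G(F)$ and using $\x|_{G(F)} = 1$ gives $(\lr|_{G})^{\theta} \cong \lr|_{G}$, so any $\r \in \lr|_{G} \subseteq \Pkt{\p}$ satisfies $\r^{\theta} \in \Pkt{\p}$, while Lemma~\ref{lemma: theta equivariant} places $\r^{\theta} \in \Pkt{\p^{\theta}}$ as well. Disjointness of these two packets (furnished by the standard decomposition when $\S{\underline{\p}}^{\theta_{0}} \neq \emptyset$ and by Theorem~\ref{thm: refined twisted character identity for classical group} in the exceptional case $\S{\underline{\p}}^{\theta_{0}} = \emptyset$) forces $\p^{\theta} = \p$; a direct computation then shows that if $\D{\theta}(\underline{\p}(u)) = g\,\underline{\p}(u)\,g^{-1}$ for some $g \in \D{G}$, then $g^{-1} \rtimes \D{\theta}$ centralizes $\Im \underline{\p}$ and represents a class in $\S{\underline{\p}}^{\theta}$. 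Picking any $x_{0} \in \S{\underline{\p}}^{\theta}$ and applying the first paragraph gives $\lr^{\theta} \cong \lr \otimes \a(x_{0})$, so $\x \cdot \a(x_{0})^{-1}$ lies in $X(\lr) = \a(\S{\underline{\p}})$; writing this as $\a(y)$ for $y \in \S{\underline{\p}}$ yields $\x = \a(x_{0} y) \in \a(\S{\underline{\p}}^{\theta})$, as required.

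The main conceptual obstacle is ensuring the L-packet disjointness just invoked, particularly when $\p$ and $\p^{\theta_{0}}$ are distinct parameters lying in the same $\Sigma_{0}$-orbit. Here Arthur's refined decomposition (Theorem~\ref{thm: refined twisted character identity for classical group}) is essential: when $\S{\underline{\p}}^{\theta_{0}} = \emptyset$, it shows that $\Pkt{\p}$ and $\Pkt{\p^{\theta_{0}}} = \Pkt{\p}^{\theta_{0}}$ are genuinely disjoint halves of $\Pkt{\p}^{\Sigma_{0}}|_{G}$. Apart from this point, the proof is a routine combination of the multiplicity-one input from \cite{AdlerPrasad:2006} with the machinery developed in Sections~\ref{subsec: coarse L-packet} and \ref{subsec: compatibility with twist}.
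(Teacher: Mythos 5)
Your proof follows the paper's strategy in all the essential steps: reduce to $\theta = \theta_0$; use abelianness of $\S{\underline{\p}}$ together with the multiplicity-one result of \cite{AdlerPrasad:2006} to invoke Propositions~\ref{prop: twisting character under multiplicity one} and~\ref{prop: theta twisting character}, which yield $X(\lr) = \a(\S{\underline{\p}})$ and the $(\Leftarrow)$ direction; then for $(\Rightarrow)$ reduce to showing $\S{\underline{\p}}^{\theta_0} \neq \emptyset$ and finish by comparing $\x$ against $\a(x_0)$ for a chosen $x_0 \in \S{\underline{\p}}^{\theta_0}$.

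One piece of your argument for $\S{\underline{\p}}^{\theta_0} \neq \emptyset$ is misstated. You claim disjointness of $\Pkt{\p}$ and $\Pkt{\p^{\theta_0}}$ is ``furnished by the standard decomposition when $\S{\underline{\p}}^{\theta_0} \neq \emptyset$''; but in that case $\p^{\theta_0} = \p$ automatically (so there is nothing left to prove, and the two packets coincide rather than being disjoint), and the standard decomposition of Theorem~\ref{thm: LLC} is indexed by $\Sigma_0$-orbits $[\p] \in \cP{G}$ and hence cannot separate $\p$ from $\p^{\theta_0}$ anyway. Disjointness is only available — and only needed — when $\S{\underline{\p}}^{\theta_0} = \emptyset$, via the refined splitting $\Pkt{\p}^{\theta_0} \sqcup \Pkt{\p}$ of Theorem~\ref{thm: refined twisted character identity for classical group}. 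Also, Lemma~\ref{lemma: theta equivariant}, which you invoke to place $\r^{\theta_0}$ in $\Pkt{\p^{\theta_0}}$, is stated only for non-simple parameters, leaving a small gap when $\S{\underline{\p}} = 1$. The paper sidesteps both issues by arguing directly: if $\S{\underline{\p}}^{\theta_0} = \emptyset$, choose $\r \in \Pkt{\p}$, observe $\r^{\theta_0} \cong \r^g$ for some $g \in \lG(F)$, note $\r^g \in \Pkt{\p}$ (since $\lG(F)$-conjugation is inner over $\bar{F}$ and hence preserves the stable packet $\Pkt{\p}$) while $\r^{\theta_0} \in \Pkt{\p}^{\theta_0}$, and derive a contradiction from the disjointness. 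These are minor slips and easily patched; the remainder of your argument is correct and the final step matches the paper exactly.
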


\begin{proof}
If $\theta = id$, this follows from Proposition~\ref{prop: twisting character under multiplicity one}, and we will have $X(\lr) = \a(\S{\underline{\p}})$. So we can assume $G$ is special even orthogonal and $\theta = \theta_{0}$. Note the direction $``\Leftarrow"$ follows from Proposition~\ref{prop: theta twisting character}. For the other direction, we suppose $\lr^{\theta_{0}} \cong \lr \otimes \x$. Then $\r^{\theta_{0}} \cong \r^{g}$ for some $g \in \lG(F)$. If $\S{\underline{\p}}^{\theta_{0}} = \emptyset$, by Theorem~\ref{thm: refined twisted character identity for classical group}, we can assume $\r \in \Pkt{\p}$. Then $\r^{\theta_{0}} \in \Pkt{\p}^{\theta_{0}}$, $\r^{g} \in \Pkt{\p}$, and we get a contradiction. So $\S{\underline{\p}}^{\theta_{0}} \neq \emptyset$, and by Theorem~\ref{thm: twisted character identity}, $\r^{\theta_{0}} \cong \r$. Let $\x_{0} \in \a(\S{\underline{\p}}^{\theta_{0}})$, we know $\lr^{\theta_{0}} \cong \lr \otimes \x_{0}$. Therefore, $\lr \cong \lr \otimes \x \x_{0}^{-1}$, which means $\x \x_{0}^{-1} \in \a(\S{\underline{\p}})$. Hence $\x \in \a(\S{\underline{\p}}^{\theta_{0}})$.
\end{proof}

For $[\p] \in \cPbd{G}$, let us fix a character $\lif{\chi}_{\p}$ of $\lZ(F)$ such that $\lif{\chi}_{\p}|_{\Z(F)} = \chi_{\p}$. We define $\clPkt{\p, \lif{\chi}_{\p}}$ to be the subset of $\cPkt{}(\lG(F))$ with central character $\lif{\chi}_{\p}$, whose restriction to $G(F)$ are contained in $\cPkt{\p}$. Let $X = \Hom(\lG(F)/\lZ(F)G(F), \C^{\times})$.

\begin{proposition}
\label{prop: coarse L-packet for classical group}
Suppose $[\p] \in \cPbd{G}$ and $\lif{\chi}_{\p}$ is chosen as above.
\begin{enumerate}
\item The orbits in $\cPkt{\p}$ under the conjugate action of $\lG(F)$ all have size $|\S{\underline{\p}} / \S{\underline{\lp}}|$. If $F$ is nonarchimedean, there are exactly $|\S{\underline{\lp}}|$ orbits. \\
\item There is a natural fibration 
\[
\xymatrix{ X / \a(\S{\underline{\p}}^{\Sigma_{0}})  \ar[r]   &   \clPkt{\p, \lif{\chi}_{\p}}    \ar[r]^{Res \quad}     &  \cPkt{\p} / \lG(F).}
\]
\item There is a unique pairing 
\[
[\lr] \longrightarrow <\cdot, \lr>_{\underline{\p}}
\]
from $\clPkt{\p, \lif{\chi}_{\p}} / X$ into $\D{\S{\underline{\lp}}}$, satisfying 
\[
<x, \lr>_{\underline{\p}} = <\iota(x), \r>_{\underline{\p}},
\]
where $\iota: \S{\underline{\lp}} \hookrightarrow \S{\underline{\p}}$, $\r$ is in the restriction of $\lr$. It sends the generic representation to the trivial character. Moreover, this map from $\clPkt{\p, \chi} / X$ to $\D{\S{\underline{\lp}}}$ is injective and when $F$ is nonarchimedean it is in fact a bijection.
\end{enumerate} 
\end{proposition}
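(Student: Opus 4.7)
The plan is to upgrade Proposition~\ref{prop: coarse L-packet} using two features of the classical-groups setting: the abelianness of $\S{\underline{\p}}$, which makes every $\rho \in \Irr{\S{\underline{\p}}}$ one-dimensional and hence forces $X(\rho) = 1$ and $\Ker(X(\rho)) = \S{\underline{\p}}$; and the Adler--Prasad multiplicity-one $m(\lr,\r)=1$ together with $m(\rho,\tau)=1$, which by Proposition~\ref{prop: twisting character under multiplicity one} verifies Hypothesis~\ref{hypo: twisting character} and unlocks Proposition~\ref{prop: theta twisting character for classical group}.

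For Part (i), Proposition~\ref{prop: coarse L-packet}(i) now reads that the $\lG(F)$-orbit of $\r(\rho) \in \Pkt{\p}$ has size $|\a(\S{\underline{\p}})| = |\S{\underline{\p}}/\S{\underline{\lp}}|$. In the special even orthogonal case with $\S{\underline{\p}}^{\theta_0} = \emptyset$, I would additionally check that $\r$ and $\r^{\theta_0}$ lie in disjoint $\lG(F)$-orbits so that the orbit size survives passage to $\cPkt{\p}$: otherwise $\r^{\theta_0} \cong \r^g$ would, via Corollary~\ref{cor: uniqueness} and Proposition~\ref{prop: theta twisting character for classical group}, force a twist $\x \in \a(\S{\underline{\p}}^{\theta_0}) = \emptyset$. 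When $F$ is nonarchimedean, Theorem~\ref{thm: endoscopic parametrization} gives $|\cPkt{\p}| = |\S{\underline{\p}}|$, and dividing yields $|\S{\underline{\lp}}|$ orbits.

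For Part (ii), fix $[\r] \in \cPkt{\p}/\lG(F)$. Corollary~\ref{cor: existence} produces some lift $\lr$; since $\lG(F)/\lZ(F) G(F)$ is finite for classical groups (as $D(F)/\c(\lZ(F))$ is), one can adjust $\chi_{\lr}$ to $\lif{\chi}_\p$ by extending the ratio $\lif{\chi}_\p/\chi_{\lr}|_{\lZ(F)}$ to a character of $\lG(F)/G(F)$ via Pontryagin duality on the finite quotient. Two such lifts differ by an element of $X$ by Corollary~\ref{cor: uniqueness} and central-character matching, making the preimage in $\cPkt{}(\lG(F))$ an $X$-torsor. After identifying $\lr$ with $\lr^{\theta_0}$ to land in $\clPkt{\p, \lif{\chi}_\p}$, Proposition~\ref{prop: theta twisting character for classical group} computes the $X$-stabilizer of $[\lr]$ as $\a(\S{\underline{\p}}) \cup \a(\S{\underline{\p}}^{\theta_0}) = \a(\S{\underline{\p}}^{\Sigma_0})$, so the fiber is $X/\a(\S{\underline{\p}}^{\Sigma_0})$.

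For Part (iii), define $<x, \lr>_{\underline{\p}} := <\iota(x), \r>_{\underline{\p}}$ for $x \in \S{\underline{\lp}}$ and $\r \in \lr|_G \cap \Pkt{\p}$. Well-definedness in $\r$ follows from Lemma~\ref{lemma: twisting character}: replacing $\r$ with $\r^g$ multiplies the value by $\x_{\iota(x)}(g) = \a(\iota(x))(g) = 1$ since $\a \circ \iota$ is trivial in sequence~\eqref{eq: twisted endoscopic sequence with dual}; invariance under the $\Sigma_0$-action on $\lr$ and under twisting by $X$ is automatic as these preserve $\lr|_G$ as a set. Genericity of $\lr$ produces a unique generic $\r \in \lr|_G$ by Lemma~\ref{lemma: multiplicity one for generic representation}, and then $<\iota(\cdot), \r>_{\underline{\p}}$ is trivial by Theorem~\ref{thm: endoscopic parametrization}. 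For injectivity, two classes yielding the same character produce $<\cdot, \r_1>_{\underline{\p}}$ and $<\cdot, \r_2>_{\underline{\p}}$ agreeing on $\S{\underline{\lp}}$, so their ratio is some $\e \in (\S{\underline{\p}}/\S{\underline{\lp}})^{*}$; taking Pontryagin dual of the injection $\a$ realizes $\e = \e_g$ for some $g \in \lG(F)$, and Lemma~\ref{lemma: twisting character} plus Theorem~\ref{thm: endoscopic parametrization}'s injectivity yields $[\r_2] = [\r_1^g]$ in $\cPkt{\p}$, whence $[\lr_1] = [\lr_2]$ modulo $X$ by Part (ii). Bijectivity when $F$ is nonarchimedean is a count: both $|\clPkt{\p, \lif{\chi}_\p}/X|$ (via Parts (i) and (ii)) and $|\D{\S{\underline{\lp}}}|$ equal $|\S{\underline{\lp}}|$. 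The main subtlety is coordinating the $\Sigma_0$-equivariance across $\S{\underline{\p}}$, $\cPkt{\p}$, and $\clPkt{\p, \lif{\chi}_\p}$, especially in the special even orthogonal case with $\S{\underline{\p}}^{\theta_0} = \emptyset$ where the disjointness argument above is needed to salvage Part (i).
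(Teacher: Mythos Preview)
Your proposal is correct and follows essentially the same approach as the paper, which simply notes that the proof follows from that of Proposition~\ref{prop: coarse L-packet}, that uniqueness of the pairing comes from $\S{\underline{\p}}$ being abelian, and that injectivity/bijectivity is inherited from the corresponding property of the pairing $\cPkt{\p} \to \D{\S{\underline{\p}}}$. You have supplied the details the paper omits, in particular the $\Sigma_0$-equivariance bookkeeping in the special even orthogonal case with $\S{\underline{\p}}^{\theta_0} = \emptyset$, and your use of Proposition~\ref{prop: theta twisting character for classical group} to compute the $X$-stabilizer in Part~(ii) is exactly the intended mechanism.
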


\begin{proof}
The proof essentially follows from that of Proposition~\ref{prop: coarse L-packet}, and the uniqueness of this pairing is due to the fact that $\S{\underline{\p}}$ is abelian. The last property follows from the same property of the pairing between $\cPkt{\p}$ and $\S{\underline{\p}}$.
\end{proof}

Finally, for the conjectural refinement of $\clPkt{\p, \lif{\chi}_{\p}}$, we would like to state it in the following two theorems.

\begin{theorem}
\label{thm: refined L-packet}

Suppose $[\p] \in \cPbd{G}$, and $\lif{\chi}_{\p}$ is a character of $\lZ(F)$ whose restriction to $\Z(F)$ is $\chi_{\p}$. Let $\lif{\chi} = \lif{\chi}_{\p}|_{\lif{Z}_{F}}$. Then there exists a subset $\cPkt{\lp}$ of $\clPkt{\p, \lif{\chi}_{\p}}$, unique up to twisting by $X$, and it is characterized by the following properties:

\begin{enumerate}
\item 
\[
\clPkt{\p, \lif{\chi}_{\p}} = \bigsqcup_{\x \in X / \a(\S{\underline{\p}}^{\Sigma_{0}})} \cPkt{\lp} \otimes \x. 
\]  

\item For $\lf \in \sH(\lG, \lif{\chi})$, the distribution
\[
\lf(\underline{\lp}) := \sum_{[\lr] \in \cPkt{\underline{\lp}}} \lf_{\lG}(\lr)
\]
is stable.

\end{enumerate}

\end{theorem}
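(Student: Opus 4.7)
The plan is to combine the stable distribution on $G$ provided by Theorem~\ref{thm: twisted character identity}(i) (or, in the exceptional case where $G$ is special even orthogonal and $\S{\underline{\p}}^{\theta_{0}} = \emptyset$, its refinement Theorem~\ref{thm: refined twisted character identity for classical group}) with the inclusion of Hecke algebras $\sH(G, \chi) \hookrightarrow \sH(\lG, \lif{\chi})$ constructed in Section~\ref{subsec: endoscopic transfer}, using the fibration structure of $\clPkt{\p, \lif{\chi}_{\p}}$ supplied by Proposition~\ref{prop: coarse L-packet for classical group}. First I would fix an auxiliary subgroup $\lif{Z}_{F} \subseteq \lZ(F)$ with $\lif{Z}_{F} \cap \Z(F) = 1$ and $D(F)/\c(\lif{Z}_{F})$ finite, as in the proof of Corollary~\ref{cor: existence}, so that any $\lf \in \sH(\lG, \lif{\chi})$ admits a clean decomposition along the (finitely many) cosets of $\lif{Z}_{F} G(F)$ in $\lG(F)$.

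To construct $\cPkt{\lp}$, I would work orbit by orbit on the base of the fibration of Proposition~\ref{prop: coarse L-packet for classical group}(ii): for each $\lG(F)$-orbit inside $\cPkt{\p}$ pick exactly one lift $[\lr]$ in the corresponding fiber $X/\a(\S{\underline{\p}}^{\Sigma_{0}})$, and declare $\cPkt{\lp}$ to be the set of these chosen lifts. Condition (i) of the theorem is then tautological, since twisting by the quotient group $X/\a(\S{\underline{\p}}^{\Sigma_{0}})$ sweeps out each fiber exactly once.

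The serious content is property (ii). Writing $\lG(F) = \bigsqcup_{h} \lif{Z}_{F} G(F) \cdot h$ and $f_{h}(g) := \lf(gh) \in \sH(G, \chi)$, the character expansion of each $\lr \in \cPkt{\lp}$ restricted to the coset indexed by $h$ factors through the restriction of $\lr$ to $G(F)$ composed with an intertwining operator implementing $\lr \cong \lr \otimes \x_{h}$ where $\x_{h} = \a(x_{h})$; the existence of such $x_{h} \in \S{\underline{\p}}^{\Sigma_{0}}$, whenever the contribution is nonzero, is guaranteed by Proposition~\ref{prop: theta twisting character for classical group}. After this rewriting, the sum $\sum_{[\lr] \in \cPkt{\lp}} \lf_{\lG}(\lr)$ collapses, coset by coset, into a twisted character distribution on $G$ of the shape appearing in Theorem~\ref{thm: twisted character identity}(ii) applied with $\theta = \theta_{h} = \Int(h)$. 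Stability on $G$ of each such distribution, together with the compatibility of transfer factors under the map $G \hookrightarrow \lG$ supplied by Lemma~\ref{lemma: twisted transfer factor} and the compatibility of twisted endoscopic transfer supplied by Lemma~\ref{lemma: twisted endoscopic transfer}, then propagates to yield stability of $\lf(\underline{\lp})$ on $\lG$.

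Uniqueness follows by linear independence of characters: given a second set $\cPkt{\lp}'$ satisfying (i)--(ii), condition (i) forces it to meet each fiber of $\clPkt{\p, \lif{\chi}_{\p}} \to \cPkt{\p}/\lG(F)$ in an $X$-twist of $\cPkt{\lp}$, and the simultaneous stability of both candidate distributions together with the spectral injectivity from Proposition~\ref{prop: coarse L-packet for classical group}(iii) forces the twist to be a single element of $X$. The main obstacle I expect is in step three: aligning, across all cosets $h$, the normalizations of the intertwining operators $A_{\r}(\theta_{h})$ with the chosen extensions of $\r \in \cPkt{\p}$ to $[\lr] \in \cPkt{\lp}$, especially in the exceptional case where $\cPkt{\p}$ must be replaced by the non-canonical refinement $\Pkt{\p}$ of Theorem~\ref{thm: refined twisted character identity for classical group}(i); it is here that the combinatorial information recorded by $X(\lr)$, $\lG(\r)$ and Proposition~\ref{prop: theta twisting character for classical group} must be carefully coordinated with the central character constraint $\lif{\chi}_{\p}$.
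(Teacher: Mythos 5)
Your proposal attempts a purely local proof, but the paper's proof of Theorem~\ref{thm: refined L-packet} is explicitly \emph{global}: it is deferred to \cite{Xu:preprint2} and, as the remark after Theorem~\ref{thm: twisted character identity for similitude group} says, rests on the stabilization of the twisted Arthur--Selberg trace formula. Only the archimedean case is handled locally in the paper, and that argument works because for discrete series $L$-packets $X(\lr)=X$, so $\cPkt{\lp}=\clPkt{\p,\lif{\chi}_{\p}}$ and the distribution $\lf(\underline{\lp})$ literally equals $f(\underline{\p})$ for the restriction $f$; that collapse does not happen when $F$ is nonarchimedean and $X(\lr)\subsetneq X$.

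The concrete gap in your step three is this. After decomposing along cosets $\lif{Z}_{F}G(F)\cdot h$ and rewriting the contribution of $h\notin G(F)$ as a $\theta_{h}$-twisted character of $\r|_{G}$ with $\theta_{h}=\Int(h)$, you invoke Theorem~\ref{thm: twisted character identity}(ii) ``with $\theta=\theta_{h}$''. But Theorem~\ref{thm: twisted character identity}(ii) only applies to $\theta\in\Sigma_{0}$, i.e.\ automorphisms of $G$ preserving an $F$-splitting, and $\Int(h)$ for $h\in\lG(F)\smallsetminus\lZ(F)G(F)$ does not preserve an $F$-splitting of $G$. The twisted character identity for such non--splitting-preserving automorphisms is \emph{not} part of the endoscopic hypothesis assumed for $G$; it is exactly what \eqref{eq: restriction of theta twisted character relation} (part of Conjecture~\ref{conj: refined L-packet}) and Theorem~\ref{thm: twisted character identity for similitude group}/\eqref{eq: theta twisted intertwining operator for similitude group} provide, and those are the statements being proved in \cite{Xu:preprint2} alongside the theorem you are trying to prove. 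Your argument is therefore circular at this point. Relatedly, the coset-by-coset collapse you describe requires a \emph{simultaneous, coherent} normalization of the intertwining operators $A_{\r}(\theta_{h})$ across all $h$ that is compatible with the chosen extensions $\lr\supseteq\r$; you flag this as ``the main obstacle'' but it cannot be resolved by the local data available (Proposition~\ref{prop: theta twisting character for classical group}, $X(\lr)$, $\lG(\r)$), because it is equivalent to specifying the refined packet itself. No amount of bookkeeping through Lemmas~\ref{lemma: twisted transfer factor} and~\ref{lemma: twisted endoscopic transfer} supplies this: those lemmas control functions supported on $\lif{Z}_{F}G(F)$, whereas the stability assertion must be checked against $\lG(\bar{F})$-conjugates of elements $h$ in the other cosets, which is where the genuinely new input lies.

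Your construction in step two and the uniqueness argument are fine modulo the above: once stability holds for some section, property (i) and linear independence of characters via Proposition~\ref{prop: coarse L-packet for classical group}(iii) do pin it down up to a single $X$-twist. But the existence of a stable section is precisely the hard part, and it is not local.
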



Recall a $\theta$-twisted endoscopic group $H$ of $G$ for $\theta \in \Sigma_{0}$ takes the form $H \cong M_{l} \times G_{1} \times G_{2}$. Let $\lG_{i}$ ($i = 1, 2$) be the similitude group of $G_{i}$ with similitude character $\c_{i}$. Suppose $\lif{H}$ is the $(\theta, \x)$-twisted endoscopic group of $\lG$ lifted from $H$ under Proposition~\ref{prop: lifting endoscopic group}, then (cf. \cite{Morel:2011})
\[
\lif{H} = \{(x, g_{1}, g_{2}) \in M_{l} \times \lG_{1} \times \lG_{2} : \c_{1}(g_{1}) = \c_{2}(g_{2})\},
\]
where $M_{l}$ is a product of general linear groups, and $\c_{H}(x, g_{1}, g_{2}) : = \c_{1}(g_{1})$. For $[\p_{H}] \in \cPbd{H}$, we can assume $\p_{H} = \p_{l} \times \p_{1} \times \p_{2}$, where $\p_{l} \in \Pbd{M_{l}}$, $[\p_{i}] \in \cPbd{G_{i}}$ ($i = 1, 2$). Fix a character $\lif{\chi}_{\p_{H}}$ of $Z_{\lif{H}}(F)$, which is the restriction of some character $\chi_{\p_{l}} \otimes \lif{\chi}_{\p_{1}} \otimes \lif{\chi}_{\p_{2}}$ of $M_{l} \times \lG_{1} \times \lG_{2}$, then by Theorem~\ref{thm: refined L-packet}, we can define $\cPkt{\lp_{H}}$ to be the restriction of $\Pkt{\p_{l}} \otimes \cPkt{\lp_{1}} \otimes \cPkt{\lp_{2}}$, which is unique up to twisting by $\Hom(\lif{H}(F)/Z_{\lif{H}}(F)H(F), \C^{\times})$.

\begin{theorem}
\label{thm: twisted character identity for similitude group}

Suppose $[\p] \in \cPbd{G}$, and $\lif{\chi}_{\p}$ is a character of $\lZ(F)$ whose restriction to $\Z(F)$ is $\chi_{\p}$. Let $\lif{\chi} = \lif{\chi}_{\p}|_{\lif{Z}_{F}}$.
Suppose $\theta \in \Sigma_{0}$, $s$ is a semisimple element in $\com{\cS{\underline{\p}}}$ and  $(H, \underline{\p}_{H}) \rightarrow (\underline{\p}, s)$. Let $x$ be the image of $s$ in $\S{\underline{\p}}^{\theta}$, and $\x = \a(x)$. Fix a packet $\cPkt{\lp_{H}}$ with $\lif{\chi}_{\p_{H}}|_{\lZ} = \lif{\chi}_{\p} \chi_{\lif{C}}$ (cf. Section~\ref{subsec: character identity}), then we can choose $\cPkt{\lp}$ in Theorem~\ref{thm: refined L-packet} such that
\begin{align}
\label{eq: theta twisted character relation for similitude group}
\lf^{\lif{H}}(\underline{\lp}_{H}) = \sum_{[\lr] \in \cPkt{\lp}} \lf_{\com{\lG}}(\lr, \x), \,\,\,\,\,\, \lf \in \sH(\lG, \lif{\chi})
\end{align}
where $A_{\lr}(\theta, \x)$ is normalized in a way so that if $f \in \sH(G, \chi)$ is the restriction of $\lf$ on $G(F)$, then 
\begin{align}
\label{eq: theta twisted intertwining operator for similitude group}
(\lf|_{\lif{Z}_{F}G(F)})_{\com{\lG}}(\lr, \x) = \sum_{\r \in \lr|_{G}} <x, \r^+>_{\underline{\p}} f_{G^{\theta}}(\r)  
\end{align}
where $\r^{+}$ is an extension of $\r$ to $G^{+}(F) := G(F) \times <\theta>$ with $\r^{+}(\theta) = A_{\r}(\theta)$.

\end{theorem}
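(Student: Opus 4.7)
The plan is to derive this identity from the twisted endoscopic character identity for $G$ (Theorem~\ref{thm: twisted character identity}) by exploiting the compatibility of twisted endoscopic transfer under the extension $G \subseteq \lG$ (Lemma~\ref{lemma: twisted transfer factor} and Lemma~\ref{lemma: twisted endoscopic transfer}). First, since $\x = \a(x)$ with $x \in \S{\underline{\p}}^{\theta}$, Proposition~\ref{prop: theta twisting character for classical group} guarantees that $\lr^{\theta} \cong \lr \otimes \x$ for every $[\lr] \in \clPkt{\p, \lif{\chi}_{\p}}$, so the $(\theta, \x)$-twisted character $\lf_{\com{\lG}}(\lr, \x)$ makes sense once $A_{\lr}(\theta, \x)$ is fixed. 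Both sides of \eqref{eq: theta twisted character relation for similitude group} are $\theta$-twisted $\lG(F)$-conjugation equivariant with a common twist by $\x$: the left-hand side by the similitude-group analogue of \eqref{eq: conjugate function} coming from Lemma~\ref{lemma: twisted transfer factor}, and the right-hand side by the standard transformation property of twisted characters. Since the $\theta$-twisted $\lG(F)$-translates of $\lif{Z}_{F}G(F)$ exhaust $\lG(F)$, it suffices to verify the identity for $\lf \in \sH(\lG, \lif{\chi})$ supported on $\lif{Z}_{F}G(F)$.

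For such $\lf$, set $f = \lf|_{\lif{Z}_{F}G(F)} \in \sH(G, \chi)$ under the natural identification, where $\chi = \lif{\chi}|_{\lif{Z}_{F} \cap \Z(F)}$. By Lemma~\ref{lemma: twisted endoscopic transfer}, $\lf^{\lif{H}}$ is the extension by zero of $f^{H}$ from $H(F)$, with central character twisted by $\chi_{\lif{C}}$. Running the same $G$-to-$\lG$ passage in parallel for the pair $(H, \lif{H})$ — now using Theorem~\ref{thm: refined L-packet} applied to $H$ and the hypothesis that $\cPkt{\lp_{H}}$ has been fixed compatibly with $\lif{\chi}_{\p_{H}}|_{\lZ} = \lif{\chi}_{\p} \chi_{\lif{C}}$ — yields
\[
\lf^{\lif{H}}(\underline{\lp}_{H}) = f^{H}(\underline{\p}_{H}).
\]
Invoking Theorem~\ref{thm: twisted character identity} then gives
\[
f^{H}(\underline{\p}_{H}) = \sum_{[\r] \in \cPkt{\p}} <x, \r^{+}>_{\underline{\p}} f_{G^{\theta}}(\r).
\]
On the other hand, the normalization condition \eqref{eq: theta twisted intertwining operator for similitude group} produces
\[
\sum_{[\lr] \in \cPkt{\lp}} \lf_{\com{\lG}}(\lr, \x) = \sum_{[\lr] \in \cPkt{\lp}} \sum_{\r \in \lr|_{G}} <x, \r^{+}>_{\underline{\p}} f_{G^{\theta}}(\r).
\]
The restriction map $\cPkt{\lp} \to \cPkt{\p}/\lG(F)$ is a bijection by Proposition~\ref{prop: coarse L-packet for classical group} combined with Theorem~\ref{thm: refined L-packet}(i), so each $[\r] \in \cPkt{\p}$ occurs exactly once in the double sum; Lemma~\ref{lemma: twisting character} then ensures that the summand $<x, \r^{+}>_{\underline{\p}} f_{G^{\theta}}(\r)$ is invariant along each $\lG(F)$-orbit in $\cPkt{\p}$, so the two displays match.

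The main obstacle is the coherent choice of intertwining operators $A_{\lr}(\theta, \x)$ across the packet $\cPkt{\lp}$ so that the local normalization \eqref{eq: theta twisted intertwining operator for similitude group} holds uniformly while remaining compatible with the analogous normalizations used on the $\lif{H}$-side; equivalently, tracking the exact twist by an element of $\Hom(\lif{H}(F)/H(F), \C^{\times})$ appearing in the statement. This bookkeeping requires following the central character relation $\lif{\chi}_{\p_{H}}|_{\lZ} = \lif{\chi}_{\p}\chi_{\lif{C}}$ through the identifications induced by Proposition~\ref{prop: lifting endoscopic group} and the compatibility of $\chi_{\lif{C}}$ and $\chi_{C}$ from Lemma~\ref{lemma: twisted transfer factor}, and is where the freedom built into Theorem~\ref{thm: refined L-packet} (namely, uniqueness only up to twisting by $X$) is consumed.
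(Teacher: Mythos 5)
The paper does not contain a proof of Theorem~\ref{thm: twisted character identity for similitude group}: the remark immediately following it states that both Theorem~\ref{thm: refined L-packet} and this theorem ``are the main local results in \cite{Xu:preprint2}'' and that ``their proofs involve global methods, and the main tool is the stabilization of the twisted Arthur-Selberg trace formula.'' The paper only sketches an argument for archimedean $F$, which works because for discrete series L-packets over $\mathbb{R}$ one has $X(\lr)=X$, hence $\lf_{\com{\lG}}(\lr,\x) = (\lf|_{\lZ(\mathbb{R})G(\mathbb{R})})_{\com{\lG}}(\lr,\x)$; that equality lets one pass to the identity coset and quote Theorem~\ref{thm: twisted character identity}. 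Your proposal is an attempt to push this local argument through in general, and it has a genuine gap.

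The flaw is in the reduction step ``Since the $\theta$-twisted $\lG(F)$-translates of $\lif{Z}_{F}G(F)$ exhaust $\lG(F)$, it suffices to verify the identity for $\lf$ supported on $\lif{Z}_{F}G(F)$.'' This is false. Because $\c$ is $\theta$-invariant, for any $g,h\in\lG(F)$ one has $\c\bigl(g\,h\,\theta(g)^{-1}\bigr) = \c(g)\,\c(h)\,\c(g)^{-1} = \c(h)$, so $\theta$-twisted conjugation by $\lG(F)$ preserves each coset of $\lif{Z}_{F}G(F)$ in $\lG(F)$. In particular the translates of $\lif{Z}_{F}G(F)$ never leave $\lif{Z}_{F}G(F)$, and your argument says nothing about $\lf$ supported on the other cosets. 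Equally, the normalization condition \eqref{eq: theta twisted intertwining operator for similitude group} only pins down $\lf_{\com{\lG}}(\lr,\x)$ on the single coset $\lif{Z}_{F}G(F)$; deducing the full identity \eqref{eq: theta twisted character relation for similitude group} on all of $\sH(\lG,\lif{\chi})$ from it is exactly the content that cannot be obtained by a coset-by-coset $\lG(F)$-equivariance argument. This is precisely the obstruction that forces the actual proof (in \cite{Xu:preprint2}) to go global: one needs control over the twisted characters of $\lr$ on the nonidentity components of $\lG(F)/\lif{Z}_{F}G(F)$, and the stabilized twisted trace formula supplies that information, whereas the purely local inputs you invoke (Theorems~\ref{thm: twisted character identity}, \ref{thm: refined L-packet}, Lemma~\ref{lemma: twisted endoscopic transfer}) do not.
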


\begin{remark}

\begin{enumerate}

\item Theorem~\ref{thm: refined L-packet} and Theorem~\ref{thm: twisted character identity for similitude group} are the main local results in \cite{Xu:preprint2}. Their proofs involve global methods, and the main tool is the stabilization of the twisted Arthur-Selberg trace formula due to M{\oe}glin and Waldspurger. 

\item If $F$ is archimedean, both theorems will
follow from Theorem~\ref{thm: twisted character identity} directly. This is clear when $F = \C$ for $\lG(\C) = \lZ(\C)G(\C)$. When $F = \mathbb{R}$, it is known by results of Harish-Chandra (cf. \cite{H-C:1975}, Theorem 27.1) that if $\cPkt{\p}$ consists of discrete series representations of $G(\mathbb{R})$, then $X(\lr) = X$ for any $[\lr] \in \clPkt{\p, \lif{\chi}_{\p}}$. So $\cPkt{\lp} = \clPkt{\p, \lif{\chi}_{\p}}$. Moreover, for $\lif{Z}_{\mathbb{R}} = \lZ(\mathbb{R})$ and $\lf \in \sH(\lG, \lif{\chi}_{\p})$
\[
\lf(\underline{\lp}) = \frac{1}{|X|} \sum_{\x \in X} \sum_{[\lr] \in \cPkt{\lp}} \lf_{\lG}(\lr \otimes \x) = \frac{1}{|X|} \sum_{\x \in X} (\lf \otimes \x) (\underline{\lp}) = (\lf|_{\lZ(\mathbb{R})G(\mathbb{R})})(\underline{\lp}) = f(\underline{\p}),
\]
where $f \in \sH(G, \chi_{\p})$ is the restriction of $\lf$. So the stability of $\cPkt{\lp}$ follows from that of $\cPkt{\p}$. For general tempered L-packets, they can be constructed by parabolic induction from the discrete series L-packets of Levi subgroups of $\lG$. For \eqref{eq: theta twisted character relation for similitude group}, by a standard descent argument we can reduce it to the case that $H$ is {\bf elliptic} (i.e., $H = G_{1} \times G_{2}$) and $\cPkt{\p_{H}}$ consists of discrete series representations of $H(F)$. In this case, by Proposition~\ref{prop: theta twisting character for classical group} one can check $X(\lr) = X$ for any $[\lr] \in \clPkt{\p, \lif{\chi}_{\p}}$ (cf. \cite{Xu:preprint2}, Proposition 6.9). Let $\lif{Z}_{\mathbb{R}} = \lZ(\mathbb{R})$, then the right hand side of \eqref{eq: theta twisted character relation for similitude group} becomes
\[
\sum_{\lr \in \cPkt{\lp}} \lf_{\com{\lG}}(\lr, \x) = \sum_{\lr \in \cPkt{\lp}} (\lf|_{\lZ(\mathbb{R})G(\mathbb{R})})_{\com{\lG}}(\lr, \x) = \sum_{\r \in \cPkt{\p}} <x, \r^+>_{\underline{\p}} f_{G^{\theta}}(\r). 
\]
One can also check $\c_{H}(Z_{\lif{H}}(\mathbb{R})) = \c(\lZ(\mathbb{R}))$. As a result, under $\lZ \hookrightarrow Z_{\lif{H}}$, we have $Z_{\lif{H}}(\mathbb{R})H(\mathbb{R}) = \lZ(\mathbb{R})H(\mathbb{R})$. So the left hand side of \eqref{eq: theta twisted character relation for similitude group} becomes
\[
\lf^{\lif{H}}(\underline{\lp}_{H}) = (\lf^{\lif{H}}|_{Z_{\lif{H}}(F)H(\mathbb{R})})(\underline{\lp}_{H}) = (\lf^{\lif{H}}|_{\lZ(\mathbb{R})H(\mathbb{R})})(\underline{\lp}_{H}) = (\lf|_{\lZ(\mathbb{R})G(\mathbb{R})})^{\lif{H}}(\underline{\lp}_{H}).
\]
By Lemma~\ref{lemma: twisted endoscopic transfer}, $(\lf|_{\lZ(\mathbb{R})G(\mathbb{R})})^{\lif{H}}(\underline{\lp}_{H}) =  \lif{f^{H}}(\underline{\lp}_{H}) = f^{H}(\underline{\p}_{H})$. Therefore, \eqref{eq: theta twisted character relation for similitude group} follows from \eqref{eq: twisted character identity for classical group} in this case.

\end{enumerate}

\end{remark}

\appendix

\section{}
\label{sec: character}

Let $F$ be a local field of characteristic zero and let $G$ be a quasisplit connected reductive group over $F$. In this appendix, we would like to recall Langlands' construction of
\begin{align}
\label{eq: local character 1}
H^{1}(W_{F}, Z(\D{G})) \longrightarrow \Hom(G(F), \C^{\times}),
\end{align}
and we will also show it is an isomorphism. To define this homomorphism, we first need to take a $z$-extension of $G$
\[
\xymatrix{1 \ar[r] & Z \ar[r] & \lG' \ar[r]  & G \ar[r] & 1,}
\]
where $G' := \lG'_{der}$ is simply connected and $H^{1}(F, Z) = 1$. Let $\lG'/G' = D$, and we have an exact sequence
\[
\xymatrix{1 \ar[r] & G' \ar[r] & \lG' \ar[r]^{\c'}  & D \ar[r] & 1.}
\]
Since $\D{G}'$ is adjoint, $\D{D} \cong Z(\D{\lG'})$ and hence $H^{1}(W_{F}, Z(\D{\lG'})) \cong H^{1}(W_{F}, \D{D}) \cong \Hom(D(F), \C^{\times})$ by the local Langlands correspondence for tori. By pulling back quasicharacters of $D(F)$ to $\lG'(F)$, we then get a homomorphism 
\begin{align}
\label{eq: local character 2}
H^{1}(W_{F}, Z(\D{\lG'})) \rightarrow \Hom(\lG'(F), \C^{\times}).
\end{align}
Next we consider the following $\Gal{F}$-equivariant exact sequence
\[
\xymatrix{1 \ar[r] & Z(\D{G}) \ar[r] & Z(\D{\lG'}) \ar[r]  & \D{Z} \ar[r] & 1.}
\]
It induces a long exact sequence 
\[
\xymatrix{\pi_{0}(\D{Z}^{\Gal{F}}) \ar[r] & H^{1}(W_{F}, Z(\D{G})) \ar[r]  & H^{1}(W_{F}, Z(\D{\lG'})) \ar[r] & H^{1}(W_{F}, \D{Z})}
\]
By Tate-Nakayama duality, we have $\pi_{0}(\D{Z}^{\Gal{F}}) \cong H^{1}(F, Z)^{*} = 1$. So we get an inclusion $H^{1}(W_{F}, Z(\D{G})) \hookrightarrow H^{1}(W_{F}, Z(\D{\lG'}))$. On the other hand, $\lG'(F)/Z(F) \cong G(F)$, so we also have an inclusion $\Hom(G(F), \C^{\times}) \hookrightarrow \Hom(\lG'(F), \C^{\times})$. Then \eqref{eq: local character 1} is defined to satisfy the following commutative diagram
\[
\xymatrix{1 \ar[r] \ar@{=}[d] & H^{1}(W_{F}, Z(\D{G})) \ar[r]  \ar[d]^{\eqref{eq: local character 1}} & H^{1}(W_{F}, Z(\D{\lG'})) \ar[d]^{\eqref{eq: local character 2}} \ar[r] & H^{1}(W_{F}, \D{Z}) \ar[d]^{\simeq} \\
1 \ar[r] & \Hom(G(F), \C^{\times}) \ar[r] & \Hom(\lG'(F), \C^{\times}) \ar[r] & \Hom(Z(F), \C^{\times}).} 
\]
To show \eqref{eq: local character 1} is an isomorphism, from this diagram it is enough to know \eqref{eq: local character 2} is an isomorphism. Since $G'$ is semisimple simply connected, $\Hom(G'(F), \C^{\times}) = 1$, which implies \eqref{eq: local character 2} is surjective. For the injectivity, we need to show $\c'(\lG'(F)) = D(F)$. We choose a maximal torus $\lif{T}'$ of $\lG'$, and let $T' = \lif{T}' \cap G'$. The short exact sequence
\begin{align*}
\xymatrix{
                 1 \ar[r] &  T' \ar[r]    & \lif{T}'  \ar[r]^{\c'}  & D  \ar[r]  & 1
                 }
\end{align*}
induces the following exact sequence
\[
\xymatrix { \lif{T}'(F) \ar[r]^{\c'}  & D(F)  \ar[r]^{\delta_{T'} \quad}  & H^{1}(F, T').}
\]
By Tate-Nakayama duality, $H^{1}(F, T') \cong \pi_{0}(\D{T}'^{\Gal{}})^{*}$. Now let $T'$ be the Levi component of a Borel subgroup $B'$ of $G'$, and we fix a $\Gal{}$-splitting $\{\D{B}', \D{T}', \{\mathcal{X}'_{\alpha}\}\}$ for $\D{G}'$. Then there is a $\Gal{}$-equivariant isomorphism
\[
\xymatrix{\D{T}' \ar[r] & \prod_{\alpha} \C^{\times}_{\alpha} \\
t \ar@{|->}[r] & (\alpha^{\vee}(t)),}
\]
where $\C^{\times}_{\alpha} = \C^{\times}$, $\alpha^{\vee}$ are simple coroots of $(G', T')$, and the $\Gal{}$-action on $\prod_{\alpha} \C^{\times}_{\alpha}$ is given by permutations on the indexing set of simple roots. Clearly, $\D{T}'^{\Gal{}} \cong (\prod_{\alpha} \C^{\times}_{\alpha})^{\Gal{}}$ is connected, i.e., $\pi_{0}(\D{T}'^{\Gal{}})^{*} = 1$. This implies $\c'(\lif{T}'(F)) = D(F)$, and hence $\c'(\lG'(F)) = D(F)$.

\bibliographystyle{amsalpha}

\bibliography{reps}

\end{document}